\documentclass[11pt]{amsart}
\usepackage{amssymb,bm,mathrsfs,xcolor}

\usepackage{tikz-cd}
\usepackage{hyperref}

\newcommand{\map}[1]{\xrightarrow{#1}}

\newcommand{\dlim}{\varinjlim}
\newcommand{\iso}{\cong}
\newcommand{\define}{\stackrel{\mathrm{def}}{=}}

\DeclareMathOperator{\Gal}{\mathrm{Gal}}

\DeclareMathOperator{\Aut}{\mathrm{Aut}}
\DeclareMathOperator{\End}{\mathrm{End}}
\newcommand{\Q}{\mathbb Q}
\newcommand{\Z}{\mathbb Z}
\newcommand{\R}{\mathbb R}
\newcommand{\C}{\mathbb C}
\newcommand{\F}{\mathbb F}
\newcommand{\A}{\mathbb A}
\newcommand{\co}{\mathcal O}

\newcommand{\sweet}{\mathrm{Sweet}}

\newcommand{\FirstIndex}{\prime}
\newcommand{\SecondIndex}{\prime\prime}

\DeclareMathOperator{\SL}{\mathrm{SL}}
\DeclareMathOperator{\GL}{\mathrm{GL}}
\DeclareMathOperator{\Sp}{\mathrm{Sp}}
\DeclareMathOperator{\Mp}{\mathrm{Mp}}

\DeclareMathOperator{\Sym}{\mathrm{Sym}}
\DeclareMathOperator{\Herm}{\mathrm{Herm}}

\begin{document}
\author{Benjamin Howard}
\title{Theta functions after Weil}
\date{\today}

\subjclass[2010]{Primary 11F27; Secondary 11F37, 11F46}
\keywords{Theta series, Weil representation}

\address{Department of Mathematics\\Boston College\\ 140 Commonwealth Ave. \\Chestnut Hill, MA 02467, USA}
\email{howardbe@bc.edu}

\begin{abstract}
This is a mostly expository paper on the construction of  theta functions on the Siegel and Hermitian half-spaces using the machinery of the Weil representation, especially  theta functions weighted by  polynomials.  
When the polynomial in question satisfies suitable homogeneity and harmonicity conditions, the resulting theta function  is  holomorphic.
Non-holomorphic Siegel and Hermitian theta functions can be constructed in a similar way, using a more general class of polynomials.
\end{abstract}

\maketitle
\setcounter{tocdepth}{1}
\tableofcontents

\theoremstyle{plain}
\newtheorem{theorem}{Theorem}[subsection]
\newtheorem{bigtheorem}{Theorem}[section]
\newtheorem{proposition}[theorem]{Proposition}
\newtheorem{lemma}[theorem]{Lemma}
\newtheorem{corollary}[theorem]{Corollary}
\newtheorem{conjecture}[theorem]{Conjecture}

\theoremstyle{definition}
\newtheorem{definition}[theorem]{Definition}
\newtheorem{hypothesis}[theorem]{Hypothesis}

\theoremstyle{remark}
\newtheorem{remark}[theorem]{Remark}
\newtheorem{example}[theorem]{Example}
\newtheorem{question}[theorem]{Question}

\numberwithin{equation}{subsection}
\renewcommand{\thebigtheorem}{\Alph{bigtheorem}}


\section{Introduction}


In his landmark \emph{Acta} paper \cite{Weil1}, Weil recast the classical theory of theta functions  in representation-theoretic terms, using what is now called the  \emph{Weil representation} (except when it is called the \emph{Segal-Shale-Weil representation} or the  \emph{oscillator representation}).

The  goal of this expository paper is to explain  Weil's representation-theoretic approach to the theory, and use it to  construct   interesting, and sometimes  non-holomorphic,  theta functions weighted by  polynomials  on the Siegel and Hermitian half-spaces.  

There is no shortage of literature explaining  the connection between the Weil representation and the classical theta functions  of Jacobi and Siegel.
In addition to Weil \cite{Weil1}, we refer the reader to  \cite{Igusa},   \cite{LV}, and the appendices to \cite{ShimuraEisenstein} and \cite{Shimura2000}.
Mumford's \cite{Mumford-3} is particularly thorough in exploring theta functions from several different points of view, including Weil's. 
 We hope that what we have written  differs from those works in enough ways that it is not entirely redundant.  
The main differences are that we include  non-holomorphic theta functions as objects of interest, and we systematically use the Lie algebra form of the Weil representation as the primary tool for explicit calculations over the real numbers.
Both of these choices are motivated  in part  by a desire to present the theory from a point of view consistent with that of  Kudla-Millson \cite{KM1}  and Funke-Millson  \cite{FunkeMillson}  in their construction of theta functions valued in the cohomology of symmetric spaces.

The main result on Siegel  theta functions (Theorem \ref{thm:intro main orthogonal})   already appears in the literature, but we  have given a proof  that centers the  representation theory.
To the best of our knowledge, the main results on  Hermitian theta functions do not appear in the existing literature.
In this technical sense they  may qualify as new, but the statements and proofs run  parallel to the Siegel case.

The reader looking for an exposition of  Howe duality,  the  theta lift of  automorphic forms between different groups, and connections with Langlands functoriality  should look instead to  \cite{Dprasad} and \cite{Gan}.  
Our aim is  very  specific: to prove that certain $q$-expansions define  modular forms, with the intention of applying  these results (but not in this paper)  to prove the modularity of  generating series of cycles on algebraic varieties; see \S \ref{ss:intro hermitian} below for a few more words about this last point.


\subsection{Theta functions on the upper half-plane}


Suppose $(V,Q)$ is a positive definite quadratic space over $\Q$, and  $L \subset V$ is a $\Z$-lattice on which $Q$ takes integer values.   
It is a classical theorem of Jacobi (for lattices of rank $1$) and Siegel (for higher rank),  found in most introductory courses on modular forms, that the theta function
\[
 \vartheta( z ,L ) = \sum_{ m \in \Z }   \# \{ v \in L : Q(v) = m \}    \cdot  e^{2\pi i m z} 
\]
 is a modular form of half-integral weight $2^{-1} \dim(V)$ in the variable $z\in \mathcal{H}$ of the   complex upper half-plane.
The level of this  modular form  is some congruence subgroup of $\SL_2(\Z)$ that  depends  on the data $(L,Q)$.

To generalize this classical result,  denote by 
\[
b(v_1,v_2)= Q(v_1+v_2)-Q(v_1) -Q(v_2)
\]
 the symmetric bilinear form on $V$ associated to the quadratic form, extend $b$ to an $\R$-bilinear form  on $V_\R=V\otimes_\Q\R$, and choose a basis of  $V_\R$ that is orthonormal with respect to $b$.  
Use this basis to regard functions on  $V_\R \iso \R^m$ as functions of    real variables $u_1,\ldots, u_m$, and  define the   Laplace differential operator 
\[
\Delta =    \frac{\partial^2}{ \partial u_1^2}  + \cdots +  \frac{\partial^2}{ \partial u_m^2} 
\]
on smooth functions on $V_\R$.  This operator does not depend on the choice of orthonormal basis used in its definition, and as usual a smooth function $P$ on $V_\R$ is  \emph{harmonic} if $\Delta P=0$.

Any  polynomial $P : V_\R \to \R$ is annihilated by   some power of the Laplace operator, allowing us to define a new polynomial
\[
P^\sharp = \exp\left( -\frac{\Delta}{8\pi} \right)  P  = 
\left(1 -  \frac{\Delta}{8\pi} + \frac{\Delta^2}{  2! ( 8\pi)^2  } - \frac{\Delta^3}{  3! ( 8\pi)^3 }  + \cdots   \right) P .
\]
The following result is implicit in work of Vign\'{e}ras \cite{Vigneras}, and explicit in work of Borcherds \cite[\S 4]{Borcherds}.  Both Vign\'{e}ras and Borcherds work in greater generality than we state here, proving similar modularity results for indefinite quadratic spaces.

\begin{bigtheorem}[Vign\'{e}ras, Borcherds] \label{thm:intro general theta}
Suppose $P : V_\R \to \R$ is any homogeneous polynomial of degree $N$.
The theta function
\[
 \vartheta_P( z ,L ) = y^{-N/2}
  \sum_{ m \in \Z }   \left(  \sum_{ \substack{   v \in L \\ Q(v) = m  }  }  P^\sharp\big(  \sqrt{y} \cdot  v   \big) \right)    \cdot  e^{2\pi i m z} 
\]
of the variable $z=x + iy \in \mathcal{H}$ transforms like a  modular form of weight $N+ 2^{-1} \dim(V)$, under the action of  some congruence subgroup of $\SL_2(\Z)$. 
The congruence subgroup depends on  $(L,Q)$, but not on $P$.
\end{bigtheorem}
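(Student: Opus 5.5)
The plan is to deduce Theorem \ref{thm:intro general theta} from Weil's adelic theta distribution, with all analysis over $\R$ done through the Lie algebra form of the Weil representation. Let $\omega$ be the Weil representation of $\Mp_2(\A)$ (or of $\SL_2(\A)$ when $\dim V$ is even) on $S(V_\A)$. For $\varphi=\varphi_\infty\otimes\varphi_f$, define $\Theta(g,\varphi)=\sum_{v\in V}(\omega(g)\varphi)(v)$. This function is left invariant under $\SL_2(\Q)$ (via its canonical splitting into the metaplectic group), which is Poisson summation together with the $\SL_2(\Q)$-invariance of the global Weil representation. Take $\varphi_f=\mathrm{char}(\widehat L)$, where $\widehat L=L\otimes\widehat\Z$. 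This function is fixed, up to a character, by an open compact $K_f\subset\SL_2(\widehat\Z)$ depending only on $(L,Q)$, for example $\Gamma_1(4N_L)$-type level, with $N_L$ the level of $L$. Strong approximation then yields a function on $\mathcal H$ that is automorphic for $\Gamma=\SL_2(\Q)\cap K_f$. This already shows that the congruence subgroup is independent of $P$.

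The archimedean input is the key step. Set $\varphi_\infty(v)=P^\sharp(v)e^{-\pi b(v,v)}$, using the normalization in which the Gaussian $\varphi_0=e^{-\pi b(v,v)}$ satisfies $\omega(k_\theta)\varphi_0=\chi_{m/2}(k_\theta)\varphi_0$ for $k_\theta\in\widetilde{SO(2)}$. I would check that $\omega(k_\theta)\varphi_\infty=e^{i(N+m/2)\theta}\varphi_\infty$, that is, that $\varphi_\infty$ has $K$-type $N+m/2$. I would do this at the Lie algebra level. The complexified $\mathfrak{sl}_2$ acts through the three operators $\pi b(v,v)$, $\Delta/(4\pi)$ and the Euler operator $\sum u_i\partial_i+m/2$. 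The compact generator $H'$ acts by the harmonic-oscillator operator $\tfrac12\bigl(-\Delta/(4\pi)+\pi b(v,v)\bigr)$ up to normalization.

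Conjugating by the Gaussian, $H'$ acting on $P^\sharp e^{-\pi b}$ becomes an operator on polynomials of the form $\mathcal E+m/2-\Delta/(4\pi)$ (suitably normalized), where $\mathcal E$ is the Euler operator. Since $\exp(-\Delta/8\pi)$ conjugates $\mathcal E-\Delta/(4\pi)$ into $\mathcal E$ (use $[\mathcal E,\Delta]=-2\Delta$), the eigenvalue on $P^\sharp e^{-\pi b}$ is $N+m/2$ because $P$ is homogeneous of degree $N$. This conjugation identity is the heart of the matter. I expect it to be the main obstacle only in bookkeeping the constants, namely the $8\pi$ and the signs. Integrating the $\mathfrak k$-action gives the $K$-type.

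The last step converts $\Theta$ into $\vartheta_P$. Write $g_z=n(x)m(\sqrt y)$. Then $\omega(g_z)\varphi_\infty(v)=y^{m/4}\varphi_\infty(\sqrt y v)e^{2\pi i Q(v)x}$. Because $\varphi_\infty$ has $K$-type $N+m/2$, the function $y^{-(N+m/2)/2}\cdots$ obtained from $\Theta(g_z k_\theta)$ descends to $\mathcal H$ and satisfies the weight $N+m/2$ transformation law. Unwinding $\varphi_\infty(\sqrt y v)e^{2\pi iQ(v)x}=P^\sharp(\sqrt y v)e^{2\pi i Q(v) z}$, using $b(v,v)=2Q(v)$, gives exactly $y^{-N/2}$ times the stated series. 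I would still need to confirm the power of $y$ against the definition of $\vartheta_P$.
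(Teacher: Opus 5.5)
Your proposal is correct and is essentially the paper's own argument specialized to $d=1$. The paper obtains Theorem \ref{thm:intro general theta} as the case $d=1$ of Theorem \ref{thm:main orthogonal}: it computes the $K$-type of $P^\sharp\varphi^\circ_\R$ at the Lie algebra level using exactly your conjugation identity (Lemma \ref{lem:diffeq conversion}, where $E_{11}$ is the Euler operator and $F_{11}=\Delta$), then de-adelizes along $g_z=n(x)m(\sqrt{y})$, and the power of $y$ does come out to $y^{-N/2}$ as you anticipated (strong approximation is not needed, only left invariance under rational points and right invariance under an open compact, as in Proposition \ref{prop:half-integral de-adelization}).
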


The theta function in Theorem \ref{thm:intro general theta} is typically non-holomorphic, which is why we use the clunky phrase ``transforms like a modular form'' rather than simply calling it a modular form.
However,  in the special case where   the polynomial $P$ is  homogeneous \emph{and  harmonic}, so that $P^\sharp = P$,   Theorem \ref{thm:intro general theta} simplifies to the well-known  modularity of the holomorphic  theta function
\[
 \vartheta_P( z ,L ) = \sum_{ m \in \Z }   \left(  \sum_{ \substack{   v \in L \\ Q(v) = m  }  }  P(v)  \right)   \cdot  e^{2\pi i m z} .
\]
See Miyake's book \cite[Theorem 4.9.3]{Miyake}, for example.
Further specializing to $P=1$ recovers the classical result of Jacobi and Siegel  on the modularity of $\vartheta(z,L)$.

We will prove Theorem \ref{thm:intro general theta} by proving a more general modularity theorem for theta functions on the Siegel half-space, explained in the next subsection.


\subsection{Theta functions on the Siegel half-space}


Keep the positive definite quadratic space $(V,Q)$ and the lattice $L \subset V$ as above, and now also fix an integer $d \ge 1$.  Denote by $\mathcal{H}_d$ the Siegel half-space of symmetric $d \times d$ matrices $\mathtt{z} = \mathtt{x} + i \mathtt{y} \in \Sym_d(\C)$ whose imaginary part $\mathtt{y}\in \Sym_d(\R)$ is positive definite.

Siegel  proved that the function 
\[
 \vartheta( \mathtt{z}  ,L ) = \sum_{ T  \in \Sym_d(\Q)  }   \# \{ v \in L^d : Q(v) = T \}    \cdot  e^{2\pi i  \mathrm{Tr}( T  \mathtt{z} ) } 
\]
of the variable $\mathtt{z} \in \mathcal{H}_d$ is a Siegel modular form of weight $2^{-1} \dim(V)$, for some congruence subgroup of the Siegel modular group $\Sp_{2d}(\Z)$.  Here for any $d$-tuple $v= (v_1,\ldots, v_d)\in L^d$ of lattice vectors, we denote by
\[
Q(v) = \left( \frac{ b(v_j,v_k)  }{2}  \right)_{ 1 \le j,k \le d } \in \Sym_d(\Q)
\]
one-half of  the symmetric matrix of inner products of the components of $v$.

To generalize Theorem \ref{thm:intro general theta} to Siegel modular forms,  for each $1 \le j \le d$ denote by $\Delta_j$ the Laplace operator on the $j^\mathrm{th}$ copy of $V_\R$ in $V_\R^d$.  The differential operator 
$
\Delta = \Delta_1 + \cdots + \Delta_d
$
 is then just the Laplace operator on the space of smooth functions on $V_\R^d$, regarded as a quadratic space by taking the orthogonal direct sum of $d$ copies of $V_\R$.  
As in the previous subsection, for any polynomial function $P : V_\R^d \to \R$ we may define a new polynomial
\[
P^\sharp = \exp\left( -\frac{\Delta}{8\pi} \right)  P .
\]

The following generalization of Theorem \ref{thm:intro general theta}, which we will prove in  \S \ref{ss:orthogonal poly weights}, is due to  Roehrig \cite{Roehrig}.
The proof we give, based on calculations with the infinitesimal form of the Weil representation, is different from Roehrig's, although in some sense the main ingredients are the same.
Roehrig, like Vign\'{e}ras and Borcherds,  proves a more general theorem that includes theta functions for  indefinite quadratic spaces.
We restrict to the simpler positive definite case in keeping with the expository nature of the article.

\begin{bigtheorem}[Roehrig]\label{thm:intro main orthogonal}
Fix  an integer $N \in \Z$, and  suppose $P: V_\R^d \to \R$ is a polynomial function  satisfying the homogeneity condition 
\[
P( v g ) = \det(g)^N P(v) 
\]
 for  all $v\in V_\R^d$ and $g \in \GL_d(\R)$.    
 The  function 
\[
 \vartheta_P (\mathtt{z} ,  L ) = 
\det(\mathtt{y})^{-N/2} 
\sum_{ T \in \Sym_d( \Q)}  \left(   \sum_{ \substack{ v \in L^d \\ Q(v) = T } }   P^\sharp(v \alpha )    \right)   \, e^{2 \pi i \mathrm{Tr}( T \mathtt{z} ) } 
\]
of the variable $\mathtt{z} =\mathtt{x} + i \mathtt{y}\in  \mathcal{H}_d$ transforms like a Siegel modular form of weight $N+2^{-1} \dim(V)$, under the action of some congruence subgroup of $\Sp_{2d}(\Z)$.
Here  $\alpha \in \GL_d(\R)$  is any matrix of positive determinant such that  $\mathtt{y}=\alpha \cdot {}^t \alpha$. 
\end{bigtheorem}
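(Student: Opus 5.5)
The plan is to realize $\vartheta_P$ as a theta distribution evaluated on a $K$-finite vector of the Weil representation, and to get the weight from the $K$-type of that vector. Let $W = V\otimes \Q^{2d}$ be the symplectic space, and consider the Weil representation $\omega$ of $\Mp_{2d}(\A)$ on $S(V_\A^d)$ (Schrödinger model). By Weil's theorem, the theta distribution $\Theta(\varphi) = \sum_{v\in V^d}\varphi(v)$ is invariant under the rational points $\Mp_{2d}(\Q)$, where $\Sp_{2d}(\Q)$ is split into the metaplectic group. Hence $g\mapsto \Theta(\omega(g)\varphi)$ is left $\Sp_{2d}(\Q)$-invariant.

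Take $\varphi = \varphi_f\otimes\varphi_\infty$, with $\varphi_f$ the characteristic function of $\widehat L^d$; it is fixed by some open compact $K_f\subset\Sp_{2d}(\A_f)$, up to a character. For the archimedean part set $\varphi_\infty(v) = P^\sharp(v)\,e^{-2\pi \mathrm{Tr}\,Q(v)}$. As usual, pass from adelic to classical by writing $g_\mathtt{z} = n(\mathtt{x})m(\alpha)$. The explicit formulas $\omega(n(\mathtt{x}))\varphi(v)=e^{2\pi i\mathrm{Tr}(Q(v)\mathtt{x})}\varphi(v)$ and $\omega(m(\alpha))\varphi(v)=\det(\alpha)^{\dim V/2}\varphi(v\alpha)$ then give
\[
\Theta(\omega(g_\mathtt{z})\varphi)=\det(\mathtt{y})^{\dim V/4}\sum_{v\in L^d} P^\sharp(v\alpha)\,e^{2\pi i \mathrm{Tr}(Q(v)\mathtt{z})}.
\]
So $\vartheta_P(\mathtt{z},L)$ is $\det(\mathtt{y})^{-(N+\dim V/2)/2}\Theta(\omega(g_\mathtt{z})\varphi)$. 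The standard dictionary then converts left invariance under $\Sp_{2d}(\Q)$, right $K_f$-invariance, and the statement that $\varphi_\infty$ transforms under $\widetilde{U(d)}\subset \Mp_{2d}(\R)$ by the character $\det^{N+\dim V/2}$ into the weight-$(N+\dim V/2)$ transformation law under the congruence subgroup $\Sp_{2d}(\Q)\cap K_f$. The choice of $\alpha$ is irrelevant: replacing $\alpha$ by $\alpha k$ with $k\in SO(d)$ does not change $P^\sharp(v\alpha)$, because $\Delta$ commutes with the right $O(d)$ action and $P$ is $\det^N$-equivariant.

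The heart of the proof is the archimedean claim that $\varphi_\infty$ is an eigenvector of $\widetilde{U(d)}$ with character $\det^{N+m/2}$, where $m=\dim V$. I will check this at the level of the Lie algebra, since $\varphi_\infty$ is $K$-finite and $\widetilde{U(d)}$ is connected. Write $\mathfrak{k}\otimes\C = \mathfrak{gl}_d(\C)$. In the Schrödinger model, the infinitesimal action of $\mathfrak{k}_\C$ is given by operators of the form
\[
E_{jk} = \sum_i u_{ij}\partial_{u_{ik}} + (\text{terms in } \partial_{u_{ij}}\partial_{u_{ik}} \text{ and } u_{ij}u_{ik}) + \tfrac{m}{2}\delta_{jk},
\]
where $u_{ij}$ are coordinates on $V_\R^d$. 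These are conjugated by the Gaussian. Concretely, one can compute that acting on $f(v)e^{-2\pi\mathrm{Tr}Q(v)}$, the element $Z\in\mathfrak{k}_\C$ acts on $f$ by $D_Z - \frac{1}{4\pi}\Delta_Z$. Here $D_Z$ is the first-order Euler-type operator $\frac{m}{2}\mathrm{tr}(Z)+\sum (vZ)\cdot\partial$, and $\Delta_Z=\sum_{j,k} Z_{jk}\sum_i\partial_{u_{ij}}\partial_{u_{ik}}$ is a "matrix Laplacian". I need to pin down the exact normalization, since the constant $8\pi$ in $P^\sharp$ is what must come out.

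The key identity is that conjugating by $\exp(-\Delta/8\pi)$ removes the second-order part. Since $[\Delta, D_Z]$ equals a multiple of $\Delta_Z$, one gets
\[
\exp(\Delta/8\pi)\,(D_Z - \tfrac{1}{4\pi}\Delta_Z)\,\exp(-\Delta/8\pi) = D_Z
\]
after checking the constants. Hence $Z$ acts on $\varphi_\infty$ through $D_Z P$. The homogeneity hypothesis $P(vg)=\det(g)^NP(v)$ differentiates to $D_Z P = (N+\frac m2)\mathrm{tr}(Z)P$, which is exactly the character $\det^{N+m/2}$.

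This is where I expect the main obstacle: the precise normalization of the Lie algebra action of $\mathfrak{k}$, the commutator $[\Delta,D_Z]$, and the fact that only the symmetric part of $Z$ contributes to $\Delta_Z$. All of this has to be arranged so that the exponent is exactly $-\Delta/8\pi$ (rather than $-\Delta/4\pi$, say).

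A secondary issue is the metaplectic bookkeeping when $m$ is odd: the half-integral weight must be interpreted via the metaplectic cocycle, and the congruence subgroup is chosen inside the kernel of the splitting discrepancy. I will handle this by quoting Weil's splitting over $\Sp_{2d}(\Q)$ and shrinking $K_f$.
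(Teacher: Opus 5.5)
Your proposal is correct and is essentially the paper's proof. Both arguments de-adelize the adelic theta function via $g_{\mathtt z}=n(\mathtt x)m(\alpha)$, reduce to showing that $P^\sharp\varphi^\circ_\R$ has $K$-type $\chi^{N+\dim(V)/2}$, and verify this infinitesimally in the Schr\"odinger model. Your conjugation identity is exactly the paper's $\exp(-\Delta/8\pi)\circ E_{jk}=(E_{jk}-\frac{1}{4\pi}F_{jk})\circ\exp(-\Delta/8\pi)$, which follows from $\Delta E_{jk}=E_{jk}\Delta+2F_{jk}$, combined with homogeneity in the form $E_{jk}P=N\delta_{jk}P$.

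The normalization you flagged comes out as you guessed. The paper computes $\omega^{\mathrm{inf}}(i_V(Z_{kj}))(f\varphi^\circ_\R)=2i\bigl(E_{jk}f-\frac{1}{4\pi}F_{jk}f+\frac{\dim(V)}{2}\delta_{jk}f\bigr)\varphi^\circ_\R$ with $\chi^{\mathrm{inf}}(Z_{kj})=2i\delta_{jk}$, so $-\Delta/8\pi$ is the right exponent.
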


In Theorem \ref{thm:intro main orthogonal}, the notations $v g$ and $v\alpha$ mean that we are viewing $v \in V_\R^d$ as a row vector with components in $V_\R$, and multiplying it on the right by a $d\times d$ matrix  in the usual way.  
 The matrix $\alpha$ in the theorem is not uniquely determined, but the value $P^\sharp(v\alpha)$ does not depend on the choice.

Just as with  Theorem \ref{thm:intro general theta}, the theta function of Theorem \ref{thm:intro main orthogonal} is typically non-holomorphic.  However, if   $\Delta P =0$ then the theta series of the theorem simplifies to  the holomorphic Siegel modular form
\[
 \vartheta_P (\mathtt{z} , L ) = 
\sum_{ T \in \Sym_d( \Q)}  \left(   \sum_{ \substack{ v \in L^d \\ Q(v) = T } }   P(v )    \right)   \, e^{2 \pi i \mathrm{Tr}( T \mathtt{z} ) } .
\]
The modularity of this theta series can be deduced from the more general result \cite[Chapter II, Theorem 7.4]{Mumford-1}, which Mumford attributes to   Freitag  and Oda, independently.
Taking  $P=1$ recovers Siegel's theta function $\vartheta(\mathtt{z},L)$ defined above.


\subsection{Theta functions on the Hermitian half-space}
\label{ss:intro hermitian}


We also prove a variant of  Theorem \ref{thm:intro main orthogonal}  in which the quadratic lattice $L$ is replaced by a Hermitian lattice, and the resulting theta series is a (typically  non-holomorphic) modular form on the Hermitian half-space.  
This is the main result of \S \ref{ss:unitary poly weights}, and  we refer the reader to Theorem \ref{thm:main unitary} for the precise statement.

There is not much  literature on Hermitian theta functions weighted by polynomials. 
As far as we know, such objects appear only in  work of Shimura, in the appendices to 
\cite{ShimuraEisenstein} and \cite{Shimura2000}.
But as  we explain in Remark \ref{rem:shimura polynomials},  Shimura works with a different class of polynomials than ours.  Outside of theta functions weighted by a \emph{constant} polynomial, there is no overlap 
 between our  Hermitian theta functions and Shimura's.

The desire to have the Hermitian analogue of  Theorem \ref{thm:intro main orthogonal} appear in the literature is among the author's  motivations for writing this expository paper.  
This theorem is used by Greer and Tayou \cite{GT}  to construct non-holomorphic Hermitian modular forms whose coefficients are classes in the cohomology of  certain CM abelian varieties
(more precisely,  tensor products of  CM elliptic curves with positive definite Hermitian lattices).
This construction can be upgraded to construct Hermitian modular forms whose coefficients lie in the Chow groups of those same CM abelian varieties.

The CM abelian varieties in question appear as boundary divisors of toroidal compactifications of Shimura varieties for unitary groups of signature $(n,1)$, and  these modular generating series of cycles on the boundary divisors are intimately related to  the  modular generating series of cycles on the interior studied  by Kudla-Millson \cite{KM1,KM2,KM3,KM4}  and Raum \cite{Raum}, and are  also closely related to work of Funke-Millson \cite{FunkeMillson,FunkeMillson2}.

The forthcoming paper  \cite{GHT}  will  extend Kudla-Millson theory to these toroidal compactifications of unitary Shimura varieties,  and  the Hermitian analogue of Theorem \ref{thm:intro main orthogonal} plays an important role in this.

\subsection{Acknowledgments}


In the summer of 2026, the author gave a series of three lectures at the research school  \emph{Classical and p-adic aspects of the Kudla program} at the CIRM in Luminy.
The first lecture was on Kudla-Millson theory \cite{KM1,KM2,KM3,KM4}.
The second was on the arithmetic Siegel-Weil formulas of Kudla-Rapoport \cite{KR14} and Li-Zhang \cite{LiZhang}.
 The third was on  the higher derivative arithmetic Siegel-Weil formula in the function field setting proved by  Feng-Yun-Zhang \cite{FYZ1},  and an extension of it proved in the author's joint work with Feng and Mkrtchyan \cite{FHM}.

Most of the content of the first two lectures can already be found in the  survey articles \cite{Kudla04} and \cite{Li}, and much of  the content of the third is included in the survey articles \cite{FengHarris} and \cite{Yun}.
Rather than  trying to improve on the  excellent existing  expositions, the author has chosen a different route.
Because Weil's representation-theoretic interpretation of the classical theory of theta functions is an essential prerequisite for making sense of any of the works cited above, and because the discussion of it in the lecture series was much too brief, the author is taking this  opportunity to discuss it in greater detail.

The author sincerely thanks Fabrizio Andreatta, Giada Grossi, Adrian Iovita, Marc-Hubert Nicole, and Joaquin Rodrigues Jacinto for  organizing a stimulating and enjoyable research school, and the CIRM for hosting it.


\section{The Weil representation}
\label{s:weil representation}


In this section we define the metaplectic cover of a symplectic group over a local field, and its Weil representation.   Then we construct the adelic metaplectic cover of a symplectic group over a global field, and define the Weil representation of this adelic group.

Essentially everything  here can be found in Weil \cite{Weil1}, but Weil's proofs often proceed through indirect and inexplicit routes, which don't lend themselves well to our ends. 
In the local setting, much of what Weil did was made more explicit   by  Rao \cite{Rao}.
Detailed expositions of the archimedean local theory can be found in \cite{Igusa} and \cite{Folland}.  
The nonarchimedean local theory is explained at length in  \cite{ThetaBook}, whose terminology and conventions we follow.   
There are aspects of the  adelic theory that  are more subtle  than one might naively expect, but details can be found in the unpublished thesis of Sweet \cite{Sweet}.


\subsection{The Stone--von-Neumann theorem}


Let $F$ be a local field of characteristic $\mathrm{char}(F) \neq 2$.  Thus $F$ is  one of $\R$, $\C$, a finite extension of $\Q_p$, or a finite extension of the field $\F_p((x))$ of Laurent series with coefficients in  the field with $p>2$ elements.  Fix a nontrivial additive character
\[
\psi : F \to \C^1 ,
\]
where $\C^1 = \{ z \in \C : | z| =1 \}$ is the circle group.

Let $W$ be a  finite dimensional vector space over $F$, equipped with a nondegenerate symplectic  form 
\[
\langle - , - \rangle  : W \times W \to F,
\]
and denote by $\Sp(W)$ the topological group (as opposed to the  reductive algebraic group) of   symplectic automorphisms of $W$.

\begin{definition}
The  \emph{Heisenberg group} is the topological group whose underlying topological space is 
\[
H(W) = W \times F , 
\]
endowed with the group law
\[
( w_1 , t_1 ) \cdot ( w_2 , t_2 ) = \left( w_1 + w_2 , t_1 + t_2 + \frac{\langle w_1 , w_2 \rangle}{2}  \right).
\]
Note that the center of the Heisenberg group is $F\iso \{ 0 \} \times F \subset H(W)$. 
\end{definition}

A short proof of the following theorem can be found in  \cite{Aprasad}.
Proofs in the archimedean and nonarchimedean cases can also be found in  \cite{Folland} and \cite{ThetaBook},  respectively.

\begin{theorem}[Stone--von-Neumann]\label{thm:SvN}
Up to isomorphism, there is a unique  continuous   irreducible  unitary Hilbert space representation $(\mathcal{S}_\psi , \rho_\psi)$ of $H(W)$ with central character $\psi$.
\end{theorem}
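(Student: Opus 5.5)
We prove existence by writing down an explicit Schrödinger model, and uniqueness by showing that every such representation is generated by a vector fixed by a small compact open subgroup (nonarchimedean case) or by a Gaussian-type vector (archimedean case). The argument runs the same way in both cases as far as possible.

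\emph{Existence.} Choose a complete polarization $W = X \oplus Y$ into transverse Lagrangian subspaces, and let $\mathcal{S}_\psi = L^2(Y)$ with respect to a Haar measure. The formulas
\[
\rho_\psi(x,0)\phi(y) = \psi(\langle y,x\rangle)\phi(y), \qquad \rho_\psi(y',0)\phi(y)=\phi(y+y'), \qquad \rho_\psi(0,t)\phi = \psi(t)\phi,
\]
extend to a unitary action of $H(W)$, once the $(x+y',t)$-action is twisted by $\psi(\langle y',x\rangle/2)$ to match the cocycle $\langle w_1,w_2\rangle/2$. This is a direct check that the operators satisfy the Heisenberg commutation relations. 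Strong continuity follows from continuity of translation and multiplication in $L^2$.

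For irreducibility, take a closed invariant subspace $M$. It is stable under multiplication by all characters $y\mapsto \psi(\langle y,x\rangle)$. Because $\psi$ is nontrivial, these characters exhaust $\widehat{Y}$, so $M$ is stable under multiplication by $L^\infty(Y)$, being the weak closure of the span of characters. Hence $M = L^2(E)$ for some measurable $E\subset Y$. Translation invariance then forces $E$ to be null or conull, so $M=0$ or $M=\mathcal{S}_\psi$.

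\emph{Uniqueness.} Let $(\rho,\mathcal{H})$ be any irreducible unitary representation with central character $\psi$. Restricting to the abelian groups $X\times F$ and $Y\times F$ gives a pair of unitary representations of $X$ and $Y\iso\widehat X$ satisfying the Weyl commutation relations. The cleanest route is to integrate against $\mathcal{S}(W)$, with a smoothing procedure in the archimedean case. For $f\in\mathcal{S}(W)$, set
\[
\rho(f)=\int_W f(w)\rho(w,0)\,dw .
\]
The composition $\rho(f_1)\rho(f_2)$ is a twisted convolution, and $\rho(f)^*=\rho(f^*)$. Now choose $f_0$ in the Schrödinger model corresponding to the rank-one projection onto a fixed unit vector $\phi_0$. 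In the nonarchimedean case take $\phi_0$ to be the characteristic function of a self-dual lattice in $Y$. In the archimedean case take $\phi_0$ to be the Gaussian. One then verifies the twisted-convolution identity $f_0 * f_0^* = f_0$ together with
\[
f_0 * g * f_0 = c(g) f_0
\]
for all $g$. This identity is the step I expect to be the main computational obstacle: it amounts to an explicit Fourier computation with $\psi$ and the self-dual measure.

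Given these identities, $P=\rho(f_0)$ is an orthogonal projection with $P\rho(g)P = c(g)P$. This forces $P$ to have rank at most one. $P$ is nonzero, since otherwise $\rho(f)=0$ for all $f$ in the two-sided ideal generated by $f_0$, which is dense, contradicting nondegeneracy of the integrated representation. By irreducibility, $\mathcal{H}$ is the closed span of $\rho(h)v_0$ for $h\in H(W)$, where $v_0$ spans the image of $P$. The same holds in $L^2(Y)$ with $\phi_0$. The matrix coefficients $\langle \rho(h)v_0,v_0\rangle$ and $\langle\rho_\psi(h)\phi_0,\phi_0\rangle$ agree, since both equal the scalar determined by the identity $P\rho(h)P=c\cdot P$, computed purely inside the twisted convolution algebra. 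Therefore $\rho(h)v_0\mapsto\rho_\psi(h)\phi_0$ extends to a unitary intertwiner, which proves uniqueness. Alternatively, one could cite Mackey's imprimitivity theorem applied to the normal abelian subgroup $X\times F$.
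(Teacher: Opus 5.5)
The paper gives no proof of this theorem. It refers to Prasad's short proof \cite{Aprasad}, and to \cite{Folland} and \cite{ThetaBook} for the archimedean and nonarchimedean cases.

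Your argument is von Neumann's classical projection argument, essentially the one in \cite{Folland} when $F=\R$, run in parallel over nonarchimedean $F$ with the Gaussian replaced by a normalized lattice indicator. It is correct in outline. Prasad's proof instead treats Heisenberg groups of arbitrary locally compact abelian groups uniformly, using only basic Fourier analysis. Your route needs a distinguished vector $\phi_0$ whose matrix coefficient $c(w)=\langle \rho_\psi(w,0)\phi_0,\phi_0\rangle$ is explicit, and that is what forces the archimedean/nonarchimedean split. In exchange it is very concrete and proves more. Applied to an orthonormal basis of $\mathrm{im}(P)$, the same argument shows that \emph{every} unitary representation with central character $\psi$ is a Hilbert direct sum of $\mathrm{rank}(P)$ copies of the Schr\"odinger model. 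It also fits the rest of the paper: your Gaussian is the vacuum vector of Definition~\ref{def:vacuum}, and your lattice indicator is the $\varphi_v^\circ$ of \S\ref{ss:adelization}.

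A few points need tightening, none serious.
\begin{itemize}
\item Since $Y$ is Lagrangian, ``self-dual lattice in $Y$'' should read: a lattice $L\subset Y$ together with its dual $L^\perp\subset X$ under $\psi(\langle\cdot,\cdot\rangle)$. Then $f_0$ is supported on the self-dual lattice $L^\perp\oplus L\subset W$.
\item The step you flag as the main obstacle needs no Fourier computation. Take $f_0=\overline{c}$, using the self-dual Haar measure on $W$; for the Gaussian, $c(w)=e^{-\pi|w|^2/2}$ in standard coordinates. Then $\rho_\psi(f_0)=P_0$ by Schur orthogonality. The identities $f_0\natural f_0^*=f_0$ and $f_0\natural\delta_h\natural f_0=c(h)f_0$ follow because both sides have the same image under $\rho_\psi$, and $f\mapsto\rho_\psi(f)$ is injective on $L^1(W)$. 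Indeed, its integral kernel is the partial Fourier transform of $f$ along $X$ after an invertible linear change of variables, which uses $\mathrm{char}(F)\neq 2$.
\item You need the second identity for point masses $\delta_h$, i.e.\ for twisted translates of $f_0$, and not only for Schwartz $g$, since your intertwiner is built from group elements.
\item The density claim behind $P\neq 0$ deserves its one-line proof. We have $\rho(z,0)\rho(f)\rho(z,0)^{-1}=\rho\big(\psi(\langle z,\cdot\rangle)f\big)$, so $P=0$ forces $\rho(gf_0)=0$ for all $g$ in the Fourier algebra of $W$. In the archimedean case $f_0$ is nowhere zero. In the nonarchimedean case it is nowhere zero on the compact open set $L^\perp\oplus L$, and twisted translates of that set cover $W$. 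Either way this gives $\rho(\mathcal{S}(W))=0$, contradicting nondegeneracy.
\item ``Rank at most one'' uses irreducibility, and you do not need it to build the intertwiner.
\end{itemize}
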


The Hilbert space representation $(\mathcal{S}_\psi , \rho_\psi)$  from  Theorem \ref{thm:SvN} is the \emph{Heisenberg representation} of the Heisenberg group.
Any $g\in \Sp(W)$  determines an automorphism 
$
 (w,t) \mapsto  ( gw,t)
$
of  the Heisenberg group,  fixing the center pointwise.  It follows that 
\[
\rho_\psi^g( w,t) \define  \rho_\psi ( gw,t) 
\]
defines a new representation of $H(W)$ on the same space $\mathcal{S}_\psi$, and this new representation is  again irreducible and unitary with central character $\psi$.  
By Theorem \ref{thm:SvN},  there is a unitary automorphism $M_\psi(g) : \mathcal{S}_\psi \to \mathcal{S}_\psi$ intertwining $\rho_\psi$ with $\rho_\psi^g$.    In other words, making the diagram 
\[
\begin{tikzcd}
{  \mathcal{S}_\psi    }    \ar[rr,   "{\rho_\psi(w,t)}"  ]  \ar[d , "  M_\psi(g) "  ']   &  & {  \mathcal{S}_\psi    }    \ar[d , "  M_\psi(g) " ]     \\
{  \mathcal{S}_\psi    }    \ar[rr, "{\rho^g_\psi(w,t)} " ' ]  &    & {  \mathcal{S}_\psi    }      
\end{tikzcd}
\]
commute for all $(w,t) \in H(W)$.  The unitary operator  $M_\psi(g)$ is only well-defined up to scaling by $\C^1$, and the equality $M_\psi(g_1 g_2)= M_\psi(g_1) \circ M_\psi(g_2)$  holds up to the same ambiguity.

\begin{definition}\label{def:projective weil}
The \emph{projective Weil representation} is the homomorphism 
\[
M_\psi : \Sp(W) \to \Aut ( \mathcal{S}_\psi) / \C^1 .
\]
Here $\Aut ( \mathcal{S}_\psi)$ is the group of Hilbert space (i.e.~unitary) automorphisms.
\end{definition}

\begin{definition}\label{def:abstract metaplectic} 
The \emph{metaplectic group}  is the subgroup 
\[
 \Mp(W)_\psi  \subset  \Sp(W) \times \Aut ( \mathcal{S}_\psi) 
\]
of pairs $(g,r)$ for which  $r = M_\psi(g)$ in the quotient  $\Aut ( \mathcal{S}_\psi) / \C^1$.
Projection to the first factor defines a short exact sequence
\[
1 \to \C^1 \to \Mp(W)_\psi \to \Sp(W ) \to 1 ,
\]
and projection to the second factor defines the \emph{Weil representation}
\[
\omega_\psi : \Mp(W)_\psi  \to \Aut ( \mathcal{S}_\psi) .
\]
\end{definition}


\subsection{The Schr\"{o}dinger model}
\label{ss:schrodinger model}


Keep the local field $F$, the additive character $\psi : F \to \C^1$, and the  symplectic space $W$ as in the previous subsection.   We now fix a  polarization  
\[
W = X \oplus Y  ,
\]
meaning a decomposition as a direct sum of two  totally isotropic subspaces.

Theorem \ref{thm:SvN} provides us with a distinguished isomorphism class of representations  of the Heisenberg group $H(W)$, but does not provide any concrete Hilbert space $\mathcal{S}_\psi$ on which $H(W)$ acts.
However, once we fix a model for the Heisenberg representation,    Definitions \ref{def:projective weil}  and \ref{def:abstract metaplectic}  provide  us with a  projective representation of $\Sp(W)$ and an honest representation of  $\Mp(W)_\psi$ on  the \emph{same} Hilbert space.
Let us therefore  construct a model of the Heisenberg representation.

Define a representation $\rho_\psi$ of $H(W)$ on the space of $\C$-valued  Schwartz functions $S(Y)$ by 
\begin{equation}\label{schrodinger heisenberg}
 \big(  \rho_\psi( w , t ) \varphi   \big)   (y) = \psi \left( \langle y,x_w \rangle + \frac{ \langle y_w,x_w\rangle }{2} + t \right) \cdot \varphi(y_w+y) 
\end{equation}
for all $w = x_w + y_w \in X \oplus Y =W$,  $t\in F$, and $y\in Y$.

For a proof of the following result see \cite[Theorem 1.13]{ThetaBook}, for example.

\begin{proposition}\label{prop:basic heisenberg}
The representation \eqref{schrodinger heisenberg}  of $H(W)$  is unitary with respect to the $L^2$ norm on $S(Y)$, and its extension to a unitary representation of $H(W)$ on  $L^2(Y)$ is irreducible with central character $\psi$.  
In particular, by the Stone--von-Neumann theorem, the representation $(L^2(Y) , \rho_\psi)$  is a model of the  Heisenberg representation.
\end{proposition}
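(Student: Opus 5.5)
The plan is to verify three things in turn: that the formula defines a group action by unitary operators, that the center acts by $\psi$, and that the resulting representation on $L^2(Y)$ is irreducible.

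\textbf{Homomorphism and central character.} First I will check directly from \eqref{schrodinger heisenberg} that $\rho_\psi$ is a homomorphism. Write $w_i = x_i + y_i$. Applying $\rho_\psi(w_2,t_2)$ and then $\rho_\psi(w_1,t_1)$ to $\varphi$, and evaluating at $y$, gives
\[
\psi\Big(\langle y,x_1\rangle + \tfrac{\langle y_1,x_1\rangle}{2} + t_1 + \langle y_1+y,x_2\rangle + \tfrac{\langle y_2,x_2\rangle}{2}+t_2\Big)\,\varphi(y+y_1+y_2).
\]
I then compare this with $\rho_\psi$ of the product $\big(w_1+w_2,\; t_1+t_2+\tfrac12\langle w_1,w_2\rangle\big)$. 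Expanding $\langle w_1,w_2\rangle = \langle x_1,y_2\rangle + \langle y_1,x_2\rangle$ (the terms $\langle x_1,x_2\rangle$ and $\langle y_1,y_2\rangle$ vanish by isotropy) and using antisymmetry of the form, the two phases agree. The centre $\{0\}\times F$ visibly acts by the scalar $\psi(t)$, so the central character is $\psi$.

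\textbf{Unitarity.} Each operator is a translation in $Y$ followed by multiplication by a unimodular function. Translation-invariance of Haar measure then shows that $L^2$ norms are preserved. Since each operator is also invertible, with inverse $\rho_\psi$ of the group inverse, it is unitary on $S(Y)$ and extends to $L^2(Y)$. Strong continuity comes from the continuity of translation and of modulation on $L^2$, tested on the dense subspace $S(Y)$.

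\textbf{Irreducibility.} This is the main step. Let $T$ be a bounded operator on $L^2(Y)$ commuting with every $\rho_\psi(w,t)$; I will show $T$ is a scalar, which gives irreducibility by Schur's lemma. Elements of $X$ act by multiplication by the characters $y\mapsto \psi(\langle y,x\rangle)$. Because $\langle\,,\rangle$ is nondegenerate and $\psi$ is nontrivial, the map $x\mapsto \psi(\langle -,x\rangle)$ identifies $X$ with the Pontryagin dual of $Y$. So $T$ commutes with multiplication by every character of $Y$.

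By Fourier transform and density, $T$ then commutes with multiplication by every function in $\widehat{L^1}$ of the dual, and hence by every bounded continuous function (or simply every $L^\infty$ function, via the weak-* closure of trigonometric polynomials). Thus $T$ lies in the commutant of the maximal abelian algebra $L^\infty(Y)$, and so $T$ is multiplication by some $f\in L^\infty(Y)$. Commuting with the translations by $y_w\in Y$ forces $f$ to be translation-invariant almost everywhere, hence almost everywhere constant.

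The obstacle I expect is this last functional-analytic step: proving that $L^\infty(Y)$ is its own commutant, and that commuting with characters implies commuting with all of $L^\infty$. I would cite a standard result for the first point, and handle the second by approximating indicator functions with trigonometric polynomials in the strong operator topology. The identification of $X$ with $\widehat{Y}$ is where the hypothesis that $\psi$ is nontrivial enters. With irreducibility, unitarity and the central character established, the Stone--von-Neumann theorem (Theorem~\ref{thm:SvN}) gives the final claim.
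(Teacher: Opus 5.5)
Your proposal is correct. The paper gives no argument of its own for this proposition. It simply points to \cite[Theorem 1.13]{ThetaBook}, a source that (as Remark~\ref{rem:ThetaBook caveat} acknowledges) works over a nonarchimedean local field, even though the proposition is stated for every local field $F$.

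Your route therefore buys something concrete: a self-contained proof that is uniform in $F$. It uses only three inputs: Haar measure on $Y$, the identification of $X$ with $\widehat{Y}$ via $x\mapsto\psi(\langle -,x\rangle)$, and the fact that $L^\infty(Y)$ is maximal abelian in the bounded operators on $L^2(Y)$. It also proves irreducibility and strong continuity directly for the Hilbert space representation, which is exactly the input Theorem~\ref{thm:SvN} needs.

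The step you flagged as the obstacle is handled cleanly by your weak-$*$ remark. The map $f\mapsto TM_f-M_fT$ is continuous from the weak-$*$ topology on $L^\infty(Y)=L^1(Y)^*$ to the weak operator topology. The span of the characters $\psi(\langle -,x\rangle)$ is weak-$*$ dense, since an $L^1$ function with vanishing Fourier transform is zero. Maximality of $L^\infty(Y)$ needs only $\sigma$-finiteness of Haar measure.

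Two small points should be made explicit. First, each $\rho_\psi(w,t)$ preserves $S(Y)$: the modulating character is smooth with bounded derivatives for archimedean $F$, and locally constant otherwise. Second, in the final step the null set where $f(y+y_0)\neq f(y)$ depends on $y_0$. To conclude that $f$ is a.e.\ constant, apply Fubini to $\{(y,y_0): f(y+y_0)\neq f(y)\}\subset Y\times Y$, or convolve $f$ with an approximate identity.
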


\begin{definition}
The representation  of $H(W)$ on $L^2(Y)$ from Proposition \ref{prop:basic heisenberg}  is the \emph{Schr\"odinger model} of the Heisenberg representation.  By the discussion surrounding Definition \ref{def:projective weil}, it determines  homomorphisms
\begin{equation}\label{M_XY}
M_\psi : \Sp(W) \to \Aut( L^2(Y) ) / \C^1 
\end{equation}
and 
\begin{equation*} 
\omega_\psi : \Mp(W)_\psi \to \Aut( L^2(Y) ) 
\end{equation*}
called the \emph{Schr\"odinger models} of the projective Weil representation and the (honest) Weil representation.
\end{definition}

Extending results of  Weil,  Rao  gave  an explicit integral formula for the homomorphism  \eqref{M_XY}.
Each $ g \in  \mathrm{Sp}(W)$ determines a $\C$-linear operator  
\begin{equation}\label{r_XY}
r_\psi(g)  :  S(Y) \to  S(Y)
\end{equation}
in the following way:  The inverse  $g^{-1}$ is encoded by four linear maps
\begin{align*}
a    : X \to X,   & \qquad    b   : Y \to X, \\
c  : X \to Y,   & \qquad    d  : Y \to Y,
\end{align*}
defined by  the relation  $g^{-1} (x+y)  = \big(  a(x) + b(y)    \big)  + \big(   c(x) + d(y)   \big)$ for all $x\in X$ and $y\in Y$.
Define a function $f_g : W \to \C^1$ by 
\[
f_g(x + y) = \psi\left(  \frac{\langle  c(x) ,   a(x) \rangle }{2}   +  \frac{\langle d(y) , b(y) \rangle }{2}    +   \langle   c (x) ,  b (y)  \rangle  \right) ,
\]
and then define \eqref{r_XY} by 
\begin{equation}\label{big weil}
 \big(  r_\psi(g) \varphi   \big)  ( y  ) = \int_{X / \ker(c)} f_g( x+y)  \cdot  \varphi\big(   c(x)  + d(y) \big) \, d_g x .
\end{equation}
The integral here is with  respect to  Haar measure on $X / \ker(c)$, which we  normalize  according to the first assertion  of Proposition \ref{prop:rao integral} below.

For the following proposition  see  {\cite{Rao}} or {\cite[Proposition 2.12]{ThetaBook}}.
We are following here the convention of \cite{ThetaBook} that the action of $\Sp(W)$ on $W$ is on the left, whereas  \cite{Rao} and many other references follow Weil \cite{Weil1} in writing the action  on the right.

\begin{proposition}\label{prop:rao integral}
For each $g \in \Sp(W)$,   there is a unique positive real-valued Haar measure $d_g x$ for which the operator  \eqref{big weil}  is unitary with respect to the $L^2$ norm on $S(Y)$.  
If we extend  \eqref{big weil}  to a unitary automorphism of  $L^2(Y)$,   the composition 
\[
  \Sp(W)   \map{r_\psi   }  \Aut ( L^2(Y) )  \to  \Aut ( L^2(Y) ) / \C^1
\]
is  precisely the homomorphism \eqref{M_XY}.
\end{proposition}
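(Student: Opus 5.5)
The argument has two parts. First I show that, for a suitable positive Haar measure $d_g x$, the operator \eqref{big weil} is unitary. Then I show that it intertwines $\rho_\psi$ with $\rho_\psi^g$; by Schur's lemma it then agrees with $M_\psi(g)$ up to a scalar in $\C^1$.

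\emph{Intertwining.} I will work first on $S(Y)$ with an arbitrary Haar measure on $X/\ker(c)$. The claim to verify is
\[
r_\psi(g)\circ\rho_\psi(w,t) = \rho_\psi(gw,t)\circ r_\psi(g)
\]
for all $(w,t)\in H(W)$. Since $\rho_\psi$ restricted to $X$, $Y$ and the center generates $H(W)$, it is enough to check the two cases $w=x_0\in X$ and $w=y_0\in Y$; the center acts by the scalar $\psi(t)$ on both sides.

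Both checks are direct substitutions in the integral. In the integral for $r_\psi(g)\rho_\psi(w,t)\varphi(y)$, the argument of $\varphi$ becomes $c(x)+d(y)+y_w$ and a phase $\psi(\langle c(x)+d(y), x_w\rangle+\cdots)$ appears. One then writes $gw$ in terms of the blocks of $g^{-1}$. The symplectic relations, namely that $g^{-1}$ preserves $\langle\ ,\ \rangle$, so that ${}^t a\, d - {}^t c\, b=1$ in the appropriate sense and ${}^t a\,c$ and ${}^t b\,d$ are symmetric, make the phases agree after a translation $x\mapsto x+x'$ in $X/\ker(c)$. Because the translation is invariant under Haar measure, the identity holds for every choice of measure.

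\emph{Unitarity and normalization.} The operator $r_\psi(g)$ factors as a composition of three kinds of maps:
\begin{itemize}
\item multiplication by the character $f_g$ restricted to $Y$;
\item the linear change of variables $d$, restricted suitably;
\item a partial Fourier transform along $X/\ker(c)\cong \mathrm{im}(c)^\vee$, which is identified with the dual of $\mathrm{im}(c)$ through $\langle\ ,\ \rangle$ and $\psi$.
\end{itemize}
Multiplication by a unitary character and composition with a linear isomorphism (rescaled by $|\det|^{1/2}$) are unitary. The Fourier transform is unitary exactly when the Haar measure is the self-dual one relative to the chosen measure on $\mathrm{im}(c)$, by Plancherel.

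To make this precise, I would decompose $Y=\mathrm{im}(c)\oplus Y'$ and use the symplectic relations to show that $d$ induces an isomorphism from $Y'$ onto a complement of $\mathrm{im}(c)$, or more precisely onto $Y/\mathrm{im}(c)$. The resulting $L^2$ norm of $r_\psi(g)\varphi$ is then a positive constant times $\|\varphi\|$, and the positive scalar multiple of Haar measure making this constant equal to $1$ is unique. This gives the first assertion.

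\emph{Conclusion.} A unitary operator on $S(Y)$ extends to $L^2(Y)$, and the intertwining identity extends to $L^2(Y)$ by continuity. By Proposition \ref{prop:basic heisenberg} and Schur's lemma for the irreducible representation $\rho_\psi$, any two unitary intertwiners of $\rho_\psi$ with $\rho_\psi^g$ differ by a scalar of modulus $1$. So $r_\psi(g)$ and $M_\psi(g)$ have the same image in $\Aut(L^2(Y))/\C^1$, which is the second assertion.

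\emph{Main obstacle.} I expect the hardest step to be the degenerate case where $c$ is neither $0$ nor invertible. There one has to handle the quotient $X/\ker(c)$ carefully: check that the integrand is really invariant under $\ker(c)$, which uses $\langle c(x),a(x)\rangle$ with $x\in\ker(c)$ together with the symmetry relations, and do the Plancherel bookkeeping on the mixed decomposition of $Y$. I would treat $c$ invertible ($X\cong Y$) first as a model case and then reduce the general case to it by the splitting above.
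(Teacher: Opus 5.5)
The paper gives no argument for this proposition; it cites Rao and \cite[Proposition 2.12]{ThetaBook}. Your architecture is the standard one: check the intertwining identity on generators of $H(W)$ by a substitution in the integral, prove unitarity by Plancherel, and finish with Schur's lemma. The intertwining and Schur steps are fine.

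\textbf{The gap: your unitarity linear algebra uses the wrong subspaces.} The unitarity step is the whole content of the first assertion, and as written it does not go through.
\begin{enumerate}
\item The identification $X/\ker(c)\cong\mathrm{im}(c)^\vee$ is wrong. Via $c$, $X/\ker(c)$ is $\mathrm{im}(c)$ itself. The form $\langle\ ,\ \rangle$ does not pair $X/\ker(c)$ with $\mathrm{im}(c)$, because in general $\ker(c)\not\perp\mathrm{im}(c)$.
\item $d$ need not map an arbitrary complement $Y'$ of $\mathrm{im}(c)$ isomorphically onto $Y/\mathrm{im}(c)$.
\end{enumerate}
Here is a counterexample to both. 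Take $\dim X=2$ and $g^{-1}=m\circ w_1$. Here $w_1$ sends $e_1\mapsto -f_1$ and $f_1\mapsto e_1$ and fixes $e_2,f_2$. The map $m$ lies in the Levi of the Siegel parabolic, acting on $Y$ by $f_1\mapsto f_1+f_2$ and $f_2\mapsto f_2$. Then $c(e_1)=-(f_1+f_2)$, $c(e_2)=0$, $d(f_1)=0$ and $d(f_2)=f_2$. So $\mathrm{im}(c)$ is the line through $f_1+f_2$, and $\langle e_2,f_1+f_2\rangle=1$. The line through $f_1$ is a complement of $\mathrm{im}(c)$ that $d$ kills. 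If you fiber $Y$ along $\mathrm{im}(c)$, moving in the fiber moves $d(y)$ off $\mathrm{im}(c)$, since $d(f_1+f_2)=f_2$. The fiber integral is then not a Plancherel integral.

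\textbf{The missing idea: the correct dual variable.}
\begin{enumerate}
\item Since $g^{-1}$ is onto, $\mathrm{im}(c)+\mathrm{im}(d)=Y$. So $d$ induces a surjection $Y\to Y/\mathrm{im}(c)$ with kernel $K:=d^{-1}(\mathrm{im}(c))$.
\item One has $K=\ker(c)^\perp\cap Y$. This follows from a dimension count together with $\langle a(\ker c),\mathrm{im}(c)\rangle=0$, which comes from $\langle a(x),c(x')\rangle=\langle a(x'),c(x)\rangle$. So $K$ is perfectly paired with $X/\ker(c)$.
\item Split $Y=K\oplus Y'$. Then $d$ restricts to an isomorphism $Y'\to Y/\mathrm{im}(c)$.
\item For $k\in K$, choose $x_k$ linearly with $c(x_k)=d(k)$, and substitute $x\mapsto x-x_k$. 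The argument of $\varphi$ becomes $c(x)+d(y')$. The $k$-dependent phase linear in $x$ becomes
$\langle c(x),b(k)-a(x_k)\rangle=\langle g^{-1}x,g^{-1}(k-x_k)\rangle=\langle x,k\rangle$.
\item Hence, up to a unimodular factor, $k\mapsto(r_\psi(g)\varphi)(y'+k)$ is the Fourier transform, from $X/\ker(c)$ to its dual $K$, of
$x\mapsto\psi\big(\tfrac12\langle c(x),a(x)\rangle+\langle c(x),b(y')\rangle\big)\varphi(c(x)+d(y'))$.
\item Apply Plancherel in $k$, then integrate over $y'$. This gives $\|r_\psi(g)\varphi\|^2=C\|\varphi\|^2$ with $C>0$. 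Replacing $d_gx$ by $t\,d_gx$ multiplies $C$ by $t^2$, so exactly one positive normalization works.
\end{enumerate}

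\textbf{A second, smaller point.} Norm preservation gives only an isometry, but the proposition asserts a unitary automorphism of $L^2(Y)$. Surjectivity follows from irreducibility. The range of the isometric extension is closed and nonzero, and the intertwining relation makes it $\rho_\psi^g$-stable. Since $\rho_\psi^g$ is irreducible, the range is all of $L^2(Y)$.
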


As in \cite[Definition 2.6]{ThetaBook}, define the  \emph{Leray cocycle} 
\begin{equation}\label{leray cocycle}
c_\psi : \Sp(W) \times \Sp(W)  \to \C^1 
\end{equation}
by the relation 
\begin{equation}\label{leray relation}
 r_\psi(g_1) \circ  r_\psi(g_2) = c_\psi( g_1,g_2) \cdot  r_\psi(g_1 g_2 ).
 \end{equation}
Rao  \cite[Theorem 4.1]{Rao}  proved that the Leray cocycle takes values in $8^\mathrm{th}$ roots of unity; see also \cite[Theorem 5.2]{ThetaBook}.
We will use it to make the abstract metaplectic group 
\[
\Mp(W)_\psi  \subset \Sp(W) \times \Aut( L^2 (Y)  ) 
\]
of Definition \ref{def:abstract metaplectic} more concrete.

\begin{definition}
We call the  bijection of sets
\begin{equation}\label{leray coords}
\Sp(W) \times \C^1   \iso \Mp(W)_\psi 
\end{equation}
 sending $( g ,t ) \mapsto  (g , t  \cdot r_\psi(g)  )$  the \emph{Leray coordinates} on the metaplectic group.
Under the Leray coordinates,  the group law on the right translates to the group law
\begin{equation}\label{leray multiplication}
( g_1 , t_1 ) \cdot (g_2, t_2 ) = \big(g_1 g_2 , t_1 t_2 c_\psi(g_1,g_2) \big)
\end{equation}
on the left, and the Weil representation on $L^2(Y)$ is given by  
\begin{equation}\label{leray weil}
\omega_\psi(  g , t ) = t \cdot r_\psi(g) .
\end{equation}
\end{definition}

One thing that  the above explicit formulas make clear is that the subspace $S(Y) \subset L^2(Y)$ of Schwartz functions  is stable under the action of the Weil representation, and we usually  regard the Weil representation as a representation of $\Mp(W)_\psi$ on $S(Y)$, unitary with respect to the $L^2$-norm.

The metaplectic group $\Mp(W)_\psi$ has a natural topology on it, making it  a locally compact group for which the map to $\Sp(W)$ is continuous and open.  Because the Leray cocycle is   discontinuous, this natural topology is    \emph{not}  the  product topology on the left hand side of \eqref{leray coords}.

If $F$ is archimedean, the natural topology on the metaplectic group is the unique one making it a connected Lie group for which the  map to $\Sp(W)$ is a smooth $\C^1$-bundle.

If $F$ is nonarchimedean, the natural topology on the metaplectic group is discussed at length in \cite[Chapter 8]{ThetaBook}.
The Weil representation on $S(Y)$ is  smooth and admissible, if one interprets these words appropriately; 
because  the topology on $\Mp(W)_\psi$ is not totally disconnected,  it doesn't literally make sense to talk about its representations having these properties.
However, because the Leray cocycle $c_\psi$   takes values in $8^\mathrm{th}$ roots of unity, for  any $8 \mid m$ the subset (defined in  Leray coordinates)
\[
\Mp(W)_\psi^{(m)} =   \{ ( g ,t ) \in \Mp(W)_\psi  : t \in \mu_m \} 
\]
is a closed subgroup.    It is   locally compact and totally disconnected,
and the restriction of the Weil representation on $S(Y)$ to this subgroup is   smooth and admissible in the usual sense of groups with such a topology.  Proofs of these claims can be found in  \cite[Chapter 9.4]{ThetaBook}.


\subsection{The adelic Weil representation}
\label{ss:adelization}


Now let $F$ be a global field with $\mathrm{char}(F) \neq 2$.  
Thus $F$ is either a finite extension of $\Q$, or a finite extension of the field of rational functions $\F_p(T)$ over a finite field of odd cardinality. 
Abbreviate $\A =\A_F$ for its ring of adeles. 

Fix an additive character $\psi : F \backslash \A  \to \C^1$, and a polarized symplectic space $W= X \oplus Y$ over $F$.   The local constructions of the previous subsections can be assembled into  a central extension 
\begin{equation}\label{adelic cover}
1 \to \C^1 \to \Mp( W_\A)_\psi \to \Sp( W_\A ) \to 1 
\end{equation}
of the adelic symplectic group $\Sp( W_\A )$, and a representation of $\Mp( W_\A)_\psi$ on the adelic Schwartz space $S( Y_\A)$.

  Before we explain how these constructions work, let us explain what does  \emph{not} work. 
 For each place $v$ of $F$, recall  from \eqref{leray cocycle} the Leray cocycle
\[
c_{\psi , v} :  \Sp( W_{F_v} ) \times \Sp(  W_{F_v}) \to \C^1 , 
\]
and from  \eqref{leray weil}  the  operator
\[
r_{\psi , v }(g_v) : S(Y_{F_v}) \to S(Y_{F_v})
\]
associated to each  $g_v\in \Sp(W_{F_v})$.
One is tempted to define   $c_\psi(g_1,g_2)$  for  $ g_1,g_2\in \Sp(W_\A)$  as the product of the local cocycles, and  then define  $\Mp(W_\A)_\psi$ as the set $\Sp(W_\A) \times \C^1$  endowed with the group structure as in \eqref{leray multiplication}.  
One is then tempted to further define, for each $g\in \Sp(W_\A)$, an operator $r_\psi(g)$  on $S(Y_\A)$ as the infinite product of the local operators, and then use this to define a representation as in \eqref{leray weil}.  
Unfortunately neither of the infinite products involved is convergent, and some modifications to this approach are necessary.

 There is one important situation in which the product of the local Leray cocycles does converge.

\begin{proposition}\label{prop:leray reciprocity}
If $g_1,g_2 \in \Sp(W)$ are rational points, then 
\[
\prod_v c_{\psi, v}(g_1 , g_2 ) =1,
\]
where the product is over all places of $F$, and all but finitely many factors are equal to $1$.
\end{proposition}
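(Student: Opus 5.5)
Both assertions reduce, via Rao's explicit formula for the Leray cocycle, to two classical facts about quadratic forms: almost all local Weil indices are $1$, and the Weil reciprocity law $\prod_v \gamma_{\psi_v}(q)=1$ for a nondegenerate quadratic form $q$ over $F$. Rao's theorem \cite[Theorem 4.1]{Rao} (see also \cite[Theorem 5.2]{ThetaBook}) expresses the local cocycle at each place $v$ as
\[
c_{\psi,v}(g_1,g_2) = \gamma_{\psi_v}\big( q(g_1,g_2) \big) \cdot (\text{product of Hilbert symbols of determinants}) .
\]
Here $\gamma_{\psi_v}$ is the Weil index, normalized to be multiplicative on orthogonal sums and to depend only on the Witt class. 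The term $q(g_1,g_2)$ is the Leray invariant: the quadratic form on $Y \cap g_1 Y \cdot\ldots$ built out of the three Lagrangians $Y$, $g_1^{-1}Y$, $g_2 Y$ (suitably modified by the kernels), in the convention of the paper. The determinants entering the Hilbert symbols come from the $x$-functions $x(g_i)\in F_v^\times/F_v^{\times2}$ defined via the $c$-block of $g_i^{-1}$.

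The key point is that all of these ingredients are defined \emph{algebraically} in terms of $g_1,g_2$. When $g_1,g_2\in\Sp(W)$ are rational, the Leray invariant $q$ is therefore a quadratic form over $F$, and the determinants are elements of $F^\times$. Moreover, the local ingredients at $v$ are just the base changes of these global ones, since intersections of Lagrangians commute with extension of scalars.

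The first step is to record Rao's formula in the present conventions (left action, $\psi$ rather than $\psi^{-1}$). The second step is to deduce that $c_{\psi,v}(g_1,g_2)=\gamma_{\psi_v}(q)\prod_k (a_k,b_k)_v$ for global data $q$ over $F$ and $a_k,b_k\in F^\times$ independent of $v$.

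Next I will show that almost all factors are $1$. For all but finitely many $v$, the place is nonarchimedean and odd, $\psi_v$ has conductor $\co_v$, and $q$ has a diagonalization with unit coefficients; all the $a_k,b_k$ are also units. At such $v$ the Hilbert symbols are trivial. The Weil index of a unimodular form with respect to an unramified character is $1$, by a direct Gauss-sum computation: the integral defining $\gamma$ reduces to the volume of $\co_v$. Hence $c_{\psi,v}(g_1,g_2)=1$ for almost all $v$, and the product is finite.

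Finally I will apply global reciprocity. Hilbert reciprocity gives $\prod_v (a,b)_v=1$, and Weil's reciprocity law gives $\prod_v\gamma_{\psi_v}(q)=1$ for any nondegenerate quadratic form $q$ over $F$ and any nontrivial character $\psi$ of $F\backslash\A$. By multiplicativity, the latter reduces to the rank-one case $\prod_v \gamma_{\psi_v}(a x^2)=1$. This rank-one case follows from Poisson summation on $F\backslash \A$ applied to a Gaussian-type function, i.e. the functional equation of the global theta series. Multiplying the two reciprocity statements gives the proposition.

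I expect the main obstacle to be the bookkeeping in the first step: getting Rao's normalized cocycle exactly, including the handling of degenerate intersections $\ker(c)$, so that every factor is visibly the localization of a single global object. The rest is classical input. An alternative route is to avoid Rao's formula altogether and argue that $\prod_v r_{\psi,v}(\gamma)$ acts compatibly on the theta distribution $\varphi\mapsto\sum_{y\in Y}\varphi(y)$. That would give a splitting over $\Sp(W)$ and force the product cocycle to be trivial. However, this argument presupposes the convergence and adelic definitions the paper has just said need care, so I would use Rao's formula.
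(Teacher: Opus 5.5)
Your proposal is correct and takes the same route as the paper. Rao's explicit formula writes each local Leray cocycle as a Weil index of the Leray invariant, which for rational $g_1,g_2$ is the localization of a single quadratic form over $F$, and Weil's product formula then gives $\prod_v c_{\psi,v}(g_1,g_2)=1$. One small correction: the unnormalized cocycle $c_{\psi,v}$ has no Hilbert symbol factors, since those occur only in the Rao cocycle $c^{(2)}_{\psi,v}$, but carrying them along is harmless because Hilbert reciprocity disposes of them.
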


\begin{proof}
This follows  from Rao's explicit formula,  \cite[Theorem 4.1]{Rao}  or \cite[Theorem 5.2]{ThetaBook},  for the local Leray cocycle  in terms of  Leray invariants and Weil indices, together with  Weil's   product formula \cite[\S 30]{Weil1} for local Weil indices.
\end{proof}

Fix  bases $e_1,\ldots, e_d \in X$ and $f_1,\ldots, f_d \in Y$ in such a way that $\langle e_j, f_k \rangle = \delta_{jk}$.  At every nonarchimedean place $v$ of $F$, let $K_v^\circ \subset \Sp(W_{F_v})$ be the stabilizer of the 
$\co_{F_v}$-lattice in $W_{F_v}$ generated by  $e_1,\ldots, e_d , f_1,\ldots, f_d$, and let $\varphi_v^\circ \in S(Y_{F_v})$ be the characteristic function of the $\co_{F_v}$-lattice in $Y_{F_v}$ generated by $f_1,\ldots, f_d$.
Thus 
\[
\Sp(W_\A) = \prod_v\nolimits^\prime \Sp(W_{F_v}) 
\qquad \mbox{and} \quad
S(Y_\A) = \bigotimes_v\nolimits^\prime  S(Y_{F_v}) ,
\]
where both products are over all places of $F$.  The first is the restricted topological product with respect to the collection of maximal compact open subgroups  $K^\circ_v $, and  the second is the restricted tensor product with respect to the collection of Schwartz functions $\varphi_v^\circ$.

The nontriviality of the function $\lambda_v$ of the following lemma is what causes the divergence of the infinite products mentioned above.

\begin{lemma}\label{lem:leray adelization}
If $v$ is a nonarchimedean place of odd residue characteristic for which $\psi_v$ is unramified, there is a function
$
\lambda_v : K_v^\circ \to \mu_8
$
satisfying 
\[
r_{\psi  , v}(k)   \varphi_v^\circ =  \lambda_v(k) \varphi_v^\circ
\]
for all $k\in K_v^\circ$.  
If $g \in \Sp(W)$ is a rational point, then  for all but finitely many places $v$ we have $g \in K_v^\circ$ and $\lambda_v(g) = 1$.
\end{lemma}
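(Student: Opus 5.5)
The plan is to handle the two assertions separately. The first follows from the Stone--von-Neumann uniqueness, together with an analysis of the compact open subgroup $K_v^\circ$. The second follows from the fact that a rational point is integral at almost every place, together with Proposition \ref{prop:leray reciprocity}.

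\emph{First assertion.} Let $L_v \subset W_{F_v}$ be the self-dual lattice spanned by the $e_j, f_k$, and let $H(L_v) = L_v \times \co_{F_v} \subset H(W_{F_v})$. Since $v$ is odd and $\psi_v$ is unramified, the halves appearing in the group law and in \eqref{schrodinger heisenberg} are integral. A direct computation from \eqref{schrodinger heisenberg} then shows two things. First, $\rho_{\psi,v}(w,t)\varphi_v^\circ = \psi_v(t)\,\psi_v(\langle y_w, x_w\rangle/2)\,\varphi_v^\circ$ for $(w,t) \in H(L_v)$. Second, because $L_v$ is self-dual, the space of vectors on which $H(L_v)$ acts through the character $\chi(w,t) = \psi_v(t + \langle y_w,x_w\rangle/2)$ is one-dimensional. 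To see the latter, note that translation invariance by $\co$-points of $Y$ together with $X$-modulation forces support in $f_1\co+\cdots+f_d\co$ and constancy there.

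Now take $k \in K_v^\circ$. The intertwining property gives
\[
r_{\psi,v}(k)\rho_{\psi,v}(w,t) = \rho_{\psi,v}(kw,t)\, r_{\psi,v}(k).
\]
Since $k$ preserves $L_v$, the vector $r_{\psi,v}(k)\varphi_v^\circ$ transforms under $H(L_v)$ by $\chi\circ k^{-1}$. The main obstacle is to check that $\chi\circ k^{-1} = \chi$, that is, that $\langle y_{kw}, x_{kw}\rangle/2 \equiv \langle y_w,x_w\rangle/2 \pmod{\co}$. The difference of these quantities is $\tfrac12$ times an integral quantity, and $2$ is a unit, so the congruence holds. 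Alternatively, one can twist the lattice Heisenberg group by the splitting $w \mapsto (w, -\langle y_w,x_w\rangle/2)$, which is a homomorphism on $L_v$ modulo $\co$ precisely because $\tfrac12\in\co$ and the form is integral. I would present the argument in this second form.

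Given the one-dimensionality, $r_{\psi,v}(k)\varphi_v^\circ$ is a scalar multiple of $\varphi_v^\circ$. That scalar has absolute value $1$ because $r_{\psi,v}(k)$ is unitary. To land in $\mu_8$ I would use the fact that $K_v^\circ = \Sp_{2d}(\co_v)$ is generated by the Siegel parabolic elements $m(a)$, $n(b)$ with integral entries and the Weyl element. For each generator, formula \eqref{big weil} gives an explicit scalar: $1$ for $n(b)$, $|\det a|^{1/2} = 1$ for $m(a)$, and $1$ for the Weyl element once the measure is normalized by Proposition \ref{prop:rao integral}, since $\varphi_v^\circ$ is self-dual under the Fourier transform for unramified $\psi_v$. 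By \eqref{leray relation}, a product of generators acts by a product of such scalars times Leray cocycle values, all of which are $8$th roots of unity by Rao's theorem. Hence $\lambda_v(k)\in\mu_8$.

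\emph{Second assertion.} A rational $g \in \Sp(W)$ has entries in $\co_{F_v}$, with $g^{-1}$ also integral, for almost all $v$, so $g \in K_v^\circ$ for almost all $v$. To show $\lambda_v(g)=1$ for almost all $v$, I would write $g = p_1 w_S p_2$ (Bruhat decomposition) over $F$, with $p_i$ in the rational Siegel parabolic and $w_S$ a partial Weyl element. For almost all $v$ each of these factors lies in $K_v^\circ$, and by the computations above each factor acts on $\varphi_v^\circ$ by the scalar $1$. Then
\[
\lambda_v(g) = c_{\psi,v}(\cdot,\cdot)^{-1}\cdots
\]
reduces to a product of local Leray cocycles of rational elements that all lie in $K_v^\circ$. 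By Rao's formula, these cocycles are expressed through Weil indices of integral unimodular forms at odd unramified places, and such indices are trivial. Hence $\lambda_v(g)=1$ for all but finitely many $v$. (Proposition \ref{prop:leray reciprocity} guarantees only that each cocycle is $1$ at almost all $v$, which is exactly what is needed here.)
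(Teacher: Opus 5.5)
Your proof is correct. For the second assertion it is the paper's argument. Integral Siegel-parabolic and Weyl elements fix $\varphi_v^\circ$ by \eqref{big weil}. A general rational $g$ is then handled by \eqref{leray relation} together with the ``all but finitely many factors are $1$'' clause of Proposition \ref{prop:leray reciprocity}. You organize the product of generators as a Bruhat factorization $g=p_1w_Sp_2$, whereas the paper uses an arbitrary product of generators. You should say explicitly that $r_{\psi,v}(w_S)$ is a partial Fourier transform, which fixes $\varphi_v^\circ$ for the same reason the full one does; your computation only covered the full Weyl element.

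The real difference is the first assertion. The paper does not prove it but cites \cite[Lemma 7.10]{ThetaBook}, and your self-contained argument works. Two remarks on it.

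\begin{itemize}
\item Your character $\chi(w,t)=\psi_v(t+\tfrac12\langle y_w,x_w\rangle)$ is identically $1$ on $L_v\times\co_{F_v}$. This is because $\psi_v$ is trivial on $\co_{F_v}$ and $\tfrac12\langle y_w,x_w\rangle\in\co_{F_v}$. So the ``main obstacle'' and the twisted splitting are vacuous. You are simply using that $\varphi_v^\circ$ spans the line of $H(L_v)$-fixed vectors, and that $r_{\psi,v}(k)$ preserves this line.
\item The Heisenberg step is logically redundant in your write-up. Once $m(a)$, $n(b)$ and the Weyl element fix $\varphi_v^\circ$, \eqref{leray relation} shows that every $k\in K_v^\circ$ multiplies $\varphi_v^\circ$ by a product of inverse Leray cocycle values. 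That gives the eigenvector property and the $\mu_8$ bound at once. The Heisenberg argument does buy the eigenvector property without any generation statement for $\Sp_{2d}(\co_{F_v})$, but it does not give the $\mu_8$ bound.
\end{itemize}
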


\begin{proof}
The first claim is  \cite[Lemma 7.10]{ThetaBook}.
One verifies the second claim about rational points  when  $g$ lies   in the Siegel parabolic of $\Sp(W)$ stabilizing $X$, and then also  when $g$ is the order $4$ element defined by 
$e_j \mapsto - f_j$ and $f_j \mapsto e_j$.  
In both cases  the claim follows from  the explicit formulas of \cite[Lemma 7.10]{ThetaBook}.
This proves the  claim about rational points for a set of generators of $\Sp(W)$, and the general case  follows for all elements using    \eqref{leray relation} and Proposition \ref{prop:leray reciprocity}. 
 \end{proof}

Let $\Sigma$ be a finite set of places of $F$.  For any  $g\in \Sp(W_\A)$,  define an operator 
\[
r_{ \psi, \Sigma } (g)   :  S(Y_\A) \to S(Y_\A)
\]
on pure tensors $\varphi = \otimes_v \varphi_v \in S(Y_\A)$ by 
\begin{equation}\label{Sigma operator}
r_{\psi ,\Sigma}  (g)   \varphi =  \big(  \otimes_{ v \in \Sigma} r_{\psi,v}( g_v) \varphi_v  \big) \otimes \big(  \otimes_{ v \not\in \Sigma}  \varphi_v   \big).  
\end{equation}
It follows from  the first claim of Lemma \ref{lem:leray adelization} that as one takes the  limit over  increasingly large $\Sigma$, the Schwartz function $r_{\psi ,\Sigma}  (g)   \varphi$ eventually stabilizes,  \emph{but only up to multiplication by an $8^\mathrm{th}$ root of unity}.

If $g\in \Sp(W)$, the second claim  of Lemma \ref{lem:leray adelization} implies   that  the $\mu_8$ ambiguity disappears, giving us  a well-defined operator   
\begin{equation}\label{splitting adelic operator}
r_\psi (g) = \lim_\Sigma r_{\psi , \Sigma}(g) \in  \Aut( S(Y_\A) ) .
\end{equation}
Here the limit just means $r_\psi(g) \varphi = r_{\psi,\Sigma}(g) \varphi$ for any sufficiently large (depending on $\varphi$ and $g$) finite set of places $\Sigma$.

For general $g\in \Sp(W_\A)$,  what is well-defined is only 
\begin{equation*}
r_\psi (g) = \lim_\Sigma r_{\psi , \Sigma}(g) \in  \Aut( S(Y_\A) ) / \mu_8 .
\end{equation*}
 To be completely precise about what this limit means,  fix arbitrarily a nonzero $\varphi_0 \in S( Y_\A)$.  
Then fix a  finite set of places  $\Sigma_0$ large  enough (a notion depending on $\varphi_0$ and $g$) that  for any  $\Sigma\supset \Sigma_0$ there exists a $\zeta_{\Sigma,\Sigma_0} \in \mu_8$ satisfying
\[
r_{\psi ,\Sigma }  (g)   \varphi_0 = \zeta_{\Sigma,\Sigma_0}  \cdot r_{\psi ,\Sigma_0 }  (g)   \varphi_0 .
\]
Now for any $\varphi \in S(Y_\A)$ the Schwartz function  
\[
r_\psi(g) \varphi \define \zeta_{\Sigma,\Sigma_0}^{-1}   \cdot r_{\psi ,\Sigma }  (g)  \varphi
\]
will be independent of   $\Sigma$ chosen sufficiently large.
This defines $r_\psi(g) \in \Aut(  S(Y_\A) )$, and  making different initial choices of $\varphi_0$ and $\Sigma_0$ has the effect of multiplying $r_\psi(g)$ by an $8^\mathrm{th}$ root of unity.

\begin{definition}\label{def:adelic metaplectic}
The \emph{adelic metaplectic group} is the subgroup  
\[
\Mp(W_\A)_\psi  \subset \Sp(W_\A) \times  \Aut( S(Y_\A) )
\]
of pairs $(g, r)$ such that $ r = r_\psi(g)$ in the quotient $\Aut( S(Y_\A) ) / \C^1$. 
Projection to the first factor defines the  short exact sequence \eqref{adelic cover}, 
and projection to the second factor defines the \emph{adelic Weil representation}
\[
\omega_\psi : \Mp(W_\A)_\psi  \to \Aut( S(Y_\A) ) .
\]
\end{definition}

Because the operator \eqref{splitting adelic operator} is defined  unambiguously for a rational point $g \in \Sp(W)$, there is a commutative diagram
\begin{equation}\label{canonical splitting}
\begin{tikzcd}
 & { \Mp(W_\A)_\psi  }  \ar[d]  \\
 {  \Sp(W) } \ar[ur] \ar[r]  & {   \Sp(W_\A)  } 
\end{tikzcd}
\end{equation}
in which the diagonal arrow is $g \mapsto  (g , r_\psi (g) )$.  
This  is a group homomorphism by \eqref{leray relation} and  Proposition \ref{prop:leray reciprocity},
and we may identify $\Sp(W)$ with its image in the adelic metaplectic group.

\begin{remark}\label{rem:adelic leray coordinates}
Unlike the local situation, the constructions we have made do not fix any bijection from  $\Sp(W_\A) \times \C^1$ to $\Mp(W_\A)_\psi$.  
What we actually  get is only a partial analogue of the Leray coordinates \eqref{leray coords}:  an injection  of sets
\begin{equation}\label{semi leray coordinates}
 \left( \prod_{v\in\Sigma} \Sp(W_{F_v})  \right) \times \C^1  \to \Mp(W_\A)_\psi 
\end{equation}
for every \emph{finite} set of places $\Sigma$   of $F$, sending 
$(g  , t)  \mapsto \big( g , t \cdot r_{\psi,\Sigma }(g) \big)$, 
where we view  $g \in \Sp(W_\A)$  with trivial components away from  $\Sigma $.
\end{remark}

One now topologizes $\Mp(W_\A)_\psi$ in the following way:
 Lemma  \ref{lem:leray adelization} provides us with a splitting homomorphism
\begin{equation*}
K_v^\circ \to  \Mp(W_{F_v})_\psi  \subset \Sp(W_{F_v} ) \times \Aut( S(Y_{F_v} ) ) ,
\end{equation*}
for every  nonarchimedean place $v$ of odd residue characteristic for which $\psi_v$ is unramified, 
defined by 
$k \mapsto  ( k , \lambda_v(k)^{-1}  r_{\psi,v}(k) )$. 
If we use these to regard  each $K_v^\circ$ as a compact open subgroup of $\Mp(W_{F_v})_\psi$, and then form the restricted topological product of all $\Mp(W_{F_v})_\psi$ with respect to these subgroups,   there is a canonical  surjective homomorphism 
\begin{equation}\label{metaplectic product}
\prod\nolimits_v^\prime \Mp(W_{F_v})_\psi \to \Mp(W_\A)_\psi .
\end{equation}
Endow $\Mp(W_\A)_\psi$ with the quotient topology.

The following is the main result of Weil's \emph{Acta} paper \cite[\S 41]{Weil1}, and is the cornerstone of his representation-theoretic approach to  theta functions.

\begin{theorem}[Weil]\label{thm:big theta}
For any $\varphi \in S( Y_\A)$, the sum 
\[
\Theta_\psi( g ,\varphi)  \define  \sum_{ y \in Y}  ( \omega_\psi(g) \varphi )(y) 
\]
converges to a continuous function of $g \in \Mp( W_\A)_\psi$.
This function is left-invariant under the subgroup $\Sp(W) \subset \Mp( W_\A)_\psi$.
\end{theorem}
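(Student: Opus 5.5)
The plan is to handle convergence/continuity and $\Sp(W)$-invariance separately, in each case reducing to statements about a generating set of $\Sp(W)$ and to the Poisson summation formula.

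\textbf{Convergence and continuity.} The sum is over the discrete cocompact lattice-like subgroup $Y \subset Y_\A$. For any $\varphi \in S(Y_\A)$, the function $\omega_\psi(g)\varphi$ is again Schwartz, so the sum converges absolutely. To get continuity in $g$, I would work locally uniformly. Fix a compact neighbourhood $\mathcal{U}$ of $g_0$ in $\Mp(W_\A)_\psi$; by \eqref{metaplectic product} it may be taken to be the image of a product of compact sets, equal to $K_v^\circ$ away from a finite set $\Sigma$.

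At places outside $\Sigma$ (enlarging $\Sigma$ so that $\varphi_v=\varphi_v^\circ$ there), Lemma \ref{lem:leray adelization} shows the local components are fixed up to scalars. Hence $\omega_\psi(g)\varphi$ is supported in a fixed compact open set of the finite adeles, times a function at the archimedean places. At the finite places of $\Sigma$, smoothness of the Weil representation on the totally disconnected subgroups $\Mp^{(m)}$ means that only finitely many functions occur.

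At the archimedean places one needs a uniform Schwartz bound: the seminorms $\sup_y (1+|y|)^{n}|(\omega(g)\varphi)(y)|$ should be bounded uniformly for $g$ in a compact set. This I would get from Rao's formula \eqref{big weil} applied on the Bruhat-type decomposition $g = p_1 \, w \, p_2$ with $p_i$ in the Siegel parabolic and $w$ a partial Weyl element. Parabolic elements act by multiplication by characters and linear substitution, and $w$ acts by a partial Fourier transform, which is continuous on Schwartz space. Dominated convergence then gives continuity.

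\textbf{Invariance.} Since the diagonal map in \eqref{canonical splitting} is a homomorphism, it suffices to prove $\Theta_\psi(\gamma g,\varphi)=\Theta_\psi(g,\varphi)$ for $\gamma$ in a generating set of $\Sp(W)$, applied to $\varphi' = \omega_\psi(g)\varphi$. That is, I need to show that the functional $\Theta(\varphi)=\sum_{y\in Y}\varphi(y)$ satisfies $\Theta(r_\psi(\gamma)\varphi)=\Theta(\varphi)$ for generators $\gamma$. I would use the same generators as in the proof of Lemma \ref{lem:leray adelization}: the Siegel parabolic stabilizing $X$, and the element $e_j\mapsto -f_j$, $f_j\mapsto e_j$.

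For the parabolic, I would use the elements $m(a)$ and $n(b)$ with $a\in \GL(Y)$ rational and $b$ rational symmetric, computing from \eqref{big weil}.
\begin{itemize}
\item For $m(a)$, $r_\psi$ acts by $\varphi(y)\mapsto |\det a|^{\pm1/2}\varphi(a^{\pm1} y)$ times a quadratic character factor. The product formula gives $|\det a|_\A = 1$, and $a$ permutes $Y$.
\item For $n(b)$, $r_\psi$ is multiplication by $\psi(\tfrac12\langle by, y\rangle)$-type phases, which are trivial on $Y$ because $\psi$ is trivial on $F$.
\end{itemize}
In both cases the global scalar produced is exactly the product of the local constants, which equals $1$ by the definition \eqref{splitting adelic operator}.

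For the Weyl element, $r_\psi(w)$ is, up to the product of local constants, the Fourier transform on $Y_\A$ with respect to $\psi$ and the self-dual measure. Invariance is then the adelic Poisson summation formula $\sum_{y\in Y}\hat\varphi(y)=\sum_{y\in Y}\varphi(y)$, using that $Y$ is self-dual in $Y_\A$ under the pairing $\psi(\langle\cdot,\cdot\rangle)$ and that $Y_\A/Y$ has volume $1$ for the self-dual measure.

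\textbf{Main obstacle.} The main obstacle is the constant for $w$. One must check that the product over $v$ of the local scalars relating $r_{\psi,v}(w)$ to the unitarily normalized Fourier transform equals exactly $1$, with no stray $8^{\mathrm{th}}$ root of unity. I would first try to read this off from the unitary normalization of $d_gx$ in Proposition \ref{prop:rao integral}, which suggests the local operator is literally the self-dual Fourier transform. I would then confirm the result using Lemma \ref{lem:leray adelization}, which gives $\lambda_v(w)=1$ almost everywhere, together with Proposition \ref{prop:leray reciprocity}.
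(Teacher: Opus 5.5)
Your proof of $\Sp(W)$-invariance follows the paper's own sketch. You reduce, via the splitting \eqref{canonical splitting}, to $\sum_{y\in Y}(r_\psi(\gamma)\varphi)(y)=\sum_{y\in Y}\varphi(y)$ for $\gamma$ in the Siegel parabolic and for the order $4$ element. You read the parabolic case off \eqref{big weil} and use Poisson summation for the order $4$ element.

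Two corrections there:
\begin{enumerate}
\item In the Leray normalization, $r_{\psi,v}(m(a))$ carries no quadratic character, only $|\det a|_v^{1/2}$ and the substitution. Likewise $r_{\psi,v}(n(b))$ carries no constant at all. So the product formula is all you need.
\item Your ``main obstacle'' is not one. For the order $4$ element $w$, \eqref{big weil} has $a=d=0$ and $\ker(c)=0$, and $f_w(x+y)=\psi(\langle c(x),b(y)\rangle)$ is $\psi(x\cdot y)$ in the chosen coordinates. The positive measure making this unitary is the self-dual one. So each $r_{\psi,v}(w)$ is exactly the self-dual Fourier transform, $\lambda_v(w)=1$ at the good places, and the adelic $r_\psi(w)$ is exactly the Fourier transform on $\A^d$ attached to $\psi(x\cdot y)$. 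No root of unity appears, and nothing needs confirming via Lemma \ref{lem:leray adelization} and Proposition \ref{prop:leray reciprocity}.
\end{enumerate}
Also, the self-duality you need is for this dot-product pairing, i.e.\ the $X\times Y$ pairing transported to $Y$ by $c$. The form $\langle\,,\rangle$ vanishes identically on $Y\times Y$.

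The step that actually fails is in the continuity half, which the paper does not prove (it refers to Weil). You cannot read off locally uniform archimedean Schwartz bounds from a Bruhat factorization $g=p_1wp_2$. A compact neighbourhood of a point outside the open cell meets several cells, and inside the open cell the parabolic factors are unbounded near its boundary. Already in $\SL_2(\R)$, $\begin{pmatrix}a&b\\c&d\end{pmatrix}=n(a/c)\,m(1/c)\,w\,n(d/c)$ with $w=\begin{pmatrix}0&-1\\1&0\end{pmatrix}$, and all three parameters blow up as $g\to 1$ through the open cell. So bounds on the individual operators give nothing uniform.

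There are two ways to repair this.
\begin{enumerate}
\item A repair close to your idea is to translate first. Write $g=hg_0$ with $h$ near $1$, so that $wh$ stays in a compact subset of the open cell $NwP$. There $wh=n\,w\,p$ with $(n,p)$ depending continuously on $wh$ and staying bounded. Then $\omega_\psi(g)\varphi$ equals, up to a scalar of modulus one, $r_\psi(w)^{-1}r_\psi(n)r_\psi(w)r_\psi(p)\,\omega_\psi(g_0)\varphi$, which is a bounded family in $S(Y_\R)$.
\item A cleaner repair uses Theorem \ref{thm:inf compatibility}. The orbit map of a smooth vector is continuous for the norms $\|\omega^{\mathrm{inf}}(u)\varphi\|_{L^2}$, with $u$ in the enveloping algebra. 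By Sobolev embedding these norms dominate the Schwartz seminorms.
\end{enumerate}
You also need continuity of $g\mapsto(\omega_\psi(g)\varphi)(y)$ for fixed $y$, which your dominated-convergence step silently assumes. It does not follow from the strong $L^2$-continuity used to topologize $\Mp(W_\R)_\psi$. The second repair gives it directly. Under the first, it follows from the uniform bounds plus $L^2$-continuity, since bounded subsets of $S(Y_\R)$ are relatively compact. The nonarchimedean part of your continuity argument is fine.
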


Readers seeking to understand the proof of convergence should consult Weil's paper. 
Readers  seeking to understand the left  invariance under  $\Sp(W)$ are encouraged to work out the proof themselves.
  The point is that the left invariance immediately reduces to verifying the equality
\begin{equation}\label{distribution invariance}
\sum_{ y \in Y}  ( r_\psi(g) \varphi )(y)  = \sum_{ y \in Y}   \varphi (y) 
\end{equation}
for any $g\in \Sp(W)$ and  $\varphi \in S(Y_\A)$, and it suffices to check this as $g$ runs over a set of generators for the group.  
As a set of generators,  take the elements of the Siegel parabolic stabilizing $X \subset W$, along with the order $4$ element defined by $e_j \mapsto  - f_j$ and $f_j \mapsto e_j$. 
If $g$ is in the Siegel parabolic then \eqref{distribution invariance} follows directly from the definition \eqref{big weil}.  
For the order $4$ element, which acts on $S(Y_\A)$ by the Fourier transform, use Poisson summation.


\subsection{The metaplectic double cover}


We now discuss the  metaplectic  double cover and its Weil representation, in both the local and adelic contexts.

Begin with the local situation.  Return to the  polarized symplectic space $W=X\oplus Y$ over a local field $F$ from  \S \ref{ss:schrodinger model}.

As was already noted,  the Leray cocycle $c_\psi$ from \eqref{leray cocycle} actually   takes values in $\mu_8$.
Rao \cite[\S 5]{Rao} proves this, see also   \cite[Chapter 5.2]{ThetaBook},  by constructing  an explicit  function 
\[
m_\psi : \Sp(W) \to \mu_8 ,
\]
called \emph{Rao's normalizing factor},   in such a way that the \emph{Rao cocycle} 
\begin{equation}\label{rao cocycle}
c_\psi^{(2)}(g_1,g_2) \define c_\psi (g_1,g_2) \cdot \frac{ m_\psi(g_1) m_\psi (g_2) }{  m_\psi(g_1g_2)  }  
\end{equation}
lies in $\mu_2$.
For any $g \in \Sp(W)$, we  renormalize the operator \eqref{big weil} by
\begin{equation}\label{rao correction}
r_\psi^{(2)}(g) \define  m_\psi(g) \cdot r_\psi(g)  : S(Y) \to S(Y) ,
\end{equation}
so that  $c_\psi^{(2)}$ and $r_\psi^{(2)}$ are related as in \eqref{leray relation}.

\begin{definition}
The \emph{metaplectic double cover} of $\Sp(W)$ is the subgroup 
\[
\Mp(W)_\psi^{(2)}   \subset \Sp(W)  \times \Aut ( S(Y) )  
\]
of pairs $(g, r)$ for which $r =  \pm r^{(2)}_\psi(g)$.  
\end{definition}

It is clear from the construction that the inclusion $\Mp(W)_\psi^{(2)}   \subset \Mp(W)_\psi$ holds as subsets of $\Sp(W)  \times \Aut ( S(Y) )$, and  we have a commutative diagram
\begin{equation}\label{double inclusion}
\begin{tikzcd}
{ 1 }  \ar[r]   &  {  \mu_2 } \ar[r] \ar[d]  & {  \Mp(W)_\psi^{(2)} } \ar[r] \ar[d] & {  \Sp(W)  } \ar[d, equal] \ar[r] & {  1 } \\ 
{ 1 }  \ar[r]   &  {  \C^1 } \ar[r]   & {  \Mp(W)_\psi } \ar[r]  & {  \Sp(W)  }   \ar[r] & {  1 } 
\end{tikzcd}
\end{equation}
with exact rows.

\begin{definition}
The bijection of sets
\begin{equation}\label{rao coords}
\Sp(W) \times \mu_2 \iso \Mp(W)_\psi^{(2)} 
\end{equation}
sending $(g, \epsilon ) \mapsto \big( g , \epsilon  \cdot r_\psi^{(2)}(g)\big)$  is the \emph{Rao coordinates}. 
The group law on the left is given by the obvious analogue of \eqref{leray multiplication}, with $c_\psi$ replaced by $c_\psi^{(2)}$.
\end{definition}

If we now switch to the global setting of \S \ref{ss:adelization},  the adelization of the local metaplectic double covers goes through essentially verbatim.  The essential points are
\begin{itemize}
\item Proposition \ref{prop:leray reciprocity} holds with every $c_{\psi,v}$ replaced by $c_{\psi,v}^{(2)}$.
This is because for any rational point $g \in \Sp(W)$ we have $\prod_v m_{\psi,v}(g)=1$.
\item
Lemma \ref{lem:leray adelization} holds  with $r_{\psi,v}$ replaced by $r_{\psi , v}^{(2)}$, and with an improvement: one can replace the $\mu_8$ appearing there with $\mu_2$.  
This is because the product of  Rao's  local normalizing factor $m_{\psi,v}(k)$  with the $\lambda_v(k)$ of Lemma \ref{lem:leray adelization} lies in $\mu_2$ for any $k \in K_v^\circ$, which is the content of the proof of \cite[Theorem 7.13]{ThetaBook}.
\end{itemize}
As a consequence, we may imitate \eqref{Sigma operator} to define, for any $g \in \Sp(W_\A)$  and any finite set of places $\Sigma$ of $F$, an operator 
\[
r^{(2)}_{ \psi, \Sigma } (g)   :  S(Y_\A) \to S(Y_\A).
\]
For a fixed $\varphi \in S(Y_\A)$,  the limit of $r^{(2)}_{ \psi, \Sigma } (g)  \varphi$ over increasing $\Sigma$  stabilizes, but only  up to multiplication by $\mu_2$.
As before, this allows us to define
\begin{equation}\label{double r limit}
r^{(2)}_\psi (g) = \dlim_\Sigma r^{(2)}_{\psi , \Sigma}(g) \in  \Aut( S(Y_\A) ) / \mu_2 .
\end{equation}
Directly imitating Definition \ref{def:adelic metaplectic}, but with $r_\psi$ replaced by $r_\psi^{(2)}$ and $\C^1$ replaced by $\mu_2$, we obtain  the top row  in the  commutative diagram
\[
\begin{tikzcd}
{ 1 }  \ar[r]   &  {  \mu_2 } \ar[r] \ar[d]  & {  \Mp(W_\A)_\psi^{(2)} } \ar[r] \ar[d] & {  \Sp(W_\A)  } \ar[d, equal] \ar[r] & {  1 } \\ 
{ 1 }  \ar[r]   &  {  \C^1 } \ar[r]   & {  \Mp(W_\A)_\psi } \ar[r]  & {  \Sp(W_\A)  }   \ar[r] & {  1 } .
\end{tikzcd}
\]

As in  \eqref{splitting adelic operator},  for a rational point $g \in \Sp(W)$ the $\mu_2$-ambiguity disappears in the definition of \eqref{double r limit}, giving a well-defined   operator
\[
r^{(2)}_\psi (g)  \in  \Aut( S(Y_\A) ) .
\]
Because the local Rao factors satisfy $\prod_v m_{\psi,v}(g_v) =1$, this actually agrees with 
 \eqref{splitting adelic operator}, and so  \eqref{canonical splitting}  can be upgraded to 
 \begin{equation}\label{double splitting}
\begin{tikzcd}
 & { \Mp(W_\A)_\psi^{(2)}   }  \ar[d]   \ar[r]  &  {   \Mp(W_\A)_\psi }  \ar[d]   \\
 {  \Sp(W) } \ar[ur] \ar[r]  & {   \Sp(W_\A)  }  \ar[r, equal] & { \Sp(W_\A)   }  
\end{tikzcd}
\end{equation}
in which the diagonal arrow is  $g \mapsto ( g , r^{(2)}_\psi (g) ) = ( g , r_\psi (g) )$.

\begin{remark}\label{rem:adelic rao coordinates}
As in Remark \ref{rem:adelic leray coordinates},  the above constructions do not fix any bijection from  $\Sp(W_\A) \times \mu_2$ to $\Mp(W_\A)_\psi^{(2)}$.   What we actually have is only a partial analogue  of the Rao coordinates \eqref{rao coords}:  an injection  
\begin{equation}\label{semi rao coordinates}
 \left( \prod_{v\in\Sigma} \Sp(W_{F_v})  \right) \times \mu_2  \to \Mp(W_\A)_\psi ^{(2)} 
\end{equation}
for every \emph{finite} set of places $\Sigma$  of $F$, sending 
$(g  , t)  \mapsto \big( g , t \cdot r^{(2)} _{\psi,\Sigma }(g) \big) $.
\end{remark}


\subsection{Sweet's renormalization}
\label{ss:sweet coordinates}


The contents of this subsection will   be mentioned later in passing, but will not play a significant role in what follows.

Return to the setting of \S \ref{ss:adelization}, so that $F$ is a global field.
Fix a finite set $\Sigma$ of places of $F$ containing all archimedean places, all places at which the additive character $\psi$ is ramified, and all nonarchimedean places with residue characteristic $2$.

We have  pointed out in Remarks \ref{rem:adelic leray coordinates} and \ref{rem:adelic rao coordinates} that we have not fixed any bijections from  the adelic metaplectic groups
$\Mp(W_\A)_\psi$ and $\Mp(W_\A)_\psi^{(2)}$ to  the sets $\Sp(W_\A) \times \C^1$ and $\Sp(W_\A) \times \mu_2$, and as a result we don't have names for most elements of these   groups.  This is  more of an aesthetic shortcoming than an actual problem, but we explain in this subsection how Sweet  \cite{Sweet} addresses the issue.

As noted in the previous subsection, there is a version of Lemma \ref{lem:leray adelization} that holds  with $r_{\psi,v}$ replaced by $r_{\psi , v}^{(2)}$.  
 That is to say, for every place $v\not\in  \Sigma$, there is a function
 $\lambda_v^{(2)} : K_v^\circ \to \mu_2$ such that 
 \[
 r^{(2)}_{\psi,v} (k) \varphi_v^\circ = \lambda_v^{(2)}(k) \varphi_v^\circ
 \]
 for all $k \in K_v^\circ$.    Sweet \cite[Proposition 1.6.6]{Sweet} derives an explicit formula for this function, and uses his  formula to  extend $\lambda_v^{(2)}$ to a function
 \[
 \lambda_v^\sweet : \Sp(W_{F_v})  \to \mu_2 .
 \]
 For $v \in \Sigma$, set  $ \lambda_v^\sweet =1$.

 For any place $v$ and any $g_v\in \Sp(W_{F_v})$, we obtain a   renormalized operator 
 \[
 r^\sweet_{\psi,v} (g_v) = \lambda_v^\sweet (g_v)^{-1}  \cdot   r^{(2)}_{\psi,v} (g_v)  \in \Aut ( S(Y_{F_v}  )) .
 \]
By construction,  this operator  fixes the vector $\varphi_v^\circ$ whenever   $g_v\in K_v^\circ$.
 This last observation allows us to form, for an adelic point $g \in \Sp(W_\A)$,  the tensor product  
 \[
 r^\sweet_{\psi } (g)   = \otimes_v   r^\sweet_{\psi ,v} (g_v )    \in \Aut( S(Y_\A)) 
 \]
 over all places of $F$.   From this we may define a  bijection  of sets
 \[
 \Sp(W_\A) \times \mu_2  \iso  \Mp(W_\A)_\psi^{(2)}  
 \]
 by $(g,\epsilon ) \mapsto \big( g , \epsilon \cdot   r^\sweet_{\psi } (g) \big)$, and the same formula defines 
  \[
 \Sp(W_\A) \times \C^1  \iso  \Mp(W_\A)_\psi .
 \]
We call both bijections the  \emph{Sweet coordinates}.
From  the collection of operators $r^\sweet_\psi(g)$ we  can imitate  the definition of \eqref{leray cocycle}  to define the corresponding \emph{Sweet cocycle}  
\[
c^\sweet_\psi  : \Sp(W_\A) \times  \Sp(W_\A) \to \mu_2 ,
\]
which factors as a product of local cocycles.  
 
The above renormalization has the advantage that all elements of the adelic metaplectic groups now have names, via the Sweet coordinates.  
Moreover, the  warnings at the beginning of \S \ref{ss:adelization}  do not apply to Sweet's renormalized cocycles and operators.  After this renormalization, the adelic theory really is just the product of the everywhere-local theories.

There are also disadvantages to this  renormalization:
\begin{itemize}
\item The  extra  renormalization factors make it harder to keep track of explicit formulas for the Weil representation. 
If we use only the Leray and Rao coordinates, all the formulas we need are readily available in \cite{ThetaBook}.
  (Sweet's  normalizing factors $\lambda_v^\sweet$ are identically $1$ on the Siegel parabolic stabilizing $X \subset W$, so at least some formulas are unaffected.)  
\item 
The collection of normalizing factors $\{  \lambda_v ^\sweet \}_v $ is not really canonical.  
It genuinely depends not only on the choice of $\Sigma$, but also on the choice of basis $e_1,\ldots, e_d,f_1,\ldots, f_d \in W$ used in \S \ref{ss:adelization} to define the  compact open subgroups  $\{ K_v^\circ \}_v$.
Making a different choice of basis  will change finitely many members of the family $\{   \lambda_v^\sweet \}_v $, resulting in different Sweet coordinates.
This is in contrast to the partial Leray \eqref{semi leray coordinates} and partial Rao \eqref{semi rao coordinates} coordinates, which are  basis-independent.
\end{itemize}

For these reasons,  we have chosen to avoid Sweet's renormalization in what follows.  The penalty for this choice is that some adelic statements, most notably Proposition \ref{prop:so adelization}, are not as clean as one might like.
 

\section{The infinitesimal Weil representation}
\label{s:infinitesimal weil}


In this section we work with a fixed  polarized  symplectic space $W=X\oplus Y$ over the field $F=\R$.
The symplectic form is denoted $\langle - , - \rangle$ as before, and we  take the usual 
\[
\psi(x) = e^{2 \pi i x}
\]
for our additive character $\psi : \R \to \C^1$.

Our goal is to describe the infinitesimal form of the Weil representation of $\Mp(W)_\psi$ on $S(Y)$, that is,  the induced action of the real Lie algebra on the same space.
Although \cite{Folland} is the most complete reference, our exposition is closer to   \cite{KM4},  \cite[Appendix A]{FunkeMillson}, and \cite{Adams}.  See also \cite{Howe-remarks} and \cite{Howe-trans}.


\subsection{Modules over the Weyl algebra}


There is a general method of constructing representations of the symplectic Lie algebra $\mathfrak{sp}(W)$ from modules over an associative $\C$-algebra, called the Weyl algebra (or quantum algebra).

\begin{definition}\label{def:weyl algebra}
The \emph{Weyl algebra}  $\mathcal{W}$  is the quotient of the complex tensor algebra 
\[
W_\C^{\otimes}  \define  \bigoplus_{\ell \ge 0} W_\C^{\otimes \ell} 
\]
 by the two-sided ideal generated by the subset 
 \[
\{  w_1 \otimes w_2 - w_2 \otimes w_1 - 2\pi i  \langle w_1,w_2\rangle  : w_1,w_2 \in W_\C \} .
 \]
\end{definition}

Suppose $M$ is a $\C$-vector space, and we are given an $\R$-linear map
\[
\rho : W \to \End_\C(M) 
\]
satisfying the commutator relations
\begin{equation}\label{Weyl module relations}
\rho(w_1) \circ \rho(w_2) - \rho(w_2) \circ \rho(w_1) = 2\pi i  \langle w_1,w_2\rangle .
\end{equation}
The function  $\rho$ extends uniquely to a $\C$-algebra homomorphism $W_\C^\otimes \to \End_\C(M)$, which then factors through a homomorphism  
\[
\rho : \mathcal{W} \to \End_\C(M) .
\]
This makes  $M$ into a left $\mathcal{W}$-module.

We  make the space of Schwartz functions $S(Y)$ into a left $\mathcal{W}$-module as follows:   For each $e \in X$, define an  endomorphism of $S(Y)$ by 
\[
 \rho (e)  \varphi   =  2\pi i  \langle e , \cdot  \rangle \,  \varphi .
\]
 For each  $ f \in Y$, define an  endomorphism of $S(Y)$ by
 \[
 \rho  (f)  \varphi = - D_f\,  \varphi ,
 \]
where $D_f$ is  the  usual directional derivative  in the direction $f$.  
 If we extend these definitions  $\R$-linearly  to  $\rho: W \to \End_\C(S(Y))$, the commutator relations  \eqref{Weyl module relations} are satisfied, and  we obtain a $\C$-algebra map
 \begin{equation}\label{Weyl module}
\rho :  \mathcal{W} \to \End_\C (S( Y ) ) .
 \end{equation}

 \begin{remark}\label{rem:weyl action}
 To be completely explicit,  choose    bases  
 \[
 e_1,\ldots, e_d \in X  \qquad \mbox{and} \qquad f_1 ,\ldots, f_d \in Y
 \]
 in such a way that  $\langle e_j,f_k\rangle = \delta_{jk}$, and  identify  $\R^d \iso Y$ by 
 $(y_1,\ldots, y_d) \mapsto y_1 f_1+\cdots+ y_d f_d$. 
 Identifying vectors in $W$ with their images in the Weyl algebra $\mathcal{W}$, our basis elements act on 
  $S(\R^d) \iso S(Y)$   through the differential operators
 \[
 \rho(e_j) =  2 \pi i  y_j  \qquad \mbox{and} \qquad \rho (f_j) =-  \frac{ \partial }{ \partial y_j }.
 \]
 \end{remark}

Denote by  $\mathfrak{sp}(W)$  the real Lie algebra of $\Sp(W)$.
We claim that there is a natural  injection
   \begin{equation}\label{quantum iota}
\iota :  \mathfrak{sp}( W )  \to  \mathcal{W} 
 \end{equation}
 as Lie algebras; that is to say, satisfying
\[
 \iota ( [ x,y ] ) = \iota(x) \iota(y) -  \iota(y) \iota(x) 
 \]
 for all $x,y\in  \mathfrak{sp}(W)$.
To construct \eqref{quantum iota}, first define an $\R$-linear bijection
\begin{equation}\label{sym2sp}
 \Sym^2( W) \iso \mathfrak{sp}(W) ,
 \end{equation}
 by sending a symmetric tensor $w_1  w_2 \in  \Sym^2( W)$  to the endomorphism of $W$ defined by 
$
w \mapsto  \langle w_1,w\rangle  w_2 + \langle w_2 , w\rangle  w_1 .
$
Next,  define an $\R$-linear injection  
\begin{equation}\label{Sym2Weyl}
  \Sym^2( W ) \to   W_\C^\otimes  \to  \mathcal{W}
 \end{equation}
    by 
 \[
  w_1 w_2  \mapsto   \frac{ 1 }{4\pi i } ( w_1 \otimes w_2 + w_2\otimes w_1 )
  = \frac{1}{2\pi i}  w_1 \otimes w_2 - \frac{1}{2}  \langle  w_1 , w_2  \rangle  .
\]
 The Lie algebra map \eqref{quantum iota} is  defined by composing  \eqref{sym2sp} and \eqref{Sym2Weyl}.

\begin{remark}
The injection \eqref{quantum iota} induces a homomorphism from the universal enveloping algebra of $\mathfrak{sp}(W)$ to the Weyl algebra $\mathcal{W}$,  but this is neither injective nor surjective.
The Weyl algebra could be viewed as an approximation to  the universal enveloping algebra,  particularly well-suited to studying the Weil representation.
\end{remark}

 Using  \eqref{quantum iota},  any left $\mathcal{W}$-module can  be viewed as  a Lie algebra representation of  $\mathfrak{sp}(W)$.  In particular, this applies  to the $\mathcal{W}$-module \eqref{Weyl module}.

\begin{definition}\label{def:Lie Schrodinger}
The  \emph{infinitesimal  Weil representation} is the Lie algebra representation 
 \[
 \omega^\mathrm{inf} = \rho \circ \iota :  \mathfrak{sp}(W)   \to \End_\C(S( Y ) ) 
 \]
 obtained by composing \eqref{Weyl module} and \eqref{quantum iota}.
 \end{definition}

Recall  the metaplectic cover 
\[
1 \to \C^1 \to \Mp(W)_\psi \to  \Sp(W ) \to 1 
\]
from \S \ref{s:weil representation}.     Its real Lie algebra 
\begin{equation}\label{mp lie}
\mathfrak{mp}(W) =  \mathfrak{sp}(W) \oplus i\R 
\end{equation}
is reductive with center $i\R$.  

The appeal  of Definition  \ref{def:Lie Schrodinger} is that the action of $ \mathfrak{sp}(W)$ on $S(Y)$ is through very explicit differential operators, in contrast to the slightly unwieldy  integral formula \eqref{big weil} underlying the action of the metaplectic group $\Mp(W)_\psi$ on the same space via the Weil representation $\omega_\psi$ of 
\S \ref{s:weil representation}.  The following theorem explains the precise connection between the two.

\begin{theorem} \label{thm:inf compatibility}
Under the Weil representation $\omega_\psi$  of $\Mp(W)_\psi$ on $L^2(Y)$,  the  Schwartz functions $S(Y)$ are  precisely the $C^\infty$ vectors; that is to say, those vectors $\varphi \in L^2(Y)$ for which the orbit map 
\[
\Mp(W)_\psi \map{  g  \to \omega_\psi(  g ) \varphi }  L^2(Y)
\]
 is infinitely differentiable.
Under the induced differentiated action of the Lie algebra \eqref{mp lie} on $S(Y)$, 
 the  first  summand in \eqref{mp lie} acts through the infinitesimal Weil representation 
$\omega^\mathrm{inf}$ from 
Definition \ref{def:Lie Schrodinger}, and 
the second summand acts through the inclusion  $i\R \to \C$.
\end{theorem}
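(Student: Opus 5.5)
The plan is to check the Lie algebra compatibility on a generating set of one-parameter subgroups, using Rao's formula \eqref{big weil}, and to obtain the smooth vectors from the standard theory of the harmonic oscillator. Fix bases $e_1,\ldots,e_d$, $f_1,\ldots,f_d$ as in Remark \ref{rem:weyl action}. Via \eqref{sym2sp}, $\mathfrak{sp}(W)$ is spanned by three families:
\begin{itemize}
\item the elements $e_je_k$, which span the nilradical $\mathfrak n$ of the Siegel parabolic and map $Y\to X$;
\item the elements $e_jf_k$, which span $\mathfrak{gl}(Y)$, the Levi factor;
\item the elements $f_jf_k$, which span the opposite nilradical $\bar{\mathfrak n}$.
\end{itemize}
These span $\mathfrak{sp}(W)$. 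The Lie subgroups $N$, $M\iso \GL(Y)$ and $\bar N$ they generate act on $S(Y)$ through explicit formulas read off from \eqref{big weil}.

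First I would treat $N$ and $M$. For $n(\beta)\in N$ with $\beta:Y\to X$ symmetric, we have $c=0$. Formula \eqref{big weil} then becomes multiplication by $\psi$ of a quadratic form in $y$, so $t\mapsto r_\psi(\exp tX)$ is an honest one-parameter group: the Leray cocycle is trivial on $N$. Differentiating at $t=0$ gives multiplication by $\pi i\langle \beta y,y\rangle$ up to sign conventions. On the other side, $\rho\iota(e_je_k)=\frac{1}{2\pi i}(2\pi i y_j)(2\pi i y_k)$ is $2\pi i\,y_jy_k$, and I would check that the two agree with the factor $2$ from \eqref{sym2sp}. For $m(a)$ with $a\in\GL(Y)$, \eqref{big weil} gives $\varphi\mapsto |\det a|^{1/2}\varphi(a^{-1}y)$ up to a constant. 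On the identity component the $8$th-root-of-unity constant is locally constant, hence $1$ near the identity after tracking $m_\psi$ or the cocycle. The derivative is then $-\sum a_{jk}y_k\partial_j-\frac12\mathrm{tr}(a)$. On the Weyl side, $\frac{1}{2\pi i}\rho(e_j)\rho(f_k)-\frac12\delta_{jk}=-y_j\partial_k-\frac12\delta_{jk}$, which matches; in particular the $-\frac12\langle w_1,w_2\rangle$ correction in \eqref{Sym2Weyl} is exactly the $|\det|^{1/2}$ term.

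The $\bar N$ family I would handle by conjugation. The element $\sigma$ with $e_j\mapsto -f_j$, $f_j\mapsto e_j$ acts by the Fourier transform, and $\sigma N\sigma^{-1}=\bar N$. Both sides intertwine with conjugation by $\sigma$: on the group side this is $\omega(\sigma)\omega(\exp X)\omega(\sigma)^{-1}$, and on the Weyl side $\mathcal F$ interchanges $2\pi i y_j$ and $\pm\partial_j$, consistent with $\sigma$ acting on $W$ in $\rho$. So the $\bar N$ derivatives are $\mathcal F^{-1}$ of multiplication operators, which is the constant-coefficient operator $\frac{1}{2\pi i}\partial_j\partial_k$. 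Since the Lie algebra map is linear and the formulas hold on the spanning set, the differentiated action equals $\omega^{\mathrm{inf}}$ on $\mathfrak{sp}(W)$. The central $\C^1$ acts by scalars, so $i\R$ acts through the inclusion.

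The real obstacle is the identification of the smooth vectors, together with the claim that the group action restricted to the identity component is genuinely a representation with these derivatives. The phases are only defined through the discontinuous Leray cocycle, so I would work with the connected Lie group $\Mp(W)_\psi$ and its natural topology. For the smooth vectors, the approach is to use the maximal compact subgroup. The element $\sum_j(e_j^2+f_j^2)$ maps to $\frac{1}{2\pi i}\bigl(-4\pi^2|y|^2+\Delta\bigr)$, a multiple of the harmonic oscillator. Since $\rho$ is essentially skew-adjoint on $S(Y)$ and Hermite functions are analytic vectors for it, Nelson's theorem shows that $S(Y)$ consists of $C^\infty$ vectors. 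For the converse, a $C^\infty$ vector lies in the domain of every power of the oscillator, as well as of $y_j$ and $\partial_j$ coming from $\mathfrak n$ and $\bar{\mathfrak n}$. That domain is exactly $S(Y)$, by the characterization of Schwartz space through Hermite coefficients of rapid decay. For the analytic details I would cite Folland \cite{Folland}.
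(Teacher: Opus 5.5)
Your proposal is correct in outline, but it is not the paper's route. The paper gives no argument of its own: it cites \cite[VIII.1.11]{BorelWallach} for the identification of the $C^\infty$ vectors with $S(Y)$, and calls the Lie algebra formula a restatement of \cite[(4.45)]{Folland} ``in different language''.

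Your generator-by-generator differentiation of \eqref{big weil} performs the translation of conventions that the citation leaves to the reader, and your constants check out in the paper's normalizations:
\begin{itemize}
\item the Leray lift of $n(\beta)$ acts by multiplication by $\psi(\frac12\langle \beta y,y\rangle)$, so $e_je_k$ differentiates to $2\pi i\,y_jy_k$;
\item $e_jf_k$ differentiates to $-y_j\partial_k-\frac12\delta_{jk}$, and the term $-\frac12\langle w_1,w_2\rangle$ in \eqref{Sym2Weyl} is exactly the unitary normalization of the measure;
\item conjugation by the Fourier transform handles $\bar{\mathfrak n}$, and this is legitimate because $\mathrm{Ad}(\tilde\sigma)$ preserves both summands of \eqref{mp lie}.
\end{itemize}
What your route buys is an honest check of the normalizations in exactly these conventions. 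It does not buy independence from the analytic literature, since the smooth-vector statement still rests on Nelson's theorem and the Hermite description of $S(Y)$. Two small corrections: the unitary formula on the Levi is $|\det a|^{-1/2}\varphi(a^{-1}y)$, which is what your derivative actually corresponds to. Also, no root of unity ever appears there: with $b=c=0$ the operator \eqref{big weil} carries no phase, so $a\mapsto (m(a),1)$ is already a continuous homomorphism into $\Mp(W)_\psi$.

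Three points need tightening.

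First, the curves you differentiate are one-parameter subgroups of $\Mp(W)_\psi$ whose generators have the form $(X,c(X))\in\mathfrak{sp}(W)\oplus i\R$, whereas the theorem concerns $(X,0)$. What your computation really proves is $d\omega_\psi(X,0)=\rho\iota(X)-c(X)$ with $c$ linear. Nothing inside $\mathfrak{gl}(Y)$ forces $c=0$, since a multiple of the trace is a character of $\mathfrak{gl}(Y)$. However, both $X\mapsto d\omega_\psi(X,0)$ and $\rho\circ\iota$ are Lie algebra homomorphisms on $S(Y)$, so $c$ vanishes on $[\mathfrak{sp}(W),\mathfrak{sp}(W)]=\mathfrak{sp}(W)$. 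This is the same perfectness remark the paper uses for \eqref{orthogonal-symplectic lie diagram}; you should state it explicitly.

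Second, $\mathfrak n$ and $\bar{\mathfrak n}$ do not produce the first-order operators $y_j$ and $\partial_j$; those belong to the Heisenberg Lie algebra. They produce only $2\pi i\,y_jy_k$ and $\frac{1}{2\pi i}\partial_j\partial_k$. This is harmless, because for $Z=\sum_j(e_j^2+f_j^2)$ the oscillator alone already gives $\bigcap_k D\big(\overline{\rho\iota(Z)}^{\,k}\big)=S(Y)$ via Hermite coefficients.

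Third, saying that Hermite functions are analytic vectors only shows that the Hermite functions themselves are smooth, and smooth vectors are not closed in $L^2$. To get all of $S(Y)$, use Nelson's description of the $C^\infty$ vectors as $\bigcap_k D(\overline{\Delta_N}^{\,k})$, where $\Delta_N$ is the Nelson Laplacian. Your one-parameter computations show that $S(Y)$ lies in the domain of each Stone generator, with values $\rho\iota(X_i)$ (up to the central constant) that preserve $S(Y)$. A short adjointness argument then gives $S(Y)\subset D(\overline{\Delta_N}^{\,k})$ for all $k$. This inclusion has to come first: the converse identifies the generator of the oscillator subgroup with the closure of $\rho\iota(Z)|_{S(Y)}$ via essential skew-adjointness, and that identification uses $S(Y)\subset C^\infty$.
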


The first claim of Theorem \ref{thm:inf compatibility} is \cite[VIII.1.11]{BorelWallach}. 
The second is a restatement of \cite[(4.45)]{Folland} in different language.  
A form of the theorem  appears  in work of Howe,  spread across   \cite{Howe-theta},  \cite{Howe-remarks}, and  \cite{Howe-trans}.


\subsection{The vacuum vector}
\label{ss:vacuum vector}


Fix  $J \in \Sp(W)$ satisfying $J^2=-1$.   
Denote by  $W_J$ the complex vector space whose underlying real vector space is $W$,  with $i \in \C$ acting via $i\cdot w=J w$.
It is easy to check that  
\[
H_J( w_1 , w_2 ) =   \langle w_1 , J w_2 \rangle - i \langle w_1,w_2 \rangle
\]
defines   a nondegenerate Hermitian form on $W_J$, and we  assume that $J$ is chosen so that  it is positive definite.  This ensures that the  centralizer 
\[
K_J = \{ g \in \Sp(W) : g  J = J g \} \iso \mathrm{U}( W_J ) 
\]
is a maximal compact subgroup of $\Sp(W)$.  
The centralizer has a distinguished character
\begin{equation}\label{chi_J}
\chi_J : K_J \to \C^1,
\end{equation}
defined as the  \emph{inverse} of the determinant of the action  of $K_J$ on $W_J$.

\begin{definition}\label{def:vacuum}
The \emph{vacuum vector} $\varphi^\circ_J \in S(Y)$ is 
\[
\varphi^\circ_J(y) =  e^{- \pi  \langle y , J y \rangle } .
\]
In other words, it is the restriction to $Y \subset W $ of the Schwartz function 
\[
\varphi^\circ_J (w) =  e^{   - \pi   H_J(w,w) } .
\]
\end{definition}


Denote by  
 $
 \mathfrak{k}_J \subset \mathfrak{sp}(W)
 $
  the real Lie algebra of $K_J$. 
Differentiating $\chi_J$  yields a linear functional $ \chi_J^{\mathrm{inf}}: \mathfrak{k}_J \to i\R$, defined by the commutativity of 
\begin{equation}\label{symplectic trace}
\begin{tikzcd}
{  \mathfrak{k}_J   }  \ar[r , " \chi_J^{\mathrm{inf}}"  ]   \ar[d, " \exp" ' ]  & { i \R  } \ar[d, " \exp"  ]  \\
{  K_J  }  \ar[r , " \chi_J " ]  & { \C^1 }  .
\end{tikzcd}
\end{equation}

Denote by $\tilde{K}_J$ the preimage of $K_J$ under the covering map 
 $\Mp(W)_\psi \to \Sp(W)$.
Recalling \eqref{mp lie}, the  Lie algebra of  $\tilde{K}_J$  is  the subalgebra 
\[
 \mathfrak{k}_J\oplus i\R  \subset \mathfrak{sp}(W) \oplus i\R .
 \]

\begin{proposition}\label{prop:vacuum eigenvector} 
Under the restriction of the Weil representation,  the subgroup  $\tilde{K}_J\subset \Mp(W)_\psi$  acts on the vacuum vector $\varphi_J^\circ$ through the unique character whose differential
$
\mathfrak{k}_J  \oplus i\R  \to i \R 
$
restricts to $\frac{1}{2} \chi_J^\mathrm{inf}$ on the first  summand, and to  the identity on the second.
\end{proposition}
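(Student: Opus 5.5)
The plan is to reduce the group statement to a Lie algebra computation, and then to check the Lie algebra statement in coordinates using the explicit differential operators of Remark \ref{rem:weyl action}.

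\emph{Reduction to the Lie algebra.} The group $\tilde{K}_J$ is connected: $K_J \iso \mathrm{U}(W_J)$ is connected, and $\tilde{K}_J$ is a $\C^1$-bundle over it sitting inside the connected Lie group $\Mp(W)_\psi$. It is therefore generated by the image of the exponential map of $\mathfrak{k}_J \oplus i\R$. By Theorem \ref{thm:inf compatibility}, $\varphi^\circ_J \in S(Y)$ is a smooth vector, so the differentiated action is available. It suffices to show that for every $X \in \mathfrak{k}_J$,
\[
\omega^{\mathrm{inf}}(X)\varphi_J^\circ = \tfrac12 \chi_J^{\mathrm{inf}}(X)\, \varphi_J^\circ ,
\]
and that $i\R$ acts on $\varphi^\circ_J$ by the identity. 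The latter holds by Theorem \ref{thm:inf compatibility}.

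Given this, for each $X$ the curve $t\mapsto \omega_\psi(\exp tX)\varphi_J^\circ$ satisfies the ODE $\frac{d}{dt}v(t)=\omega_\psi(\exp tX)\,\omega^{\mathrm{inf}}(X)\varphi^\circ_J = \lambda(X) v(t)$. Hence $v(t)=e^{t\lambda(X)}\varphi^\circ_J$, and the line $\C\varphi_J^\circ$ is stable under $\tilde K_J$. Because the action is unitary, $\tilde{K}_J$ acts on this line through a unitary character. A character of a connected Lie group is determined by its differential, which gives uniqueness.

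\emph{The Lie algebra computation.} Choose a symplectic basis $e_j, f_j$ as in Remark \ref{rem:weyl action}, adapted to $J$ so that $Jf_j = e_j$ and $Je_j=-f_j$. Possibly after conjugating by an element of the Siegel parabolic, we may assume $\langle y,Jy\rangle = \sum y_j^2$ and $\varphi^\circ_J = e^{-\pi\sum y_j^2}$. The general case follows because $\Sp(W)$ acts transitively on such $J$, and one can transport the statement by equivariance. Alternatively, pick the basis from the start using the positive-definite form $\langle w, Jw\rangle$.

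Under \eqref{sym2sp}, $\mathfrak{k}_J$ is the image of the $J$-invariant part of $\Sym^2(W)$, which is spanned by
\[
e_je_k + f_jf_k \qquad \text{and} \qquad e_jf_k - e_kf_j .
\]
Using \eqref{Sym2Weyl} and Remark \ref{rem:weyl action}, these act by explicit operators:
\begin{itemize}
\item $e_je_k+f_jf_k$ acts by $\frac{1}{4\pi i}\bigl((2\pi i)^2 2y_jy_k + 2\partial_j\partial_k\bigr)$;
\item $e_jf_k - e_kf_j$ acts by $\frac{1}{4\pi i}\bigl(-2\pi i(y_j\partial_k+\partial_k y_j) + 2\pi i(y_k\partial_j+\partial_j y_k)\bigr)$.
\end{itemize}
Applying these to the Gaussian is direct. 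The rotation generators annihilate $\varphi_J^\circ$. For $j\neq k$, the term $\partial_j\partial_k\varphi = 4\pi^2y_jy_k\varphi$ cancels $-8\pi^2 y_jy_k\varphi/2$ up to constants, so these also annihilate it. The diagonal elements $e_j^2+f_j^2$ give $\frac{1}{2\pi i}\bigl(-4\pi^2 y_j^2 + 4\pi^2y_j^2 - 2\pi\bigr)\varphi = i\varphi$. Each of these is a scalar.

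It remains to identify the scalar with $\frac12\chi_J^{\mathrm{inf}}$. This means computing the endomorphism $w\mapsto 2\langle e_j,w\rangle e_j + 2\langle f_j,w\rangle f_j$ attached to $e_j^2+f_j^2$. It acts on the complex line spanned by $f_j$ in $W_J$ as $\pm 2J$, i.e.\ multiplication by $\pm 2i$, and it kills the other lines. The trace of its complex-linear action is therefore $\pm 2i$, and $\chi_J^{\mathrm{inf}} = -\mathrm{tr}_\C$ gives $\mp 2i$. Half of this should match the eigenvalue $i$. Off-diagonal $J$-invariant elements have complex trace $0$, matching the eigenvalue $0$.

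\emph{Expected obstacle.} The main difficulty I foresee is bookkeeping of signs and factors of $2\pi i$: the normalizations of \eqref{sym2sp}, \eqref{Sym2Weyl}, $\rho(f)=-D_f$, and the convention that $\chi_J$ is the \emph{inverse} determinant must all line up. I would first carry out the whole computation carefully for $d=1$, where $\mathfrak{k}_J$ is one-dimensional, and fix the sign of $J$ relative to $e,f$ from the positivity of $H_J$. Since every element of $\mathfrak k_J$ is conjugate under $K_J$ to a diagonal (torus) element, the general case then reduces to that torus, where the computation splits as a sum of $d=1$ cases.
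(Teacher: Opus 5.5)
Your route is the paper's. You reduce, via Theorem~\ref{thm:inf compatibility} and the connectedness of $\tilde K_J$, to $\omega^{\mathrm{inf}}(Z)\varphi^\circ_J=\frac12\chi_J^{\mathrm{inf}}(Z)\varphi^\circ_J$ for $Z\in\mathfrak{k}_J$, and then check this on a spanning set using Remark~\ref{rem:weyl action}. Your spanning set is exactly the real and imaginary parts of the paper's $Z_{jk}=(e_je_k+f_jf_k)+i(e_jf_k-e_kf_j)$. Your operator formulas and Gaussian computations are correct: eigenvalue $i$ for $e_j^2+f_j^2$, and $0$ for the others. Your ODE argument also spells out the Lie-algebra-to-group step that the paper only asserts.

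\textbf{The gap.} It lies in the one step with content beyond calculus: matching the eigenvalue with $\frac12\chi_J^{\mathrm{inf}}$. You leave the sign as $\pm$, and with the $J$ you actually wrote down it comes out wrong.
\begin{itemize}
\item If $Jf_j=e_j$ and $Je_j=-f_j$, then $\langle f_j,Jf_j\rangle=-1$. So $H_J$ is negative definite and $e^{-\pi\langle y,Jy\rangle}=e^{\pi\sum_j y_j^2}$, contradicting your normalization.
\item Under that $J$, the endomorphism attached to $e_j^2+f_j^2$ (namely $e_j\mapsto -2f_j$, $f_j\mapsto 2e_j$) is $+2J$ on $\mathrm{Span}\{e_j,f_j\}$. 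This gives $\frac12\chi_J^{\mathrm{inf}}(e_j^2+f_j^2)=-i\neq i$.
\item The admissible choice is the paper's $Je_j=f_j$, $Jf_j=-e_j$. The same endomorphism is then $-2J$, i.e.\ multiplication by $-2i$ on the line $\C e_j\subset W_J$ and zero on the other lines. Hence $\chi_J^{\mathrm{inf}}(e_j^2+f_j^2)=2i$, and half of it is the eigenvalue $i$.
\item The remaining generators interchange $\C e_j$ and $\C e_k$, so they have complex trace $0$, matching eigenvalue $0$.
\end{itemize}

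\textbf{Two smaller points.}

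First, your fallback of conjugating into a torus is circular. To get $\omega^{\mathrm{inf}}(\mathrm{Ad}(k)H)\varphi^\circ_J=\omega_\psi(\tilde k)\,\omega^{\mathrm{inf}}(H)\,\omega_\psi(\tilde k)^{-1}\varphi^\circ_J$ from the torus case, you need $\omega_\psi(\tilde k)^{-1}\varphi^\circ_J\in\C\varphi^\circ_J$. That is what is being proved, so the off-diagonal computations are indispensable.

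Second, ``transport by equivariance'' to $J=gJ_0g^{-1}$ produces the eigenvector $\omega_\psi(\tilde g)\varphi^\circ_{J_0}$. This is proportional to $e^{-\pi\langle y,Jy\rangle}$ when $JY=X$; then $g$ may be taken to preserve $X$ and $Y$, or equivalently you can pick the adapted basis directly as in your alternative. It fails in general. Take $d=1$, $Je=f+se$, $Jf=-(1+s^2)e-sf$ with $s\neq 0$. The relevant eigenline is spanned by the complex Gaussian $e^{-\pi(1-is)y^2}$, and $e^{-\pi\langle y,Jy\rangle}=e^{-\pi(1+s^2)y^2}$ is not an eigenvector. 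The paper's ``all $J$ are conjugate'' reduction has the same limitation. Both arguments really prove the statement for $J$ with $JY=X$, which covers its later uses, where $\bm{J}\bm{Y}=\bm{X}$.
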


Proposition \ref{prop:vacuum eigenvector} can be proved  by directly computing the action of the integral operators \eqref{big weil} on the vacuum vector. 
 This is the approach taken in \cite[Chapter I.9, Lemma 11]{Igusa}.
We will spend the rest of this subsection giving a different proof, using Theorem \ref{thm:inf compatibility} to reduce the problem to showing that the vacuum vector is a solution to a concrete system of differential equations.
The argument serves  as a prototype for the more involved calculations lying ahead in \S \ref{ss:orthogonal poly weights}  and \S \ref{ss:unitary poly weights}.

\begin{remark}
As soon as $\dim(W) > 2$,  the span of the vacuum vector is the only line in  $L^2(Y)$ that is stable under the action of  $\tilde{K}_J$.  See \cite[Chapter I.9, Theorem 8]{Igusa}.
When  $\dim(W) =2$ there are many lines in  $L^2(Y)$ that are stable under the action of $\tilde{K}_J$, but the span of the vacuum vector is the unique one on which $\tilde{K}_J$ acts through the character specified in Proposition \ref{prop:vacuum eigenvector}.    In either case, Proposition \ref{prop:vacuum eigenvector} characterizes the vacuum vector up to scaling. 
\end{remark}

All possible choices of $J \in \Sp(W)$ are conjugate, and so in order to prove Proposition \ref{prop:vacuum eigenvector} for all of them, it suffices to do so for one.
Choose a basis  
$
e_1,\ldots, e_d , f_1,\ldots, f_d\in W
$
  as in Remark \ref{rem:weyl action}, and take the complex structure $J$ defined by 
\begin{equation}\label{standard J}
J e_j =f_j  \qquad \mbox{and} \qquad  J f_j = - e_j .
\end{equation}
Under the identification   $Y \iso \R^d$  of Remark \ref{rem:weyl action},
the vacuum vector $\varphi^\circ = \varphi_J^\circ \in S(Y)$  of Definition \ref{def:vacuum} is identified with  the Gaussian
\begin{equation}\label{coordinate vacuum}
\varphi^\circ(y_1 ,\ldots, y_d ) = e^{- \pi   ( y_1^2 + \cdots + y_d^2)  } .
\end{equation}
Abbreviate $K=K_J$ for the centralizer of $J$ in $\Sp(W)$, and $\chi=\chi_J$ for the character  \eqref{chi_J}.

\begin{remark}
Our basis identifies  $W = \R^{2d}$, and so  identifies   $\Sp(W)$ with the matrix group
\[
\Sp_{2d}(\R) =  \left\{ 
g \in \GL_{2d}(\R) :  {}^t g  \begin{pmatrix}  & I_d \\  - I_d \end{pmatrix}  g = \begin{pmatrix}  & I_d \\  - I_d \end{pmatrix} 
\right\} .
\]
Depending on one's preference, one can  regard elements of $W$ as column vectors, in which case $g \in \Sp_{2d}(\R)$ corresponds to the symplectic automorphism $g(w) = g\cdot w$, or regard elements of $W$ as row vectors, in which case $g \in \Sp_{2d}(\R)$ corresponds to the symplectic automorphism $g(w) = w\cdot {}^t g$.  (In both cases the $\cdot$ is matrix multiplication).
Under either convention, the stabilizer  $K \subset \Sp(W)$ of $J$ is identified with the maximal compact subgroup
 \begin{equation}\label{standard K}
K = \left\{  \begin{pmatrix} A & B \\ -B & A \end{pmatrix} \in  \GL_{2d}(\R)   : A+iB \in \mathrm{U}(d)    \right\}  ,
\end{equation}
where $\mathrm{U}(d) = \{ g \in  \GL_d(\C) : \overline{g} = {}^t g ^{-1}  \}$ is the usual unitary group.
The  character  $\chi$ is given by 
 \begin{equation}\label{det chi}
\chi  \left( \begin{pmatrix} A & B \\ -B & A \end{pmatrix}  \right) = \det(A+iB). 
\end{equation}
These explicit descriptions will be needed later, but not immediately.
\end{remark}

Let $\mathfrak{k} \subset \mathfrak{sp}(W)$ be the real Lie algebra of $K$, and  let $\chi^\mathrm{inf} : \mathfrak{k} \to i \R$ be the  linear map defined by differentiating $\chi$, as in \eqref{symplectic trace}.
Denote in the same way its $\C$-linear extension to $\mathfrak{k}_\C \to \C$.

Using the compatibility (Theorem  \ref{thm:inf compatibility}) of the differentiated action of $\omega_\psi$ with the infinitesimal Weil representation $\omega^\mathrm{inf}$ of Definition \ref{def:Lie Schrodinger},  Proposition \ref{prop:vacuum eigenvector} 
 is equivalent to the equality 
\begin{equation}\label{Zeigenvacuum}
\omega^\mathrm{inf} ( Z ) \varphi^\circ = \frac{1}{2}  \cdot \chi^\mathrm{inf} ( Z ) \cdot \varphi^\circ 
\end{equation}
for all $Z \in \mathfrak{k}$.
This is what we will actually verify.

The path to doing this is clear.  We will  choose a  finite set of vectors that span $\mathfrak{k}_\C$.  
Each of our  chosen vectors $Z \in \mathfrak{k}_\C$  determines a differential operator $\omega^\mathrm{inf}(Z)$ on $S(\R^d)$, and a scalar $\chi^\mathrm{inf}(Z)$.  Once we have computed these for each $Z$ in our finite spanning set, \eqref{Zeigenvacuum} becomes a concrete system of  differential equations, and checking that $\varphi^\circ$ satisfies them is a calculus exercise.

\begin{lemma}\label{lem:basic Z_jk action}
As  $j,k \in \{1,\ldots, d\}$ vary, the vectors 
\begin{equation}\label{KZs}
Z_{jk} \define  (e_j  -  i  f_j  )  \cdot ( e_k  +  i f_k   )  \in \Sym^2(W_\C) 
\stackrel{ \eqref{sym2sp}}{\iso}\mathfrak{sp}(W_\C) 
\end{equation}
span the subalgebra $\mathfrak{k}_\C \subset  \mathfrak{sp}(W_\C)$, and satisfy
\[
\chi^\mathrm{inf} (Z_{jk})   = 2 i \delta_{jk} .
\]
\end{lemma}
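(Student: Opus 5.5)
The plan is to make the identification \eqref{sym2sp} completely explicit, and then to read off both claims from the decomposition of $W_\C$ into eigenspaces of $J$.

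\textbf{Step 1: $\mathfrak{k}$ is the space of $J$-invariant tensors.} Write $T_{w_1w_2}$ for the endomorphism $w\mapsto \langle w_1,w\rangle w_2+\langle w_2,w\rangle w_1$. Since $J$ is symplectic, a one-line check gives
\[
J\, T_{w_1w_2}\, J^{-1}=T_{Jw_1\,Jw_2}.
\]
Hence \eqref{sym2sp} is equivariant for conjugation by $J$ on $\mathfrak{sp}(W)$ and the induced action of $J$ on $\Sym^2(W)$. It follows that $\mathfrak{k}$, the centralizer of $J$, corresponds exactly to the $J$-invariant symmetric tensors. The same holds after complexifying.

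\textbf{Step 2: decompose $W_\C$ and identify $\mathfrak{k}_\C$.} Let $W_\C=W^+\oplus W^-$ be the $\pm i$-eigenspaces of $J$. From \eqref{standard J} one checks:
\begin{itemize}
\item $u_j=e_j-if_j$ satisfies $Ju_j=iu_j$, so $u_1,\ldots,u_d$ is a basis of $W^+$;
\item $\bar u_k=e_k+if_k$ satisfies $J\bar u_k=-i\bar u_k$, so $\bar u_1,\ldots,\bar u_d$ is a basis of $W^-$.
\end{itemize}
This gives
\[
\Sym^2(W_\C)=\Sym^2W^+\oplus W^+W^-\oplus\Sym^2W^-,
\]
and $J$ acts on the three summands by $-1$, $+1$, $-1$ respectively. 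So the $J$-invariant part is $W^+W^-$, which is spanned by the $d^2$ products $Z_{jk}=u_j\bar u_k$. By Step 1 these span $\mathfrak{k}_\C$. As a consistency check, $\dim_\R\mathfrak{k}=\dim\mathfrak{u}(d)=d^2$.

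\textbf{Step 3: compute $\chi^\mathrm{inf}$.} Since $\chi$ is the inverse of the complex determinant on $W_J$, differentiating gives $\chi^\mathrm{inf}(Z)=-\mathrm{tr}_\C(Z\mid W_J)$ for $Z\in\mathfrak{k}$. I will identify $W_J$ with $W^+$, $\C$-linearly, via
\[
w\mapsto \tfrac12(w-iJw).
\]
This map intertwines any real $T$ commuting with $J$ with the restriction of $T_\C$ to $W^+$. Therefore the $\C$-linear extension of $\chi^\mathrm{inf}$ to $\mathfrak{k}_\C$ is $Z\mapsto-\mathrm{tr}(Z\mid W^+)$.

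Next compute the pairings. Using $\langle e_j,f_k\rangle=\delta_{jk}$, I expect
\[
\langle u_j,u_l\rangle=0 \qquad\text{and}\qquad \langle \bar u_k,u_l\rangle=-2i\delta_{kl}.
\]
Hence $Z_{jk}(u_l)=-2i\delta_{kl}\,u_j$, so $\mathrm{tr}(Z_{jk}\mid W^+)=-2i\delta_{jk}$, and therefore $\chi^\mathrm{inf}(Z_{jk})=2i\delta_{jk}$.

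\textbf{Main obstacle.} The only real hazard is bookkeeping of signs and conventions. There are three places to be careful:
\begin{itemize}
\item $\chi$ is the \emph{inverse} determinant;
\item the isomorphism $W_J\cong W^+$ must be $\C$-linear, so one must use $W^+$ and not $W^-$;
\item the signs in the pairings $\langle \bar u_k,u_l\rangle$ must be computed correctly.
\end{itemize}
I would double-check the final answer in the case $d=1$. There $\mathfrak{k}$ is spanned by the generator of rotations, and its image under $\chi^\mathrm{inf}$ can be compared directly with \eqref{det chi}.
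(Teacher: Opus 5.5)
Your proposal is correct and follows essentially the same route as the paper: decompose $W_\C$ into $J$-eigenspaces, identify $\chi^{\mathrm{inf}}$ with $-\mathrm{Tr}$ on $W_\C^{J=i}$, and compute $Z_{jk}(e_\ell - i f_\ell) = -2i\delta_{k\ell}(e_j - i f_j)$. The only difference is how you get spanning. You use the $J$-equivariance of \eqref{sym2sp} and the weight decomposition of $\Sym^2(W_\C)$, which gives spanning and linear independence directly. The paper instead checks that each $Z_{jk}$ preserves both eigenspaces and leaves the spanning step implicit.
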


\begin{proof}
Extend $J$ to a $\C$-linear automorphism of $W_\C$, and decompose
\[
W_\C  =  W_\C^{J=-i}  \oplus W_\C^{J = i } 
  \]
into its $J$-eigenspaces. 
 The two summands  are spanned, as $\ell \in \{1,\ldots, d\}$ varies,  by the vectors $e_\ell + i f_\ell$ and $e_\ell - i f_\ell$, respectively.   

Our maximal compact subgroup $K \subset \Sp(W)$ is precisely the subgroup of elements that preserve the direct sum decomposition, and the character $\chi$ is the composition
\[
K \to \GL( W_\C^{J = i } ) \map{ \det^{-1} } \C^\times.
\]
Differentiating this, the linear functional $\chi^\mathrm{inf}$ is 
\[
\mathfrak{k}_\C \iso \mathfrak{gl}( W_\C^{J = i }  ) \map{  - \mathrm{Tr} } \C .
\]

The lemma  follows   by directly calculating the action of each $Z_{jk}$ on $W_\C$, which is by
\begin{align*}
Z_{jk} \cdot (  e_\ell + i f_\ell ) & =    2 i \delta_{j \ell }  \cdot (  e_k + i f_k ) \\ 
Z_{jk} \cdot (  e_\ell - i f_\ell ) & =  -2 i \delta_{k \ell }  \cdot (  e_j - i f_j )  .
\end{align*}
From these formulas it is clear that $Z_{jk}$ preserves the  decomposition of $W_\C$, so lies in $\mathfrak{k}_\C$, and acts on the summand $W_\C^{J=i}$ with trace $-2 i \delta_{jk}$.
\end{proof}

\begin{lemma}\label{lem:basic Kinf calc}
Under the infinitesimal Weil representation, the action of  \eqref{KZs}  on $S(\R^d) \iso S(Y)$ is as follows:
If $j \neq k$,  then 
\[
\omega^\mathrm{inf}( Z_{jk} )   = 
  2\pi i   \left(    y_j  +   \frac{1}{2 \pi}   \frac{\partial}{ \partial y_j }   \right)
 \left(   y_k -   \frac{1}{2\pi}  \frac{\partial}{ \partial y_k}   \right)   .
\]
If $j=k$,  we instead have 
\[
\omega^\mathrm{inf}( Z_{kk} )   = 
2 \pi i  y_k^2 + \frac{1}{ 2 \pi i }      \frac{\partial^2 }{ \partial y_k^2 }    .
\]
\end{lemma}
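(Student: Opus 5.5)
The plan is to compute $\omega^\mathrm{inf}(Z_{jk}) = \rho(\iota(Z_{jk}))$ directly from the definitions: first find the image of the symmetric tensor $Z_{jk}$ under \eqref{Sym2Weyl}, then apply the $\mathcal{W}$-module structure \eqref{Weyl module} using the explicit operators of Remark \ref{rem:weyl action}. Because $\omega^\mathrm{inf}$ is built as $\rho\circ\iota$ with $\iota$ the composite of \eqref{sym2sp} and \eqref{Sym2Weyl}, the element $Z_{jk}\in\Sym^2(W_\C)$ goes straight to the Weyl algebra by the ($\C$-linearly extended) formula
\[
w_1 w_2 \mapsto \frac{1}{2\pi i}\, w_1\otimes w_2 - \frac12 \langle w_1,w_2\rangle ,
\]
with $w_1 = e_j - i f_j$ and $w_2 = e_k + i f_k$. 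We never need the endomorphism of $W$ itself.

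The first step is the scalar term. Since $X$ and $Y$ are isotropic and $\langle e_j,f_k\rangle=\delta_{jk}$, we get
\[
\langle e_j - i f_j, e_k + i f_k\rangle = i\langle e_j,f_k\rangle - i\langle f_j,e_k\rangle = 2i\delta_{jk}.
\]
Hence $\iota(Z_{jk}) = \frac{1}{2\pi i}(e_j - i f_j)(e_k + i f_k) - i\delta_{jk}$ in $\mathcal{W}$.

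The second step is to apply $\rho$, using $\rho(e_j) = 2\pi i y_j$ and $\rho(f_j) = -\partial/\partial y_j$. This gives
\[
\rho(e_j - i f_j) = 2\pi i\Big(y_j + \frac{1}{2\pi}\frac{\partial}{\partial y_j}\Big),
\qquad
\rho(e_k + i f_k) = 2\pi i\Big(y_k - \frac{1}{2\pi}\frac{\partial}{\partial y_k}\Big).
\]
The product of the two prefactors is $(2\pi i)^2$, and dividing by $2\pi i$ leaves exactly $2\pi i$ times the composite of the two first-order operators. For $j\ne k$ the scalar term vanishes, which yields the first formula of the lemma.

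The third step, the case $j=k$, needs one more computation: expand the composite and handle the non-commuting terms. The expansion is
\[
\Big(y_k + \frac{1}{2\pi}\partial_k\Big)\Big(y_k - \frac{1}{2\pi}\partial_k\Big) = y_k^2 - \frac{1}{4\pi^2}\partial_k^2 + \frac{1}{2\pi}\big(\partial_k\circ y_k - y_k\partial_k\big),
\]
and the commutator $\partial_k\circ y_k - y_k\partial_k$ equals $1$. Multiplying by $2\pi i$ gives
\[
2\pi i y_k^2 - \frac{i}{2\pi}\partial_k^2 + i.
\]
The term $+i$ cancels the scalar $-i\delta_{kk}$ from the first step. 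Finally, $-\frac{i}{2\pi} = \frac{1}{2\pi i}$, which produces the stated formula.

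The only real obstacle is bookkeeping of signs and factors of $i$. The places to be careful are the sign convention $\rho(f)=-D_f$, the $\C$-linear extension of $\langle\,,\rangle$, and the order of composition, where $\rho(w_1)$ is applied after $\rho(w_2)$. A useful consistency check is that the cancellation of the constant in the case $j=k$ must occur, because $\iota$ is symmetric in $w_1,w_2$.
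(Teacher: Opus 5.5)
Your proposal is correct and follows the paper's proof: compute $\iota(Z_{jk})$ via \eqref{Sym2Weyl}, then apply $\rho$ using Remark~\ref{rem:weyl action}. The only difference is the case $j=k$: the paper first rewrites $Z_{kk}=e_k^2+f_k^2$ in $\Sym^2(W)_\C$, so that $\iota(Z_{kk})=\frac{1}{2\pi i}(e_k\otimes e_k+f_k\otimes f_k)$ has no constant term, whereas you keep the factored form and cancel the commutator $[\partial/\partial y_k, y_k]=1$ against the scalar $-\frac12\langle w_1,w_2\rangle=-i$; both routes give the same operator.
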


\begin{proof}
We   compute the action of each $Z_{jk}$ on $S(\R^d)$  by first  applying  the injection  \eqref{Sym2Weyl}  to obtain  $\iota(Z_{jk}) \in \mathcal{W}$, and then letting this element act on $S(\R^d) \iso S(Y)$ via the map $\rho$ of  \eqref{Weyl module}.

 If $j \neq k$,   then
 \[
\iota( Z_{jk} ) =   \frac{1}{ 2 \pi i } \cdot (e_j  -  i  f_j  )  \otimes  ( e_k  +  i f_k   )  \in \mathcal{W} .
\]
If  $j=k$, we instead  rewrite \eqref{KZs} as  $Z_{kk} = e_k^2 + f_k^2 \in \Sym^2(W)_\C$, and find 
\[
\iota( Z_{k k} ) =   \frac{1}{ 2 \pi i } \cdot ( e_k \otimes e_k  + f_k \otimes f_k   )  \in \mathcal{W}.
\]
In either case,  apply $\rho$ to both sides and  use   Remark \ref{rem:weyl action}.
\end{proof}

\begin{proof}[Proof of Proposition \ref{prop:vacuum eigenvector}]
Using  Lemma \ref{lem:basic Kinf calc},  a calculus exercise shows that the Gaussian \eqref{coordinate vacuum} satisfies 
\[
\omega^\mathrm{inf}( Z_{jk} ) \varphi^\circ = i \delta_{jk} \cdot  \varphi^\circ .
\]
On the other hand, Lemma \ref{lem:basic Z_jk action} implies 
\[
\frac{1}{2}  \cdot \chi^\mathrm{inf} ( Z_{jk} ) \cdot \varphi^\circ =
 i \delta_{jk} \cdot  \varphi^\circ  .
\] 
Varying $j$ and $k$  proves that \eqref{Zeigenvacuum} holds for all $Z\in \mathfrak{k}_\C$,   and we have already explained why this implies Proposition \ref{prop:vacuum eigenvector}.
\end{proof}


\subsection{The Fock model}


We end \S \ref{s:infinitesimal weil}  with a few words about the Fock model of the infinitesimal Weil representation. 
Although we will not use it directly, it  plays a central role in the calculations of Kudla-Millson \cite{KM4} and Funke-Millson \cite{FunkeMillson}.

Return to the setting of \S \ref{ss:vacuum vector},  fix a complex structure $J \in \Sp(W)$ of the type considered there, and decompose 
\begin{equation}\label{J-decomp}
W_\C  =  W_\C^{J=-i}  \oplus W_\C^{J = i }  
\end{equation}
as in the proof of Lemma \ref{lem:basic Z_jk action}.

\begin{definition}
Let $\mathcal{J} \subset \mathcal{W}$ be the left ideal generated by the image of $W_\C^{J = i }$ under the composition $W_\C \to W_\C^\otimes \to \mathcal{W}$.  
The \emph{Fock model} of the infinitesimal Weil representation is the left $\mathcal{W}$-module 
\[
M_J^\mathrm{Fock} = \mathcal{W}/ \mathcal{J},
\]
regarded as a representation of $\mathfrak{sp}(W)$ using \eqref{quantum iota}.
\end{definition}

Recalling the action \eqref{Weyl module},  one can show that the vacuum vector  in $S(Y)$ satisfies
$
\rho( \mathcal{J} ) \varphi_J^\circ =0,
$
and that  this relation determines the vacuum vector uniquely up to scaling.
 It follows  that there is a unique $\mathcal{W}$-module map 
 \[
M_J^\mathrm{Fock} \to S(Y)
\]
sending $1\mapsto \varphi_J^\circ$. As in \cite[Appendix A]{FunkeMillson}, this map is injective with image 
\begin{equation}\label{K-finite}
 \{ P\cdot \varphi^\circ_J : P \mbox{ is a polynomial function on } Y \}  .
\end{equation}
More or less tautologically,  it intertwines the action of $\mathfrak{sp}(W)$ on the Fock model  with its action on $S(Y)$ via the infinitesimal Weil representation $\omega^\mathrm{inf}$.

\begin{remark}
One usually identifies the Fock model with a space of polynomials, but \emph{not}  by identifying it with \eqref{K-finite} and dividing by the vacuum vector.
Each summand in \eqref{J-decomp} is totally isotropic with respect to the symplectic form on $W_\C$, and it follows that the composition 
\[
 W_\C^{J=-i} \hookrightarrow W_\C \to \mathcal{W}
 \]
  extends to a $\C$-algebra homomorphism from the full symmetric algebra 
$\Sym(  W_\C^{J=-i})$ to the Weyl algebra.
The composition 
\[
\Sym(  W_\C^{J=-i}) \to \mathcal{W} \to  \mathcal{W} / \mathcal{J} = M_J^\mathrm{Fock}
\]
is a bijection, and one uses this to identify   $M_J^\mathrm{Fock}$ with the space of polynomial functions on the $\C$-linear dual of $W_\C^{J=-i}$.  
\end{remark}


\section{Symplectic-orthogonal dual pairs}
\label{s:symplectic-orthogonal}


In this section we use the theory of symplectic-orthogonal reductive dual pairs in the sense of Howe \cite{Howe-theta} to embed a smaller metaplectic group into a larger one.   Using the restriction of   the Weil representation to the embedded subgroup, we  construct theta functions on the smaller adelic metaplectic group, and then convert them to classical theta functions on the Siegel half-space.


\subsection{The setup}
\label{ss:so setup}


Throughout \S \ref{s:symplectic-orthogonal} we work over  a global field $F$ of  characteristic $\mathrm{char}(F) \neq 2$, and abbreviate  $\A=\A_F$ for its ring of adeles.  
Fix  an additive character  $\psi : F \backslash \A \to \C^1$.

 Let $V$ be a finite dimensional vector space over $F$, endowed with a nondegenerate  symmetric bilinear form 
  $
  b:V\times V \to F . 
  $   
For a $d$-tuple $v=(v_1,\ldots, v_d) \in V^d$, write
\[
Q(v) = \left( \frac{ b(v_j,v_k)  }{2}  \right)_{ 1 \le j,k \le d } \in \Sym_d(F)
\]
for one-half of the matrix of  inner products of the components of $v$.

Let $W$ be a vector space over $F$ of dimension $2d$, endowed with a symplectic form $\langle - , -\rangle : W \times W \to F$.
Choose a basis $e_1,\ldots, e_d, f_1,\ldots, f_d \in W$ in such a way that $\langle e_j,f_k\rangle = \delta_{jk}$, and the subspaces 
\[
X=\mathrm{Span} \{ e_1,\ldots, e_d \}   \qquad \mbox{and} \qquad Y=\mathrm{Span} \{ f_1,\ldots, f_d \} 
\]
are totally isotropic.  We identify  $\Sp(W)$ with the  matrix group 
  \[
\Sp_{2d}(F) = \left\{ 
g \in \GL_{2d}(F) :  {}^t g  \begin{pmatrix}  & I_d \\  - I_d \end{pmatrix}  g = \begin{pmatrix}  & I_d \\  - I_d \end{pmatrix} 
\right\} 
\]
in the following way:
Use the chosen basis of $W \iso F^{2d}$ to write its elements as \emph{row}  vectors, and identify  a matrix $g\in \Sp_{2d} (F)$ with the symplectic automorphism  $g(w) = w \cdot {}^tg$ of $W$ (the  $\cdot$ on the right is usual matrix multiplication).

Combine $V$ and $W$ into a new   polarized symplectic space,  by endowing $\bm{W} = W \otimes V$
with the  symplectic form  
\[
\langle w_1 \otimes v_1 , w_2 \otimes v_2 \rangle_{\bm{W}}  
= \langle w_1,w_2 \rangle  \cdot b(v_1,v_2) 
\]
 and  the polarization $\bm{W} = \bm{X} \oplus \bm{Y}$ with  $ \bm{X} = X \otimes V$ and  $\bm{Y} = Y \otimes V$.  Identify 
 \[
 V^d \iso  \bm{Y} 
 \]
by  $(v_1,\ldots, v_d ) \mapsto f_1 \otimes v_1 + \cdots + f_d \otimes v_d$.
  There is a natural map 
 $\Sp(W) \times  \mathrm{O}(V)  \to\Sp(\bm{W})$,
and in particular an injective homomorphism
\begin{equation}\label{symplectic i_V}
i_V : \Sp(W) \to \Sp(\bm{W}) .
\end{equation}


\subsection{Lifting the embedding}

 
 Following Kudla \cite{KudlaSplitting}, but using the notation and conventions of \cite{ThetaBook}, 
we will upgrade    \eqref{symplectic i_V}  to a homomorphism  between metaplectic groups.
We will then  restrict  the Weil representation of the larger group to a representation of the smaller one, and use this to construct theta functions on the smaller metaplectic group. 
The first step is described in this subsection, and the second is described in the next.

For any place $v$ of $F$, we have 
\begin{itemize}
\item the Leray cocycle  $\bm{c}_{\psi,v} : \Sp(\bm{W}_{F_v}) \times \Sp(\bm{W}_{F_v}) \to \C^1$ from \eqref{leray cocycle}
\item
 the Rao cocycle $c_{\psi,v}^{(2)} : \Sp(W_{F_v}) \times \Sp(W_{F_v})  \to \mu_2$ from \eqref{rao cocycle}.
\end{itemize}
To relate them,   Kudla \cite{KudlaSplitting}  writes  down an explicit function 
 \begin{equation*}
 \beta_v=  \beta_{V,\psi , v }  : \Sp(W_{F_v} ) \to \mu_8 
 \end{equation*}
 in such a way that 
 \begin{equation}\label{orthogonal-symplectic splitting cocycle}
 \bm{c}_{\psi,v}   \big(  i_V(g_1)  ,  i_V(g_2) \big)  
  \cdot \frac{  \beta_v (g_1)  \beta_v (g_2) }{  \beta_v ( g_1g_2) } =
 c_{\psi,v} ^{(2)} ( g_1,g_2)^{ \dim(V) } 
 \end{equation}
 for all $g_1,g_2 \in \Sp(W_{F_v})$.
 See Case $1_{+}$ of   \cite[(11.3)]{ThetaBook} for the precise definition.
 
 \begin{remark}\label{rem:ThetaBook caveat}
Although \cite{ThetaBook} works over a  nonarchimedean local field,  the definition of $\beta_v$ given there makes sense for all  places $v$, and is the  correct one in both the archimedean  and  nonarchimedean  cases.
  Indeed, the original source \cite{KudlaSplitting} treats the archimedean and nonarchimedean cases simultaneously, and the formulas found there are agnostic to the distinction.  Similar remarks apply to \cite{Rao}.  We will generally cite  \cite{ThetaBook} for explicit local formulas, even at archimedean places,  because we follow its conventions and normalizations, which are different from those of  \cite{KudlaSplitting} and \cite{Rao}.
   \end{remark}

There is a commutative diagram
  \begin{equation}\label{symplectic-orthogonal local splitting diagram}
\begin{tikzcd}
{ \Sp(W_{F_v})  \times \mu_2  }  \ar[r]   \ar[d,  " \eqref{rao coords} "  ' ]     &  {  \Sp(W_{F_v})  \times \C^1 }  \ar[d,  " \eqref{leray coords} " ]    \\ 
 {  \Mp(W_{F_v})_\psi^{(2)} }  \ar[d] \ar[r, " i_{V,\psi} "]&  {  \Mp(\bm{W}_{F_v})_\psi  }  \ar[d] \\
{   \Sp(W_{F_v}) }  \ar[r , "i_V" ]   &  {   \Sp(\bm{W}_{F_v}) , } 
\end{tikzcd}
\end{equation}
in which the top horizontal arrow is 
\[
(g,\epsilon) \mapsto (  i_V( g ) , \epsilon^{\dim(V)} \beta_v(g) ) . 
\]
The arrow labeled  $i_{V,\psi}$ is defined by the commutativity of the diagram,  and  \eqref{orthogonal-symplectic splitting cocycle}  is precisely the condition that makes it a group homomorphism.

 These local homomorphisms can be assembled, as $v$ varies, into a homomorphism
 \begin{equation}\label{orthogonal-symplectic adelic lift}
i_{V,\psi} :  \Mp( W_\A )_\psi^{(2)}   \to   \Mp(\bm{W}_\A)_\psi ,
 \end{equation}
 but this requires some care.
 As  pointed out in  Remarks \ref{rem:adelic leray coordinates} and \ref{rem:adelic rao coordinates}, we have not chosen  any explicit coordinates on the source or target of this map, and so we do not have names for most of their elements.
 This forces us to characterize \eqref{orthogonal-symplectic adelic lift} in a slightly indirect way: by describing what it does to a dense subset of elements of the domain for which  we do have names.

 \begin{proposition} \label{prop:so adelization}
 There is a unique continuous homomorphism \eqref{orthogonal-symplectic adelic lift}  such that, for every finite set of places $\Sigma$ of $F$, the diagram 
 \[
 \begin{tikzcd}
 { \left( \prod_{v\in\Sigma} \Sp(W_{F_v})  \right) \times \mu_2 }    \ar[d, " \eqref{semi rao coordinates}"   ' ]   \ar[rr]      &  &  { \left( \prod_{v\in\Sigma} \Sp( \bm{W}_{F_v})  \right) \times \C^1 }  \ar[d , " \eqref{semi leray coordinates} " ]  \\ 
 { \Mp( W_\A )_\psi^{(2)}  } \ar[rr ,  " i_{V,\psi} "  ' ]    &   & {    \Mp(\bm{W}_\A)_\psi   }
 \end{tikzcd}
 \]
 commutes.  Here the top horizontal arrow is defined by 
 \[
 ( g  , \epsilon ) \mapsto \Big(  i_V(g) , \epsilon^{\dim(V) }\prod_{v\in \Sigma} \beta_v (g_v)  \Big) .
 \]
 Moreover,  for this homomorphism we have a commutative diagram
 \[
 \begin{tikzcd}
 {  \Sp(W) }    \ar[d, " \eqref{double splitting}"   ' ]   \ar[r , " i_V" ]        &  { \Sp( \bm{W}  )  }  \ar[d , " \eqref{canonical splitting}  " ]  \\ 
 { \Mp( W_\A )_\psi^{(2)}  } \ar[r ,  " i_{V,\psi} "  ' ]    &    {    \Mp(\bm{W}_\A)_\psi   .}
 \end{tikzcd}
 \]
 If $\dim(V)$ is even, then \eqref{orthogonal-symplectic adelic lift} factors through a continuous homomorphism
 \[
 \Sp( W_\A )  \to   \Mp(\bm{W}_\A)_\psi  .
 \]
 \end{proposition}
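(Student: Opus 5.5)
Uniqueness comes first. The images of the injections \eqref{semi rao coordinates}, as $\Sigma$ ranges over finite sets of places, exhaust the elements of $\Mp(W_\A)^{(2)}_\psi$ lying over the restricted-product subgroup of elements with trivial components outside some finite set. That subgroup is dense in $\Sp(W_\A)$, and its preimage is dense in $\Mp(W_\A)^{(2)}_\psi$. So a continuous homomorphism is determined by the diagrams in the statement.

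For existence I will work with the restricted product $\prod'_v \Mp(W_{F_v})^{(2)}_\psi$ and the surjection onto $\Mp(W_\A)^{(2)}_\psi$ (the $\mu_2$ analogue of \eqref{metaplectic product}). On each factor I take the local homomorphism $i_{V,\psi}$ from \eqref{symplectic-orthogonal local splitting diagram}. To assemble these into a continuous map of restricted products
\[
\prod\nolimits'_v \Mp(W_{F_v})^{(2)}_\psi \to \prod\nolimits'_v \Mp(\bm{W}_{F_v})_\psi ,
\]
I must check that for almost all $v$ the local map sends the compact open $K_v^\circ \subset \Mp(W_{F_v})^{(2)}_\psi$ (embedded via $k\mapsto(k,\lambda^{(2)}_v(k)^{-1} r^{(2)}_{\psi,v}(k))$) into $\bm{K}_v^\circ$ (embedded via $\bm\lambda_v^{-1}\bm r_{\psi,v}$). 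If we choose the basis of $\bm W$ to be $e_j\otimes u_i, f_j\otimes u_i$ for a basis $u_i$ of $V$ which is self-dual at almost all $v$, then $i_V(K_v^\circ)\subset \bm K_v^\circ$ and $\bm\varphi_v^\circ=(\varphi_v^\circ)^{\otimes\dim V}$. The compatibility then reduces to the identity
\[
\beta_v(k)\,\bm\lambda_v(i_V(k)) = \lambda_v^{(2)}(k)^{\dim V}, \qquad k\in K_v^\circ,
\]
for almost all $v$. This is the main obstacle. I would verify it on generators of $K_v^\circ$ (the integral Siegel parabolic and the Weyl element) using the explicit formulas for $\beta_v$ in \cite[(11.3)]{ThetaBook} and for $\lambda_v$ in \cite[Lemma 7.10]{ThetaBook}. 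Both sides are then characters modulo the unramified cocycles, so checking on generators suffices. Alternatively, I can note that both sides are characters of $K_v^\circ$ trivial on the integral unipotent radicals, hence trivial for odd residue characteristic.

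Next I show that the assembled map descends through the quotient maps to a map $\Mp(W_\A)^{(2)}_\psi\to\Mp(\bm W_\A)_\psi$. The kernel of the local-to-adelic surjection is generated by elements $(\epsilon_v)$ of central $\mu_2$'s with product $1$. Since $\epsilon\mapsto\epsilon^{\dim V}$ is multiplicative, these map into the corresponding kernel on the $\bm W$ side. The commutativity of the stated square for finite $\Sigma$ is then immediate from unwinding \eqref{semi rao coordinates} and \eqref{semi leray coordinates} in terms of the local Rao and Leray coordinates, together with the top arrow of \eqref{symplectic-orthogonal local splitting diagram}.

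For compatibility with the rational splittings, take $g\in\Sp(W)$ and a large $\Sigma$. The element $i_{V,\psi}(g)$ is $i_V(g)$ with operator $\prod_{v\in\Sigma}\beta_v(g)\,\bm r_{\psi,\Sigma}(i_V(g))$ after the normalizations. So I need $\prod_v\beta_v(g)=1$ for rational $g$, together with the fact that the Rao factors $m_{\psi,v}(g)$ have product $1$. This is a Weil-index product formula, as in Proposition~\ref{prop:leray reciprocity}: $\beta_v$ is built from Weil indices and Hilbert symbols of rational quantities, so I would check it on generators and use \eqref{orthogonal-symplectic splitting cocycle} with Proposition~\ref{prop:leray reciprocity}.

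Finally, if $\dim V$ is even, then $\epsilon^{\dim V}=1$, so the central $\mu_2$ lies in the kernel and the map factors through $\Sp(W_\A)$. Continuity of the factored map follows because the quotient map is open.
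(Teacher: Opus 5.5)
Your argument is correct. The paper does not prove this proposition itself. It cites Sweet's Proposition~2.4.2, which is stated in the renormalized coordinates of \S\ref{ss:sweet coordinates}, and only translates that statement into the partial Rao and Leray coordinates. Your route is a direct construction in the paper's own partial coordinates, and it makes explicit what Sweet's renormalization packages away. The divergence of $\prod_v\beta_v$ is absorbed by building the lift as a restricted product of the local Kudla lifts. The one local input is your identity $\beta_v(k)\,\bm\lambda_v(i_V(k))=\lambda_v^{(2)}(k)^{\dim V}$ on $K_v^\circ$ for almost all $v$; this is exactly what is needed for Sweet's renormalized local splittings to be trivial on $K_v^\circ$. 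The one global input is $\prod_v\beta_v(g)=1$ for $g\in\Sp(W)$. This parallels the outline the paper does give in the unitary case, where $\prod_v\beta_v(g)$ already converges and no $\lambda$-bookkeeping is needed. The citation buys brevity; your version is self-contained.

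A few points should be tightened.

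Neither side of your identity is by itself a character of $K_v^\circ$. The ratio $k\mapsto\beta_v(k)\,\bm\lambda_v(i_V(k))\,\lambda_v^{(2)}(k)^{-\dim V}$ is one. This holds because $\lambda_v^{(2)}$ and $\bm\lambda_v$ trivialize the Rao and Leray cocycles on $K_v^\circ$ and $\bm K_v^\circ$, and \eqref{orthogonal-symplectic splitting cocycle} relates those two cocycles.

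This ratio is trivial on $\{n(B):B\in\Sym_d(\co_v)\}$. That follows from Proposition~\ref{prop:symplectic-orthogonal weil formulas} and $\beta_v(n(B))=1$, using that the lattice spanned by the $u_i$ is self-dual at $v$ and $\psi_v$ is unramified. Being a character, the ratio is then also trivial on the conjugate of this subgroup by the order $4$ element $e_j\mapsto -f_j$, $f_j\mapsto e_j$ of $K_v^\circ$. These two unipotent subgroups generate $\Sp_{2d}(\co_v)$, so this proves the identity without case-checking.

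Similarly, the cocycle relation and the two reciprocity laws make $g\mapsto\prod_v\beta_v(g)$ a homomorphism $\Sp(W)\to\mu_8$. The product is finite by your identity together with the rational-point part of Lemma~\ref{lem:leray adelization} for $\lambda_v^{(2)}$ and $\bm\lambda_v$. This homomorphism is automatically trivial because $\Sp_{2d}(F)$ is perfect, so no generator check is needed.

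Finally, $e_j\otimes u_i,\ f_j\otimes u_i$ is not a symplectic basis in the sense of \S\ref{ss:adelization} unless one side uses the $b$-dual basis of $V$. This changes the lattices at only finitely many places, so it is harmless.
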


Proposition \ref{prop:so adelization} is a reformulation of \cite[Proposition 2.4.2]{Sweet}.
Sweet works systematically with the  alternate coordinates described in  \S \ref{ss:sweet coordinates}, and we have  translated his  statement into our preferred (partial) coordinates \eqref{semi leray coordinates} and \eqref{semi rao coordinates}. 
 Sweet's formulation makes it clear that the map \eqref{orthogonal-symplectic adelic lift} actually takes values in the subgroup 
  $\Mp(\bm{W}_\A)_\psi^{(2)}$, but we will never use this.


\subsection{Restriction of the Weil representation}

 
The metaplectic group $\Mp(\bm{W}_\A)_\psi$ acts on the Schwartz space $S(V^d_\A)= S(\bm{Y}_\A)$ via   the  Weil representation  $\omega_\psi$ of Definition \ref{def:adelic metaplectic}.
Pulling back   the Weil representation  along  \eqref{orthogonal-symplectic adelic lift},  we obtain a representation of $\Mp(W_\A)^{(2)}_\psi$ on $S(V^d_\A)$.   Let us record  this construction as a definition.

\begin{definition}\label{def:symplectic-orthogonal weil}
The \emph{Weil representation}  $\omega_{V,\psi}$  is the composition 
\[
\omega_{V,\psi}  \define  \omega_\psi  \circ  i_{V,\psi} : \Mp(W_\A)_\psi^{(2)} \to  \Aut \big(  S( V_\A^d)  \big) .
\]
The \emph{adelic theta function}  associated to a  Schwartz function $\varphi   \in S(V_\A^d)$ is the continuous function 
\begin{equation}\label{adelic siegel theta}
\theta_{V,\psi}( g ,  \varphi)  =  \sum_{ v \in V^d }  \left(  \omega_{V,\psi}(g) \varphi  \right) (v)  
\end{equation}
of the variable  $g \in \Sp(W) \backslash \Mp(W_\A)_\psi^{(2)}$.
\end{definition}

\begin{remark}
The convergence and left  $\Sp(W)$-invariance of \eqref{adelic siegel theta} are immediate from  Theorem \ref{thm:big theta}.
The point is that the function $\Theta_\psi(   -   , \varphi   )$ on $\Sp( \bm{W} )  \backslash \Mp(\bm{W}_\A)_\psi$ from that theorem  satisfies
\[
\theta_{V,\psi}( g ,  \varphi)  =  \Theta_\psi(    i_{V,\psi} (g)    , \varphi   ) .
\]
\end{remark}

We now make the Weil representation $\omega_{V,\psi}$  more explicit.  
Suppose   $\Sigma$ is any finite set of places of $F$.
If $g \in \Sp(W_\A)$ is an adelic point such that $g_v=1$ for all $v$ outside $\Sigma$,  and $\epsilon \in \mu_2$, the partial Rao coordinates  \eqref{semi rao coordinates} determine a point 
$
(g , \epsilon ) \in \Mp(W_\A)_\psi^{(2)} .
$
Unpacking the definitions, we find that
 \begin{equation}\label{orthogonal symplectic general weil}
\omega_{V,\psi} (g , \epsilon ) =  \epsilon^{\dim(V)} \cdot   \beta_\Sigma   (g)  \cdot 
\bm{r}_{\psi , \Sigma}  ( i_V(g) ),
\end{equation}
where  
$
\bm{r}_{\psi , \Sigma} : \Sp(\bm{W}_\A)  \to \Aut ( S(\bm{Y}_\A) ) 
$
is the operator   \eqref{Sigma operator}, and 
\[
\beta_\Sigma(g) = \prod_{v \in \Sigma}  \beta_v   (g_v) \in \mu_8.
\]

\begin{proposition}\label{prop:symplectic-orthogonal weil formulas}
Fix $A \in \GL_d(\A)$ and $B \in \Sym_d(\A)$, and define elements of $\Sp_{2d}(\A) \iso \Sp(W_\A)$ by 
\[
m(A)  = \begin{pmatrix}  A  &  \\  & {}^t A ^{-1}  \end{pmatrix} 
\qquad \mbox{and} \qquad
n(B) = \begin{pmatrix}   I_d  & B  \\ &  I_d  \end{pmatrix} .
\]
Assume that the local components of both $A$ and $B$ are trivial at all places outside some finite set of places $\Sigma$ of $F$, so that we may use the partial  Rao coordinates  \eqref{semi rao coordinates} to form 
\[
( m(A) , \epsilon ) \in \Mp(W_\A)^{(2)}_\psi \quad \mbox{and}\quad ( n(B) , \epsilon ) \in \Mp(W_\A)^{(2)}_\psi 
\]
for any $\epsilon \in \mu_2$.    Under the Weil representation on $S(V_\A^d)$, these satisfy
\[
\omega_{V,\psi} (m(A), \epsilon )   \varphi (v) 
=   \epsilon^{\dim(V)}  \cdot   \beta_\Sigma   ( m(A) )  
\cdot   |  \det(A)  |^{ \frac{1}{2} \dim_F(V) }   \cdot  \varphi(   v  A ) 
\]
and 
\[
\omega_{V,\psi} (n(B) , \epsilon) \varphi (v) 
=  \epsilon^{\dim(V)}  \cdot    \psi(   \mathrm{Tr}( Q(v)  B )  ) \cdot  \varphi(  v )  . 
\]
In the first formula the tuple $v A \in V_\A^d$ is computed by viewing $v \in V_\A^d$ as a row vector, and multiplying it by the matrix $A$ in the usual way. 
The modulus function $| \cdot |$ on $\A^\times$  is normalized by 
\[
\int_\A f(ax) \, dx = |a| ^{-1}  \int_\A f(x)\, dx.
\]
\end{proposition}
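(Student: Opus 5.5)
The plan is to start from formula \eqref{orthogonal symplectic general weil}, which already reduces everything to computing $\bm{r}_{\psi,\Sigma}(i_V(g))$ for $g = m(A)$ or $n(B)$. Since $\bm{r}_{\psi,\Sigma}$ is a tensor product of the local operators $\bm{r}_{\psi,v}$ for $v\in\Sigma$ (acting trivially elsewhere, where $A_v$ and $B_v$ are trivial), it suffices to compute the local Rao integral \eqref{big weil} for $i_V(m(A_v))$ and $i_V(n(B_v))$ acting on $S(\bm{Y}_{F_v}) = S(V_{F_v}^d)$. The results are then multiplied over $v \in \Sigma$. One also needs the product formula $|\det A| = \prod_{v\in\Sigma}|\det A_v|_v$, which holds because $A_v\in \GL_d(\co_{F_v})$ is trivial for $v\notin\Sigma$.

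First I determine $i_V(g)$ as an element of $\Sp(\bm{W})$ in block form relative to $\bm{X}\oplus\bm{Y}$. With the row-vector convention $g(w) = w\,{}^tg$, the element $m(A)$ sends $f_k \mapsto \sum_j (A^{-1})_{kj} f_j$ on $Y$ and acts on $X$ via ${}^tA$-type matrices. So $i_V(m(A))$ preserves $\bm{X}$ and $\bm{Y}$, and its inverse has $b = c = 0$. Likewise $n(B)$ fixes $Y$ and sends $e_j \mapsto e_j + \sum_k B_{jk} f_k$ (up to the transpose bookkeeping), so $i_V(n(B))^{-1}$ has $a = 1$, $d = 1$, $b = 0$, and $c$ equal to $-B\otimes \mathrm{id}_V$ as a map $\bm{X}\to\bm{Y}$.

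Next I plug these into \eqref{big weil}. For $m(A)$ we have $c=0$, so $X/\ker c$ is a point and the formula gives $\varphi(d(y))$ times a Haar-measure normalization. Working out $d$ on $V^d$ gives $v\mapsto vA$. The unitarity normalization of Proposition \ref{prop:rao integral} forces the constant to be the positive number $|\det(A\otimes \mathrm{id}_V)|^{1/2} = |\det A|^{\dim V/2}$, as the change of variables $v\mapsto vA$ on $V^d$ requires. For $n(B)$, $c$ is invertible when $B$ is nondegenerate; but Rao's formula is cleaner if I note that $g^{-1}$ fixes $\bm{Y}$ pointwise. I would therefore use the convention (as in \cite{ThetaBook}, where for elements of the Siegel parabolic stabilizing the appropriate subspace the integral degenerates) that the operator is multiplication by $f_g(y) = \psi(\tfrac12\langle d(y), b(y)\rangle)$-type phase. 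I must check carefully which of $b,c$ is nonzero under the paper's left-action convention. If $c$ turns out to be the nonzero block, I would instead use that $n(B)$ lies in the parabolic stabilizing $Y$, where the integral is over the $\ker c$ quotient and collapses. Either way the result is the phase $\psi(\tfrac12\langle y, B y\rangle_{\bm W})$, and expanding $\langle \sum f_j\otimes v_j, \sum B_{jk}\, e_k\otimes v_k\rangle$ gives $\sum_{j,k}B_{jk}\,b(v_j,v_k)/2 = \mathrm{Tr}(Q(v)B)$, with sign fixed by $\langle e_j,f_k\rangle=\delta_{jk}$.

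Finally I account for the scalars: $\epsilon^{\dim V}\beta_\Sigma(g)$ comes directly from \eqref{orthogonal symplectic general weil}. For $n(B)$, I verify from Kudla's explicit definition (Case $1_+$ of \cite[(11.3)]{ThetaBook}) that $\beta_v(n(B_v))=1$. I expect this to hold because $\beta_v$ depends on the Leray $x$-invariant of the coset in the Siegel parabolic $P_Y\backslash$, which is trivial for unipotent $n(B)$, and because $\det$ of the Levi component is $1$. The main obstacle is bookkeeping: matching the row-vector/left-action conventions, and identifying which block ($b$ or $c$) is nonzero so that the Rao integral really degenerates and the phase has the right sign. I would settle this by first testing the case $d=1$, $\dim V=1$ against the known formulas in \cite[Lemma 7.10]{ThetaBook}.
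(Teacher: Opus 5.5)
Your route is the paper's: reduce via \eqref{orthogonal symplectic general weil} to the local Rao integrals \eqref{big weil} for $i_V(m(A_v))$ and $i_V(n(B_v))$, observe that the block $c$ of the inverse vanishes so the integral collapses, fix the positive constant by unitarity, and use that Kudla's $\beta_v$ is trivial on the unipotent radical. Your $m(A)$ computation is correct as written ($d(y)=yA$, constant $|\det A|^{\frac12\dim_F(V)}$).

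The $n(B)$ bookkeeping, however, is backwards, and the fallback you offer would fail. With the paper's convention $g(w)=w\cdot{}^tg$ one gets $(x,y)\cdot{}^t n(B)=(x+yB,\,y)$. So $n(B)$ fixes $X$ pointwise (not $Y$) and sends $f_j\mapsto f_j+\sum_k B_{jk}e_k$; it lies in the parabolic stabilizing $X$. Hence $n(B)^{-1}=n(-B)$ has $a=\mathrm{id}$, $d=\mathrm{id}$, $c=0$ and $b(f_j)=-\sum_k B_{jk}e_k$ (tensored with $\mathrm{id}_V$ on $\bm{W}$). It does not have $b=0$ and $c=-B\otimes\mathrm{id}_V$ as you wrote. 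Your version is also inconsistent with the phase $\psi(\tfrac12\langle d(y),b(y)\rangle)$ you then use, which would be trivial if $b=0$.

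Your alternative, ``if $c$ is the nonzero block, the integral over the $\ker c$ quotient collapses,'' is false. If $c$ were nonzero, $X/\ker(c)$ would be a positive-dimensional space and \eqref{big weil} would be a genuine integral over it (a partial Fourier transform), not a multiplication operator.

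With the correct blocks, $X/\ker(c)$ is a point, and unitarity forces the constant to be $1$. Using $\langle f_l,e_k\rangle=-\delta_{lk}$,
\[
\tfrac12\Big\langle \sum_l f_l\otimes v_l,\; -\sum_{j,k}B_{jk}\,e_k\otimes v_j\Big\rangle_{\bm{W}}=\sum_{j,k}B_{jk}\,\frac{b(v_j,v_k)}{2}=\mathrm{Tr}(Q(v)B),
\]
which gives the stated formula. Once this block identification is corrected, your argument goes through exactly as in the paper.
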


\begin{proof}
Write $g \in \Sp(W_\A)$ for the symplectic automorphism corresponding to the matrix $m(A)$, under the convention explained in \S \ref{ss:so setup}.
  As in the definition of \eqref{r_XY}, its  inverse $g^{-1}$ is encoded  by  linear maps $a : X_\A \to X_\A$ and $d : Y_\A\to Y_\A$. 
Using the bases $e_1,\ldots , e_d$ and $f_1,\ldots, f_d$, we identify elements of $X_\A \iso \A^d$ and $Y_\A \iso \A^d$ as row vectors, so that $a$ and $d$ are expressed in terms of matrix multiplication by
\begin{align*}
a(x) & = g^{-1}(x)  = x\cdot {}^t A^{-1}  \\
d(y) & = g^{-1} (y) = y \cdot A.
\end{align*}

Now consider  $i_V(g) \in \Sp(\bm{W}_\A)$. Its inverse is encoded by the linear maps
\begin{align*}
  \bm{X} _\A =  X_\A \otimes V_\A   &  \map{  a \otimes \mathrm{id}  }    X_\A \otimes V_\A  \iso \bm{X}_\A  \\
    \bm{Y} _\A =  Y_\A \otimes V_\A   &  \map{  d \otimes \mathrm{id}  }    Y_\A \otimes V_\A  \iso \bm{Y}_\A  .
  \end{align*} 

Tracing the definition of $\bm{r}_{\psi , \Sigma}$ all the way back to the integral  \eqref{big weil}, we find that for any $y \otimes v \in  Y_\A \otimes V_\A$ we have
\[
\bm{r}_{ \psi , \Sigma } (  i_V(g)  )  \varphi     ( y \otimes v  ) = 
 |  \det(A)  |^{ \frac{1}{2} \dim_F(V) } 
 \varphi\big(   (y\cdot A) \otimes v \big) .
 \]
The normalizing factor $ |  \det(A)  |^{ \frac{1}{2} \dim_F(V) }$ is  included to make the operator unitary, as per Proposition \ref{prop:rao integral}.
 When we identify a tuple  
 $v \in V_\A^d = Y_\A \otimes V_\A$ with a row of vectors in $V_\A$, this becomes 
 \[
\bm{r}_{ \psi , \Sigma } (  i_V(g)  )   \varphi     ( v ) = 
 |  \det(A)  |^{ \frac{1}{2} \dim_F(V) }  \cdot 
 \varphi\big(   v  A \big) .
 \]
Now use \eqref{orthogonal symplectic general weil} obtain the formula for the action of $( m(A) , \epsilon)$.

The proof for $(n(B),\epsilon)$ is similar.  
For any place $v$, the restriction of Kudla's function  $\beta_v$  to the unipotent radical of the Siegel parabolic is identically $1$, which is why it does not appear in the formula for $\omega_{V,\psi} (n(B) , \epsilon)$.
\end{proof}

\begin{remark}\label{rem:beta on parabolics}
Let $v$ be an archimedean place of $F$,  let $P_v \subset \Sp(W_{F_v})$ be the Siegel parabolic stabilizing $X_{F_v} \subset W_{F_v}$, and denote by $P_v^+ \subset P_v$ the connected component of the identity.  
 Kudla's function $\beta_v$  satisfies $\beta_v(p) =1$ for all $p \in P_v^+$.
 For  a real place $v$, this implies    $\beta_v( m(A) )=1$ whenever  $\det(A)>0$.
\end{remark}

\begin{remark}
If $\dim(V)=2r$ is even, then 
\[
\beta_v( m(A_v) ) = \big( \det(A_v) , (-1)^{ r}  \det(V)  \big)_v,
\]
where the right hand side is the Hilbert symbol, and $\det(V)$ is the determinant of the matrix of inner products $( b(x_j , x_k) )_{j,k} \in \Sym_{2r}(F)$ of a basis $x_1,\ldots, x_{2r} \in V$. 
When $\dim(V)$ is odd the formula for $\beta_v(m(A_v))$ involves a Weil index, and we refer the reader to 
Case $1_{+}$ of   \cite[(11.3)]{ThetaBook}.
\end{remark}

\begin{remark}
For an arbitrary $g\in \Sp(W_\A)$, the  product $\prod_v \beta_v(g)$ over all places is not defined, as there will typically be infinitely many factors different from $1$.  
However,  for any $A \in \GL_d(\A)$ the product 
$\beta( m(A))  =\prod_v \beta_v( m(A_v))$ is actually finite.
If we assume that $\dim(V)$ is even, then the Weil representation $\omega_{V,\psi}$ factors through $\Sp(W_\A)$, and satisfies the expected formulas 
\[
\omega_{V,\psi} (m(A)  )   \varphi (v) 
=     \beta    ( m(A) )  
\cdot   |  \det(A)  |^{ \frac{1}{2} \dim_F(V) }   \cdot  \varphi(   v  A ) 
\]
and 
\[
\omega_{V,\psi} (n(B)  ) \varphi (v) 
=    \psi(   \mathrm{Tr}( Q(v)  B )  ) \cdot  \varphi(  v )   
\]
for arbitrary  $A\in \GL_d(\A)$ and $B\in \Sym_d(\A)$.
\end{remark}


\subsection{Generalities on Siegel modular forms}


For the remainder of \S \ref{s:symplectic-orthogonal} we work over the global field $F=\Q$.  
Our goal in this subsection  is  to recall the general  process of de-adelizing    automorphic forms on  $\Sp(W_\A)$  to functions  on the Siegel half-space.
For more substantial expositions of the theory of Siegel modular forms, see \cite{vandergeer} and \cite{pitale}.

The matrix group $\Sp_{2d}(\R)$ acts transitively on the Siegel half-space
\[
\mathcal{H}_d 
= \left\{ \mathtt{z} =  \mathtt{x} + i \mathtt{y}  \in M_d(\C) :  
\begin{array}{c}    
\mathtt{x}, \mathtt{y} \in \Sym_d(\R) \\
 \mathtt{y} \mbox{ is positive definite}    \end{array} \right\}
\]
according to the usual rule: if 
\begin{equation*}
\gamma = \begin{pmatrix}  A & B \\ C & D  \end{pmatrix}  \in \Sp_{2d}(\R),
\end{equation*}
then  $\gamma \mathtt{z} = (A\mathtt{z}+B)(C\mathtt{z}+D)^{-1}.$  
The function 
\begin{equation}\label{integral j}
j( \gamma , \mathtt{z}) =  \det( C\mathtt{z} + D ) 
\end{equation}
of $\gamma \in \Sp_{2d}(\R)$ and $\mathtt{z} \in \mathcal{H}_d$ satisfies the familiar  cocycle relation
\begin{equation}\label{siegel automorphy}
 j( \gamma_1 \gamma_2 , \mathtt{z} )  = j( \gamma_1 ,  \gamma_2 \mathtt{z} ) \cdot j( \gamma_2 , \mathtt{z} ) .
\end{equation}

The stabilizer of $i I_d \in \mathcal{H}_d$ is the maximal compact subgroup $K   \subset \Sp_{2d}(\R)$
already encountered in   \eqref{standard K}, which has a distinguished character $\chi  : K  \to \C^1$ defined by   \eqref{det chi}.
Fix an integer  $m$, and  suppose $F$ is a function on  $\Sp(W) \backslash \Sp(W_\A)$  satisfying 
\begin{equation}\label{siegel weight}
F(g k ) = \chi(k)^m F(g) 
\end{equation}
for all $g \in \Sp(W_\A)$ and $k \in K$.
Assume that $F$ is right invariant under  some compact open subgroup   $U \subset \Sp(W_{\A_f})$,  and define   
\begin{equation*}
\Gamma =     \{ \gamma \in \Sp_{2d}(\R) :  \exists u \in U \mbox{ for which }
\gamma u \in \Sp_{2d}(\Q) \}  .
\end{equation*}
On the right hand side  the product $\gamma u$ is  understood inside  $\Sp_{2d}(\A)$.

Convert $F$ to a function on the Siegel half-space as follows: 
Given a  $\mathtt{z} = \mathtt{x} + i \mathtt{y}  \in \mathcal{H}_d$,  choose a    factorization  
\begin{equation*}
\mathtt{y} =\alpha \cdot {}^t \alpha
\end{equation*}
in such a way that  $\alpha \in \GL_d(\R)$ has positive determinant,   set 
\begin{equation}\label{natural g_z}
g_\mathtt{z}  =  \begin{pmatrix} I_d & \mathtt{x} \\ & I_d \end{pmatrix} 
\begin{pmatrix} \alpha &  \\ & {}^t\alpha^{-1}  \end{pmatrix}    \in \Sp_{2d}(\R)  ,
\end{equation}
and define 
  \begin{equation}\label{classical siegel}
F(\mathtt{z}) =    \det(\mathtt{y})^{- m/2 } F( g_\mathtt{z} ) .
\end{equation}
On  the right hand side,  we are viewing 
 $g_\mathtt{z} \in \Sp_{2d}(\R)$ as an element of  $\Sp_{2d}(\A)$ with trivial nonarchimedean components.

 \begin{remark}\label{rem:alpha ambiguity}
 Making a different choice of $\alpha$ amounts to multiplying the $\alpha$ already chosen  on the right by an element of $\mathrm{SO}(d)$, which multiplies $g_\mathtt{z}$ on the right  by an element of 
 \[
 \left\{  m(A)=\begin{pmatrix} A &  \\   & A \end{pmatrix}   : A  \in \mathrm{SO}(d)    \right\} \subset  \ker( \chi : K \to \C^1).
\]
Using  the assumption \eqref{siegel weight}, it follows that  $F(\mathtt{z})$ is independent of the choice of $\alpha$ used to define it.
\end{remark}

\begin{proposition}\label{prop:siegel transformation}
The function \eqref{classical siegel}  satisfies  the transformation law
\[
F(\gamma \mathtt{z} ) =  j( \gamma ,\mathtt{z})^m F(\mathtt{z}) 
\]
 for all $\gamma \in  \Gamma \subset  \Sp_{2d}(\R)$.
\end{proposition}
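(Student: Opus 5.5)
The plan is to compare $g_{\gamma\mathtt{z}}$ with $\gamma g_\mathtt{z}$ inside $\Sp_{2d}(\R)$, and then use the invariance properties of $F$ to move $\gamma$ away. Fix $\gamma\in\Gamma$ and choose $u\in U$ with $\gamma u\in\Sp_{2d}(\Q)$. Here $\gamma$ is viewed at the archimedean place and $u$ at the finite places. Since $g_\mathtt{z}\cdot iI_d=\mathtt{z}$ and $\Sp_{2d}(\R)$ acts transitively with stabilizer $K$ at $iI_d$, we have $g_{\gamma\mathtt{z}}\cdot iI_d=\gamma\mathtt{z}=\gamma g_\mathtt{z}\cdot iI_d$. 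Hence
\[
\gamma g_\mathtt{z}=g_{\gamma\mathtt{z}}\,k\qquad\text{for a unique }k\in K .
\]
In $\Sp_{2d}(\A)$ the elements $\gamma$ and $u$ commute, because they live at different places. Therefore
\[
F(\gamma g_\mathtt{z})=F(\gamma u\, g_\mathtt{z}\, u^{-1})=F(g_\mathtt{z}u^{-1})=F(g_\mathtt{z}).
\]
The middle step uses left $\Sp(W)$-invariance, and the last uses right $U$-invariance. Combining this with \eqref{siegel weight} gives $F(g_\mathtt{z})=F(g_{\gamma\mathtt{z}}k)=\chi(k)^mF(g_{\gamma\mathtt{z}})$.

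Next I need to compute $\chi(k)$ in terms of the automorphy factor. The function $j(h,iI_d)$ restricted to $h\in K$ is a character. For $k=\begin{pmatrix}A&B\\-B&A\end{pmatrix}$ it equals $\det(-Bi+A)=\det(A-iB)$, which is $\overline{\chi(k)}=\chi(k)^{-1}$. The cocycle relation \eqref{siegel automorphy} gives
\[
j(\gamma g_\mathtt{z},iI_d)=j(\gamma,\mathtt{z})\,j(g_\mathtt{z},iI_d)
\]
and
\[
j(g_{\gamma\mathtt{z}}k,iI_d)=j(g_{\gamma\mathtt{z}},iI_d)\,j(k,iI_d).
\]
From the explicit form \eqref{natural g_z}, $j(g_\mathtt{z},iI_d)=\det({}^t\alpha^{-1})=\det(\mathtt{y})^{-1/2}$, using that $\det\alpha>0$. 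Together these give
\[
\chi(k)=j(k,iI_d)^{-1}=\frac{\det(\mathtt{y})^{-1/2}\,j(\gamma,\mathtt{z})^{-1}}{\det(\operatorname{Im}\gamma\mathtt{z})^{-1/2}} .
\]

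Finally, I substitute into \eqref{classical siegel}:
\[
F(\gamma\mathtt{z})=\det(\operatorname{Im}\gamma\mathtt{z})^{-m/2}F(g_{\gamma\mathtt{z}})=\det(\operatorname{Im}\gamma\mathtt{z})^{-m/2}\chi(k)^{-m}F(g_\mathtt{z}).
\]
The powers of $\det(\operatorname{Im}\gamma\mathtt{z})$ cancel, and what remains is $j(\gamma,\mathtt{z})^m\det(\mathtt{y})^{-m/2}F(g_\mathtt{z})=j(\gamma,\mathtt{z})^mF(\mathtt{z})$. For integer $m$ there are no branch issues, because only the positive real quantities $\det(\mathtt{y})^{1/2}$ are raised to fractional powers.

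The only step requiring care is the sign and conjugation convention identifying $j(k,iI_d)$ with $\chi(k)^{-1}$ rather than $\chi(k)$. I would check this directly on $d=1$ rotations $k=\begin{pmatrix}\cos\theta&\sin\theta\\-\sin\theta&\cos\theta\end{pmatrix}$. There $j(k,i)=-i\sin\theta+\cos\theta=e^{-i\theta}$, while \eqref{det chi} gives $\chi(k)=e^{i\theta}$, which confirms the inverse relation.
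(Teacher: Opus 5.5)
Your proof is essentially the paper's: the same factorization $\gamma g_{\mathtt{z}} = g_{\gamma\mathtt{z}}k$ (your $k$ is the inverse of the paper's $g_\mathtt{z}^{-1}\gamma^{-1}g_{\gamma\mathtt{z}}$), the same use of $\gamma u\in\Sp_{2d}(\Q)$ and right $U$-invariance, and then a comparison of $\chi(k)$ with $j(\gamma,\mathtt{z})$ obtained by evaluating $j(\,\cdot\,,iI_d)$ on both sides of the factorization. That last step is how the paper argues in the Hermitian case, and it supplies the details the paper leaves to the reader here.

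One slip needs fixing. Your displayed formula for $\chi(k)$ has the two determinant factors swapped between numerator and denominator. Your own identities give $\chi(k)=j(\gamma,\mathtt{z})^{-1}\det(\mathtt{y})^{1/2}\det(\mathrm{Im}\,\gamma\mathtt{z})^{-1/2}$. This is forced, since $|\chi(k)|=1$ while $|j(\gamma,\mathtt{z})|^2=\det\mathtt{y}/\det\mathrm{Im}\,\gamma\mathtt{z}$. Only with this corrected version do the powers of $\det\mathrm{Im}\,\gamma\mathtt{z}$ cancel in your last step as you claim.
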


\begin{proof}
This is well-known \cite[\S 6.1]{pitale}, but we formulate the proof in a way that  points  toward the  correct definition  of  $j(\gamma,\mathtt{z})^m$ when $m$ is allowed to be a half-integer.

Under  the action of $\Sp_{2d}(\R)$ on $\mathcal{H}_d$,  we have   $g_\mathtt{z}\cdot i I_d = \mathtt{z}$.
Hence  $g_\mathtt{z}^{-1}  \gamma^{-1}  g _{\gamma \mathtt{z} }  \in K$ for all  $\gamma \in \Sp_{2d}(\R)$,  allowing us to define 
\begin{equation}\label{circle cocycle}
c( \gamma , \mathtt{z} ) =   \chi ( g_\mathtt{z}^{-1}  \gamma^{-1}  g _{\gamma \mathtt{z} }  )   .
\end{equation}

Now  suppose  $\gamma \in \Gamma$.
  By definition of $\Gamma$, there is a $u  \in U$ such that $\gamma  u $ lies in the image of the  natural embedding $\Sp(W) \to \Sp(W_\A)$.  
Because the function $F(g)$  is left invariant under this image, and right invariant under $U$, we have 
\[
F( g_\mathtt{z} )  =  F( \gamma   u  \cdot g_\mathtt{z} ) = F( \gamma  \cdot  g_\mathtt{z}  \cdot  u ) =  F( \gamma  g_\mathtt{z}  ) .
\]
On the other hand, because $g_\mathtt{z}^{-1}  \gamma^{-1}  g _{\gamma \mathtt{z} }  \in K$, the assumption \eqref{siegel weight} implies 
\[
c(\gamma, \mathtt{z})^m  \cdot  F( \gamma  g_\mathtt{z}  ) 
= \chi (  g_\mathtt{z}^{-1}  \gamma^{-1}  g _{\gamma \mathtt{z} }  )^m   \cdot  F( \gamma  g_\mathtt{z}  ) 
= F( g _{\gamma \mathtt{z} }  ) .
\]
This proves  the equality
$
F( g_{ \gamma \mathtt{z} } ) = c(\gamma,\mathtt{z})^{m} F(g_\mathtt{z}). 
$

The proposition now follows from 
\[
 j( \gamma , \mathtt{z} ) 
= 
c( \gamma , \mathtt{z} ) 
 \cdot 
 \left(   \frac{\det  \mathrm{Im}  ( \mathtt {z} )     }  {  \det  \mathrm{Im}(  \gamma \mathtt{z} )   } \right)^{1/2} .
\]
One verifies this last equality by direct calculation  for   $\mathtt{z} = i I_d$,  then  deduces  the general case  using the cocycle relation \eqref{siegel automorphy}, which is also satisfied  by   $c(\gamma, \mathtt{z})$. The details are left to the reader.
\end{proof}


\subsection{Siegel modular forms of half-integral weight}


We now extend the discussion of the previous subsection to Siegel modular forms of half-integral weight.
The challenge, of course, is to specify  square roots of the factors of automorphy $j( \gamma, \mathtt{z})$ from  \eqref{integral j} in some consistent way.   One approach to this is  \cite[Theorem 8.3]{Mumford-3}, but we will effectively define our square roots in such a way that the proof of Proposition \ref{prop:siegel transformation} goes through unchanged.

Identify $\Sp(W_\R) = \Sp_{2d}(\R)$ in the way explained in \S \ref{ss:so setup}.
Denote by $K^{(2)} \subset \Mp(W_\R)_\psi^{(2)}$ the preimage of $K$ under the metaplectic double cover 
\[
\Mp(W_\R)^{(2)}_\psi \map{ \eqref{double inclusion} }  \Sp(W_\R) .
\]
It is known that $K^{(2)}$ is the unique connected double cover of $K\iso \mathrm{U}(d)$,  and that the pullback of the character $\chi$  to this cover has a unique square root, denoted $\chi^{1/2}$.  
In fact, one has  a cartesian diagram
\begin{equation*}
\begin{tikzcd}
{ K^{(2)}  }  \ar[d] \ar[r, " \chi^{ 1/2  } " ] &  { \C^1  } \ar[d , " z \mapsto z^2" ] \\
{ K } \ar[r , " \chi " ]  & { \C^1 .}  
\end{tikzcd} 
\end{equation*}
For proofs of these claims see \cite[\S 7.1]{Sweet}.

Now fix  $m\in 2^{-1}\Z$, and abbreviate
\begin{equation}\label{half-character}
\chi^{ m  } = ( \chi^{1/2}  )^{(2m)} : K^{(2)} \to \C^1 .
\end{equation}
Imitating \eqref{circle cocycle},   define a function of  $\gamma \in \Mp(W_\R)_\psi^{(2)}$ and $\mathtt{z} \in \mathcal{H}_d$ by 
\begin{equation}\label{half circle cocycle}
c(  \gamma , \mathtt{z} )^m  =
 \chi^{ m } \big(  \tilde{g}_\mathtt{z} ^{-1}  \cdot  \gamma^{-1}  \cdot  \tilde{g} _{\gamma \mathtt{z}} \big),
\end{equation}
where we have used the Rao coordinates \eqref{rao coords} to define
\begin{equation}\label{g_z lift}
\tilde{g}_{\mathtt{z}} = ( g_{\mathtt{z}} , 1 ) \in \Sp(W_\R) \times \mu_2    \iso  \Mp(W_\R)_\psi^{(2)}.
\end{equation}
Note that $\tilde{g}_\mathtt{z} ^{-1}  \cdot  \gamma^{-1}  \cdot  \tilde{g} _{\gamma \mathtt{z}} \in K^{(2)}$, as in the  proof of Proposition \ref{prop:siegel transformation}.

\begin{proposition}\label{prop:good half cocycle}
The function 
\[
j(  \gamma , \mathtt{z} )^m  \define
c(   \gamma   , \mathtt{z} )^m
 \cdot 
 \left(   \frac{\det( \mathrm{Im}(   \mathtt{z} ) )    }  {  \det( \mathrm{Im}(  \gamma \mathtt{z} ) )  } \right)^{m/2} 
\]
of $\gamma \in \Mp(W_\R)_\psi^{(2)}$ and $\mathtt{z} \in \mathcal{H}_d$  satisfies the following properties: 
\begin{enumerate}
\item
It is well-defined, meaning independent of the choice of  $\alpha$ used in the definition of \eqref{natural g_z}.  
\item
It  is holomorphic as a function of $\mathtt{z} \in \mathcal{H}_d$.
\item
It satisfies the cocycle relation
$
 j(  \gamma_1  \gamma_2 , \mathtt{z} )^m  = j( \gamma_1 ,  \gamma_2 \mathtt{z} )^m  \cdot j(   \gamma_2 , \mathtt{z} )^m.
$
\item
If $m\in \Z$, then $j(  \gamma , \mathtt{z})^m$ depends only on the image of $\gamma$ in  $\Sp(W_\R)$, and agrees with the   $j(\gamma , \mathtt{z})^m$ already defined by \eqref{integral j}.
\end{enumerate}
\end{proposition}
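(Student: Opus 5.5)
The plan is to treat the four properties in the order (1), (3), (4), (2). Holomorphy is the only property that is not a formal consequence of the definitions, and it can be reduced to the integral-weight case once (3) and (4) are in hand.

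For (1), a different choice of $\alpha$ replaces $g_\mathtt{z}$ by $g_\mathtt{z} m(A)$ with $A\in \mathrm{SO}(d)$, as in Remark \ref{rem:alpha ambiguity}. The first step is to check that the Rao-coordinate lift $(g_\mathtt{z} m(A),1)$ equals $(g_\mathtt{z},1)\cdot (m(A),1)$ in $\Mp(W_\R)^{(2)}_\psi$. The Rao cocycle $c^{(2)}_\psi(g_\mathtt{z}, m(A))$ should be trivial here because both elements lie in the Siegel parabolic. If that is awkward to verify directly, an alternative is to use connectedness: the set $\{(m(A),1): A\in\mathrm{SO}(d)\}$ is the image of a continuous section over the connected group $\mathrm{SO}(d)$ passing through the identity, so it is the identity component of the preimage of that subgroup. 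It is therefore a subgroup of $K^{(2)}$ lying in the identity component of $\ker(\chi)$'s preimage. The character $\chi^{1/2}$ is continuous, squares to $\chi$, which is trivial there, and takes the value $1$ at the identity, so it is identically $1$ on this connected subgroup. Hence the change of $\alpha$ multiplies the argument of $\chi^m$ on the left and right by elements where $\chi^m=1$, and $c(\gamma,\mathtt{z})^m$ is unchanged.

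For (3), I would write
\[
\tilde g_\mathtt{z}^{-1}\gamma_2^{-1}\gamma_1^{-1}\tilde g_{\gamma_1\gamma_2\mathtt{z}} = \big(\tilde g_\mathtt{z}^{-1}\gamma_2^{-1}\tilde g_{\gamma_2\mathtt{z}}\big)\big(\tilde g_{\gamma_2\mathtt{z}}^{-1}\gamma_1^{-1}\tilde g_{\gamma_1\gamma_2\mathtt{z}}\big).
\]
Both factors lie in $K^{(2)}$ and $\chi^m$ is a character, so $c(\cdot,\cdot)^m$ satisfies the cocycle relation. The ratio of $\det\mathrm{Im}$ factors telescopes, which gives (3).

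For (4), when $m\in\Z$ the character $\chi^m=(\chi^{1/2})^{2m}$ is the pullback of $\chi^m$ on $K$, so $c(\gamma,\mathtt{z})^m$ equals $\chi(g_\mathtt{z}^{-1}\bar\gamma^{-1}g_{\bar\gamma\mathtt{z}})^m$, where $\bar\gamma$ is the image of $\gamma$ in $\Sp(W_\R)$. This depends only on $\bar\gamma$ and is the integer power of \eqref{circle cocycle}. The identity $j=c\cdot(\det\mathrm{Im}\,\mathtt{z}/\det\mathrm{Im}\,\gamma\mathtt{z})^{1/2}$ from the proof of Proposition \ref{prop:siegel transformation}, raised to the $m$-th power, then gives agreement with \eqref{integral j}.

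For (2), I would use (4) with $2m\in\Z$: the square $(j(\gamma,\mathtt{z})^m)^2$ equals $j(\gamma,\mathtt{z})^{2m}=\det(C\mathtt{z}+D)^{2m}$, which is holomorphic and nowhere vanishing. The function $j(\gamma,\mathtt{z})^m$ is continuous in $\mathtt{z}$, since $\mathtt{z}\mapsto \tilde g_\mathtt{z}$ is continuous once $\alpha$ is chosen continuously, for example by Cholesky factorization, and (1) makes the choice irrelevant. A continuous square root of a nowhere-vanishing holomorphic function on the connected, simply connected domain $\mathcal{H}_d$ is holomorphic, because locally it agrees with one of the two holomorphic branches and continuity prevents switching between them.

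I expect the main obstacle to be the continuity of $\mathtt{z}\mapsto\tilde g_\mathtt{z}=(g_\mathtt{z},1)$ as a map into $\Mp(W_\R)^{(2)}_\psi$ with its Lie group topology, since Rao coordinates are not continuous in general. The approach I would try first is to note that $g_\mathtt{z}$ ranges over the connected group $P^+$, the identity component of the Siegel parabolic. On $P^+$ the Rao normalizing factor and the cocycle are trivial, as is visible from the explicit formulas for $m(A)$ and $n(B)$ with $\det A>0$ in Proposition \ref{prop:symplectic-orthogonal weil formulas}, together with Remark \ref{rem:beta on parabolics}. Consequently $g\mapsto(g,1)$ is a group homomorphism $P^+\to\Mp$. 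It is continuous because its composite with the Weil representation acts by the evidently strongly continuous operators in those formulas, and the topology on $\Mp$ is the one making the Weil representation continuous.
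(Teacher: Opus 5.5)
Your proposal follows the paper's proof essentially step for step. Everything hinges on the same lemma: $p\mapsto (p,1)$ is a continuous homomorphism from the identity component $P^+_\R$ of the Siegel parabolic into $\Mp(W_\R)^{(2)}_\psi$, because the Rao cocycle and $m_\psi$ are trivial there and continuity holds in the strong operator topology. This gives (1) and the continuity needed to upgrade holomorphy of the square from (4), and (3) and (4) are formal.

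The one slip is in (1): the Rao cocycle $c^{(2)}_\psi(g_{\mathtt{z}},m(A))$ is trivial because both elements lie in $P^+_\R$, not merely because they lie in the Siegel parabolic. On all of $P_\R$ it is given by a Hilbert symbol of determinants and can equal $-1$. The paper also cites Rao's explicit formulas for this and for $m_\psi|_{P^+_\R}=1$, rather than reading them off Proposition \ref{prop:symplectic-orthogonal weil formulas}.
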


\begin{proof}
If we can prove that $j(\gamma ,\mathtt{z})^m$ is well-defined and continuous as a function of $\mathtt{z}$, then everything else follows easily.
The cocycle relation asserted in (3) follows from the analogous relation for \eqref{half circle cocycle}, which is clear from the definition.  
If $m$ is an integer,  the  character \eqref{half-character} factors through $K^{(2)} \to K$, and agrees with the $m^\mathrm{th}$-power 
of the character $\chi : K \to \C^1$.  Hence the function $c(\gamma,\mathtt{z})^m$  defined by \eqref{half circle cocycle} only depends on the image of $\gamma$ in $\Sp(W_\R)$, and agrees with the $m^\mathrm{th}$ power of  \eqref{circle cocycle}.
Property (4) follows immediately.  
From property  (4) we deduce that the square of $j( \gamma , \mathtt{z})^m$ is holomorphic.  
If  $j( \gamma , \mathtt{z})^m$ is continuous, it must  be holomorphic as well.

The key to proving that $j(\gamma ,\mathtt{z})^m$ is well-defined and continuous is the following lemma.

\begin{lemma}\label{lem:siegel section}
As in Remark \ref{rem:beta on parabolics},  let $P^+_\R \subset \Sp(W_\R)$ denote the identity component of the Siegel parabolic.
The composition
\[
P_\R^+ \map{  p \mapsto \tilde{p}   }  \Sp(W_\R) \times \mu_2   \stackrel{ \eqref{rao coords}}{\iso}  \Mp(W_\R)_\psi^{(2)} ,
\]
where we have set $\tilde{p} = ( p,1)$,  is a continuous group homomorphism. 
\end{lemma}

\begin{proof}
To say that $p\mapsto \tilde{p}$ is a group homomorphism means precisely that  the archimedean Rao cocycle satisfies $c_{\psi}^{(2)}( p_1 , p_2) =1$ for all $p_1,p_2 \in P^+_\R$.  
This follows from the explicit formula of \cite[Corollary 5.16(c)]{ThetaBook}.
See also Rao \cite[Corollary 5.5(2)]{Rao}, as per the caveat of  Remark \ref{rem:ThetaBook caveat}.

For the continuity claim, unwinding the definitions shows that  $\tilde{p}$ is another name for the pair 
\[
\big( p ,  r^{(2)}_\psi(p)  \big)  \in  \Mp(W_\R)_\psi^{(2)}  \subset \Sp(W_\R) \times \Aut (L^2(Y_\R) ) ,
\]
where $r^{(2)}_\psi(p) = m_\psi(p) r_\psi(p) $ as in \eqref{rao correction}.   
For  $p \in P_\R^+$,  Rao's normalizing factor satisfies $m_\psi(p)=1$.
So  really the claim is that  $r_\psi(p)  \in \Aut (L^2(Y_\R) )$ varies continuously, in a suitable sense, with $p\in P_\R^+$. 
As per Weil's characterization \cite[\S 35]{Weil1} of the topology on the metaplectic group, 
the suitable sense is with respect to the strong operator topology on $\Aut (L^2(Y_\R) )$.
A proof of the continuity of $r_\psi(p)$  as a function of $p$ (on all of $P_\R$, not just $P_\R^+$)  can be found in \cite[Chapter I.7]{Igusa}.
\end{proof}

Let us now prove (1).   As in Remark \ref{rem:alpha ambiguity}, making a different choice of $\alpha$ amounts to multiplying it on the right by some $A \in \mathrm{SO}(d)$, which multiplies $g_\mathtt{z}$ on the right by the element 
\[
m(A) \in P_\R^+ \cap \ker(  \chi : K \to \C^1 )  \subset \Sp_{2d}(\R) .
\]
Denoting the continuous homomorphism of Lemma \ref{lem:siegel section}  by $p \mapsto \tilde{p}=(p,1)$, we see that 
making a different choice of $\alpha$ multiplies  $\tilde{g}_\mathtt{z}$ on the right by $\tilde{m}(A)$.
This multiplies   \eqref{half circle cocycle}  by the inverse of $\chi^m ( \tilde{m}(A) )$, which is $\pm 1$ because  
\[
\chi^m ( \tilde{m}(A) )^2 = \chi( m(A)  )^{2m} =1.
\]
The point is now simply that $A \mapsto \chi^m( \tilde{m}(A) )$ is a continuous homomorphism $\mathrm{SO}(d) \to \mu_2$, which must be the  trivial homomorphism because $\mathrm{SO}(d)$ is connected.  
This proves that   \eqref{half circle cocycle}  does not depend on the choice of $\alpha$ used to define $g_\mathtt{z}$.
By the same reasoning it is independent of the analogous choice made to define $g_{\gamma \mathtt{z}}$.
In other words $c(\gamma , \mathtt{z})^m$, hence also  $j( \gamma , \mathtt{z})^m$,  is well-defined.

To complete the proof of Proposition \ref{prop:good half cocycle},  choose the elements \eqref{natural g_z} to vary continuously with $\mathtt{z}$, for example by taking $\alpha$ to be the unique positive definite symmetric square root of $\mathtt{y}$.  Then use the  continuity claim of Lemma \ref{lem:siegel section} to see that \eqref{half circle cocycle} is continuous as a function of  $\mathtt{z}$, and so the same is true of $j( \gamma , \mathtt{z})^m$. 
\end{proof}

Recall that, exactly as in \eqref{metaplectic product}, we may realize our adelic metaplectic double cover as a quotient 
\[
\prod\nolimits_v^\prime \Mp(W_{\Q_v})^{(2)}_\psi \to \Mp(W_\A)^{(2)}_\psi .
\]
of a restricted topological product (over all places of $\Q$)  of local metaplectic double covers.

Suppose  $F$ is a  function on $\Sp(W) \backslash \Mp(W_\A)_\psi^{(2)}$  satisfying 
\begin{equation}\label{siegel 1/2 weight}
F(g k ) = \chi^m(k) F(g) 
\end{equation}
for all $k \in K^{(2)}$.  Here, by abuse of notation, we are using the same notation for  $K^{(2)}$ and for its image under
\[
\Mp(W_{\R})^{(2)}_\psi  \to  \Mp(W_\A)^{(2)}_\psi .
\]
Suppose also that there is  some  compact open subgroup
\[
U \subset \prod\nolimits_{v\neq \infty}^\prime \Mp(W_{\Q_v})^{(2)}_\psi 
\]
such that, using a similar abuse of notation,  $F(g u ) = F(g)$ for all $u \in U$.
Define 
\[
\Gamma =  \{ \gamma \in \Mp(W_\R)_\psi^{(2)} :  \exists u \in U \mbox{ for which }
\gamma u \in \Sp(W) 
\} .
\]
On the right hand side the product $\gamma u$ is understood in  $\Mp(W_\A)^{(2)}_\psi$, and we use the canonical splitting \eqref{double splitting} to regard $\Sp(W)$ as a subgroup of the same adelic metaplectic group.

\begin{proposition}\label{prop:half-integral de-adelization}
The function 
\begin{equation}\label{half-classical siegel}
F(\mathtt{z}) \define    \det(\mathtt{y})^{- m/2 } F( \tilde{g}_\mathtt{z} )  
\end{equation}
of $\mathtt{z} \in \mathcal{H}_d$ satisfies  the transformation law
\[
F(\gamma \mathtt{z} ) =  j(  \gamma   ,\mathtt{z})^m F(\mathtt{z}) 
\]
for all $\gamma \in  \Gamma \subset \Mp(W_\R)_\psi^{(2)}$.
\end{proposition}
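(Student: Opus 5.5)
The plan is to imitate the proof of Proposition \ref{prop:siegel transformation} verbatim, with $g_\mathtt{z}$ replaced by its lift $\tilde{g}_\mathtt{z}$ from \eqref{g_z lift}. The factor $j(\gamma,\mathtt{z})^m$ of Proposition \ref{prop:good half cocycle} was built precisely so that this works.

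Fix $\gamma \in \Gamma$ and choose $u \in U$ with $\gamma u \in \Sp(W)$, the product taken in $\Mp(W_\A)^{(2)}_\psi$. We view $\gamma$ and $\tilde{g}_\mathtt{z}$ in the adelic group through $\Mp(W_\R)^{(2)}_\psi \to \Mp(W_\A)^{(2)}_\psi$, and $u$ through the finite part of the restricted product.

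The first step is to check that the archimedean and nonarchimedean images commute in $\Mp(W_\A)^{(2)}_\psi$. This holds because the restricted product $\prod'_v \Mp(W_{\Q_v})^{(2)}_\psi$ is a product of groups and \eqref{metaplectic product} is a homomorphism. Left invariance under $\Sp(W)$ and right invariance under $U$ then give
\[
F(\tilde{g}_\mathtt{z}) = F(\gamma u \tilde{g}_\mathtt{z}) = F(\gamma \tilde{g}_\mathtt{z} u) = F(\gamma \tilde{g}_\mathtt{z}).
\]

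The second step uses that $k = \tilde{g}_\mathtt{z}^{-1}\gamma^{-1}\tilde{g}_{\gamma\mathtt{z}}$ lies in $K^{(2)}$. Its image in $\Sp_{2d}(\R)$ fixes $iI_d$, and $K^{(2)}$ is the full preimage of $K$. Applying \eqref{siegel 1/2 weight} to $\gamma\tilde{g}_\mathtt{z}\cdot k = \tilde{g}_{\gamma\mathtt{z}}$ gives
\[
F(\tilde{g}_{\gamma\mathtt{z}}) = \chi^m(k)\,F(\gamma\tilde{g}_\mathtt{z}) = c(\gamma,\mathtt{z})^m F(\tilde{g}_\mathtt{z}),
\]
by the definition \eqref{half circle cocycle}.

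The last step multiplies by $\det \mathrm{Im}(\gamma\mathtt{z})^{-m/2}$ and uses the definition \eqref{half-classical siegel}:
\[
F(\gamma\mathtt{z}) = c(\gamma,\mathtt{z})^m \left(\frac{\det\mathrm{Im}(\mathtt{z})}{\det\mathrm{Im}(\gamma\mathtt{z})}\right)^{m/2} F(\mathtt{z}) = j(\gamma,\mathtt{z})^m F(\mathtt{z}).
\]

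A small point should be addressed first: $F(\mathtt{z})$ must not depend on the choice of $\alpha$. Changing $\alpha$ multiplies $\tilde{g}_\mathtt{z}$ on the right by $\tilde{m}(A)$ with $A \in \mathrm{SO}(d)$, via the homomorphism of Lemma \ref{lem:siegel section}. As in the proof of Proposition \ref{prop:good half cocycle}(1), $\chi^m(\tilde{m}(A)) = 1$ by connectedness of $\mathrm{SO}(d)$, so well-definedness follows from \eqref{siegel 1/2 weight}.

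The only genuinely delicate point is the commutation in the first step. We must justify that elements of $\Mp(W_\R)^{(2)}_\psi$ and of the compact open $U$ in the finite part commute after being pushed into the adelic cover. I would verify this directly from the surjection \eqref{metaplectic product}: the archimedean and finite factors occupy disjoint coordinates of the restricted product, so they commute there, and hence their images commute.
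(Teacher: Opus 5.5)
Your proposal is correct and follows the paper's route. The paper's proof just says the argument of Proposition \ref{prop:siegel transformation} goes through word for word with $\tilde{g}_\mathtt{z}$ in place of $g_\mathtt{z}$, and that is what you carry out: $\Sp(W)$- and $U$-invariance give $F(\tilde{g}_\mathtt{z})=F(\gamma\tilde{g}_\mathtt{z})$, $K^{(2)}$-equivariance produces $c(\gamma,\mathtt{z})^m$, and the definition of $j(\gamma,\mathtt{z})^m$ from Proposition \ref{prop:good half cocycle} finishes the argument.
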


\begin{proof}
We have formulated  our definitions in such a way that the proof of  Proposition \ref{prop:siegel transformation} goes through essentially word for word.
\end{proof}

The $\Gamma$ in Proposition \ref{prop:half-integral de-adelization}  is a subgroup of $\Mp(W_\R)_\psi^{(2)}$, but one can always rephrase the proposition as a statement about how $F(\mathtt{z})$ transforms under  a  subgroup of $\Sp(W)$. 
The key observation for this is that on the central $\mu_2 \subset \Mp(W_\R)_\psi^{(2)}$, 
which is contained in $K^{(2)}$, we have $\chi^m( \epsilon )=  ( \chi^{1/2}(\epsilon))^{2m} = \epsilon^{2m}$. 
\begin{itemize}
\item
If $m\in \Z$, then \eqref{siegel 1/2 weight} implies that $F$ is invariant under the central $\mu_2$, so descends to a function on $\Sp(W) \backslash \Sp(W_\A)$.   The function  $F(\mathtt{z})$ defined by  \eqref{half-classical siegel} agrees with the one defined by \eqref{classical siegel}, and we are back in the situation of Proposition \ref{prop:siegel transformation}.
\item
If $m \not\in \Z$,  then \eqref{siegel 1/2 weight} implies that $F$ is \emph{genuine}, in the sense that 
$F(\epsilon g) = \epsilon F(g)$ for all $\epsilon \in \mu_2$.  
Assuming $F \neq 0$, this forces  the subgroup $\Gamma$ to intersect the central $\mu_2$ trivially, and so the covering map  identifies $\Gamma$ with its image in $\Sp(W_\R)$.  This  image is contained in $\Sp(W)$, essentially by definition of $\Gamma$.
\end{itemize}
In either case, it makes sense to say that $F(\mathtt{z})$  \emph{transforms like a Siegel modular form of weight $m$ under the action of some arithmetic subgroup of $\Sp_{2d}(\Q)$}.


\subsection{Siegel theta functions}


We continue to work  over the global field $F=\Q$, and for our additive character
\[
\psi : \Q \backslash \A \to \C^1
\]
take the unique one with archimedean component  $\psi_\infty(x) = e^{2\pi i x}$.

Fix a half-integer $m\in 2^{-1}\Z$.
Recalling the notation \eqref{half-character}, suppose  
\[
\varphi = \varphi_\R \otimes \varphi_f \in S(V_\A^d)
\]
 is a Schwartz function whose archimedean component  transforms under the Weil representation of Definition \ref{def:symplectic-orthogonal weil}  as  
\begin{equation}\label{orthogonal symplectic K weight}
\omega_{V,\psi}(k ) \varphi_\R =   \chi^m(k) \cdot  \varphi_\R,
\end{equation}
for all $k$ in the compact subgroup  $K^{(2)} \subset \Mp(W_\R)_\psi^{(2)}$.   
The associated adelic theta function \eqref{adelic siegel theta} then satisfies 
\[
\theta_{V,\psi}( g k , \varphi) = \chi^m(k)  \cdot \theta_{V,\psi}( g   ,  \varphi)
\]
for all $k \in K^{(2)}$, so we may apply the de-adelization construction \eqref{half-classical siegel} to obtain a function 
\begin{equation}\label{classical theta}
\theta_{V,\psi}(\mathtt{z} , \varphi) \define    \det(\mathtt{y})^{- m/2 }  \theta_{V,\psi}( \tilde{g}_\mathtt{z} , \varphi) 
\end{equation}
of the variable $\mathtt{z}  = \mathtt{x} + i \mathtt{y} \in \mathcal{H}_d$.
By Proposition \ref{prop:half-integral de-adelization}, this function  transforms like a Siegel modular form of half-integral weight $m$, under the action of some  arithmetic   subgroup of $\Sp_{2d}(\Q)$.

\begin{proposition}\label{prop:classical theta expansion}
The Fourier expansion of  \eqref{classical theta}  is
\[
\theta_{V,\psi} (\mathtt{z} , \varphi ) = \sum_{ T \in \Sym_d( \Q)} R_T( \mathtt{y}  , \varphi )
  \cdot  e^{2 \pi i \mathrm{Tr}( T \mathtt{z} ) }  , 
\]
where  the coefficients are
\[
R_T( \mathtt{y}  , \varphi )   = 
\det( \mathtt{y} )^{- \frac{m}{2} + \frac{1}{4} \dim_\Q(V)  }  
  \sum_{  \substack{  v \in V^d \\ Q(v) = T } } \varphi_\R( v  \alpha)  \varphi_f(v)  \, e^{2\pi  \mathrm{Tr}( Q(v  \alpha )) } .
\]
Here  $\alpha \in \GL_d(\R)$ is any matrix of positive determinant satisfying   $\mathtt{y} =\alpha \cdot {}^t \alpha$.
The product $v \alpha \in V_\R^d$ is computed by viewing $v=(v_1,\ldots, v_d)\in V_\R^d$ as a row vector and multiplying it by the matrix $\alpha$ in the usual way.
\end{proposition}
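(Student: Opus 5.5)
The plan is to unwind the definition \eqref{classical theta}: write $\tilde{g}_\mathtt{z}$ as a product of Rao lifts of a unipotent and a Levi element, then apply the explicit formulas of Proposition \ref{prop:symplectic-orthogonal weil formulas} at the single archimedean place. By \eqref{natural g_z} we have $g_\mathtt{z} = n(\mathtt{x})\, m(\alpha)$ with $\det(\alpha)>0$. Both factors lie in the identity component $P^+_\R$ of the Siegel parabolic. Lemma \ref{lem:siegel section} says $p\mapsto(p,1)$ is a homomorphism on $P^+_\R$, so in Rao coordinates
\[
\tilde{g}_\mathtt{z} = (n(\mathtt{x}),1)\cdot(m(\alpha),1) \in \Mp(W_\R)^{(2)}_\psi .
\]
We view this inside $\Mp(W_\A)^{(2)}_\psi$ via the partial Rao coordinates \eqref{semi rao coordinates} with $\Sigma=\{\infty\}$. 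Since $\omega_{V,\psi}$ is a homomorphism, $\omega_{V,\psi}(\tilde{g}_\mathtt{z})\varphi = \omega_{V,\psi}(n(\mathtt{x}),1)\,\omega_{V,\psi}(m(\alpha),1)\varphi$.

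Next I apply Proposition \ref{prop:symplectic-orthogonal weil formulas} with $\Sigma=\{\infty\}$ and $\epsilon=1$. Remark \ref{rem:beta on parabolics} gives $\beta_\infty(m(\alpha))=1$ because $\det\alpha>0$. Hence
\[
\omega_{V,\psi}(\tilde{g}_\mathtt{z})\varphi(v) = \psi(\mathrm{Tr}(Q(v)\mathtt{x}))\,\det(\alpha)^{\frac12\dim V}\,\varphi_\R(v\alpha)\,\varphi_f(v).
\]
The adelic character evaluated at rational $v$ has to be handled with care. Acting only at the archimedean place, the factor is $\psi_\infty(\mathrm{Tr}(Q(v)\mathtt{x}))=e^{2\pi i\,\mathrm{Tr}(Q(v)\mathtt{x})}$, since in the $\Sigma$-operator \eqref{Sigma operator} the finite components of $n(\mathtt{x})$ are trivial. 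Also $\det\alpha=\det(\mathtt{y})^{1/2}$, so the scalar factor is $\det(\mathtt{y})^{\frac14\dim V}$.

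Finally I sum over $v\in V^d$ and group the terms by $T=Q(v)\in\Sym_d(\Q)$. Using $\mathrm{Tr}(Q(v\alpha)) = \mathrm{Tr}({}^t\alpha\, Q(v)\,\alpha)=\mathrm{Tr}(T\mathtt{y})$, I insert $1 = e^{-2\pi\mathrm{Tr}(T\mathtt{y})}\,e^{2\pi\mathrm{Tr}(Q(v\alpha))}$. Combining $e^{2\pi i\mathrm{Tr}(T\mathtt{x})}e^{-2\pi\mathrm{Tr}(T\mathtt{y})} = e^{2\pi i\mathrm{Tr}(T\mathtt{z})}$ and multiplying by the prefactor $\det(\mathtt{y})^{-m/2}$ from \eqref{classical theta} gives exactly $R_T$. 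Absolute convergence, from Theorem \ref{thm:big theta} and the Schwartz property, justifies the rearrangement.

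The main obstacle is justifying the factorization of $\tilde{g}_\mathtt{z}$ in the metaplectic group with trivial sign, and the absence of any $\beta$ factor. Both are handled by Lemma \ref{lem:siegel section} together with Remark \ref{rem:beta on parabolics}, and by the remark that $\beta$ is trivial on the unipotent radical. Independence of the choice of $\alpha$ needs no separate argument, since it is already part of the de-adelization construction.
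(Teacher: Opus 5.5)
Your proposal is correct and follows the paper's proof: you factor $\tilde{g}_\mathtt{z} = \tilde{n}(\mathtt{x})\,\tilde{m}(\alpha)$ using the triviality of the Rao cocycle on $P^+_\R$ (Lemma \ref{lem:siegel section}), and you apply Proposition \ref{prop:symplectic-orthogonal weil formulas} with $\beta_\infty(m(\alpha))=1$ by Remark \ref{rem:beta on parabolics}. You then rewrite $e^{2\pi i \mathrm{Tr}(Q(v)\mathtt{x})}$ using $\mathrm{Tr}(Q(v\alpha)) = \mathrm{Tr}(Q(v)\mathtt{y})$, which is the same sequence of steps as the paper.
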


\begin{proof}
Fix a   $\mathtt{z} \in \mathcal{H}_d$,  and a    factorization  $\mathtt{y} =\alpha \cdot {}^t \alpha$ as above.
By definition, 
\[
\theta_{V,\psi}( \mathtt{z} ,\varphi)  
 = 
\det( \mathtt{y} )^{- \frac{m}{2} }
\sum_{    v  \in V^d  }    \omega_{V,\psi}(  \tilde{g}_\mathtt{z}  ) \varphi  ( v )  ,
\]
where $g_\mathtt{z} \in \Sp(W_\R)$ and $\tilde{g}_\mathtt{z} \in \Mp(W_\R)_\psi^{(2)}$ are defined by  \eqref{natural g_z} and \eqref{g_z lift}.

As in the proof of Lemma \ref{lem:siegel section}, and  using the notation of Proposition \ref{prop:symplectic-orthogonal weil formulas},  the Rao cocycle is trivial on the subgroup
\[
\left\{  
m(A)n(B) : A \in \GL_d(\R) , B \in \Sym_d(\R) , \det(A) >0 
\right\} \subset \Sp_{2d}(\R) .
\]
Because of this, the factorization $g_{\mathtt{z}} = n(\mathtt{x}) \cdot  m(\alpha)$ from 
 \eqref{natural g_z} implies the analogous factorization 
 \[
 \tilde{g}_\mathtt{z}   = \tilde{n} ( \mathtt{x}) \cdot  \tilde{m}( \alpha)  
 \]
 of \eqref{g_z lift}, where we  are using the Rao coordinates \eqref{rao coords} to regard
\[
\tilde{n}(\mathtt{x}) = ( n ( \mathtt{x}) , 1 ) \qquad  \mbox{and}  \qquad \tilde{m}(\alpha) = ( m(\alpha)  , 1)
\]
 as elements of $\Mp(W_\R)_\psi^{(2)}$.  In particular,
 \[
 \omega_{V,\psi}( \tilde{g}_\mathtt{z}   ) =
 \omega_{V,\psi}( \tilde{n}(\mathtt{x})   )  \circ \omega_{V,\psi}( \tilde{m}(\alpha)   ).
 \]

Using the explicit formulas from Proposition \ref{prop:symplectic-orthogonal weil formulas}
(and also  Remark \ref{rem:beta on parabolics},  to see that the $\beta_v$ factors are all $1$), we find 
\begin{align*}
 \omega_{V,\psi}( \tilde{g}_\mathtt{z}   ) \varphi (v)   = 
e^{2\pi i \mathrm{Tr}( Q(v)  \mathtt{x} )}  \det( \mathtt{y} ) ^{  \frac{1}{4} \dim(V)  }   \varphi( v  \alpha)   ,
\end{align*}
for any $ v  \in V^d$.   Hence
\begin{equation}\label{coefficient computation 2}
\theta_{V,\psi}( \mathtt{z} ,\varphi)  
 = 
\det( \mathtt{y} )^{- \frac{m}{2 } + \frac{1}{4} \dim(V)  }  
\sum_{    v \in V^d  }    \varphi( v  \alpha)   \cdot  e^{2\pi i \mathrm{Tr}( Q(v)  \mathtt{x} )}   .
\end{equation}
To complete the proof,  note that 
\[
e^{2\pi  \mathrm{Tr}(   Q(v)  \mathtt{y}   ) }  =e^{2\pi  \mathrm{Tr}({}^t  \alpha  Q(v)  \alpha ) } =
 e^{2\pi  \mathrm{Tr}( Q(v  \alpha )) } ,
\]
which implies
\[
e^{2 \pi i \mathrm{Tr} ( Q(v) \mathtt{x}  ) }  = 
e^{2 \pi i \mathrm{Tr} ( Q(v) \mathtt{z} ) } e^{2\pi  \mathrm{Tr}(   Q(v \alpha)     ) }   .
\] 
This allows us to rewrite \eqref{coefficient computation 2}  as 
\begin{align*}
\theta_{V,\psi}( \mathtt{z} ,\varphi)  
& = 
\det( \mathtt{y} )^{- \frac{m}{2} + \frac{1}{4} \dim(V)  }  
\sum_{    v \in V^d  }        \varphi( v  \alpha)  e^{ 2\pi  \mathrm{Tr}(   Q(v \alpha)     ) }  e^{2 \pi i \mathrm{Tr} ( Q(v) \mathtt{z} ) }    ,
\end{align*}
as desired.
\end{proof}

By  the discussion above,  the problem of constructing theta functions on $\mathcal{H}_d$ is entirely reduced to the problem of constructing  Schwartz functions $\varphi_\R \in S(V_\R^d)$ that transform like   \eqref{orthogonal symplectic K weight} under the Weil representation.
  The following theorem shows how to do this in the most classical case, recovering the original theta functions of Jacobi and Siegel.

\begin{theorem}\label{thm:classical theta modularity}
Assume that $V$ is positive definite.  
For any $\varphi_f \in S(V^d_{\A_f})$, the function  
\[
\vartheta (\mathtt{z} , \varphi_f ) = 
\sum_{ T \in \Sym_d( \Q)}  \left(   \sum_{ \substack{ v \in V^d \\ Q(v) = T } } \varphi_f(v)   \right)  
e^{2 \pi i \mathrm{Tr}( T \mathtt{z} ) } 
\]
is a holomorphic Siegel modular form on $\mathcal{H}_d$ of weight  $2^{-1} \dim_\Q(V)$.  Its level, an arithmetic subgroup of $\Sp_{2d}(\Q)$,  depends on $\varphi_f$.
\end{theorem}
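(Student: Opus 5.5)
The plan is to take the archimedean Schwartz function to be the Gaussian $\varphi_\R(v) = e^{-2\pi \mathrm{Tr}(Q(v))}$ on $V_\R^d$, verify that it satisfies \eqref{orthogonal symplectic K weight} with $m = 2^{-1}\dim_\Q(V)$, and then read off the Fourier expansion from Proposition \ref{prop:classical theta expansion}. With this choice $\varphi_\R(v\alpha)\, e^{2\pi \mathrm{Tr}(Q(v\alpha))} = 1$, and the power of $\det(\mathtt{y})$ in $R_T$ is $-\frac{m}{2} + \frac14\dim(V) = 0$. So $\theta_{V,\psi}(\mathtt{z},\varphi_\R\otimes\varphi_f)$ is exactly the $q$-series $\vartheta(\mathtt{z},\varphi_f)$ in the statement. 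Since $\varphi_f$ is smooth and compactly supported, $\varphi_f$ is invariant under some compact open subgroup $U$ under the Weil representation, so the de-adelized function has a level in the required sense. Proposition \ref{prop:half-integral de-adelization} and the discussion after it then give that $\vartheta$ transforms like a Siegel modular form of weight $m$ for an arithmetic subgroup of $\Sp_{2d}(\Q)$. Holomorphy in $\mathtt{z}$ is visible from the expansion: positive definiteness makes $\{v \in V^d \cap \mathrm{supp}(\varphi_f)\}$ a lattice-like set, the series converges absolutely and locally uniformly on $\mathcal{H}_d$, and each term $e^{2\pi i\mathrm{Tr}(T\mathtt{z})}$ is holomorphic. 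For $d=1$ one also needs boundedness at cusps, which follows from $T \ge 0$ (Koecher's principle covers $d \ge 2$).

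The main step is the $K^{(2)}$-equivariance of $\varphi_\R$. Choose an orthonormal basis of $V_\R$ for $b$, so that $\bm{W}_\R = W_\R\otimes V_\R$ is an orthogonal sum of $n = \dim V$ copies of $W_\R$. The operator $J\otimes 1$ is a positive complex structure on $\bm{W}_\R$, and under $\bm{Y}_\R \iso \R^{dn}$ its vacuum vector is $e^{-\pi\sum y^2} = \varphi_\R$. By Theorem \ref{thm:inf compatibility}, together with connectedness of $K^{(2)}$, it suffices to check the infinitesimal identity $\omega^{\mathrm{inf}}_{V}(Z)\varphi_\R = m\,\chi^{\mathrm{inf}}(Z)\varphi_\R$ for $Z$ in the spanning set $Z_{jk}$ of Lemma \ref{lem:basic Z_jk action}. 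One subtlety is that the differential of $\chi^{m}$ on $K^{(2)}$ is $m\chi^{\mathrm{inf}}$, and the central $i\R$ component does not enter.

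To check this, note that the differential of $i_V$ sends $Z_{jk}$ to $\sum_{a=1}^n Z^{(a)}_{jk}$, where $Z^{(a)}_{jk}$ is the analogous element for the $a$-th copy of $W$. This follows from the map \eqref{sym2sp}: $w w' \mapsto \sum_a (w\otimes u_a)(w'\otimes u_a)$, where $u_a$ is the orthonormal basis. By Lemma \ref{lem:basic Kinf calc} and the proof of Proposition \ref{prop:vacuum eigenvector}, each $Z^{(a)}_{jk}$ acts on the Gaussian by $i\delta_{jk}$. The total eigenvalue is therefore $n i\delta_{jk} = m\cdot 2i\delta_{jk} = m\chi^{\mathrm{inf}}(Z_{jk})$, as required.

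The main obstacle is the passage from the Lie algebra identity to the group identity. We must ensure that integrating gives exactly $\chi^m$ on $K^{(2)}$, and not $\chi^m$ twisted by a character that is trivial on the identity component. Because $K^{(2)}$ is connected, this is automatic. One must also confirm that the lift $i_{V,\psi}$ at the real place is compatible with differentiation, i.e.\ that $d(i_{V,\psi})$ restricted to $\mathfrak{sp}(W)$ is $d i_V$ with no central correction. I would argue this by observing that the two sides of \eqref{orthogonal symplectic K weight} are both continuous characters of $K^{(2)}$ acting on the line spanned by $\varphi_\R$. That line is preserved because the vacuum vector is an eigenvector for the big $\tilde{K}$. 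If the differentials agree modulo a possible central term, any such term can be ruled out by evaluating at the identity component.
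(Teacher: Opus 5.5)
Your overall strategy matches the paper's. You use the same Gaussian and the same reduction through Proposition~\ref{prop:classical theta expansion}. You also reduce the $K^{(2)}$-equivariance to an infinitesimal identity via Theorem~\ref{thm:inf compatibility} and connectedness of $K^{(2)}$, as the paper does.

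The only difference is how you get the infinitesimal eigenvalue. You split $\bm{W}_\R$ into $n=\dim V$ orthogonal copies of $W_\R$ and sum the eigenvalues $i\delta_{jk}$ from Lemma~\ref{lem:basic Kinf calc}. This is the case $P=1$ of the computation in the proof of Theorem~\ref{thm:main orthogonal}. The paper instead pulls back the character of Proposition~\ref{prop:vacuum eigenvector} along $i_V$, using the group-level identity $\chi_{\bm{J}}\circ i_V=\chi^{\dim V}$. Both computations are correct and give $m\,\chi^{\mathrm{inf}}$ with $m=\frac12\dim V$.

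The gap is in your justification that $d(i_{V,\psi})$ has no $i\R$-component. You argue that both sides are characters of $K^{(2)}$ on the line spanned by $\varphi_\R$, and that a central term is ``ruled out by evaluating at the identity component.'' That does not work. If the actual character $\xi$ had differential $m\chi^{\mathrm{inf}}+c|_{\mathfrak{k}}$, then $\xi\chi^{-m}$ would simply be another character of the connected group $K^{(2)}$. Nothing about identity components detects it.

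Your argument also uses nothing specific to $\mathfrak{sp}(W_\R)$. In the unitary setting the identical reasoning would give a wrong answer, because there the correction $\frac{r_\eta}{2}\mathrm{Tr}_W$ of Lemma~\ref{lem:kinf correction} is nonzero on $\mathfrak{k}$.

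The missing idea is the one the paper uses. The $i\R$-component of $d(i_{V,\psi}):\mathfrak{sp}(W_\R)\to\mathfrak{sp}(\bm{W}_\R)\oplus i\R$ is a Lie algebra homomorphism into an abelian Lie algebra. It therefore kills $[\mathfrak{sp}(W_\R),\mathfrak{sp}(W_\R)]$, which is all of $\mathfrak{sp}(W_\R)$ because $\mathfrak{sp}(W_\R)$ is semisimple. Hence $d(i_{V,\psi})(Z)=(i_V(Z),0)$, as in \eqref{orthogonal-symplectic lie diagram}. With that line substituted, your proof is complete.

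A minor point: for $d=1$, $T\ge 0$ only controls the cusp at $\infty$. At other cusps you need that $\vartheta$ transformed by a rational $\gamma$ is again a theta series attached to a translate of $\varphi_f$. The paper does not address cusp conditions either.
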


\begin{proof}
This is just a question of choosing the right  $\varphi_\R \in S( V_\R^d)$ in Proposition \ref{prop:classical theta expansion}, for which we take the standard Gaussian
\[
\varphi^\circ_\R(v)  = e^{-2\pi  \mathrm{Tr}( Q(v  )) }   =  \prod_{j=1}^d e^{- \pi   b (v_j , v_j  ) } .
\]

If we can show that  the standard Gaussian satisfies the transformation law \eqref{orthogonal symplectic K weight} with  
\[
m = 2^{-1} \dim(V),
\]
   then we are done:   Proposition \ref{prop:classical theta expansion} applies to  the Schwartz function $\varphi = \varphi^\circ_\R \otimes \varphi_f$, and shows that the theta function  $\theta_{V,\psi} (\mathtt{z} , \varphi )$,  which we already know transforms like a Siegel modular form  of weight $m$,  is none other than   the function $\vartheta (\mathtt{z} , \varphi_f )$ in the statement of the theorem.

To show that the standard Gaussian satisfies the correct transformation law, we return to the discussion  of \S \ref{s:infinitesimal weil}.
Define $J \in \Sp(W)$ as in  \eqref{standard J}, so that $K$ is its centralizer in $\Sp_{2d}(\R) = \Sp(W_\R)$,  and $\chi$ is the distinguished character from \eqref{chi_J}. 

Denote by 
$
\bm{J} = J  \otimes   \mathrm{id}_V
$
the induced automorphism of $\bm{W} = W \otimes_\Q V$, 
let $K_{\bm{J}} \subset \Sp(\bm{W}_\R)$ be its  stabilizer, and let 
$\chi_{\bm{J}}$ be its distinguished character as in  \eqref{chi_J}.
As in \eqref{coordinate vacuum},  the vacuum vector $\varphi^\circ_{\bm{J}} \in S(\bm{Y}_\R) =S(V_\R^d)$ of Definition \ref{def:vacuum}  agrees with  the standard Gaussian defined above.

Consider the diagram 
\[
\begin{tikzcd}
 {  \Mp(W_\R)_\psi^{(2)} }  \ar[d] \ar[r, " i_{V,\psi} "]    &  {  \Mp(\bm{W}_\R) _\psi   }  \ar[d]  \\
{   \Sp(W_\R) }  \ar[r , "i_V" ]   &   {   \Sp(\bm{W}_\R)  }   ,
\end{tikzcd}
\]
of real Lie groups, as in \eqref{symplectic-orthogonal local splitting diagram}.
The bottom horizontal arrow takes $K$ into $K_{\bm{J}}$, and  the
distinguished characters $\chi$ and $\chi_{\bm{J}}$ of these groups  are related by 
\begin{equation}\label{orthogonal chi pullback}
\chi_{\bm{J}} \circ i_V=   \chi^{ \dim(V) }    .
\end{equation}
Indeed, both are equal to  the inverse of the determinant of $K $ acting on $\bm{W}_{\R, \bm{J}} = W_{ \R, J}  \otimes_\Q V$, where the action is only on the first tensor factor.

 Proposition \ref{prop:vacuum eigenvector} tells us that  the preimage of $K_{\bm{J}}$ in  $\Mp(\bm{W}_\R)_\psi$ acts on the vacuum vector through a character. 
 It follows that $K^{(2)}$,   the preimage of $K$ under the left vertical arrow in the diagram,  acts  on the standard Gaussian via the pullback of this  character along $i_{V,\psi}$.

To see what this pullback is, denote by  $\mathfrak{k} \subset \mathfrak{sp}(W_\R)$  the Lie algebra of $K$, which we identify with the Lie algebra of $K^{(2)}$.  
Similarly, let $\mathfrak{k}_{\bm{J}} \subset \mathfrak{sp}( \bm{W}_\R)$ be the Lie algebra of $K_{\bm{J}}$.
Recalling \eqref{mp lie}, the above diagram of Lie groups  induces the bottom half of the diagram of real Lie algebras 
\begin{equation}\label{orthogonal-symplectic lie diagram}
\begin{tikzcd}
{  \mathfrak{k} }  \ar[r] \ar[d] &  {    \mathfrak{k}_{ \bm{J} }   \oplus i \R }  \ar[d]  \\
  { \mathfrak{sp}(W_\R) }  \ar[d, equal ] \ar[r, " i_{V,\psi} "]    &  {    \mathfrak{sp}(\bm{W}_\R)  \oplus i\R  }  \ar[d]  \\
 { \mathfrak{sp}(W_\R) } \ar[r , "i_V" ]   &   {   \mathfrak{sp}(\bm{W}_\R)  }    .
\end{tikzcd}
\end{equation}
Note that the  horizontal arrows are related by $i_{V,\psi}(Z) = (  i_V(Z) , 0 )$, simply because there are no nonzero Lie algebra maps $\mathfrak{sp}(W_\R) \to i \R$.

The character through which  $K^{(2)}$ acts on the standard Gaussian differentiates to the composition of the top horizontal arrow with the linear functional  on $ \mathfrak{k}_{\bm{J}} \oplus i\R$ appearing in  Proposition \ref{prop:vacuum eigenvector}.
Using \eqref{orthogonal chi pullback}, this composition is 
\[
\frac{1}{2}  \cdot \chi_{\bm{J}}^\mathrm{inf} \circ i_V 
= \frac{1}{2}  \cdot  \dim(V)  \cdot \chi^\mathrm{inf} = m \cdot \chi^\mathrm{inf} ,
\]
which is the infinitesimal form of the character $\chi^m : K^{(2)} \to \C^1$.
Because $K^{(2)}$ is connected,  it follows that the standard Gaussian satisfies   \eqref{orthogonal symplectic K weight}, as desired.
 \end{proof}


\subsection{Polynomial weights}
\label{ss:orthogonal poly weights}


With the goal of generalizing Theorem \ref{thm:classical theta modularity}, 
 assume  throughout \S \ref{ss:orthogonal poly weights}  that $(V,b)$ is a positive definite quadratic space over $\Q$.
Fix an orthonormal   basis $v_1,\ldots, v_n \in V_\R$, so that  $b(v_j,v_k) =\delta_{j k} $.

Writing vectors in $V_\R$ as $x_1 v_1+ \cdots + x_n v_n$,
   we regard functions on $V_\R$ as functions of the real variables $x_\alpha$ with $1\le \alpha \le n$.
   For an integer $1\le j \le d$, let $x_{\alpha j}$ be the composition
   \[
   V_\R^d \map{ \mathrm{pr}_j }    V_\R \map{x_\alpha}  \R,
   \]
   where the first arrow  is projection to the $j^\mathrm{th}$ factor.
In this way we regard functions on $V_\R^d$ as functions of the variables $x_{\alpha j}$.

For each $1 \le j \le d$, define  a differential operator 
\begin{align*}
\Delta_j   & = \sum_\alpha   \frac{\partial^2 }{ \partial x^2 _{\alpha j }}    
\end{align*}
on the space of ($\C$-valued)  smooth  functions on the real vector space $V_\R^d$. 
 In other words, $\Delta_j$ is the usual Laplace operator on the $j^\mathrm{th}$ copy of $V_\R$ in $V_\R^d$.
 It is independent of the choice of orthonormal basis used to define it, hence so is $\Delta= \Delta_1+\cdots+\Delta_d$.
 
Given a  polynomial function $P : V_\R^d \to \C$, meaning a polynomial in the variables $x_{\alpha j}$,   define a new polynomial
\begin{equation}\label{Psharp}
P^\sharp =  \exp\left( -\frac{\Delta}{8\pi} \right)  P  
\end{equation}
as in the introduction.
Clearly  $P\mapsto P^\sharp$ is an automorphism of the space of all polynomial functions on $V_\R^d$.
The definition is motivated by the following result of Roehrig \cite{Roehrig}, whose proof will occupy the remainder of this subsection.

\begin{theorem}\label{thm:main orthogonal}
Fix  an integer $N \in \Z$, and  suppose $P$ is a polynomial function on $V_\R^d$ satisfying
\begin{equation}\label{P homogeneity}
P( v g ) = \det(g)^N P(v) 
\end{equation}
 for  all $v\in V_\R^d$ and $g \in \GL_d(\R)$.    
 For any Schwartz function $\varphi_f \in S(V^d_{\A_f})$,   the  function 
\[
\vartheta_P (\mathtt{z} ,  \varphi_f ) = 
\det(\mathtt{y})^{-N/2} 
\sum_{ T \in \Sym_d( \Q)}  \left(   \sum_{ \substack{ v \in V^d \\ Q(v) = T } }   P^\sharp(v \alpha ) \varphi_f(v  )   \right)   \, e^{2 \pi i \mathrm{Tr}( T \mathtt{z} ) } 
\]
of the variable $\mathtt{z} =\mathtt{x} + i \mathtt{y}\in  \mathcal{H}_d$ transforms like a Siegel modular form of weight $N+2^{-1} \dim_\Q(V)$, under the action of some arithmetic subgroup of $\Sp_{2d}(\Q)$.
Here  $\alpha \in \GL_d(\R)$  is any matrix of positive determinant such that  $\mathtt{y}=\alpha \cdot {}^t \alpha$. 
\end{theorem}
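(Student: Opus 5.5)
The plan is to reduce everything to Proposition \ref{prop:classical theta expansion}, as in the proof of Theorem \ref{thm:classical theta modularity}. Take $m = N + 2^{-1}\dim(V)$ and the archimedean Schwartz function
\[
\varphi_\R(v) = P^\sharp(v)\, e^{-2\pi \mathrm{Tr}(Q(v))} = P^\sharp(v)\,\varphi^\circ_\R(v).
\]
Since $Q(v\alpha)={}^t\alpha Q(v)\alpha$, Proposition \ref{prop:classical theta expansion} turns $\theta_{V,\psi}(\mathtt{z},\varphi_\R\otimes\varphi_f)$ into
\[
\det(\mathtt{y})^{-\frac{m}{2}+\frac14\dim V}\sum_v P^\sharp(v\alpha)\varphi_f(v)e^{2\pi i\mathrm{Tr}(Q(v)\mathtt{z})}.
\]
The exponent of $\det(\mathtt{y})$ is $-N/2$, so this is exactly $\vartheta_P(\mathtt{z},\varphi_f)$. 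The whole theorem therefore reduces to verifying the transformation law \eqref{orthogonal symplectic K weight} for this $\varphi_\R$ with weight $m$.

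Because $K^{(2)}$ is connected, it suffices by Theorem \ref{thm:inf compatibility} to check the infinitesimal identity
\[
\omega^{\mathrm{inf}}(i_V(Z))\varphi_\R = m\,\chi^{\mathrm{inf}}(Z)\varphi_\R
\]
for $Z$ in the spanning set $Z_{jk}$ of $\mathfrak{k}_\C$ from Lemma \ref{lem:basic Z_jk action}, where $\chi^{\mathrm{inf}}(Z_{jk})=2i\delta_{jk}$. I will compute $i_V(Z_{jk})\in\Sym^2(\bm W_\C)$. Since $Z_{jk}=(e_j-if_j)(e_k+if_k)$ and $i_V$ on $\mathfrak{sp}$ corresponds to contracting with $b$, I expect
\[
i_V(Z_{jk})=\sum_\alpha (e_j\otimes v_\alpha - i f_j\otimes v_\alpha)(e_k\otimes v_\alpha+ i f_k\otimes v_\alpha),
\]
possibly up to a harmless normalization I will verify. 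Lemma \ref{lem:basic Kinf calc}, applied coordinatewise on $\bm Y_\R\cong\R^{nd}$ with coordinates $x_{\alpha j}$, then gives
\[
\omega^{\mathrm{inf}}(i_V(Z_{jk}))=2\pi i\sum_\alpha\Big(x_{\alpha j}+\tfrac{1}{2\pi}\partial_{\alpha j}\Big)\Big(x_{\alpha k}-\tfrac{1}{2\pi}\partial_{\alpha k}\Big)
\]
for $j\neq k$, together with the analogous formula for $j=k$.

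Next I conjugate by the Gaussian. Writing $\varphi_\R=P^\sharp\varphi^\circ$, we have $(x-\frac{1}{2\pi}\partial)(F\varphi^\circ)=(2x-\frac1{2\pi}\partial)F\cdot\varphi^\circ$ and $(x+\frac1{2\pi}\partial)(F\varphi^\circ)=\frac1{2\pi}\partial F\cdot\varphi^\circ$. Hence the operator becomes, on polynomials $F$,
\[
F\mapsto i\sum_\alpha \partial_{\alpha j}\Big(2x_{\alpha k}-\tfrac1{2\pi}\partial_{\alpha k}\Big)F = i\Big(2E_{jk}-\tfrac1{2\pi}\Delta_{jk}\Big)F + i\,n\,\delta_{jk}F,
\]
where $E_{jk}=\sum_\alpha x_{\alpha k}\partial_{\alpha j}$ is a polarization operator and $\Delta_{jk}=\sum_\alpha\partial_{\alpha j}\partial_{\alpha k}$. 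The $j=k$ case should give the same shape after the same computation. The term $in\delta_{jk}$ reproduces the Gaussian eigenvalue $\frac12\dim V\cdot\chi^{\mathrm{inf}}$. What remains to show is
\[
\Big(2E_{jk}-\tfrac1{2\pi}\Delta_{jk}\Big)P^\sharp = 2N\delta_{jk}P^\sharp.
\]

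The key step, and the expected main obstacle, is this identity. The homogeneity \eqref{P homogeneity} differentiated along $\mathfrak{gl}_d$ gives $E_{jk}P=N\delta_{jk}P$. The commutator $[\Delta, E_{jk}]=2\Delta_{jk}$ (up to index order, to be checked) yields
\[
e^{-\Delta/8\pi}E_{jk}e^{\Delta/8\pi}=E_{jk}-\tfrac{1}{4\pi}\Delta_{jk},
\]
the series terminating after one term because $[\Delta,\Delta_{jk}]=0$. Therefore
\[
\Big(E_{jk}-\tfrac1{4\pi}\Delta_{jk}\Big)P^\sharp = e^{-\Delta/8\pi}E_{jk}P = N\delta_{jk}P^\sharp,
\]
which is exactly the required identity after multiplying by $2$. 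The delicate parts are getting the signs and constants in $i_V(Z_{jk})$ and in the commutator exactly right so that the $\frac1{8\pi}$ in \eqref{Psharp} matches. I will double-check these against the $d=1$, $P$ harmonic case.
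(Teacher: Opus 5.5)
Your proposal is correct and follows the paper's proof essentially step for step: the reduction via Proposition \ref{prop:classical theta expansion} to the $K^{(2)}$-type of $P^\sharp\varphi^\circ_\R$, the Gaussian-conjugated form of the operators in Lemma \ref{lem:inf Z_jk}, and the identity $e^{-\Delta/8\pi}E_{jk}e^{\Delta/8\pi}=E_{jk}-\frac{1}{4\pi}\Delta_{jk}$ (Lemmas \ref{lem:homo to diffeq} and \ref{lem:diffeq conversion}, with your $E_{jk}$ being the paper's $E_{kj}$). You leave two points implicit. First, the differentiated action of $\omega_{V,\psi}$ on $\mathfrak{k}$ is exactly $\omega^{\mathrm{inf}}\circ i_V$, with no central correction, because $\mathfrak{sp}(W_\R)$ has no nonzero map to $i\R$. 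Second, for $j=k$ it is the commutator term in $x_{\alpha k}^2-\frac{1}{4\pi^2}\partial_{\alpha k}^2$ that makes the constant $in$ rather than the $2in$ a literal reuse of your $j\neq k$ factorization would give; your final formula already has the correct value.
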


\begin{remark}
The assumption  \eqref{P homogeneity} implies that $P$ is right invariant under the subgroup
$\mathrm{SO}(d) \subset \GL_d(\R)$.
The differential operator $\Delta$  commutes with the action of this subgroup, 
and hence  $P^\sharp$ is also  $\mathrm{SO}(d)$-invariant.  As in  Remark \ref{rem:alpha ambiguity}, it follows that 
$P^\sharp (v \alpha)$ is independent of the choice of $\alpha$.
\end{remark}

\begin{remark}
Theorem \ref{thm:intro main orthogonal} of the introduction is just the special case in which  $\varphi_f$ is the characteristic function of $\prod_p L_{\Z_p} \subset V_{\A_f}$.
\end{remark}

%
%

We   begin the  proof of Theorem \ref{thm:main orthogonal}.
Our calculations are  related to those of \cite[\S 9]{Mumford-3}, but we allow nonharmonic polynomials $P$, and carry out all calculations at the Lie algebra level, rather than working with the group version of the Weil representation.

For $j,k \in \{ 1, \ldots, d\}$, define differential operators  
\begin{equation*}
E_{jk}   \define  \sum_\alpha x_{\alpha j}  \frac{\partial}{ \partial x_{\alpha k}}  
\qquad \mbox{and} \qquad 
F_{jk}   \define \sum_\alpha \frac{\partial }{ \partial x _{\alpha j }}    \frac{\partial }{ \partial x _{\alpha k }}    \end{equation*}
on the space of smooth functions on $V_\R^d$.  The following is our version of \cite[Proposition 3.4]{Roehrig}.

\begin{lemma}\label{lem:homo to diffeq}
The transformation law \eqref{P homogeneity} for $P$  is equivalent to the system of differential equations 
\begin{equation}\label{homo equations}
E_{jk} P = N \delta_{jk} P .
\end{equation}
\end{lemma}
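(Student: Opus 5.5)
The plan is to differentiate the right action of $\GL_d(\R)$ on polynomials and compare infinitesimal characters, then integrate back using connectedness and a polynomial-identity argument. For $g \in \GL_d(\R)$ write $(R_g P)(v) = P(vg)$. Let $\epsilon_{kj}$ denote the elementary matrix with a $1$ in position $(k,j)$. The first step is to compute the derivative at $t=0$ of $P(v\exp(t\epsilon_{kj}))$. Since $v\exp(t\epsilon_{kj})$ has $j$-th component $v_j + t v_k + O(t^2)$ and all other components unchanged, the chain rule gives
\[
\frac{d}{dt}\Big|_{t=0} P\big(v \exp(t \epsilon_{kj})\big) = \sum_\alpha x_{\alpha k}\frac{\partial P}{\partial x_{\alpha j}} (v) .
\]
The roles of $j,k$ may be swapped depending on convention; this is harmless because the target equation is symmetric in that respect. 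On the other side, $\frac{d}{dt}\det(\exp(t\epsilon_{kj}))^N = N\,\mathrm{Tr}(\epsilon_{kj}) = N\delta_{jk}$. So differentiating \eqref{P homogeneity} along one-parameter subgroups gives exactly \eqref{homo equations}, which proves one direction.

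For the converse, assume \eqref{homo equations}. Fix $v$ and consider the function $h(g) = \det(g)^{-N} P(vg)$ on $\GL_d(\R)$. The computation above, applied at an arbitrary base point $g$, shows that every left-invariant derivative of $h$ vanishes. Concretely, for $X\in\mathfrak{gl}_d$,
\[
\frac{d}{dt}h(g\exp(tX))\Big|_{t=0} = \det(g)^{-N}\Big(\big(\textstyle\sum X_{kj}E_{kj}P\big)(vg) - N\,\mathrm{Tr}(X)\,P(vg)\Big) = 0 .
\]
Hence $h$ is locally constant, and therefore constant on the identity component $\GL_d^+(\R)$. This gives \eqref{P homogeneity} for $\det g>0$.

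The main obstacle is the component with $\det g<0$, since the Lie algebra does not see it. I would handle it with an algebraic argument. Both sides of $P(vg) = \det(g)^N P(v)$ are polynomial in the entries of $g$ when $N\ge0$. If $N<0$, multiply through by $\det(g)^{-N}$ to get a polynomial identity. A polynomial identity on $\GL_d(\R)$ that holds on the open set $\GL_d^+(\R)$ holds everywhere, because a polynomial vanishing on a nonempty open subset of $M_d(\R)$ is identically zero. This extends the identity to all of $\GL_d(\R)$ and completes the equivalence.
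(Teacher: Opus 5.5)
Your proposal is correct and follows essentially the same route as the paper. In both, the differentiated right action $r(\epsilon_{kj})$ is identified with $E_{kj}$, the group law on the identity component $\GL_d^+(\R)$ is matched with the Lie algebra relation $r(X)P = N\,\mathrm{Tr}(X)P$, and the law is extended to $\det g<0$ by a polynomial-identity (Zariski density) argument. The only difference is that you make explicit two steps the paper leaves to ``basic representation theory'' and ``a Zariski density argument'': the integration via $h(g)=\det(g)^{-N}P(vg)$, and clearing denominators when $N<0$.
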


\begin{proof}
Define a representation  $r$ of $\GL_d(\R)$  on the space of  polynomial functions  on $V_\R^d$  by $r(g)P(v) = P(v  g)$.  Denote in the same way the   induced representation of the Lie algebra  $\mathfrak{gl}_d(\R)$.

The transformation law \eqref{P homogeneity} holds for all $g \in \GL_d(\R)$ if and only if it holds for all $g$ in the connected component of the identity (by a Zariski density argument), 
and by   basic representation theory this  is equivalent to $P$ satisfying
\begin{equation}\label{right regular trace}
r(X )P = N  \mathrm{Tr}(X)P .
\end{equation}
for all $X \in \mathfrak{gl}_d(\R)$.

Denote by  $X_{jk} \in \mathfrak{gl}_d(\R)$ the matrix with a $1$ in the $(j,k)$ entry,   and $0$'s elsewhere.
By differentiating the action $\GL_d(\R)$, one sees that
$r(X_{j k } ) =  E_{jk}$, and hence the collection of equalities  \eqref{right regular trace}  as  $X$ varies  is  equivalent to the system of differential equations  \eqref{homo equations} as $j,k\in \{1,\ldots, d\}$ vary.
\end{proof}

The following is our version of \cite[Lemma 3.7]{Roehrig}.

\begin{lemma}\label{lem:diffeq conversion}
The system of differential equations \eqref{homo equations} for $P$ is equivalent to the system of differential equations 
\[
\left( E_{jk}  -  \frac{1}{4\pi}  F_{j k} \right)  P^\sharp 
= N    \delta_{jk}   P^\sharp 
\]
for the polynomial $P^\sharp$ from  \eqref{Psharp}.
\end{lemma}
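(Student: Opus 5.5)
The plan is to conjugate the operators $E_{jk}$ by $\exp(-\Delta/8\pi)$ and show that
\[
\exp\left(-\frac{\Delta}{8\pi}\right) \circ E_{jk} \circ \exp\left(\frac{\Delta}{8\pi}\right) = E_{jk} - \frac{1}{4\pi} F_{jk}
\]
as operators on polynomials on $V_\R^d$. Here $\exp(\pm\Delta/8\pi)$ are mutually inverse automorphisms of the polynomial ring, since $\Delta$ is locally nilpotent. Granting this identity, suppose $E_{jk}P = N\delta_{jk}P$. Applying $\exp(-\Delta/8\pi)$ to both sides and writing $P = \exp(\Delta/8\pi)P^\sharp$ gives
\[
\left(E_{jk} - \frac{1}{4\pi}F_{jk}\right)P^\sharp = N\delta_{jk}P^\sharp .
\]
Conversely, since $P \mapsto P^\sharp$ is a bijection, the same computation run backwards recovers \eqref{homo equations}. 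So the two systems are equivalent.

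To prove the conjugation identity, compute commutators. The basic one is $[\Delta, E_{jk}] = 2F_{jk}$. Indeed, $\Delta = \sum_{\beta,\ell} \partial_{\beta\ell}^2$, and
\[
[\partial_{\beta\ell}^2,\, x_{\alpha j}] = 2\delta_{\alpha\beta}\delta_{\ell j}\,\partial_{\beta\ell}.
\]
Hence $[\Delta, x_{\alpha j}\partial_{\alpha k}] = 2\,\partial_{\alpha j}\partial_{\alpha k}$, and summing over $\alpha$ gives $2F_{jk}$. Next, $F_{jk}$ is a constant-coefficient operator, so it commutes with $\Delta$. With $\mathrm{ad}_\Delta(X) = [\Delta, X]$, the iterated commutators are therefore $\mathrm{ad}_\Delta(E_{jk}) = 2F_{jk}$ and $\mathrm{ad}_\Delta^2(E_{jk}) = 0$.

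The standard expansion $e^{tA}Be^{-tA} = \sum_n \frac{t^n}{n!}\mathrm{ad}_A^n(B)$ then terminates after the linear term. Taking $t = -1/8\pi$ and $A = \Delta$ yields $E_{jk} - \frac{2}{8\pi}F_{jk} = E_{jk} - \frac{1}{4\pi}F_{jk}$.

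This expansion is only a formal identity, but on polynomials everything is a finite sum. One can justify it directly: from $\Delta^n E_{jk} = E_{jk}\Delta^n + 2nF_{jk}\Delta^{n-1}$ (induction on $n$, using that $F_{jk}$ commutes with $\Delta$) one gets
\[
\exp(-\Delta/8\pi)\,E_{jk} = \left(E_{jk} - \frac{1}{4\pi}F_{jk}\right)\exp(-\Delta/8\pi),
\]
and this is the form actually needed. The only real work is the commutator computation $[\Delta, E_{jk}] = 2F_{jk}$ and getting its constant right. The rest is bookkeeping, and I expect no genuine obstacle.
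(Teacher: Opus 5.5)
Your proposal is correct and follows essentially the same route as the paper. Both arguments use the commutator $[\Delta, E_{jk}] = 2F_{jk}$, the induction $\Delta^r E_{jk} = E_{jk}\Delta^r + 2rF_{jk}\Delta^{r-1}$ (using that $\Delta$ and $F_{jk}$ commute), and the resulting relation $\exp(-\Delta/8\pi)\circ E_{jk} = (E_{jk} - \frac{1}{4\pi}F_{jk})\circ\exp(-\Delta/8\pi)$; combined with the bijectivity of $P\mapsto P^\sharp$, this gives the equivalence, and your $\mathrm{ad}$-expansion is just a repackaging of that same computation.
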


\begin{proof}
By direct calculation, one proves the commutator relation 
\[
 \Delta  E_{j k}     = E_{ j k } \Delta +    2  F_{ j k} .
\] 
Using induction and the observation that $\Delta$ and $F_{jk}$ commute, this can be  strengthened to 
\[
 \Delta^r    E_{j k}      = E_{j k}   \Delta^r    +   2 r F_{ j k} \Delta^{r-1}
\]
for all $r \ge 1$.  An elementary calculation then shows
\begin{align*}
\exp\left( -\frac{\Delta}{8\pi} \right) \circ  E_{jk}  
& =    \left(  E_{jk}    -      \frac{1}{4 \pi}    F_{ j k}   \right)  \circ \exp\left( -\frac{\Delta}{8\pi} \right)    ,
\end{align*}
which proves the equality 
\[
( E_{jk}  P)^\sharp =  \left(  E_{jk}    -      \frac{1}{4 \pi}    F_{ j k}   \right) P^\sharp.
\]
The claim follows immediately from this.
\end{proof}

This is the point where we diverge from the arguments of \cite{Roehrig}.
Let us compute the action of the vectors 
\begin{equation}\label{Z_jk}
Z_{jk}  \define    (e_j  -  i  f_j  )  \cdot ( e_k  +  i f_k   )  \in \Sym^2(W_\C) 
\stackrel{  \eqref{sym2sp} } { \iso }   \mathfrak{sp}(W_\C)
\end{equation}
under the composition
\[
 \mathfrak{sp}(W_\C)   \map{ i_V }  \mathfrak{sp}(\bm{W}_\C )
  \map{  \omega^\mathrm{inf}  }   \End_\C( S(V_\R^d ))
\]
of   \eqref{symplectic i_V} with the infinitesimal Weil representation of Definition \ref{def:Lie Schrodinger}.

\begin{lemma}\label{lem:inf Z_jk}
 The vectors defined above satisfy
\begin{align*}
\omega^\mathrm{inf}  (  i_V( Z_{jj} )    ) & =
 2 \pi i  \sum_\alpha  \left(  x_{\alpha j} ^2 
-  \frac{1}{ 4 \pi^2 } \frac{ \partial^2}{ \partial x_{ \alpha j}^2}  \right),
\end{align*}
and, for $j \neq k$, 
\begin{align*}
\omega^\mathrm{inf}  (  i_V(  Z_{jk}  ) ) 
& =
 2 \pi i   \sum_\alpha  \left(   x_{\alpha j } +  \frac{1}{2\pi}  \frac{\partial}{ \partial x_{\alpha j } }   \right)
 \left(   x_{\alpha k } -  \frac{1}{2\pi}  \frac{\partial}{ \partial x_{\alpha k } }   \right) .
  \end{align*}
\end{lemma}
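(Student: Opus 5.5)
The plan is to reduce to Lemma \ref{lem:basic Kinf calc} by identifying $i_V(Z_{jk})$ with a sum of vectors in $\Sym^2(\bm{W}_\C)$ of the same shape as the vectors \eqref{KZs}, but for the enlarged symplectic space $\bm{W}$. The symplectic space $\bm{W}_\R = W_\R\otimes V_\R$ has the symplectic basis $\bm{e}_{\alpha j} = e_j\otimes v_\alpha$ and $\bm{f}_{\alpha j} = f_j\otimes v_\alpha$, since $\langle e_j\otimes v_\alpha, f_k\otimes v_\beta\rangle_{\bm W} = \delta_{jk}\delta_{\alpha\beta}$ by orthonormality of the $v_\alpha$. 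Under the identification $V_\R^d\iso \bm{Y}_\R$ the coordinate dual to $\bm{f}_{\alpha j}$ is exactly $x_{\alpha j}$. Remark \ref{rem:weyl action} then gives $\rho(\bm{e}_{\alpha j}) = 2\pi i x_{\alpha j}$ and $\rho(\bm{f}_{\alpha j}) = -\partial/\partial x_{\alpha j}$.

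The key step is to show that, under \eqref{sym2sp} for $W$ and for $\bm W$, the map $i_V$ on Lie algebras is
\[
i_V(w_1 w_2) = \sum_\alpha (w_1\otimes v_\alpha)(w_2\otimes v_\alpha).
\]
I would check this directly. The right side acts on $w\otimes v$ by
\[
\sum_\alpha \langle w_1,w\rangle b(v_\alpha,v)\, w_2\otimes v_\alpha + (1\leftrightarrow 2) = \big(\langle w_1,w\rangle w_2 + \langle w_2,w\rangle w_1\big)\otimes v,
\]
using $\sum_\alpha b(v_\alpha,v)v_\alpha = v$. This is precisely $(w_1w_2)\otimes \mathrm{id}_V$, the differential of $g\mapsto g\otimes\mathrm{id}_V$. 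There is one subtlety about conventions, because $\Sp(W)$ was identified with matrices acting on row vectors. However, \eqref{sym2sp} is defined intrinsically on endomorphisms of $W$, so only $i_V$ as $g\mapsto g\otimes \mathrm{id}$ matters, and this should cause no trouble. Extending $\C$-linearly gives
\[
i_V(Z_{jk}) = \sum_\alpha (\bm{e}_{\alpha j} - i\bm{f}_{\alpha j})(\bm{e}_{\alpha k}+i\bm{f}_{\alpha k}),
\]
that is, a sum over $\alpha$ of the vectors $Z_{(\alpha j)(\alpha k)}$ of \eqref{KZs} for the space $\bm{W}$.

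Now apply Lemma \ref{lem:basic Kinf calc} to $\bm{W}$ with its basis indexed by pairs $(\alpha,j)$, summing over $\alpha$. For $j\ne k$ the index pairs $(\alpha,j)\neq(\alpha,k)$ are distinct, so the off-diagonal formula applies with $y$ replaced by $x$. This gives $2\pi i(x_{\alpha j}+\frac1{2\pi}\partial_{\alpha j})(x_{\alpha k}-\frac1{2\pi}\partial_{\alpha k})$, which is exactly the claimed formula. For $j=k$ the diagonal formula gives
\[
2\pi i x_{\alpha j}^2 + \frac{1}{2\pi i}\partial^2_{\alpha j} = 2\pi i\Big(x_{\alpha j}^2 - \frac{1}{4\pi^2}\partial_{\alpha j}^2\Big),
\]
because $\frac{1}{2\pi i} = -\frac{2\pi i}{4\pi^2}$. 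Summing over $\alpha$ finishes the proof.

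The main obstacle is the identification of $i_V$ on $\Sym^2$, and in particular being sure that the orthonormality of $v_\alpha$ (rather than a dual basis) is what is needed. Everything else is bookkeeping through the earlier lemma.
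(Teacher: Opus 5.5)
Your proposal is correct and follows the paper's proof. The paper likewise identifies $i_V(w_1w_2)=\sum_\alpha (w_1\otimes v_\alpha)(w_2\otimes v_\alpha)$ and then evaluates through $\iota$ and $\rho$ using Remark \ref{rem:weyl action}, with $\rho(e_j\otimes v_\alpha)=2\pi i\, x_{\alpha j}$ and $\rho(f_j\otimes v_\alpha)=-\partial/\partial x_{\alpha j}$. The only cosmetic difference is that you package this last computation by citing Lemma \ref{lem:basic Kinf calc} for $\bm{W}_\R$ with the symplectic basis $e_j\otimes v_\alpha, f_j\otimes v_\alpha$, instead of redoing it directly.
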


\begin{proof}
Let $\mathcal{W}$ be the  Weyl algebra (Definition \ref{def:weyl algebra}) of the real symplectic space $\bm{W}_\R$.
Recalling the identification \eqref{sym2sp},  one sees that the composition
\[
\Sym^2(W_\R) \iso \mathfrak{sp}(W_\R) \map{i_V}  \mathfrak{sp}(\bm{W}_\R) \iso  \Sym^2(\bm{W}_\R)
\]
  sends
\[
w_1  w_2  \mapsto \sum_\alpha    ( w_1 \otimes v_\alpha ) (w_2 \otimes v_\alpha )  
\]
for any $ w_1, w_2  \in W_\R$.
Tracing through the constructions leading to Definition \ref{def:Lie Schrodinger}, it follows that 
\begin{align*}
 \omega^{\mathrm{inf}}(  i_V( w_1 w_2 ) ) = 
    \sum_\alpha 
\left(    \frac{1}{2\pi i}
    \rho  ( w_1 \otimes v_\alpha  ) \circ \rho ( w_2 \otimes v_\alpha ) 
    -  \frac{  1 }{2   }  \langle w_1 , w_2  \rangle   \right),
\end{align*}
where we use the composition 
$
\bm{W}_\R \to \bm{W}_\C^{\otimes}  \to \mathcal{W}  
$
to regard $w_1 \otimes v_\alpha$ and $w_2 \otimes v_\alpha$ as elements of the Weyl algebra,
and  $\rho$ is the action of the Weyl algebra  on $S(\bm{Y}_\R)=S(V_\R^d)$ from \eqref{Weyl module}.
 Remark \ref{rem:weyl action} shows that
 \[
\rho   (  e_j \otimes v_\alpha )    =   2 \pi i x_{\alpha j} 
\qquad \mbox{and} \qquad 
\rho   ( f_j \otimes v_\alpha  )   = - \frac{\partial}{ \partial x_{ \alpha j } }   ,
\]
and from these formulas the claim follows easily.
\end{proof}

\begin{proof}[Proof of Theorem \ref{thm:main orthogonal}]
In  the coordinates $x_{\alpha j}$ on $V_\R^d$ determined by our orthonormal basis of $V_\R$, the standard Gaussian $\varphi_\R^\circ \in S(V_\R^d)$ from the proof of Theorem \ref{thm:classical theta modularity} becomes 
\[
\varphi^\circ_\R(v)  =  e^{-2\pi   \mathrm{Tr}(  Q(v ) ) } 
= \prod_{ j =1 }^d  e^{ - \pi   \sum_\alpha x^2_{\alpha j} } .
\]
Exactly as in the proof of Theorem \ref{thm:classical theta modularity},
if we can  show that the Schwartz function  $\varphi_\R = P^\sharp \varphi_\R^\circ$ satisfies the transformation law \eqref{orthogonal symplectic K weight} with 
 \[
 m =  N + \frac{1}{2} \dim(V)  ,
 \]
 then we are done by  Proposition  \ref{prop:classical theta expansion}.

  Using  the explicit formulas of Lemma \ref{lem:inf Z_jk},  it is a calculus exercise to  verify  that
\[
\omega^\mathrm{inf}(  i_V(  Z_{k j }  ) )( f \varphi^\circ_\R)
=   2 i    \cdot    \left(   E_{jk} f  - \frac{1}{4\pi  } F_{jk} f+  \frac{ \delta_{jk}  }{2} \dim(V)  f \right) \cdot  \varphi^\circ_\R
\]
for any polynomial function $f$ on $V_\R^d$.   
Apply this with $f=P^\sharp$.  As we are assuming   $P$ satisfies  \eqref{P homogeneity},
 combining  Lemmas \ref{lem:homo to diffeq} and \ref{lem:diffeq conversion} shows that
 \[
\left( E_{jk}  -  \frac{1}{4\pi}  F_{j k} \right)  P^\sharp 
= N   \delta_{jk}   P^\sharp ,
\]
and hence
\[
\omega^\mathrm{inf} (   i_V( Z_{k j } )   ) ( P^\sharp    \varphi_\R^\circ) 
 =  2 i   \delta_{jk}   \cdot  \left( N + \frac{1}{2} \dim(V) \right)  \cdot  P^\sharp   \varphi_\R^\circ .
\]

Let $ \mathfrak{k} \subset \mathfrak{sp}(W_\R)$ be   the  Lie algebra of  $K\subset \Sp(W_\R)$, which is canonically identified with the Lie algebra of $K^{(2)} \subset \Mp(W_\R)_\psi^{(2)}$.
By Lemma \ref{lem:basic Z_jk action},  the vectors \eqref{Z_jk}  span the subalgebra  $\mathfrak{k}_\C \subset \mathfrak{sp}(W_\C)$, and  the linear functional  $\chi^\mathrm{inf} : \mathfrak{k} \to i \R$
 obtained by differentiating the character $\chi : K \to \C^1$ (extended $\C$-linearly to $\mathfrak{k}_\C \to \C$)  satisfies
$
\chi^\mathrm{inf} (Z_{k j })   = 2 i \delta_{jk} .
$
It follows that for all $Z \in \mathfrak{k}$, we have
\begin{equation}\label{final sharp inf action}
\omega^\mathrm{inf} (   i_V(  Z )    ) ( P^\sharp    \varphi_\R^\circ) 
 =      \left( N + \frac{1}{2} \dim(V) \right)   \cdot   \chi^\mathrm{inf} (Z)   \cdot  P^\sharp   \varphi_\R^\circ .
\end{equation}

Now consider the action  of $K^{(2)}$ on $S(V_\R^d)$ through the Weil representation  $\omega_{V,\psi} = \omega_\psi \circ i_{V,\psi}$ of Definition \ref{def:symplectic-orthogonal weil}. 
We  may differentiate the action of the Weil representation to an action of the Lie algebra $\mathfrak{k}$  on $S(V_\R^d)$.
By Theorem  \ref{thm:inf compatibility} and the relation between $i_V$ and $i_{V,\psi}$ in the diagram \eqref{orthogonal-symplectic lie diagram}, the resulting  Lie algebra action of $\mathfrak{k}$ on $S(V_\R^d)$  is precisely the composition $\omega^\mathrm{inf}\circ i_V$ appearing on the left hand side of \eqref{final sharp inf action}.  
Thus we may conclude from \eqref{final sharp inf action} that
\[
\omega_{ V , \psi} (   k    ) ( P^\sharp    \varphi_\R^\circ) 
 =      \chi^m   (k)    P^\sharp   \varphi_\R^\circ 
\]
for all $k \in K^{(2)}$, completing the proof of Theorem \ref{thm:main orthogonal}.
\end{proof}


\section{Unitary-unitary dual pairs}
\label{s:unitary-unitary}


In this section we use the theory of unitary-unitary reductive dual pairs in the sense of Howe \cite{Howe-theta} to embed a  quasi-split unitary group into a metaplectic group.   Using the restriction of   the Weil representation of the larger group to the smaller one, we  construct  theta functions on the adelic unitary group, and then convert them into classical theta functions on the Hermitian half-space. 


\subsection{The setup}
\label{ss:uu setup}


Let $E/F$ be a quadratic extension of global fields of characteristic $\mathrm{char}(F) \neq 2$, and denote by $\sigma \in \Gal(E/F)$ the nontrivial Galois automorphism.  We write $\A_E$ for the ring of adeles of $E$, but continue to abbreviate $\A=\A_F$ as before.
For a matrix $A \in M_{m\times n}(E)$ of any size, write
\[
{}^\dagger A = {}^t A^\sigma
\]
for its conjugate transpose.
Fix an additive character  $\psi : F \backslash \A \to \C^1$.

Let $V$ be a finite dimensional vector space over $E$, endowed with a nondegenerate  Hermitian form 
\[
h : V \times V \to E .
\]
 Thus $h$ is $E$-linear in the first variable, and satisfies 
 $
 h(v_1,v_2) = h(v_2,v_1)^\sigma . 
 $
  For a $d$-tuple $v=(v_1,\ldots, v_d) \in V^d$, denote by 
\begin{equation}\label{hermitian gram}
H(v) = \left(  h(v_k  ,v_j)   \right)_{ 1 \le j,k \le d } \in \Herm_d(E)
\end{equation}
 the matrix of Hermitian  inner products of the components of $v$.
The order of indices matters here: the entry in the $j^\mathrm{th}$ row and $k^\mathrm{th}$ column is   $h( v_k , v_j )$.

Let $W$ be a vector space over $E$ of dimension $2d$,  endowed with a skew-Hermitian form $\langle-,-\rangle : W \times W \to E$.  
This  pairing  is $E$-linear in the \emph{second} variable, and satisfies $\langle w_1,w_2 \rangle = - \langle w_2 ,w_1\rangle^\sigma$. 
Assume one can choose  basis vectors  $e_1,\ldots, e_d, f_1,\ldots, f_d \in W$ in such a way  that $\langle e_j,f_k\rangle = \delta_{jk}$, and the subspaces
\[
X=\mathrm{Span} \{ e_1,\ldots, e_d \}   \qquad \mbox{and} \qquad Y=\mathrm{Span} \{ f_1,\ldots, f_d \} 
\]
are totally isotropic. 
The isometry group $\mathrm{U}(W)$ can then be  identified with the matrix group 
  \begin{equation}\label{quasi-split}
\mathrm{U}_{d,d}(F) = \left\{ 
g \in \GL_{2d}(E) :  {}^\dagger  g    \begin{pmatrix}  & I_d \\  - I_d \end{pmatrix}  g = \begin{pmatrix}  & I_d \\  - I_d \end{pmatrix} 
\right\} 
\end{equation}
 in the following way: Using the basis to regard $W = E^{2d}$ as a space of \emph{row} vectors,  a matrix $g \in \mathrm{U}_{d,d}(F)$ is identified with the  isometry of $W$ defined by $g( w) = w \cdot {}^\dagger g$, where the $\cdot$ on the right is usual matrix multiplication.

\begin{remark}\label{rem:banana peel}
Any $g \in \mathrm{U}_{d,d}(F)$ has a determinant $\det(g)$, computed as a matrix using the usual rules of undergraduate linear algebra.  
Any  $g \in \mathrm{U}(W)$ has a determinant $\det_W(g)$, computed as an $E$-linear automorphism of $W$.  
Under our isomorphism, these two determinants are related by 
\[
\det\nolimits_W(g) = \det({}^\dagger g ) = \det(g)^\sigma,
\]
and similarly for the traces $\mathrm{Tr}_W(g)$ and  $\mathrm{Tr}(g)$.  It is helpful  to maintain a notational and mental distinction between the group of matrices $\mathrm{U}_{d,d}(F)$ and the  group of isometries $\mathrm{U}(W)$, and to agree that an undecorated $\det$ or $\mathrm{Tr}$ is understood in the matrix sense.
\end{remark}

From $V$ and $W$ we can construct  a polarized symplectic space  by regarding
$
\bm{W}= W \otimes_E  V
$
as a vector space over $F$, endowed  with the symplectic form
 \begin{equation}\label{big unitary symplectic}
\langle  w_1 \otimes v_1 , w_2 \otimes v_2  \rangle_{\bm{W}} = 
 \mathrm{Tr}_{E/F} \big(   \langle w_1,w_2 \rangle^\sigma \cdot  h(v_1,v_2)    \big),
 \end{equation}
 and the polarization $\bm{W} = \bm{X} \oplus \bm{Y}$ defined by  $ \bm{X} = X \otimes_E V$ and  $\bm{Y} = Y \otimes_E V$.  We use $(v_1,\ldots, v_d ) \mapsto f_1 \otimes v_1 + \cdots + f_d \otimes v_d$ to identify 
\[
V^d \iso  \bm{Y} .
\]
 There is a natural homomorphism 
 $
 \mathrm{U}(W) \times \mathrm{U}(V) \to \Sp(\bm{W}),
 $
and in particular an injective homomorphism
\begin{equation}\label{unitary i_V}
i_V : \mathrm{U}(W) \to \Sp(\bm{W}) .
\end{equation}

Our normalization of \eqref{big unitary symplectic} follows \cite[Chapter 10.3.3]{ThetaBook}.   It is more common in the literature to follow \cite[(0.9)]{KudlaSplitting} and define the symplectic form on $\bm{W}$  to be one-half of ours.  In \S  \ref{ss:old normalization} we explain how the results of \S \ref{s:unitary-unitary} would change if we adopted this other convention.


\subsection{Lifting the embedding}


Following Kudla \cite{KudlaSplitting}, but using the notation and conventions of \cite{ThetaBook}, 
we will upgrade   \eqref{unitary i_V}  to an embedding of the  unitary group into the metaplectic group.
We will then restrict  the Weil representation of the metaplectic group to the unitary group, and use this to construct theta functions on the unitary group.
The first step is described in this subsection, and the second is described in the next.

Denote by 
\[
\eta_{E/F} : F^\times \backslash \A^\times \to \{\pm 1\}
\]
 the quadratic character determined by the extension $E/F$, and fix an auxiliary  character 
\begin{equation}\label{eta def}
\eta : E^\times \backslash \A_E^\times \to \C^1
\end{equation}
whose restriction to $\A^\times$ satisfies  $\eta |_{\A^\times } = \eta_{E/F}^{ \dim_E(V) }$.

For every place $v$ of $F$ we have the local  Leray cocycle $\bm{c}_{\psi,v}$ on $\Sp(\bm{W}_{F_v})$  from  \eqref{leray cocycle}.
  Using the choice of auxiliary character $\eta$,  Kudla \cite{KudlaSplitting} defines  an explicit function 
 \[
 \beta_v=\beta_{V, \psi ,v }^\eta  : \mathrm{U}(W_{F_v}) \to \C^1 
 \]
  in such a way that 
 \begin{equation}\label{unitary kudla coboundary}
 \bm{c}_{\psi , v}   \big(  i_V(g_1)  ,  i_V(g_2) \big)   \cdot  
    \frac{ \beta_v (g_1)   \cdot   \beta_v (g_2)  } {  \beta_v ( g_1g_2)  }  =1 
 \end{equation}
 for all $g_1,g_2 \in \mathrm{U}(W_{F_v})$.    
 The precise definition is  Case $3_{+}$ of  \cite[(11.3)]{ThetaBook}, keeping Remark \ref{rem:ThetaBook caveat} in mind.

 As in  \eqref{symplectic-orthogonal local splitting diagram},  there is an associated diagram
  \begin{equation}\label{unitary-unitary local splitting diagram}
\begin{tikzcd}
{ \mathrm{U}(W_{F_v})   }  \ar[r]   \ar[d,  equal ]     &  {  \Sp( \bm{W}_{F_v})  \times \C^1 }  \ar[d,  " \eqref{leray coords} " ]    \\ 
 {  \mathrm{U}(W_{F_v}) }  \ar[d, equal ] \ar[r, " i^\eta_{V,\psi} "]&  {  \Mp(\bm{W}_{F_v})_\psi  }  \ar[d] \\
{   \mathrm{U}(W_{F_v}) }  \ar[r , "i_V" ]   &  {   \Sp(\bm{W}_{F_v}) , } 
\end{tikzcd}
\end{equation}
in which the top horizontal arrow is 
$
g \mapsto (  i_V( g ) ,  \beta_v(g) ) . 
$
The arrow labeled $ i^\eta_{V,\psi}$ is defined by the commutativity of the diagram, and is  a group homomorphism by \eqref{unitary kudla coboundary}.

These  local homomorphisms  can be collected together into a homomorphism 
 \begin{equation}\label{unitary-unitary adelic lift}
i_{V,\psi}^\eta :  \mathrm{U}( W_\A )    \to   \Mp(\bm{W}_\A)_\psi .
 \end{equation}
 This adelization process  is  more direct in the  unitary-unitary setting than it was in the symplectic-orthogonal setting of  Proposition \ref{prop:so adelization}, for which we  simply cited Sweet's thesis.
 In this simpler  unitary-unitary setting, we at least indicate for the reader the basic outline of the proof.

\begin{proposition}
There is a unique continuous homomorphism \eqref{unitary-unitary adelic lift}  such that, for every finite set of places $\Sigma$ of $F$, the diagram 
 \[
 \begin{tikzcd}
 {  \prod_{v\in\Sigma} \mathrm{U}(W_{F_v})   }    \ar[d]   \ar[rr]      &  &  { \left( \prod_{v\in\Sigma} \Sp( \bm{W}_{F_v})  \right) \times \C^1 }  \ar[d , " \eqref{semi leray coordinates} " ]  \\ 
 { \mathrm{U}( W_\A )  } \ar[rr ,  " i^\eta_{V,\psi} "  ' ]    &   & {    \Mp(\bm{W}_\A)_\psi   }
 \end{tikzcd}
 \]
 commutes.  Here the top horizontal arrow is defined by 
 \[
g \mapsto \Big(  i_V(g) , \prod_{v\in \Sigma} \beta_v (g_v)  \Big) .
 \]
 Moreover,  for this homomorphism we have a commutative diagram
 \[
 \begin{tikzcd}
 {  \mathrm{U}(W) }    \ar[d ]   \ar[r , " i_V" ]        &  { \Sp( \bm{W}  )  }  \ar[d , " \eqref{canonical splitting}  " ]  \\ 
 { \mathrm{U}( W_\A )  } \ar[r ,  " i^\eta_{V,\psi} "  ' ]    &    {    \Mp(\bm{W}_\A)_\psi   .}
 \end{tikzcd}
 \]
 \end{proposition}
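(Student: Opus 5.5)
The plan is to build the map on the dense subgroup of elements supported on finitely many places. After that I extend it continuously using an unramified splitting at almost all places, and then check compatibility with the rational splitting via a product formula for Kudla's factors.

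\emph{Step 1: places where everything is unramified.} Fix a finite set $\Sigma_0$ of places. It contains the archimedean places, the places of residue characteristic $2$, the places where $\psi$, $E/F$ or $\eta$ ramify, and the places where the chosen bases of $W$ and $V$ fail to give self-dual lattices. For $v\notin\Sigma_0$, let $K_v\subset\mathrm{U}(W_{F_v})$ be the stabilizer of the $\co_{E_v}$-lattice spanned by $e_j,f_j$. Then $i_V(K_v)\subset\bm{K}^\circ_v$, where $\bm K^\circ_v$ is the stabilizer in $\Sp(\bm W_{F_v})$ of the corresponding self-dual lattice, taken with respect to an $\co_{F_v}$-basis adapted to $\bm X\oplus\bm Y$. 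The key local input, from Kudla's explicit formula (Case $3_+$ of \cite[(11.3)]{ThetaBook}), is
\[
\beta_v(k)=\lambda_v(i_V(k))^{-1}\qquad (k\in K_v),
\]
with $\lambda_v$ as in Lemma \ref{lem:leray adelization}. Equivalently, $i^\eta_{V,\psi}$ carries $K_v$ onto the image of the splitting $K_v^\circ\to\Mp(\bm W_{F_v})_\psi$, $k\mapsto(k,\lambda_v(k)^{-1}r_{\psi,v}(k))$. To check this, note that both sides are functions $K_v\to\C^1$ satisfying the same coboundary relation with $\bm c_{\psi,v}\circ(i_V\times i_V)$. Their ratio is therefore a character of $K_v$. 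This character is trivial on the unipotent radical and on the diagonal torus where the formulas are explicit (unramified Weil indices and $\eta_v$ evaluated on units), and on the Weyl element. Since these generate $K_v$, the character is trivial. This step is the main obstacle: it requires actually unwinding Kudla's $\beta_v$ on units, which is where unramifiedness of $\eta$ enters.

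\emph{Step 2: defining the map.} With Step 1 in hand, consider the restricted product $\prod'_v \mathrm{U}(W_{F_v})$ with respect to the $K_v$. The product of the local homomorphisms $i^\eta_{V,\psi,v}$ lands in $\prod'_v\Mp(\bm W_{F_v})_\psi$, and this map is continuous, because each local map is continuous (clear in Leray coordinates away from the discontinuity of $\bm c_\psi$, and inherited from the diagram \eqref{unitary-unitary local splitting diagram}). Composing with \eqref{metaplectic product} gives \eqref{unitary-unitary adelic lift}. Unwinding the definition of \eqref{metaplectic product} and of \eqref{semi leray coordinates}, an element supported in $\Sigma$ maps to $(i_V(g),\prod_{v\in\Sigma}\beta_v(g_v)\cdot\bm r_{\psi,\Sigma}(i_V(g)))$, which is exactly the required commutativity. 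Uniqueness holds because elements supported on finite sets are dense in $\mathrm{U}(W_\A)$ and the map is continuous.

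\emph{Step 3: compatibility with rational points.} For $\gamma\in\mathrm{U}(W)$, take $\Sigma$ large, containing $\Sigma_0$ and the places where $\gamma\notin K_v$. Write $\gamma=\gamma_\Sigma\gamma^\Sigma$. By Step 1, $\gamma^\Sigma$ acts on $\otimes_{v\notin\Sigma}\varphi_v^\circ$ trivially, up to $\lambda_v\beta_v=1$. So
\[
i^\eta_{V,\psi}(\gamma)=\Bigl(i_V(\gamma),\ \prod_{v\in\Sigma}\beta_v(\gamma)\cdot \bm r_{\psi,\Sigma}(i_V(\gamma))\Bigr),
\]
and we must show this equals the canonical splitting $r_\psi(i_V(\gamma))$ of \eqref{splitting adelic operator}. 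This reduces to the product formula
\[
\prod_v\beta_v(\gamma)\lambda_v(i_V(\gamma))=1 ,
\]
where $\lambda_v:=1$ for $v\in\Sigma$ is replaced by the exact comparison. I would prove it as in Lemma \ref{lem:leray adelization}. By \eqref{unitary kudla coboundary} together with Proposition \ref{prop:leray reciprocity}, the function $\gamma\mapsto\prod_v\beta_v(\gamma)$ is a character of $\mathrm{U}(W)$ up to the $\lambda$ correction. It therefore suffices to check generators: the Siegel parabolic, where $\prod_v\beta_v(m(A))=\prod_v\eta_v(\det A)=1$ by reciprocity for $\eta$, while $\beta_v$ is trivial on the unipotent radical; and the Weyl element, where the claim follows from Weil's product formula for the Weil indices.
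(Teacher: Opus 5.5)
Your proposal is correct and has the same skeleton as the paper's proof. It has three parts: a compatibility at almost all places with the unramified splitting, assembly into an infinite tensor product, and the identity $\prod_v\beta_v(\gamma)=1$ for rational $\gamma$. You route the tensor product through \eqref{metaplectic product}, whereas the paper writes the tensor product of the operators $\beta_v(g_v)r_{\psi,v}(i_V(g_v))$ directly.

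The real difference is in the local step. The paper first asserts that $\beta_v(g_v)=1$ for almost all $v$ and every $g\in\mathrm{U}(W_\A)$, and deduces from this that products of local Leray cocycles are finite. It then checks via \cite[Proposition 12.27]{ThetaBook} that $\beta_v(g_v)r_{\psi,v}(i_V(g_v))$ fixes $\varphi_v^\circ$ on a set of generators, and uses the finiteness to pass to general $g$. You instead prove $\beta_v=(\lambda_v\circ i_V)^{-1}$ on $K_v$, by noting that the ratio is a character of $K_v$ and testing it on generators.

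Your version is purely local, and it is also the formulation that holds in general, because $\beta_v$ need not be trivial on $K_v$. Take $\dim_E V$ odd, $v$ inert with everything unramified, $d=1$, and $k=\left(\begin{smallmatrix}1&0\\ \varpi&1\end{smallmatrix}\right)\in K_v$. Then $r_{\psi,v}(i_V(k))\varphi_v^\circ=-\varphi_v^\circ$, since the scalar is a normalized Gauss sum for $\dim_E V$ copies of the anisotropic norm form of $\mathbb{F}_{q^2}/\mathbb{F}_q$. Your Step 1 then gives $\beta_v(k)=-1$; this is $\eta_v(\varpi)=-1$ propagated through the Bruhat decomposition of $k$.

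What both arguments actually need is weaker. First, the local homomorphism $k\mapsto\beta_v(k)r_{\psi,v}(i_V(k))$ already carries the fixed-vector property from generators of $K_v$ to all of $K_v$. Second, $\beta_v(\gamma)=1$ for almost all $v$ when $\gamma$ is rational, which follows from your Step 1 together with Lemma \ref{lem:leray adelization}.

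For rational compatibility, the paper reads $\prod_v\beta_v(\gamma)=1$ off the explicit formula for $\beta_v$, whose Bruhat-cell data are global for rational $\gamma$, together with Weil's product formula. You instead show that $\gamma\mapsto\prod_v\beta_v(\gamma)$ is a homomorphism, using \eqref{unitary kudla coboundary} and Proposition \ref{prop:leray reciprocity}. You then check it on the Siegel parabolic (reciprocity for $\eta$) and on the Weyl element, mirroring the proof of Lemma \ref{lem:leray adelization}. This requires $\beta_v$ only on those generators.

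The phrase ``up to the $\lambda$ correction'' in Step 3 is unnecessary: by Step 1 the product is finite, and the identity you need is exactly $\prod_v\beta_v(\gamma)=1$. Continuity of the local maps deserves an actual argument or a citation, although the paper does not give one either.
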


 \begin{proof}
 First, one verifies that for any $g \in \mathrm{U}(W_\A)$ the product   $\prod_v \beta_v(g)$
over all places  is well-defined, in the sense that all but finitely many factors are equal to $1$.   This follows from the explicit formula for $\beta_v$ found in  \cite[(11.3)]{ThetaBook}.  
As a consequence, one deduces the analogous statement for the product 
$\prod_v \bm{c}_{\psi,v}( i_V ( g_{1,v} ) ,  i_V ( g_{2,v} ) )$ of local Leray cocycles.  

Next,  one considers the local operator 
\begin{equation}\label{local unitary weil}
\omega_{V,\psi}^\eta(g_v) \define  \beta_v(g_v)   r_{\psi,v}( i_V(g_v) )  : S(\bm{Y}_{F_v}) \to S(\bm{Y}_{F_v})
\end{equation}
at each place $v$, where $r_{\psi,v}$ is defined by \eqref{big weil}.   
The key fact is that at all but finitely many places $v$, this operator fixes the distinguished vector $\varphi^\circ_v \in S(\bm{Y}_{F_v})$ from \S \ref{ss:adelization}. This follows from the formula for the operator found in \cite[Proposition 12.27]{ThetaBook}, at least in the case where  $g$ is one of a particular set of generators for  $\mathrm{U}(W_\A)$.
One then uses the previous paragraph to extend this to  general $g$.

 This key fact about  the local operators  is what allows us to make sense of the infinite tensor product
 \[
 \omega_{V,\psi}^\eta(g)  \varphi  \define 
    \otimes_v \left(  \omega_{V,\psi}^\eta(g_v) \varphi_v \right)
 \]
for any factorizable $\varphi = \otimes_v \varphi_v \in S(\bm{Y}_\A)$, and therefore define \eqref{unitary-unitary adelic lift} by 
\[
g \mapsto \left(  i_V(g) ,  \omega_{V,\psi}^\eta(g)  \right) \in   \Mp(\bm{W}_\A)_\psi \subset    \Sp(\bm{W}_\A) \times \Aut (S (\bm{Y}_\A )) .
\]
With this definition in hand, the proposition follows easily.  
The commutativity of the final diagram in the statement  boils down to checking that $\beta(g)=1$ for any rational point $g\in \mathrm{U}(W)$, which is a consequence of formulas for $\beta_v$ cited above, together with Weil's product formula \cite[\S 30]{Weil1}  for local Weil indices.
  \end{proof}

 \begin{remark}
At a place $v$ of $F$ that is nonsplit in $E$, the groups $\mathrm{U}(W_{F_v})$ and $\Sp( \bm{W}_{F_v})$ form a dual pair of Type I,  Case $3_{+}$, in the terminology of  \cite{ThetaBook}. 
 At a split place they form a dual  pair of Type II. 
In the discussion above we have implicitly suppressed the distinction, and our citations to  \cite{ThetaBook} above are to the results relevant to nonsplit places.   
At a split place the entire   discussion of Type II dual pairs (including Leray cocycles, the splitting function $\beta_v$, the local operators \eqref{local unitary weil}) found  in  \cite{ThetaBook} applies, but with a caveat: in the Type II case  results and formulas are often expressed using a different polarization of  $\bm{W}_{F_v}$  than the one obtained by localizing our global polarization $\bm{W} = \bm{X} \oplus \bm{Y}$.  
Thus at a split place one needs the extra step of keeping  track of how formulas  change under a change of polarization, in order to convert the relevant results from \cite{ThetaBook} to our setting.  Fortunately, \cite{ThetaBook} systematically keeps track of such things.
 In the end, one finds that the  local results and formulas that we have cited from \cite{ThetaBook} in the nonsplit case also hold in the split case.
  \end{remark}


\subsection{Restriction of the Weil representation}


The adelic metaplectic group $\Mp(\bm{W}_\A)_\psi$ acts on   $S(V^d_\A)= S(\bm{Y}_\A)$ via   the  Weil representation  $\omega_\psi$ of Definition \ref{def:adelic metaplectic}, and restricting this   along  the lower horizontal arrow in the diagram yields a representation of $\mathrm{U}(W_\A)$ on the same space.   We record this construction as a definition, the Hermitian analogue of Definition \ref{def:symplectic-orthogonal weil}.

\begin{definition}\label{def:unitary-unitary weil}
The \emph{Weil representation}  $\omega^\eta_{V,\psi}$ is the composition
\[
\omega^\eta_{V,\psi}  =  \omega_\psi  \circ  i^\eta_{V,\psi}  : \mathrm{U}(W_\A) \to \Aut( S(V^d_\A)  ) .
\] 
The \emph{adelic theta function} associated to a Schwartz function  $\varphi \in S(V_\A^d)$ is the continuous function  
\begin{equation}\label{adelic hermitian theta}
\theta^\eta_{V,\psi}( g ,  \varphi)
=   \sum_{ v \in V^d }  \left(  \omega^\eta_{V,\psi}(g) \varphi  \right) (v) 
\end{equation}
of the variable $ g \in \mathrm{U}(W) \backslash \mathrm{U}(W_\A)$.
\end{definition}

\begin{remark}
Exactly as in the symplectic-orthogonal case, the convergence and left  $\mathrm{U}(W)$-invariance of the adelic theta function  are immediate from  Theorem \ref{thm:big theta}, as 
\[
\theta^\eta_{V,\psi}( g ,  \varphi)  =  \Theta_\psi(    i^\eta_{V,\psi} (g)    , \varphi   ) .
\]
\end{remark}

\begin{proposition}\label{prop:unitary-unitary weil formulas}
For $A \in \GL_d(\A_E)$ and $B \in \Herm_d(\A_E)$, define elements of $\mathrm{U}(W_\A)$ by 
\[
m(A)  = \begin{pmatrix}  A  &  \\  &   {}^\dagger A^{-1}  \end{pmatrix} 
\qquad \mbox{and} \qquad 
n(B) = \begin{pmatrix}   I_d  & B  \\ &  I_d  \end{pmatrix} .
\]
Under the Weil representation on $S(V_\A^d)$, these satisfy
\[
\omega^\eta_{V,\psi} (m(A)  )   \varphi (v) 
=   \eta( \det({}^\dagger A) )   \cdot   |  \det(A)  |_E^{ \frac{1}{2} \dim_E(V) }   \cdot  \varphi(   v  A ) 
\]
and 
\[
\omega^\eta_{V,\psi} (n(B) ) \varphi (v) 
=    \psi \left(    \mathrm{Tr} \big(  H(v)  B \big)  \right) \cdot  \varphi(  v )  . 
\]
Here we have normalized the modulus function $| \cdot |_E$ on $\A_E^\times$   by 
\[
\int_{\A_E} f(ax) \, dx = |a|_E^{-1}  \int_{\A_E} f(x)\, dx,
\]
and the product $vA$ for $v \in V_\A^d$ has the same meaning (row vector times matrix) as in Proposition \ref{prop:symplectic-orthogonal weil formulas}.
\end{proposition}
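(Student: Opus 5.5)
The plan is to imitate the proof of Proposition \ref{prop:symplectic-orthogonal weil formulas}. First I would reduce to local statements. The adelic operator $\omega^\eta_{V,\psi}(g)$ is the restricted tensor product of the local operators \eqref{local unitary weil}, namely $\beta_v(g_v)\, r_{\psi,v}(i_V(g_v))$. The proposed formulas are themselves products of local factors, because $\eta$, $|\cdot|_E$ and $\psi$ all factor over places. So it suffices to prove, at each place $v$ of $F$,
\[
\beta_v(m(A_v))\, r_{\psi,v}(i_V(m(A_v)))\varphi_v(x) = \eta_v(\det({}^\dagger A_v))\, |\det A_v|_{E_v}^{\frac12\dim_E V}\, \varphi_v(xA_v),
\]
together with the analogous identity for $n(B_v)$ with factor $\psi_v(\mathrm{Tr}(H(x)B_v))$. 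The remaining issue is convergence: at almost all places $A_v \in \GL_d(\co_{E_v})$, and there both sides fix $\varphi_v^\circ$. This follows from the local formula, since the unit $\det A_v$ gives $\eta_v = 1$ when $\eta$ is unramified there.

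Next I would compute $r_{\psi,v}$ via Rao's integral \eqref{big weil}. For $g = m(A)$, with the row-vector convention $g(w) = w\,{}^\dagger g$, the inverse $g^{-1}$ has $b = c = 0$. Its action $d$ on $Y$ is $y \mapsto y A$, using $E$-coefficients and taking conjugate-transposes into account. Hence $i_V(g)^{-1}$ acts on $\bm{Y} = Y\otimes_E V$ by $x \mapsto xA$ on $V^d$. The integral degenerates to a point evaluation, and unitarity (Proposition \ref{prop:rao integral}) forces the constant $|\det_{F}(x\mapsto xA)|^{1/2}$ on $\bm{Y}$. As an $F$-linear map on $(V\otimes E)^d$ this determinant has absolute value $|\det A|_E^{\dim_E V}$. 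For $n(B)$, we have $a = d = 1$, $c = 0$, and the map $b\colon Y\to X$ is given by $-B$ (after checking signs). So $f_g$ contributes $\psi(\tfrac12\langle d(y),b(y)\rangle_{\bm W})$. Using \eqref{big unitary symplectic}, I would then verify that $\tfrac12\langle y,b(y)\rangle_{\bm W}$ equals $\tfrac12\mathrm{Tr}_{E/F}\mathrm{Tr}(H(x)B) = \mathrm{Tr}(H(x)B)$. The last equality holds because $H(x)$ and $B$ are Hermitian, so the trace is already in $F$. This is where the index convention in \eqref{hermitian gram} and the absence of the factor $\tfrac12$ in the normalization of \eqref{big unitary symplectic} enter.

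Finally I would insert Kudla's splitting function. From the explicit formula for Case $3_+$ in \cite[(11.3)]{ThetaBook}, I expect $\beta_v$ to be identically $1$ on the unipotent radical $n(B)$ and to equal $\eta_v(\det_W\text{-type determinant})$ on the Levi $m(A)$. With Remark \ref{rem:banana peel} this reads $\eta_v(\det({}^\dagger A))$, possibly up to a conjugation that I must match carefully against the sign and convention choices in \cite{ThetaBook}. At split places the same conclusion should follow after the change of polarization discussed in the preceding remark. I expect this step to be the main obstacle. It involves no deep idea; the difficulty is pinning down $\beta_v$ on the Siegel Levi in the paper's conventions, including whether the argument is $\det A$ or $\det {}^\dagger A$, which matters because $\eta$ is not $\sigma$-invariant. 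The first thing I would do is compute the Leray invariant and the Rao/Kudla factor $x(g)$ for $m(A)$, which is $\det$ of the $d$-block viewed on $Y$, and read off the character from \cite[(11.3)]{ThetaBook}.
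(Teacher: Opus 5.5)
Your proposal follows the paper's own proof. The paper likewise reduces to the local operators $\beta_v(g_v)\, r_{\psi,v}(i_V(g_v))$ and computes Rao's integral for $m(A)$ and $n(B)$ as in Proposition \ref{prop:symplectic-orthogonal weil formulas}; your computations giving $|\det A|_E^{\frac12\dim_E V}$ and $\psi(\mathrm{Tr}(H(v)B))$ are correct. It then inserts Kudla's $\beta_v$ from Case $3_+$ of \cite[(11.3)]{ThetaBook}, namely $\beta_v(n(B_v))=1$ and $\beta_v(m(A_v))=\eta_v(\det({}^\dagger A_v))$.

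The paper settles the ambiguity you flagged by noting that the determinant relevant to Case $3_+$ is that of $m(A_v)$ restricted to $X_{F_v}$, which equals $\det({}^\dagger A_v)$ by Remark \ref{rem:banana peel}. Do not use the determinant of Rao's block $d$, which is $\det A_v$: since $\eta_v(x^\sigma)=\eta_v(x)^{-1}$, that choice would invert the character.
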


\begin{proof}
The proof is virtually identical to that of Proposition \ref{prop:symplectic-orthogonal weil formulas}, given the description of $\omega^\eta_{V,\psi}(g)$ as a tensor product of  local operators \eqref{local unitary weil}.
One also uses the explicit formulas 
\begin{equation}\label{unitary parabolic beta}
\beta_v  \big( m(A_v)  \big )  = \eta_v( \det({}^\dagger A_v))  
 \quad \mbox{and} \quad
 \beta_v  \big( n(B_v)  \big )  = 1 
\end{equation}
from  \cite[(11.3)]{ThetaBook}.  Here one must remember  Remark \ref{rem:banana peel}; the matrix determinant $\det({}^\dagger A_v)$ agrees with the determinant of the restriction of $m(A_v) \in \mathrm{U}(W_{F_v})$ to $X_{F_v} \subset W_{F_v}$, which is the determinant relevant to Case $3_+$ of \cite[(11.3)]{ThetaBook}.
\end{proof}


\subsection{Generalities on Hermitian modular forms}


For the remainder of \S \ref{s:unitary-unitary} we work over the global field $F=\Q$, and assume that $E/\Q$ is quadratic imaginary.  Fix an embedding $E \to \C$, and use this to identify $E_\R \iso \C$.
We recall the passage  from  automorphic forms on the matrix group   $\mathrm{U}_{d,d}(\A)$  to functions  on the Hermitian half-space.   

Regard the quasi-split unitary group $\mathrm{U}_{d,d}$ from  \eqref{quasi-split} as a reductive group over $\Q$.
The group $\mathrm{U}_{d,d}(\R) \subset \GL_{2d}(\C)$ acts  on the Hermitian half-space
\begin{align*}
\mathcal{H}_d 
 &  = \left\{ \mathtt{z} =  \mathtt{x} + i \mathtt{y}  \in M_d(\C) :  
{ \begin{array}{c}    
\mathtt{x}, \mathtt{y} \in \Herm_d(\C) \\  \mathtt{y} \mbox{ is positive definite}    \end{array} }  \right\}  \\
 & =   \left\{  \mathtt{z} \in M_d(\C) :   \frac{  \mathtt{z} -  {}^\dagger \mathtt{z}  }{2i}      \mbox{ is positive definite}   \right\}
\end{align*}
by the same formula as in the Siegel case: if 
\begin{equation*}
\gamma = \begin{pmatrix}  A & B \\ C & D  \end{pmatrix}  \in \mathrm{U}_{d,d}(\R),
\end{equation*}
then  $\gamma \mathtt{z} = (A\mathtt{z}+B)(C\mathtt{z}+D)^{-1}.$  
Define a function 
\[
j  ( \gamma , \mathtt{z})  =  \det(C \mathtt{z} + D )  
\]
of $\gamma \in \mathrm{U}_{d,d}(\R)$ and $\mathtt{z}\in \mathcal{H}_d$.
This  satisfies the same cocycle relation \eqref{siegel automorphy} as in the Siegel case,
and is the factor of automorphy used to define Hermitian modular forms.

There is an injection $\mathrm{U}(d)  \times \mathrm{U}(d)  \to \mathrm{U}_{d,d}(\R)$ sending a pair $(k_1,k_2)$ to 
\[
  [k_1 , k_2] \define   \frac{1}{2}  \begin{pmatrix} k_1 + k_2  & -i k_1 + i k_2  \\ ik_1 - i k_2 & k_1 +k_2  \end{pmatrix}  \in  \mathrm{U}_{d,d}(\R) ,
\]
where $\mathrm{U}(d) = \{ g \in  \GL_d(\C) : {}^\dagger g = g ^{-1}  \}$ is the usual  unitary group.
The stabilizer   of   $i I_d \in \mathcal{H}_d$ is  the maximal compact subgroup 
\begin{equation}\label{unitary max compact}
K  =  \big\{  [ k_1 , k_2 ] : k_1,k_2 \in \mathrm{U}(d) \big\}  \subset \mathrm{U}_{d,d}(\R).
\end{equation}

Fix integers $m_1$ and $m_2$, and 
suppose $F$ is a function on   $\mathrm{U}_{d,d}(\Q) \backslash \mathrm{U}_{d,d}(\A)$  satisfying 
\[
F(g k ) =     \det(k_1)^{  m_1 }   \det(k_2)^{ m_2   }    F(g)  
\]
for all $g \in \mathrm{U}_{d,d}(\A)$ and $k=[k_1,k_2]  \in K$ (regarded as an element of $\mathrm{U}_{d,d}(\A)$ with trivial nonarchimedean components).
Suppose also that $F$ is right invariant under some compact open subgroup $U \subset  \mathrm{U}_{d,d} ( \A_f)$, and set 
\[
\Gamma =  \{ \gamma \in \mathrm{U}_{d,d}(\R) :  \exists u \in U \mbox{ for which }
\gamma u \in \mathrm{U}_{d,d}(\Q)  \} .
\]
The product $\gamma u$ on the right hand side is understood inside $\mathrm{U}_{d,d}(\A)$.

We  convert $F$ to a function on the Hermitian half-space as follows: 
Given   $\mathtt{z} = \mathtt{x} + i \mathtt{y}  \in \mathcal{H}_d$,  choose a    factorization  
\[
\mathtt{y} =\alpha \cdot {}^\dagger \alpha
\]
in such a way that  $\alpha \in \GL_d(\C)$ has positive real determinant,   set 
\[
g_\mathtt{z}  \define   \begin{pmatrix} I_d & \mathtt{x} \\ & I_d \end{pmatrix} 
\begin{pmatrix} \alpha &  \\ & {}^\dagger \alpha^{-1}  \end{pmatrix}    \in  \mathrm{U}_{d,d}(\R)  ,
\]
and define 
  \begin{equation}\label{classical unitary}
F(\mathtt{z}) \define    \det(\mathtt{y})^{  \frac{m_2-m_1}{2}  } F( g_\mathtt{z} ) .
\end{equation}
On the right hand side  we are viewing 
 $g_\mathtt{z} \in \mathrm{U}_{d,d}(\R)$ as an element of  $\mathrm{U}_{d,d}(\A)$ with trivial nonarchimedean components.

\begin{remark}\label{rem:su g_z}
Our assumption that $\alpha$ has positive real determinant implies that $\det(g_\mathtt{z})=1$.
\end{remark}

\begin{remark}\label{rem:unitary alpha ambiguity}
Any other choice of $\alpha$ differs from the one we already have by right multiplication by some $k \in \mathrm{SU}(d)$.
This has the effect of multiplying $g_\mathtt{z}$ on the right by $[k,k] \in K$,
and it follows that  $F(\mathtt{z})$  does not depend on the choice of $\alpha$ used  in the definition of $g_\mathtt{z}$.
\end{remark}

\begin{proposition}\label{prop:hermitian transformation}
 For all $\gamma \in  \Gamma  \cap \mathrm{SU}_{d,d}(\R)$, the function \eqref{classical unitary} satisfies  the transformation law
\[
F(\gamma \mathtt{z} ) =    j  ( \gamma ,\mathtt{z})^{m_1-m_2}  F(\mathtt{z})
\]
of a Hermitian modular form of weight $m_1 - m_2$.
\end{proposition}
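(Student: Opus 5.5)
I will imitate the proof of Proposition \ref{prop:siegel transformation}. Since $g_\mathtt{z}\cdot iI_d=\mathtt{z}$, for any $\gamma\in \mathrm{U}_{d,d}(\R)$ the element $k(\gamma,\mathtt{z}) \define g_\mathtt{z}^{-1}\gamma^{-1}g_{\gamma\mathtt{z}}$ lies in the stabilizer $K$ of $iI_d$. Write $k(\gamma,\mathtt{z})=[k_1,k_2]$. If $\gamma\in\Gamma$, choose $u\in U$ with $\gamma u\in \mathrm{U}_{d,d}(\Q)$. Left invariance under $\mathrm{U}_{d,d}(\Q)$, the fact that $u$ (finite adeles) commutes with the archimedean element $g_\mathtt{z}$, and right $U$-invariance give $F(g_\mathtt{z})=F(\gamma g_\mathtt{z})$. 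The $K$-equivariance then gives
\[
F(g_{\gamma\mathtt{z}})=\det(k_1)^{m_1}\det(k_2)^{m_2}F(g_\mathtt{z}).
\]

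The remaining task is to show, for $\gamma\in \mathrm{SU}_{d,d}(\R)$, the identity
\[
j(\gamma,\mathtt{z})^{m_1-m_2}=\det(k_1)^{m_1}\det(k_2)^{m_2}\left(\frac{\det\mathtt{y}}{\det \mathrm{Im}(\gamma\mathtt{z})}\right)^{\frac{m_1-m_2}{2}}.
\]
Combined with the definition \eqref{classical unitary}, this gives the transformation law. Here is where the hypothesis $\det\gamma=1$ enters. By Remark \ref{rem:su g_z}, $\det g_\mathtt{z}=\det g_{\gamma\mathtt{z}}=1$, so $\det k(\gamma,\mathtt{z})=1$. A direct computation of $[k_1,k_2]$ gives $\det[k_1,k_2]=\det(k_1)\det(k_2)$. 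To see this, conjugate by the Cayley-type matrix $\frac{1}{\sqrt2}\begin{pmatrix}1&i\\ i&1\end{pmatrix}$, or check it on diagonal elements. Hence $\det k_2=\det(k_1)^{-1}$, and so $\det(k_1)^{m_1}\det(k_2)^{m_2}=\det(k_1)^{m_1-m_2}$. It therefore suffices to prove the single identity
\[
j(\gamma,\mathtt{z})=\det(k_1)\cdot\left(\frac{\det\mathtt{y}}{\det\mathrm{Im}(\gamma\mathtt{z})}\right)^{1/2}
\]
for all $\gamma$ in $\mathrm{SU}_{d,d}(\R)$, or perhaps even in $\mathrm{U}_{d,d}(\R)$ with $k_1$ suitably interpreted.

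To prove this identity I will define $c(\gamma,\mathtt{z})=\det k_1(\gamma,\mathtt{z})$ and check three things.

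\textbf{(a) Well-definedness.} The map $[k_1,k_2]\mapsto\det k_1$ is a character of $K$. Changing $\alpha$ multiplies $g_\mathtt{z}$ by $[k,k]$ with $k\in\mathrm{SU}(d)$ (Remark \ref{rem:unitary alpha ambiguity}), and $\det k=1$, so $c$ is well defined.

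\textbf{(b) Cocycle relation.} $c$ satisfies the cocycle relation \eqref{siegel automorphy}: the telescoping $k(\gamma_1\gamma_2,\mathtt{z})=k(\gamma_2,\mathtt{z})\,k(\gamma_1,\gamma_2\mathtt{z})$ holds, and the character values commute. Both $j$ and the ratio $(\det\mathtt{y}/\det\mathrm{Im}(\gamma\mathtt{z}))^{1/2}$ are cocycles as well.

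\textbf{(c) Reduction to $\mathtt{z}=iI_d$.} Since $j(g_\mathtt{z},iI_d)=\det({}^\dagger\alpha^{-1})$ and $c(g_\mathtt{z},iI_d)=1$, the cocycle relation reduces the general case to $\mathtt{z}=iI_d$ and $\gamma=[k_1,k_2]\in K$. There one computes $C(iI_d)+D=k_2$ from the formula for $[k_1,k_2]$. The bottom row gives $iC+D=\tfrac12\big(-k_1+k_2+k_1+k_2\big)=k_2$, so $j=\det k_2$.

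\textbf{Main obstacle.} The computation in (c) shows that the natural identity is really $j(\gamma,\mathtt{z})=\det(k_2)^{\pm1}\cdot(\text{ratio})^{1/2}$ rather than one involving $\det k_1$. Reconciling signs and inverses, using $\det k_2=\det k_1^{-1}$ on $\mathrm{SU}$ and the positive-determinant normalization of $\alpha$, is the step requiring care. I expect to fix the exact form of $c$ to match the weight $m_1-m_2$ as stated. For $\gamma\in\mathrm{SU}_{d,d}(\R)$ the relation $\det k_1\det k_2=1$ makes the ambiguity disappear, and I will use it to pin down the sign.
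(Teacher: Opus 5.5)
Your proposal is correct and is essentially the paper's argument. It uses the same character $c(\gamma,\mathtt{z})$ on $g_\mathtt{z}^{-1}\gamma^{-1}g_{\gamma\mathtt{z}}=[k_1,k_2]$, the same identity $\det[k_1,k_2]=\det(k_1)\det(k_2)$ together with $\det g_\mathtt{z}=1$, and the same evaluation $j([k_1,k_2],iI_d)=\det(k_2)$ fed through the cocycle relation. The paper does the last step at once via $g_{\gamma\mathtt{z}}=\gamma g_\mathtt{z}[k_1,k_2]$, which gives $\det(k_2)=j(\gamma,\mathtt{z})^{-1}(\det\mathtt{y}/\det\mathrm{Im}(\gamma\mathtt{z}))^{1/2}$ and in fact $F(\gamma\mathtt{z})=\det(\gamma)^{-m_1}j(\gamma,\mathtt{z})^{m_1-m_2}F(\mathtt{z})$ for all $\gamma\in\Gamma$.

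Your ``main obstacle'' is only a notational clash in (c). For $\gamma=[a,b]\in K$ you have $k(\gamma,iI_d)=\gamma^{-1}=[a^{-1},b^{-1}]$, so $c(\gamma,iI_d)=\det(a)^{-1}$. This equals $j(\gamma,iI_d)=\det(b)$ precisely because $\det(a)\det(b)=\det\gamma=1$, so your identity with $\det(k_1)$ is exactly the right one on $\mathrm{SU}_{d,d}(\R)$.
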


\begin{proof}
Fix  $\gamma \in \mathrm{U}_{d,d}(\R)$.
We must have
\[
g_\mathtt{z}^{-1}  \gamma^{-1} g_{\gamma \mathtt{z}} = [ k_1 , k_2] 
\]
for some  $k_1,k_2 \in \mathrm{U}(d)$, because the left hand side fixes $i I_d \in \mathcal{H}_d$, hence lies in $K$. 
 If  we define 
\[
c(\gamma , \mathtt{z}) = \det(k_1)^{  m_1 } \det(k_2)^{ m_2 } , 
\]  
then  the same reasoning as in the proof of  Proposition \ref{prop:siegel transformation} shows that
\begin{equation}\label{unitary circle transform}
F( g_{ \gamma \mathtt{z} } ) = c(\gamma,\mathtt{z}) F(g_\mathtt{z}) .
\end{equation}

To relate $c(\gamma,\mathtt{z})$  to $j(\gamma, \mathtt{z})$, first note that
\begin{equation}\label{C conjugation}
[k_1,k_2] = C \begin{pmatrix} k_1 \\ & k_2 \end{pmatrix} C^{-1}
\quad \mbox{in which} \quad C = \frac{1}{\sqrt{2}} \begin{pmatrix}   I_d & I_d \\ i I_d & -i I_d  \end{pmatrix}   ,
\end{equation}
and hence    $\det( [ k_1,k_2] ) = \det(k_1) \det(k_2)$.
Combining this with Remark  \ref{rem:su g_z} shows that
\[
\det(\gamma)^{-1} = \det( [ k_1,k_2] ) = \det(k_1) \det(k_2) ,
\]
and we deduce
\begin{equation}\label{alt c}
c(\gamma , \mathtt{z}) = \det(\gamma)^{ - m_1 } \det(k_2)^{ m_2 -m_1 } .
\end{equation}

Now write  $\mathtt{y}(\mathtt{z}) = (2i)^{-1} ( \mathtt{z} - {}^\dagger\mathtt{z} )$ for the $\mathtt{y}$-part of $\mathtt{z}$.
Using the factorization $g_{ \gamma \mathtt{z}} = \gamma g_\mathtt{z} [ k_1, k_2 ]$ and the cocycle relation for $j$, one finds 
\[
j( g_{ \gamma \mathtt{z}}  , i I_d)  = j( \gamma ,\mathtt{z} ) \cdot  j( g_\mathtt{z} , i I_d )   \cdot j( [ k_1,k_2] , i I_d ).
\]
Directly from the definition of $j$, we have the equalities 
\begin{align*}
j( g_{ \gamma \mathtt{z}}   ,  i I_d)  & = \det( \mathtt{y}(\gamma \mathtt{z} ))^{ -1/2}  \\
j( g_\mathtt{z} , i I_d ) &  = \det( \mathtt{y}(\mathtt{z}) )^{-1/2}  \\
j( [ k_1,k_2] , i I_d )  & = \det(k_2) ,
\end{align*}
from which we deduce
\begin{align*}
\det(k_2)     =  j( \gamma ,  \mathtt{z} )^{-1}  
\left( \frac{ \det  \mathtt{y} (\mathtt{z})  }{ \det  \mathtt{y} ( \gamma \mathtt{z} ) }   \right)^{1/2} .
\end{align*} 

Plugging this last expression for $\det(k_2)$ into \eqref{alt c} shows that
\[
j( \gamma ,  \mathtt{z} )^{m_1-m_2}  
= c(\gamma,\mathtt{z})   \det(\gamma)^{  m_1 }  
 \left( \frac{ \det  \mathtt{y} (\mathtt{z})  }{ \det  \mathtt{y} ( \gamma \mathtt{z} ) }   \right)^{ \frac{m_1-m_2}{2} }   ,
\]
and from this and \eqref{unitary circle transform} we deduce that   \eqref{classical unitary} satisfies  
\[
F(\gamma \mathtt{z} ) =  \det( \gamma )^{-m_1}   j  ( \gamma ,\mathtt{z})^{m_1-m_2}  F(\mathtt{z})
\]
for all $\gamma \in  \Gamma$, proving (a stronger version of) the claim.
\end{proof}


\subsection{Hermitian theta functions}


We continue to work  with a quadratic imaginary extension $E$  of  the global field $F=\Q$, and for our additive character
\[
\psi : \Q \backslash \A \to \C^1
\]
take the unique one with $\psi_\infty(x) = e^{2\pi i x}$.
Identify $\mathrm{U}_{d,d}(\Q) \iso \mathrm{U}(W)$ in the way described  in \S \ref{ss:uu setup}.

Recalling the character $\eta$ chosen in  \eqref{eta def},  define an integer $r_\eta \in \Z$  by 
\begin{equation}\label{r_eta}
\eta_\infty (z)  = ( z/ |z| )^{r_\eta} 
\end{equation}
for all $z \in E_\R^\times = \C^\times$.
It satisfies  $r_\eta  \equiv \dim_E(V) \pmod{2}$.

Fix  another   integer $m \equiv \dim_E(V)  \pmod{2}$, and  suppose  
\[
\varphi = \varphi_\R \otimes \varphi_f \in S(V_\A^d)
\]
 is a Schwartz function whose archimedean component  transforms under the Weil representation (Definition \ref{def:unitary-unitary weil}) as  
\begin{equation}\label{unitary-unitary K weight}
\omega^\eta_{V,\psi}( k) \varphi_\R 
=   \det(k_1)^{  \frac{ m  - r_\eta}{2}  }  \cdot \det(k_2)^{ \frac{-m - r_\eta}{2} }  \cdot  \varphi_\R,
\end{equation}
for all $k=[k_1,k_2]$ in the  subgroup  $K \subset  \mathrm{U}_{d,d}(\R)$.

The adelic theta function on $\mathrm{U}_{d,d}(\Q) \backslash \mathrm{U}_{d,d}(\A)$ from \eqref{adelic hermitian theta}  satisfies
\[
\theta^\eta_{V,\psi}( g k , \varphi) =
\det(k_1)^{  \frac{ m  - r_\eta}{2}  }  
\cdot  \det(k_2)^{ \frac{-m - r_\eta}{2} }  
 \cdot \theta^\eta_{V,\psi}( g   ,  \varphi)
\]
for all $k  \in K$, so we may apply  to it the de-adelization construction \eqref{classical unitary}  to obtain  a function 
\begin{equation}\label{classical hermitian theta}
\theta_{V,\psi}(\mathtt{z} , \varphi) \define  
  \det(\mathtt{y})^{- m/2 }  \theta^\eta_{V,\psi}( g_\mathtt{z} , \varphi) 
\end{equation}
of the variable $\mathtt{z}  = \mathtt{x} + i \mathtt{y} \in \mathcal{H}_d$, satisfying
(by Proposition \ref{prop:hermitian transformation}) the transformation law
\[
\theta_{V,\psi}(\gamma \mathtt{z} ,  \varphi) 
=  j( \gamma ,\mathtt{z})^m \theta_{V,\psi} (\mathtt{z} ,  \varphi)
\]
of a Hermitian modular form of weight $m$, 
   for all $\gamma$ in some  arithmetic  subgroup of $\mathrm{SU}_{d,d}(\Q)$.    
The following calculation of its Fourier expansion shows that this function is actually independent of the choice of character $\eta$ from \eqref{eta def}, explaining its omission  from the notation in \eqref{classical hermitian theta}.

\begin{proposition}\label{prop:hermitian theta expansion}
The Fourier expansion of  \eqref{classical hermitian theta}  is
\[
\theta_{V,\psi} (\mathtt{z} , \varphi ) 
= \sum_{ T \in \Herm_d( E )} R_T( \mathtt{y}  , \varphi )
  \cdot  e^{2 \pi i \mathrm{Tr}( T \mathtt{z} ) }  , 
\]
where  the coefficients are
\[
R_T( \mathtt{y}  , \varphi )   = 
\det( \mathtt{y} )^{- \frac{m}{2} + \frac{1}{2} \dim_E(V)  }  
  \sum_{  \substack{  v \in V^d \\ H (v) = T } } \varphi_\R( v  \alpha)  \varphi_f(v)  \, e^{2\pi  \mathrm{Tr}( H(v  \alpha )) } 
\]
for any $\alpha \in \GL_d(\C)$ of positive real determinant satisfying   $\mathtt{y} =\alpha \cdot {}^\dagger \alpha$. 
In forming the product $v \alpha$, we are viewing $v \in V_\R^d$ as a row vector, and viewing $V_\R$ as a vector space over $E_\R = \C$.
\end{proposition}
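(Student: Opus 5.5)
The proof will follow that of Proposition \ref{prop:classical theta expansion}, with Proposition \ref{prop:unitary-unitary weil formulas} taking the place of Proposition \ref{prop:symplectic-orthogonal weil formulas}. Here $\mathrm{U}(W_\A)$ acts directly through $\omega^\eta_{V,\psi}$, with no metaplectic cover, so there are no Rao-coordinate issues. We fix $\mathtt{z}$ and $\alpha$ with $\mathtt{y} = \alpha\,{}^\dagger\alpha$ and $\det(\alpha)>0$. Since $g_\mathtt{z} = n(\mathtt{x})\, m(\alpha)$ as elements of $\mathrm{U}_{d,d}(\R)$, we have
\[
\omega^\eta_{V,\psi}(g_\mathtt{z}) = \omega^\eta_{V,\psi}(n(\mathtt{x}))\circ \omega^\eta_{V,\psi}(m(\alpha)) .
\]
Here $n(\mathtt{x})$ and $m(\alpha)$ are regarded as adelic points with trivial nonarchimedean components.

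Applying Proposition \ref{prop:unitary-unitary weil formulas} gives
\[
\omega^\eta_{V,\psi}(g_\mathtt{z})\varphi(v) = \psi_\infty\big(\mathrm{Tr}(H(v)\mathtt{x})\big)\, \eta_\infty(\det({}^\dagger\alpha))\, |\det\alpha|_{\C}^{\frac12\dim_E V}\, \varphi_\R(v\alpha)\varphi_f(v) .
\]
We then evaluate the two archimedean factors.
\begin{itemize}
\item Since $\det({}^\dagger\alpha) = \overline{\det\alpha}$ is a positive real number, $\eta_\infty(\det({}^\dagger\alpha))=1$ by \eqref{r_eta}. This is the step that shows independence of $\eta$.
\item With the normalization of $|\cdot|_E$ on $E_\R=\C$, we have $|a|_\C = a\bar a$. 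Hence $|\det\alpha|_\C = \det(\alpha\,{}^\dagger\alpha)=\det\mathtt{y}$, and the factor is $\det(\mathtt{y})^{\frac12\dim_E V}$.
\end{itemize}
Multiplying by $\det(\mathtt{y})^{-m/2}$ from \eqref{classical hermitian theta} and summing over $v\in V^d$ produces the stated power of $\det\mathtt{y}$. The sum converges absolutely, so we may group the terms according to $T=H(v)\in \Herm_d(E)$.

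It remains to convert $e^{2\pi i\,\mathrm{Tr}(H(v)\mathtt{x})}$ into $e^{2\pi i\,\mathrm{Tr}(T\mathtt{z})}e^{2\pi\,\mathrm{Tr}(H(v\alpha))}$. This requires the identity $\mathrm{Tr}(H(v)\mathtt{y}) = \mathrm{Tr}(H(v\alpha))$, and this is the step where care with conventions is needed. From \eqref{hermitian gram}, with $h$ linear in the first variable, we get $H(v\alpha) = {}^t\alpha\, H(v)\, \overline{\alpha}$, or ${}^\dagger\alpha H(v)\alpha$ depending on index conventions. I will check which one holds by computing the $(j,k)$ entry $h\big(\sum_a v_a\alpha_{ak},\sum_b v_b\alpha_{bj}\big)$. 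Either form gives a trace equal to $\mathrm{Tr}(H(v)\,\overline{\alpha}\,{}^t\alpha) = \mathrm{Tr}(H(v)\,\overline{\mathtt{y}})$ or $\mathrm{Tr}(H(v)\mathtt{y})$.

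In the conjugated case I will use ${}^tH(v)=\overline{H(v)}$ and ${}^t\mathtt{y}=\overline{\mathtt{y}}$, both valid because these matrices are Hermitian. Together with the realness of $\mathrm{Tr}(T\mathtt{y})$ for Hermitian $T$ and $\mathtt{y}$, this gives $\mathrm{Tr}(H(v)\overline{\mathtt{y}}) = \mathrm{Tr}({}^t(H(v)\overline{\mathtt{y}})) = \mathrm{Tr}(\overline{\mathtt{y}}\,\overline{H(v)}) = \overline{\mathrm{Tr}(\mathtt{y}H(v))} = \mathrm{Tr}(H(v)\mathtt{y})$. So in either case the identity holds, and the rewriting finishes the proof.
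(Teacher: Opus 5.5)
Your strategy is the paper's own. The paper's proof is just the instruction to rerun the proof of Proposition~\ref{prop:classical theta expansion} with Proposition~\ref{prop:unitary-unitary weil formulas} in place of Proposition~\ref{prop:symplectic-orthogonal weil formulas}. Your steps fill this in correctly: the factorization $g_\mathtt{z}=n(\mathtt{x})\,m(\alpha)$, the evaluations $\eta_\infty(\det({}^\dagger\alpha))=1$ and $|\det\alpha|_\C=\det\mathtt{y}$, and the bookkeeping of powers of $\det\mathtt{y}$.

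The last paragraph, however, contains a false claim. For Hermitian $T$ and $\mathtt{y}$ one has ${}^t(T\overline{\mathtt{y}})={}^t\overline{\mathtt{y}}\,{}^tT=\mathtt{y}\,\overline{T}$, not $\overline{\mathtt{y}}\,\overline{T}$. So your chain only shows $\mathrm{Tr}(T\overline{\mathtt{y}})=\mathrm{Tr}(\overline{T}\mathtt{y})$, which merely says the trace is real. In fact $\mathrm{Tr}(T\overline{\mathtt{y}})\neq\mathrm{Tr}(T\mathtt{y})$ in general. Take $T=\begin{pmatrix}0&i\\-i&0\end{pmatrix}$ and $\mathtt{y}=\begin{pmatrix}1&i/2\\-i/2&1\end{pmatrix}$, which is positive definite. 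Then $\mathrm{Tr}(T\mathtt{y})=1$ but $\mathrm{Tr}(T\overline{\mathtt{y}})=-1$. So ``in either case the identity holds'' is wrong. The index order in \eqref{hermitian gram} is not a harmless convention: with the transposed Gram matrix one would get $H(v\alpha)={}^t\alpha\,H(v)\,\overline{\alpha}$, and the expansion would genuinely fail. This is why the paper warns that the order of indices matters.

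The repair is to carry out the computation you deferred:
\[
H(v\alpha)_{jk}=h\Big(\sum_a v_a\alpha_{ak},\sum_b v_b\alpha_{bj}\Big)=\sum_{a,b}\overline{\alpha_{bj}}\;h(v_a,v_b)\;\alpha_{ak}=\big({}^\dagger\alpha\,H(v)\,\alpha\big)_{jk},
\]
using $H(v)_{ba}=h(v_a,v_b)$. Hence $\mathrm{Tr}(H(v\alpha))=\mathrm{Tr}(H(v)\,\alpha\,{}^\dagger\alpha)=\mathrm{Tr}(H(v)\mathtt{y})$, and the conjugated case never arises. With this computation replacing your ``either case'' argument, the proof is complete.
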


\begin{proof}
This is entirely similar  to the proof of  Proposition \ref{prop:classical theta expansion}, using the formulas of Proposition \ref{prop:unitary-unitary weil formulas} for the Weil representation  in place of those of Proposition \ref{prop:symplectic-orthogonal weil formulas}.
\end{proof}

We now use the discussion above to  construct  Hermitian analogues of the classical theta functions of Jacobi and Siegel.  The following result can be found in, for example,  \cite{CoRes} and \cite[Appendix A7]{ShimuraEisenstein}.

\begin{theorem}\label{thm:hermitian theta modularity}
Assume that $V$ is positive definite.  
For any $\varphi_f \in S(V^d_{\A_f})$, the function  
\[
\vartheta (\mathtt{z} , \varphi_f ) = 
\sum_{ T \in \Herm_d( E )}  \left(   \sum_{ \substack{ v \in V^d \\ H(v) = T } } \varphi_f(v)   \right)  
e^{ 2  \pi i \mathrm{Tr}( T \mathtt{z} ) } 
\]
is a holomorphic Hermitian modular form on $\mathcal{H}_d$ of weight  $ \dim_E(V)$.
Its level, an arithmetic subgroup  of $\mathrm{SU}_{d,d}(\Q)$, depends on  $\varphi_f$.
\end{theorem}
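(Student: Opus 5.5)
The plan is to copy the proof of Theorem \ref{thm:classical theta modularity}. By Proposition \ref{prop:hermitian theta expansion}, it suffices to take $\varphi_\R$ to be the Gaussian
\[
\varphi^\circ_\R(v)=e^{-2\pi\mathrm{Tr}(H(v))},
\]
and show that it satisfies \eqref{unitary-unitary K weight} with $m=\dim_E(V)$. Indeed, with this choice the factor $e^{2\pi\mathrm{Tr}(H(v\alpha))}$ in $R_T$ cancels against $\varphi_\R^\circ(v\alpha)$, and the power of $\det(\mathtt{y})$ is $-\frac m2+\frac12\dim_E(V)=0$. What remains is exactly $\vartheta(\mathtt{z},\varphi_f)$. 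It then transforms with weight $m$ under an arithmetic subgroup of $\mathrm{SU}_{d,d}(\Q)$, by the discussion preceding \eqref{classical hermitian theta}. Holomorphy is visible from the $q$-expansion, since its coefficients no longer depend on $\mathtt{y}$.

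To check \eqref{unitary-unitary K weight}, I will argue at the Lie algebra level, which is enough because $K\iso \mathrm{U}(d)\times\mathrm{U}(d)$ is connected. Let $J$ be the complex structure on $W_\R$ whose centralizer is $K$, i.e.\ the stabilizer of $iI_d$. Set $\bm J=J\otimes\mathrm{id}_V$ on $\bm W_\R$. The key input is that, because $h$ is positive definite, $H_{\bm J}$ is positive definite, and $\varphi^\circ_{\bm J}$ restricted to $\bm Y_\R=V_\R^d$ is $\varphi^\circ_\R$. Proposition \ref{prop:vacuum eigenvector} then says that the preimage of $K_{\bm J}$ acts on $\varphi_\R^\circ$ through a character with differential $\frac12\chi_{\bm J}^{\rm inf}\oplus\mathrm{id}$.

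Pulling this back along $i^\eta_{V,\psi}$ needs two computations.
\begin{itemize}
\item First, compute $\chi_{\bm J}\circ i_V$ on $[k_1,k_2]$, as the inverse determinant of $K$ acting on $W_{\R,J}\otimes_{E_\R}V_\R$. Using \eqref{C conjugation} to diagonalize $J$ on $W_\C$, I expect this to be $\det(k_1)^{a}\det(k_2)^{b}$ with $a=-b=\dim_E(V)$, up to conjugations fixed by the sign conventions of Remark \ref{rem:banana peel}. Half of it then gives $\det(k_1)^{m/2}\det(k_2)^{-m/2}$.
\item Second, and unlike the symplectic-orthogonal case, the Lie algebra map $\mathfrak u(W_\R)\to\mathfrak{sp}(\bm W_\R)\oplus i\R$ need not have zero second component. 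This is because $\mathfrak u(W_\R)$ has a one-dimensional center. So $i^\eta_{V,\psi}(Z)=(i_V(Z),\lambda(Z))$, where $\lambda$ is a linear functional vanishing on $[\mathfrak u,\mathfrak u]$, and $\lambda$ is determined by its value on the center.
\end{itemize}

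To pin down $\lambda$, I will compare with the explicit group action on the subgroup $\{m(A):A\in\mathrm{U}(d)\}=\{[A,A]\}\subset K$, which contains the center. On this subgroup, Proposition \ref{prop:unitary-unitary weil formulas} gives
\[
\omega^\eta_{V,\psi}(m(A))\varphi^\circ_\R=\eta_\infty(\det({}^\dagger A))\,\varphi^\circ_\R=\det(A)^{-r_\eta}\varphi^\circ_\R,
\]
because $|\det A|=1$ and the Gaussian is $\mathrm{U}(d)$-invariant. Meanwhile the vacuum-vector part contributes $\det(A)^{m/2}\det(A)^{-m/2}=1$ on this subgroup. Hence $\lambda$ is the differential of $[k_1,k_2]\mapsto(\det k_1\det k_2)^{-r_\eta/2}$ restricted to the center, which extends to all of $K$ because it is a character of $K$. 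Multiplying, the Gaussian transforms under $K$ by
\[
\det(k_1)^{\frac{m-r_\eta}2}\det(k_2)^{\frac{-m-r_\eta}2},
\]
which is \eqref{unitary-unitary K weight}.

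The step I expect to be the main obstacle is the bookkeeping in the two computations above: getting the correct signs and conjugations in $\chi_{\bm J}\circ i_V$, given the skew-Hermitian pairing, linearity in the second variable, and the $\mathrm{Tr}_{E/F}$ normalization \eqref{big unitary symplectic}; and justifying that the $i\R$-component of $i^\eta_{V,\psi}$ is detected on the center. As a fallback, I could instead compute $\omega^{\rm inf}(i_V(Z))$ on the Gaussian for an explicit spanning set of $\mathfrak k_\C$, in the style of Lemma \ref{lem:basic Kinf calc}, and use the $m(A)$ formula only for the central direction.
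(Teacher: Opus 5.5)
Your proposal is correct and follows the paper's proof. You reduce via Proposition \ref{prop:hermitian theta expansion} to the Gaussian, identify it with the vacuum vector for $\bm{J}=J\otimes\mathrm{id}_V$, compute $(\chi_{\bm J}\circ i_V)([k_1,k_2])=\det(k_1)^m\det(k_2)^{-m}$ (your expected answer is right), and add the central correction $\frac{r_\eta}{2}\mathrm{Tr}_W$ to half the pulled-back infinitesimal character. The only variation is how you fix that correction: you read it off from the action of $m(A)=[A,A]$, $A\in\mathrm{U}(d)$, on the Gaussian, while Lemma \ref{lem:kinf correction} uses $\beta_\R(z\cdot\mathrm{Id}_W)=z^{d r_\eta}$; both rest on \eqref{unitary parabolic beta}. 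One imprecision: $(\det k_1\det k_2)^{-r_\eta/2}$ is not a character of $K$ when $r_\eta$ is odd, so justify the extension from the center at the Lie algebra level, via $\lambda=\frac{r_\eta}{2}\mathrm{Tr}_W$ on all of $\mathfrak{u}(W_\R)$.
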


\begin{proof}
By the same reasoning as in the proof of Theorem \ref{thm:classical theta modularity}, this follows from Proposition \ref{prop:hermitian theta expansion} once we  show that the  standard Gaussian
\[
\varphi^\circ_\R(v)  
\define
 e^{-2\pi  \mathrm{Tr}( H (v  )) }   =  \prod_{j=1}^d e^{- 2\pi   h(v_j  , v_j  ) } 
\]
on $V^d_\R$  satisfies the transformation law \eqref{unitary-unitary K weight} with  
$m =  \dim_E(V)$.

Using  the basis $e_1,\ldots, e_d,f_1,\ldots, f_d \in W$, define  $J \in \mathrm{U}(W_\R)$  exactly as in  \eqref{standard J}.   
The centralizer of $J$ is the maximal compact subgroup $K$ from \eqref{unitary max compact}, which is also the stabilizer of the  eigenspace decomposition 
\[
W_\R  = W_\R^{J = i }  \oplus  W_\R^{J = - i }  .
\]
The two eigenspaces are, recalling \eqref{C conjugation}, 
\begin{align*}
W_\R^{J=i}  & = \mathrm{Span}_\C\{ e_\ell - i f_\ell  : 1 \le \ell \le d \} 
= C \cdot \mathrm{Span}_\C\{ f_\ell : 1 \le \ell \le d   \} \\
W_\R^{J=- i}  & = \mathrm{Span}_\C\{ e_\ell + i f_\ell : 1 \le \ell \le d\} 
= C \cdot \mathrm{Span}_\C\{ e_\ell  : 1 \le \ell \le d  \} , 
\end{align*} 
and   $K$ is the direct product of its two subgroups
\begin{align}\label{K1K2}
K_1  &\define \{ [ k_1 , I_d ]  \in K \}   = \mathrm{ker} \big(  K \to \GL( W_\R^{J= - i}  )  \big) \\
K_2 & \define \{ [  I_d , k_2 ]  \in K \}   = \mathrm{ker} \big(  K  \to \GL( W_\R^{J=   i}  )  \big). \nonumber
\end{align}

The above $J$ determines a complex structure 
$
\bm{J} = J  \otimes   \mathrm{id}_V
$
 on $\bm{W}=W\otimes_E V$.  Let  $K_{\bm{J}}  \subset  \Sp(\bm{W}_\R)$ be its  centralizer, which  is also equal to the stabilizer of the eigenspace decomposition
\begin{equation}\label{big eigenspaces}
\bm{W}_ \R =   \bm{W}_\R^{ \bm{J} = i } \oplus \bm{W}_\R^{ \bm{J} = -i } = 
 ( W_\R^{ J =  i } \otimes_E V   )  \oplus ( W_\R^{ J = - i }   \otimes_E V )   .
\end{equation}
The vacuum vector  $\varphi_{\bm{J}}^\circ \in S(\bm{Y}_\R) \iso S(V_\R^d)$  from Definition \ref{def:vacuum}  associated to this choice of $\bm{J}$ is precisely the standard Gaussian defined above.

In the  commutative diagram
\[
\begin{tikzcd}
&  {  \Mp(\bm{W}_\R)_\psi  }  \ar[d] \\
{   \mathrm{U} (W_\R) }  \ar[r , "i_V" ' ]  \ar[ur  ,   "i^\eta_{V,\psi}"  ]    &  {   \Sp(\bm{W}_\R)  } 
\end{tikzcd}
\]
from  \eqref{unitary-unitary local splitting diagram},   the  horizontal arrow takes $K$ into $K_{\bm{J}}$.  
  By Proposition \ref{prop:vacuum eigenvector}, the preimage of $K_{\bm{J}}$ under the right vertical arrow acts on the vacuum vector through a character, and hence the compact subgroup $K \subset \mathrm{U}(W_\R)$ acts on the standard Gaussian via the pullback of this character along $i^\eta_{V,\psi}$.
This pullback  is what we must compute.

\begin{lemma}
The restriction to $K$ of the character $\chi_{\bm{J}} :  K_{\bm{J}}\to \C^1 $ from  \eqref{chi_J}  is 
\begin{equation}\label{unitary pullback character}
 \left( \chi_{\bm{J}}  \circ  i_V \right) ( [ k_1,k_2] ) =   \det\nolimits_W([ k_1, I_d ] )^{-m}  \cdot \det\nolimits_W( [ I_d , k_2])^{m} .
\end{equation}
Equivalently, recalling the distinction between matrix determinants and determinants of operators on $W$, 
\[
 \left( \chi_{\bm{J}}  \circ  i_V \right) ( [ k_1,k_2] ) =   \det(k_1)^{m}  \cdot \det(k_2)^{-m} .
\]
\end{lemma} 

\begin{proof}
Recall that in  \eqref{chi_J} we defined $\chi_{\bm{J}} : K_{\bm{J}} \to \C^1$ as the composition
\[
K_{\bm{J}} \to \GL( \bm{W}_ \R  ) \map{ \det_{\bm{J}}^{-1} } \C^\times .
\]
The notation $\det_{\bm{J}}$ means the determinant is being computed  using  the complex structure $\bm{J}$ (and \emph{not} the action of $E_\R=\C$) to regard $\bm{W}_\R$ as a complex vector space.
Using   \eqref{K1K2} and \eqref{big eigenspaces}, we see that  the character  $\chi_{\bm{J}} \circ i_V$ is equal to the composition 
\[
K \to   \GL( \bm{W}_\R^{ \bm{J} = i} ) \times \GL(\bm{W}_\R^{ \bm{J} =- i} ) \map{  ( k_1,k_2) \mapsto    \det_{\C}( k_1)^{-1}      \overline{ \det_{ \C} ( k_2) ^{-1}  }  }  \C^\times,
\]
where $m=\dim_E(V)$, and the determinants in the definition of the second arrow are with respect to the actions of $k_1$ and $k_2$ on 
\[
\bm{W}_\R^{ \bm{J} = \pm i } = W_\R^{ J = \pm i } \otimes_E V,
\]
which is regarded as a complex vector space using the action of $E_\R = \C$  (and \emph{not}   the action of $J$). 
This change of complex structure is why there is a complex conjugation on the determinant of $k_2$.  
The claim follows immediately.
 \end{proof}

Let $\mathfrak{k} \subset \mathfrak{u}(W_\R)$ be the Lie algebra of $K   \subset \mathrm{U}(W_\R)$, and let 
$\mathfrak{k}_{\bm{J}} \subset \mathfrak{sp}(\bm{W}_\R)$ be the Lie algebra of  $K_{\bm{J}}  \subset  \Sp(\bm{W}_\R) $.
As in the proof of Theorem \ref{thm:classical theta modularity}, we have a diagram
\begin{equation}\label{unitary-unitary lie diagram}
\begin{tikzcd}
{  \mathfrak{k} }  \ar[r] \ar[d] &  {    \mathfrak{k}_{ \bm{J} }   \oplus    i \R }  \ar[d]  \\
  { \mathfrak{u}(W_\R) }  \ar[d, equal ] \ar[r, " i^\eta_{V,\psi} "]    &  {   \mathfrak{sp}(\bm{W}_\R)  \oplus  i\R  }  \ar[d]  \\
 { \mathfrak{u}(W_\R) } \ar[r , "i_V" ]   &   {   \mathfrak{sp}(\bm{W}_\R)  }     ,
\end{tikzcd}
\end{equation}
but  the relation between the  horizontal arrows is not so simple as in \eqref{orthogonal-symplectic lie diagram}.

  \begin{lemma}\label{lem:kinf correction}
  The horizontal arrows in \eqref{unitary-unitary lie diagram} are related by 
  \[
i^\eta_{V,\psi}(Z) = \left(   i_V(Z) ,  \frac{ r_\eta }{2} \cdot  \mathrm{Tr}_W(Z)  \right),
\]
where  $\mathrm{Tr}_W(Z) \in i \R$ is the trace of $Z \in \mathfrak{u}(W_\R)$ acting on $W_\R$, 
regarded as a vector space over $E_\R=\C$, and $r_\eta$ is the integer defined by \eqref{r_eta}.
  \end{lemma}
  
  \begin{proof}
Every Lie algebra map  $\mathfrak{u}(W_\R) \to i \R $ is a scalar multiple of the trace map $\mathrm{Tr}_W : \mathfrak{u}(W_\R) \to i \R$, and it follows that the horizontal arrows are related by 
   \[
i^\eta_{V,\psi}(Z) = \left(   i_V(Z)  , c    \mathrm{Tr}_W(Z)   \right) 
\]
for some constant $c\in \R$.  

 To compute the constant, we may consider only those $Z$ in the Lie algebra of the center of $\mathrm{U}(W_\R)$.  The center is the circle group  $\C^1 \subset \C^\times = E_\R^\times$ acting on $W_\R$ by scalar multiplication.
 It follows from   \eqref{unitary parabolic beta} that for any $z \in \C^1$, the element $z \cdot  \mathrm{Id}_W  \in   \mathrm{U}(W_\R)$ satisfies 
\[
\beta_\R( z \cdot   \mathrm{Id}_W ) = \eta_\infty ( z )^d  = z^{  d \cdot  r_\eta} .
\]
Going back to \eqref{unitary-unitary local splitting diagram}, we see that the homomorphism
\[
 \mathrm{U}(W_\R)   \map{   i^\eta_{V,\psi}  }  \Mp(\bm{W}_\R)_\psi  
\]
satisfies
\[
i^\eta_{V,\psi}(z  \cdot \mathrm{Id}_W ) = \big(  i_V(z \cdot  \mathrm{Id}_W) , z^{   d \cdot r_\eta}  \big), 
\]
and differentiating this shows that  $2 c= r_\eta$.
  \end{proof}

   The character through which  $K$ acts on the standard Gaussian differentiates to the composition of the top horizontal arrow  in \eqref{unitary-unitary lie diagram}, which is known by  Lemma \ref{lem:kinf correction},   
   with the linear functional  on $ \mathfrak{k}_{\bm{J}}  \oplus i \R$ appearing in  Proposition \ref{prop:vacuum eigenvector}.  
More precisely,  $K=K_1 \times K_2$ acts on the standard Gaussian through the character whose infinitesimal form is the linear functional   $\mathfrak{k}= \mathfrak{k}_1 \oplus \mathfrak{k}_2 \to i\R $ given by 
   \begin{equation}\label{unitary vacuum inf character}
Z_1+Z_2  \mapsto \frac{1}{2}   \cdot   ( \chi^\mathrm{inf} _{\bm{J}}  \circ  i_V) ( Z_1+Z_2 ) + \frac{r_\eta}{2}   \cdot  \mathrm{Tr}_W(Z_1+Z_2) .
\end{equation}

Differentiating  \eqref{unitary pullback character} shows that 
\[
( \chi^\mathrm{inf}_{\bm{J}}  \circ  i_V ) (Z_1 + Z_2 ) = -m \mathrm{Tr}_W(Z_1) +  m \mathrm{Tr}_W(Z_2),
\]
which  allows us to rewrite \eqref{unitary vacuum inf character} as 
\[
(Z_1 , Z_2)  \mapsto   \left(  \frac{ - m + r_\eta }{2}   \right) \mathrm{Tr}_W(Z_1)   +  \left(  \frac{ m + r_\eta }{2}   \right) \mathrm{Tr}_W(Z_2) . 
\]
Recalling  Remark \ref{rem:banana peel}, this  is the infinitesimal form of the character 
\[
[ k_1 , k_2 ] \mapsto \det(k_1)^{ \frac{ m -  r_\eta }{2} } \det(k_2)^{ \frac{  - m -  r_\eta }{2} } 
\]
of $K$.  This last character  is therefore the character by which $K$ acts on the standard Gaussian, completing the proof of Theorem \ref{thm:hermitian theta modularity}. 
\end{proof}


\subsection{Polynomial weights}
\label{ss:unitary poly weights}


With the goal of generalizing Theorem \ref{thm:hermitian theta modularity}, we assume throughout \S \ref{ss:unitary poly weights} that $(V,h)$ is a positive definite Hermitian space over our quadratic imaginary field $E$.  Regard $V_\R$ as a vector space over $E_\R = \C$, and fix a basis 
 $
 v_1,\ldots, v_n \in V_\R
 $
satisfying  $h(v_j,v_k) =\delta_{jk}$.

Writing vectors in $V_\R$ as $z_1 v_1+ \cdots + z_n v_n$ with $z_1,\ldots, z_n \in \C$, 
   we regard functions on the real vector space $V_\R$ as functions of the  variables 
   \[
   z_\alpha=x_\alpha+ i y_\alpha 
   \qquad \mbox{and} \qquad  
   \overline{z}_\alpha=x_\alpha -  i y_\alpha
   \]
    with $1\le \alpha \le n$.
   For an integer $1\le j \le d$, define a coordinate function  $z_{\alpha j}$ as  the composition
   \[
   V_\R^d \to   V_\R \map{z_\alpha}  \C,
   \]
   where the first arrow  is projection to the $j^\mathrm{th}$ factor.
In this way we regard functions on $V_\R^d$ as functions of the  variables 
\[
z_{\alpha j} = x_{\alpha j}+i y_{\alpha j}
\qquad \mbox{and} \qquad 
\overline{z}_{\alpha j} = x_{\alpha j} - i y_{\alpha j} .
\] 
Following the usual conventions,  define differential operators
\[
 \frac{\partial  }{ \partial z _{\alpha j }}  = 
\frac{1}{2} \left(   \frac{\partial  }{ \partial x _{\alpha j }}  - i  \frac{\partial }{ \partial y _{\alpha j }}  \right)
  \qquad   \mbox{and}  \qquad 
   \frac{\partial  }{ \partial \overline{z} _{\alpha j }}  = 
   \frac{1}{2} \left(     \frac{\partial  }{ \partial x _{\alpha j }}  +  i \frac{\partial }{ \partial y _{\alpha j }}  \right) 
\]
on the space of smooth functions on $V_\R^d$.

For each $1 \le j \le d$, define  a differential operator 
\[
\Delta_j    \define \sum_\alpha  \left(   \frac{\partial^2 }{ \partial x^2 _{\alpha j }}  +   \frac{\partial^2 }{ \partial y^2 _{\alpha j }}   \right) 
=  4  \sum_\alpha \frac{\partial^2}{\partial z_{\alpha j }    \partial \overline{z}_{\alpha j } } 
\]
on the space of polynomial functions on the real vector space $V^d_\R$, by which we always mean  $\C$-valued polynomials  in the variables $z_{\alpha j}$ and $\overline{z}_{\alpha j}$.
As in \eqref{Psharp}, set $\Delta = \Delta_1+\cdots+\Delta_d$, and  for any such polynomial   $P$ define
\[
P^\sharp =  \exp\left( -\frac{\Delta}{16\pi} \right)  P  .
\]

\begin{theorem}\label{thm:main unitary}
Fix  an integer $N \in \Z$, and  suppose $P$ is a polynomial function on the real vector space  $V^d_\R$ satisfying 
\begin{equation}\label{unitary P homogeneity}
P( v g ) = | \det(g)| ^{2N} P(v) 
\end{equation}
 for  all $v\in V_\R^d$ and $g \in \GL_d(\C)$.    
 For any Schwartz function $\varphi_f \in S(V^d_{\A_f})$,   the  function 
\[
\vartheta_P (\mathtt{z} ,  \varphi_f ) = 
\det(\mathtt{y})^{-N} 
\sum_{ T \in \Herm_d( E )}  \left(   \sum_{ \substack{ v \in V^d \\ H(v) = T } }   P^\sharp(v \alpha ) \varphi_f(v  )   \right)   \, e^{2 \pi i \mathrm{Tr}( T \mathtt{z} ) } 
\]
of the variable $\mathtt{z} =\mathtt{x} + i \mathtt{y}\in  \mathcal{H}_d$ transforms like a Hermitian  modular form of weight $2N+ \dim_E(V)$, under the action of some arithmetic subgroup of $\mathrm{SU}_{d,d}(\Q)$.
Here  $\alpha \in \GL_d(\C)$  is any matrix of positive real determinant such that  $\mathtt{y}=\alpha \cdot {}^\dagger \alpha$.   
\end{theorem}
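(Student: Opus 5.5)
The plan is to mirror the proof of Theorem \ref{thm:main orthogonal}, with Theorem \ref{thm:hermitian theta modularity} as the template for the group-theoretic bookkeeping. By Proposition \ref{prop:hermitian theta expansion} and Proposition \ref{prop:hermitian transformation}, it suffices to show that $\varphi_\R = P^\sharp \varphi^\circ_\R$ satisfies \eqref{unitary-unitary K weight} with $m = 2N+\dim_E(V)$. Here $\varphi^\circ_\R = e^{-2\pi\sum_{\alpha,j}|z_{\alpha j}|^2}$ is the standard Gaussian. Indeed, $P^\sharp(v\alpha)e^{-2\pi\mathrm{Tr}H(v\alpha)}e^{2\pi\mathrm{Tr}H(v\alpha)} = P^\sharp(v\alpha)$, and the power of $\det(\mathtt{y})$ is $-m/2+\dim_E(V)/2 = -N$, which gives exactly $\vartheta_P$. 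Independence of $\alpha$ follows because $P$, hence $P^\sharp$, is invariant under $\mathrm{SU}(d)$; this uses the fact that $\Delta$ commutes with the $\GL_d(\C)$-action restricted to $\mathrm U(d)$.

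First, convert the homogeneity into differential equations, as in Lemma \ref{lem:homo to diffeq}. Write the right translation action of $\mathfrak{gl}_d(\C)$, viewed as a real Lie algebra. The condition $P(vg)=\det(g)^N\overline{\det(g)}^N P(v)$ becomes
\[
E_{jk}P = N\delta_{jk}P, \qquad \overline{E}_{jk}P = N\delta_{jk}P,
\]
where $E_{jk}=\sum_\alpha z_{\alpha j}\partial/\partial z_{\alpha k}$ and $\overline E_{jk}=\sum_\alpha \overline z_{\alpha j}\partial/\partial\overline z_{\alpha k}$. Connectedness of $\GL_d(\C)$ makes the passage to the Lie algebra immediate. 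Next, following Lemma \ref{lem:diffeq conversion}, use $\Delta = 4\sum\partial_{z}\partial_{\bar z}$ and compute the commutators $[\Delta,E_{jk}] = 4F_{jk}$ and $[\Delta,\overline E_{jk}]=4\overline F_{jk}$. Here $F_{jk}=\sum_\alpha \partial_{\bar z_{\alpha j}}\partial_{z_{\alpha k}}$, and $\overline F_{jk}$ is the same with the roles of $z$ and $\bar z$ swapped. These commute with $\Delta$, so
\[
(E_{jk}P)^\sharp = \left(E_{jk}-\tfrac{1}{4\pi}F_{jk}\right)P^\sharp,
\]
and similarly for $\overline E_{jk}$. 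The factor $16\pi$ in the definition of $P^\sharp$ is what will make these constants match the Weil representation computation.

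Second, compute the infinitesimal action. The Lie algebra $\mathfrak k_\C$ is spanned by elements of $\mathfrak k_{1,\C}$ and $\mathfrak k_{2,\C}$ from \eqref{K1K2}. Explicitly, take $Z_{jk}$-type elements built from $e_j\mp i f_j$, using the skew-Hermitian analogue of \eqref{sym2sp}. Their images under $i_V$ land in $\mathrm{Sym}^2(\bm W)$ as sums over the real basis $v_\alpha, iv_\alpha$ of $V_\R$. Following Lemma \ref{lem:inf Z_jk} and Remark \ref{rem:weyl action}, the resulting operators on $S(V_\R^d)$ are quadratic in $z,\bar z,\partial_z,\partial_{\bar z}$. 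A calculus exercise should then give the following. For $Z\in\mathfrak k_{1,\C}$ corresponding to the matrix unit $X_{jk}$, the operator acts on $f\varphi^\circ_\R$ as $c\,(E_{jk}-\tfrac1{4\pi}F_{jk}+\delta_{jk}\dim_E V)f\cdot\varphi^\circ_\R$ plus a scalar term. For $Z\in\mathfrak k_{2,\C}$ the same holds with $\overline E,\overline F$ in place of $E,F$ and with opposite sign.

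Finally, combine these with Lemma \ref{lem:kinf correction} and the vacuum computation \eqref{unitary vacuum inf character}. Comparing with the case $P=1$, which is Theorem \ref{thm:hermitian theta modularity}, each unit of $E$-eigenvalue shifts the $\mathfrak k_1$ exponent by $+1$ and each unit of $\overline E$-eigenvalue shifts the $\mathfrak k_2$ exponent by $-1$. The character therefore becomes $\det(k_1)^{(\dim V+2N-r_\eta)/2}\det(k_2)^{(-\dim V-2N-r_\eta)/2}$, which is \eqref{unitary-unitary K weight} with $m=2N+\dim_E(V)$. Connectedness of $K$ and Theorem \ref{thm:inf compatibility} then pass the result from the Lie algebra to the group.

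I expect the main obstacle to be this infinitesimal calculation: getting the correct identification of $\mathfrak u(W)$ with Hermitian symmetric tensors, and carrying the factor $\mathrm{Tr}_{E/F}$ in \eqref{big unitary symplectic}, so that the constant $1/4\pi$ and the signs on the $k_2$ side come out right. I would fix these constants by checking against the Gaussian case and against a degree-one test polynomial.
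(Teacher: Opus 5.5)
Your route is the paper's route. You reduce to the $K$-type \eqref{unitary-unitary K weight} of $P^\sharp\varphi^\circ_\R$ with $m=2N+\dim_E(V)$, convert homogeneity into Euler equations for $E'_{jk},E''_{jk}$, and conjugate through $\exp(-\Delta/16\pi)$. You then compute the infinitesimal action of $\mathfrak{k}$ on $f\varphi^\circ_\R$ and add the shift from Lemma \ref{lem:kinf correction}. Your complex matrix units and the unsymmetrized operators $\sum_\alpha\partial_{\bar z_{\alpha j}}\partial_{z_{\alpha k}}$ are only a repackaging of the paper's real basis $X',Y',X'',Y''$ and of $F_{jk},G_{jk}$; note that $4\sum_\alpha\partial_{\bar z_{\alpha j}}\partial_{z_{\alpha k}}=2F_{jk}-2iG_{jk}$. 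Your commutator identities are correct.

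The gap is that the step which actually determines the weight is predicted, not proved. You leave $c$ unspecified, and the scalar as ``$\dim_E(V)\delta_{jk}$ plus a scalar term''. You then assert that $N$ raises the $\det(k_1)$-exponent by $N$ and lowers the $\det(k_2)$-exponent by $N$. But the size and sign of $c$ are exactly what decide among weights $2N+\dim_E(V)$, $\dim_E(V)-2N$, $4N+\dim_E(V)$, and so on.

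Your plan for fixing constants cannot settle this. The Gaussian fixes only the additive scalar. A degree-one test polynomial is killed by the second-order part, so it cannot confirm that this part is exactly $-\frac1{4\pi}\sum_\alpha\partial_{\bar z_{\alpha j}}\partial_{z_{\alpha k}}$ relative to $E'_{jk}$, which is what has to match the $16\pi$ in $P^\sharp$. You need the Weyl-algebra computation of Lemma \ref{lem:unitary XY operators} and the calculus after it.

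Carried out, that computation gives the following, with $\sqrt{-1}$ denoting the complexification scalar.
\begin{itemize}
\item With $u_\ell=e_\ell-if_\ell$, the element $\frac14(Y'_{jk}+\sqrt{-1}\,X'_{jk})\in\mathfrak{k}_{1,\C}$ is the unit $u_\ell\mapsto\delta_{j\ell}u_k$. It acts on $f\varphi^\circ_\R$ by $-\bigl(E'_{jk}-\frac1{4\pi}\sum_\alpha\partial_{\bar z_{\alpha j}}\partial_{z_{\alpha k}}+\frac12\dim_E(V)\delta_{jk}\bigr)$.
\item With $u'_\ell=e_\ell+if_\ell$, the element $-\frac14(Y''_{jk}+\sqrt{-1}\,X''_{jk})\in\mathfrak{k}_{2,\C}$ is the unit $u'_\ell\mapsto\delta_{j\ell}u'_k$. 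It acts by $+\bigl(E''_{kj}-\frac1{4\pi}\sum_\alpha\partial_{z_{\alpha k}}\partial_{\bar z_{\alpha j}}+\frac12\dim_E(V)\delta_{jk}\bigr)$.
\end{itemize}
So $c=\mp1$, and the scalar is $\frac12\dim_E(V)$, not $\dim_E(V)$.

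The last sign comes from \eqref{C conjugation} and Remark \ref{rem:banana peel}. These give $\det\nolimits_W([k_1,I_d])=\det(k_1)^{-1}$ and $\det\nolimits_W([I_d,k_2])=\det(k_2)^{-1}$. Hence $\det(k_1)^a\det(k_2)^b$ differentiates to $-a\,\mathrm{Tr}_W$ on $\mathfrak{k}_1$ and to $-b\,\mathrm{Tr}_W$ on $\mathfrak{k}_2$. After adding $\frac{r_\eta}{2}\mathrm{Tr}_W$, the diagonal units have eigenvalues $-(N+\frac12\dim_E V)+\frac{r_\eta}2$ and $N+\frac12\dim_E V+\frac{r_\eta}2$ on $P^\sharp\varphi^\circ_\R$. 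This gives $a=\frac{m-r_\eta}2$ and $b=\frac{-m-r_\eta}2$, as required; with these computations supplied, your argument is complete.
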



\begin{remark}
The assumption  \eqref{unitary P homogeneity} implies that $P$ is right  invariant under  the subgroup
$\mathrm{SU}(d) \subset \GL_d(\C)$.
As the differential operator $\Delta$  commutes with the action of this subgroup, 
 the polynomial $P^\sharp$ is also  $\mathrm{SU}(d)$-invariant.
 Combining this with  Remark \ref{rem:unitary alpha ambiguity} shows that 
$P^\sharp (v \alpha)$ is independent of the choice of $\alpha$.
\end{remark}

\begin{remark}\label{rem:shimura polynomials}
If $\Delta P =0$,  then $P^\sharp =P$, and Theorem \ref{thm:main unitary} shows  that 
\[
\vartheta_P (\mathtt{z} , \varphi_f ) = 
\sum_{ T \in \Herm_d( E )}  \left(   \sum_{ \substack{ v \in V^d \\ H(v) = T } }   P(v ) \varphi_f(v  )   \right)   \, e^{2 \pi i \mathrm{Tr}( T \mathtt{z} ) } 
\]
is a holomorphic Hermitian modular form of weight $2N+  \dim_E(V)$.
Similar results have been proved by Shimura  in \cite[Appendix A7]{ShimuraEisenstein} and \cite[Appendix A5]{Shimura2000}, but this precise result is not among them.  
Shimura  works with theta series for Hermitian spaces of arbitrary signature.  
When specialized to the positive definite case that we work in, Shimura's results prove the modularity  only of  theta functions weighted by polynomial functions on $V_\R$ that are either holomorphic or anti-holomorphic.   
Nonconstant polynomials satisfying   \eqref{unitary P homogeneity} do not fall into either class, so it seems that  even the special case $\Delta P=0$ of Theorem \ref{thm:main unitary} has not previously appeared explicitly in the literature.
\end{remark}

We now  begin the  proof of Theorem \ref{thm:main unitary}.
For  $j,k \in \{1,\ldots, d\}$, define differential operators on $S(V_\R^d)$ by 
\begin{align*}
   E^{\FirstIndex}_{jk}   &  \define  \sum_\alpha   z_{\alpha j}     \frac{\partial  }{\partial z_{\alpha k } }  
    & \qquad F_{jk}  &  \define    \frac{1}{2}  \sum_\alpha   \left(    \frac{\partial^2  }{ \partial x_{\alpha j } \partial x_{\alpha k} }    +       \frac{\partial^2  }{  \partial y_{\alpha j } \partial y_{\alpha k}}     \right)  \\ 
    E^{\SecondIndex}_{jk}   &  \define  \sum_\alpha    \overline{z}_{\alpha j}     \frac{\partial  }{\partial  \overline{z}_{\alpha k } }   
& \qquad 
G_{jk}  &  \define   \frac{1}{2}   \sum_\alpha
\left(    \frac{\partial^2  }{ \partial x_{\alpha j } \partial y_{\alpha k} }    -       \frac{\partial^2  }{  \partial y_{\alpha j } \partial x_{\alpha k}}   
\right) .
\end{align*}

\begin{lemma}
The transformation law \eqref{unitary P homogeneity} is equivalent to the system of differential equations
\begin{equation}\label{unitary homo equations}
 E^{\FirstIndex}_{jk}   P  =   N  \delta_{jk} P 
  \qquad \mbox{and} \qquad 
  E^{\SecondIndex}_{jk}  P   =   N  \delta_{jk} P   
\end{equation}
for all $j,k \in \{1,\ldots, d\}$.
\end{lemma}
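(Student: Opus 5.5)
The plan is to imitate the proof of Lemma \ref{lem:homo to diffeq}, now for the group $\GL_d(\C)$ viewed as a real Lie group. Let $\GL_d(\C)$ act on polynomial functions on $V_\R^d$ by $r(g)P(v)=P(vg)$. The polynomial space is finite dimensional in each degree and $r$ preserves degree. So \eqref{unitary P homogeneity} says that $P$ spans a line on which $\GL_d(\C)$ acts through the character $g\mapsto |\det g|^{2N} = \det(g)^N\overline{\det(g)}^N$. Since $\GL_d(\C)$ is connected, this holds if and only if the real Lie algebra $\mathfrak{gl}_d(\C)$ acts on $P$ through the differential of that character, namely
\[
X \mapsto N\big(\mathrm{Tr}(X)+\overline{\mathrm{Tr}(X)}\big).
\]
Both $r$ and this functional are $\R$-linear. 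We extend them $\C$-linearly to the complexification $\mathfrak{gl}_d(\C)\otimes_\R\C \iso \mathfrak{gl}_d(\C)\oplus\mathfrak{gl}_d(\C)$. The condition then becomes a family of equations indexed by a spanning set of the complexification.

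The key computation is to differentiate the action along the one-parameter subgroups $I+tX_{jk}$ and $I+itX_{jk}$, where $X_{jk}$ is the matrix unit. Right multiplication by $g$ sends the coordinate $z_{\alpha k}$ of $vg$ to $\sum_\ell z_{\alpha\ell}g_{\ell k}$, and sends $\overline z_{\alpha k}$ to $\sum_\ell \overline z_{\alpha\ell}\overline g_{\ell k}$. Applying the chain rule in the variables $z,\overline z$ (treated as independent, which is legitimate for polynomials in $z_{\alpha j},\overline z_{\alpha j}$) gives
\[
r(X_{jk}) = E'_{jk}+E''_{jk}, \qquad r(iX_{jk}) = i\big(E'_{jk}-E''_{jk}\big).
\]
The functional takes the values $2N\delta_{jk}$ on $X_{jk}$ and $0$ on $iX_{jk}$.

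The equations therefore read
\[
(E'_{jk}+E''_{jk})P = 2N\delta_{jk}P \quad\text{and}\quad (E'_{jk}-E''_{jk})P = 0
\]
for all $j,k$. Taking half-sums and half-differences of these two equations is equivalent to \eqref{unitary homo equations}. The converse direction is the same argument run backwards: if \eqref{unitary homo equations} holds, the Lie algebra acts on $P$ by the correct scalar. Exponentiating on the finite-dimensional space of polynomials of bounded degree then gives \eqref{unitary P homogeneity} on the connected group $\GL_d(\C)$.

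The main obstacle is bookkeeping the chain rule with Wirtinger derivatives. In particular, one must get the index order right, namely $z_{\alpha j}\partial/\partial z_{\alpha k}$ rather than the transpose, since the action is by right multiplication on row vectors. One must also check that the $iX_{jk}$ direction produces the difference $E'-E''$ with the correct sign. A sign error there would not affect the final equivalence, because both sums and differences are imposed, but the index order would.
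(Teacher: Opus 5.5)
Your proposal is correct and is essentially the paper's proof: you differentiate $r(g)P(v)=P(vg)$ along $X_{jk}$ and $iX_{jk}$, identify $r(X_{jk})=E'_{jk}+E''_{jk}$ and $r(iX_{jk})=i(E'_{jk}-E''_{jk})$ (the paper reaches the same identities by computing in the real coordinates $x_{\alpha j},y_{\alpha j}$), and use that, on the connected group $\GL_d(\C)$, the transformation law is equivalent to $r(X_{jk})P=2N\delta_{jk}P$ and $r(iX_{jk})P=0$. Two minor points: the detour through the complexification of $\mathfrak{gl}_d(\C)$ is unnecessary, since $\{X_{jk}, iX_{jk}\}$ already spans the real Lie algebra; and, contrary to your closing remark, a uniform transposition of indices would not affect the equivalence either, because the system is imposed for all pairs $(j,k)$ and $\delta_{jk}$ is symmetric.
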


\begin{proof}
Define a representation   of  $\GL_d(\C)$  on the space of  polynomial functions  on $V_\R^d$  by $( r(g)P)(v) = P(v  g)$, where we regard $V_\R$ as a vector space over $E_\R = \C$.
 Note that this is a representation of $\GL_d(\C)$ regarded as a \emph{real} Lie group; because our polynomial functions are polynomials in both $z_{\alpha j}$ and $\overline{z}_{\alpha j}$, the automorphism $r(g)$  varies smoothly, but not holomorphically, as $g \in \GL_d(\C)$ varies.

 Under the induced  representation of the \emph{real} Lie algebra $\mathfrak{gl}_d(\C)$, the  matrix $X_{jk}\in \mathfrak{gl}_d(\C)$ with a $1$ in the $(j,k)$ entry and $0$'s elsewhere satisfies 
\begin{align*}
r(X_{j k} ) 
&  =  \sum_\alpha \left(  x_{\alpha j} \frac{\partial}{\partial x_{\alpha k}} + y_{\alpha j} \frac{\partial}{\partial y_{\alpha k}}  \right)  \\
r( i X_{jk} ) 
&  =  \sum_\alpha \left(  -y_{\alpha j} \frac{\partial}{\partial x_{\alpha k }} + x_{\alpha j} \frac{\partial}{\partial y_{\alpha k}}  \right) .
\end{align*}
In particular, note that 
\begin{equation}\label{X_jk}
2 E^{\FirstIndex}_{jk}   = r(X_{jk}) - i r( i X_{jk} ) 
\qquad \mbox{and}\qquad 
2 E^{\SecondIndex}_{jk}   = r(X_{jk}) +  i r( i X_{jk} ) .
\end{equation}

Elementary representation theory shows that the transformation law \eqref{unitary P homogeneity} under the Lie group $\GL_d(\C)$ is equivalent to the Lie algebra relations 
\begin{align*}
r(X_{jk}) P  = 2N  \delta_{jk} P  \qquad \mbox{and} \qquad 
r( i X_{j k }) P  =  0  
\end{align*}
for all $j,k \in \{ 1,\ldots, d\}$.  By  \eqref{X_jk} these are equivalent to 
\begin{align*}
 ( E^{\SecondIndex}_{jk}    + E^{\FirstIndex}_{jk} ) P  = 2 N  \delta_{jk} P \qquad \mbox{and} \qquad 
(  E^{\SecondIndex}_{jk}  -  E^{\FirstIndex}_{jk} ) P   =  0  ,
\end{align*}
which are equivalent to \eqref{unitary homo equations}.
\end{proof}

\begin{lemma}\label{lem:unitary commutator}
The system of differential equations \eqref{unitary homo equations} for $P$ is equivalent to the system of differential equations 
\begin{align*}
  \left(         E^{\FirstIndex}_{jk}  +  E^{\FirstIndex}_{kj}    -     \frac{ 1 }{4\pi }      F_{jk}    \right) P^\sharp  
 & =     2 N     \delta_{jk}     P^\sharp        \\ 
 \left(         E^{\SecondIndex}_{jk}   +  E^{\SecondIndex}_{kj}    -    \frac{ 1 }{4\pi }     F_{jk}    \right) P^\sharp  
 & =     2  N   \delta_{jk}    P^\sharp      \\  
\left(       E^{\FirstIndex}_{jk}  -    E^{\FirstIndex}_{kj}          -    \frac{  1    }{4 \pi i }   G_{jk}   \right) P^\sharp   & = 0   \\
\left(       E^{\SecondIndex}_{jk}  -    E^{\SecondIndex}_{kj}         +    \frac{  1    }{4 \pi i }   G_{jk}   \right) P^\sharp   & = 0 \\
\end{align*}
for the polynomial $P^\sharp$.
\end{lemma}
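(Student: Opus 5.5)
The plan is to imitate the proof of Lemma \ref{lem:diffeq conversion}: compute the commutators of $\Delta$ with $E^{\FirstIndex}_{jk}$ and $E^{\SecondIndex}_{jk}$, conjugate by $\exp(-\Delta/16\pi)$, and then take symmetric and antisymmetric combinations in $j,k$.

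First observe that the system \eqref{unitary homo equations} is equivalent to the system obtained by adding and subtracting the equations for $(j,k)$ and $(k,j)$:
\[
(E^{\FirstIndex}_{jk}+E^{\FirstIndex}_{kj})P = 2N\delta_{jk}P, \qquad (E^{\FirstIndex}_{jk}-E^{\FirstIndex}_{kj})P=0,
\]
together with the same two equations for $E^{\SecondIndex}$. Since $P\mapsto P^\sharp$ is an automorphism of the space of polynomials, it then suffices to establish operator identities of the form $(DP)^\sharp = D' P^\sharp$ for each of these four combinations $D$.

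For the commutators, write $\Delta = 4\sum_{\alpha,\ell} \partial_{z_{\alpha\ell}}\partial_{\overline{z}_{\alpha\ell}}$. The only non-commuting contributions come from the derivative $\partial_{z_{\alpha j}}$ in $\Delta$ hitting the coefficient $z_{\alpha j}$ of $E^{\FirstIndex}_{jk}$. This gives
\[
\Delta E^{\FirstIndex}_{jk} = E^{\FirstIndex}_{jk}\Delta + 4D^{\FirstIndex}_{jk}, \qquad D^{\FirstIndex}_{jk} = \sum_\alpha \frac{\partial^2}{\partial \overline{z}_{\alpha j}\,\partial z_{\alpha k}},
\]
and similarly $\Delta E^{\SecondIndex}_{jk} = E^{\SecondIndex}_{jk}\Delta + 4D^{\SecondIndex}_{jk}$ with $D^{\SecondIndex}_{jk}=\sum_\alpha \partial_{z_{\alpha j}}\partial_{\overline{z}_{\alpha k}}$. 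Both $D^{\FirstIndex}_{jk}$ and $D^{\SecondIndex}_{jk}$ are constant-coefficient operators, so they commute with $\Delta$. The same induction and exponential-series argument as in Lemma \ref{lem:diffeq conversion} then gives
\[
\exp\left(-\frac{\Delta}{16\pi}\right)\circ E^{\FirstIndex}_{jk} = \left(E^{\FirstIndex}_{jk} - \frac{1}{4\pi}D^{\FirstIndex}_{jk}\right)\circ \exp\left(-\frac{\Delta}{16\pi}\right),
\]
and the analogous identity for $E^{\SecondIndex}_{jk}$ with $D^{\SecondIndex}_{jk}$.

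It remains to express the $D$'s in terms of $F_{jk}$ and $G_{jk}$, and I expect this sign bookkeeping to be the main place where errors can creep in. Expanding $\partial_{z}=\frac12(\partial_x-i\partial_y)$ and $\partial_{\overline z}=\frac12(\partial_x+i\partial_y)$, I expect
\[
D^{\FirstIndex}_{jk} = \tfrac12 F_{jk} - \tfrac{i}{2}G_{jk}, \qquad D^{\SecondIndex}_{jk} = \tfrac12 F_{jk} + \tfrac{i}{2} G_{jk}.
\]
Since $F_{jk}=F_{kj}$ and $G_{jk}=-G_{kj}$, it follows that
\[
D^{\FirstIndex}_{jk}+D^{\FirstIndex}_{kj}=F_{jk}, \qquad D^{\FirstIndex}_{jk}-D^{\FirstIndex}_{kj}=-iG_{jk},
\]
and likewise $D^{\SecondIndex}_{jk}-D^{\SecondIndex}_{kj}=iG_{jk}$ and $D^{\SecondIndex}_{jk}+D^{\SecondIndex}_{kj}=F_{jk}$.

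Substituting, one gets
\[
\big((E^{\FirstIndex}_{jk}-E^{\FirstIndex}_{kj})P\big)^\sharp=\Big(E^{\FirstIndex}_{jk}-E^{\FirstIndex}_{kj}+\tfrac{i}{4\pi}G_{jk}\Big)P^\sharp,
\]
and $\frac{i}{4\pi}=-\frac{1}{4\pi i}$, which matches the third displayed equation of the lemma. The other three equations follow in the same way. Applying $\sharp$ to the symmetric and antisymmetric combinations of \eqref{unitary homo equations}, and using that $\sharp$ is invertible, yields the claimed equivalence.
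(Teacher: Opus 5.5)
Your proposal is correct and follows essentially the same route as the paper: compute $[\Delta, E^{\FirstIndex}_{jk}]$ and $[\Delta, E^{\SecondIndex}_{jk}]$, rewrite the resulting constant-coefficient operators in terms of $F_{jk}$ and $G_{jk}$, and conjugate by $\exp(-\Delta/16\pi)$. The only cosmetic difference is that the paper forms the symmetric and antisymmetric combinations in $j,k$ at the commutator level before exponentiating, whereas you exponentiate first and combine afterward; your sign bookkeeping ($D^{\FirstIndex}_{jk}=\tfrac12 F_{jk}-\tfrac{i}{2}G_{jk}$, $D^{\SecondIndex}_{jk}=\tfrac12 F_{jk}+\tfrac{i}{2}G_{jk}$) is correct.
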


\begin{proof}
Of course this is similar to the proof of Lemma \ref{lem:diffeq conversion}.
Direct calculation proves the commutator relations 
\begin{align*}
\Delta  E^{\FirstIndex}_{jk}  & = E^{\FirstIndex}_{jk} \Delta + 4 \sum_{\alpha } \frac{\partial^2}{ \partial \overline{z} _{\alpha j}   \partial z_{\alpha k } }  \\
\Delta  E^{\SecondIndex}_{jk}   & = E^{\SecondIndex}_{jk} \Delta + 4 \sum_{\alpha } \frac{\partial^2}{  \partial   z_{\alpha j}   \partial  \overline{z}_{\alpha k }   } ,
\end{align*}
from which one deduces
\begin{align*}
\Delta  ( E^{\FirstIndex}_{jk}    +  E^{\FirstIndex}_{kj}  ) 
& =  ( E^{\FirstIndex}_{jk}   +  E^{\FirstIndex}_{kj} )  \Delta + 4 F_{jk}   \\
\Delta  ( E^{\SecondIndex} _{jk}   +  E^{\SecondIndex} _{kj}  )  
& = ( E^{\SecondIndex} _{jk}   +  E^{\SecondIndex} _{kj}  )   \Delta + 4 F_{jk}   
\end{align*}
and
\begin{align*}
\Delta  ( E^{\FirstIndex}_{jk}    -   E^{\FirstIndex}_{kj} ) 
& =  ( E^{\FirstIndex}_{jk}  -  E^{\FirstIndex}_{kj} )  \Delta - 4 i  G_{jk}    \\
\Delta  ( E^{\SecondIndex}_{jk}   -   E^{\SecondIndex}_{kj}  )  & = ( E^{\SecondIndex}_{jk}   -  E^{\SecondIndex}_{kj}  )   \Delta + 4 i  G_{jk} .
\end{align*}
These imply the relations
\begin{align*}
\exp \left( - \frac{ \Delta }{ 16 \pi } \right) \circ   ( E^{\FirstIndex}_{jk}  +  E^{\FirstIndex}_{kj} ) 
& =  \left( E^{\FirstIndex}_{jk}  +  E^{\FirstIndex}_{kj}    - \frac{1}{4\pi}  F_{jk}  \right)  \circ \exp \left( - \frac{ \Delta }{16 \pi} \right)     \\
\exp \left( - \frac{\Delta}{16 \pi} \right) \circ    ( E^{\SecondIndex}_{jk}   +  E^{\SecondIndex}_{kj}  )  & = 
\left( E^{\SecondIndex}_{jk}   +  E^{\SecondIndex}_{kj}     - \frac{1}{4\pi}  F_{jk}  \right)  \circ \exp \left( - \frac{ \Delta }{16 \pi} \right) 
\end{align*}
and
\begin{align*}
\exp \left( - \frac{ \Delta }{ 16 \pi } \right) \circ   ( E^{\FirstIndex}_{jk}  -  E^{\FirstIndex}_{kj} ) 
& =  \left( E^{\FirstIndex}_{jk}  -  E^{\FirstIndex}_{kj}    -  \frac{1}{4\pi i }    G_{jk}  \right)  \circ \exp \left( - \frac{ \Delta }{ 16 \pi} \right)    \\
\exp \left( - \frac{\Delta}{ 16 \pi} \right) \circ    ( E^{\SecondIndex}_{jk}   -  E^{\SecondIndex}_{kj}  )  & = 
\left( E^{\SecondIndex}_{jk}   -  E^{\SecondIndex}_{kj}     +  \frac{1}{4\pi i }   G_{jk}  \right)  \circ \exp \left( - \frac{ \Delta }{16 \pi} \right) , 
\end{align*}
from which  the  claim follows easily.
\end{proof}

Define an  $\R$-linear map
 \begin{equation}\label{herm2u}
W_\R\otimes_\R W_\R \to \mathfrak{u}( W_\R ) 
\end{equation}
 by sending  $w_1  \otimes w_2$ to the endomorphism 
$
w  \mapsto   \langle w_1 , w  \rangle w_2 + \langle w_2   ,w  \rangle  w_1   
$
of $W_\R$.   
  If we define a real vector space  $\Herm^2(W_\R)$  as the quotient of $W_\R\otimes_\R W_\R$ by the span of all tensors of the form $w_1 \otimes w_2 - w_2 \otimes w_1$ and $(\alpha w_1) \otimes w_2 - w_1\otimes (\overline{\alpha} w_2)$
with $w_1 , w_2 \in W_\R$ and $\alpha \in E_\R$, then the above map factors through an  $\R$-linear isomorphism
\[
 \Herm^2(  W_\R )  \iso \mathfrak{u}( W_\R ) .
\]
Compare with  \eqref{sym2sp}.

Using \eqref{herm2u} and the action of $i \in \C=E_\R$ on  $W_\R$, we regard the tensors 
\begin{align} \label{XsYs}
X^{\FirstIndex}_{ j k }  & = (  e_j -i f_j  ) \otimes (  e_k  -i f_k  )  \\
 Y^{\FirstIndex}_{ j k }   & = (  e_j -i f_j  )   \otimes   (  i e_k +  f_k  )   \nonumber  \\
 X^{\SecondIndex}_{ j k }  & =  ( e_j + i f_j  )  \otimes ( e_k + i f_k )   \nonumber    \\
  Y^{\SecondIndex}_{ j k }  & = ( e_j + i f_j  )  \otimes  ( i e_k  -  f_k) ,  \nonumber 
\end{align}
 as elements of $\mathfrak{u}( W_\R )$,  for all $j,k \in \{ 1, \ldots, d\}$.
Let us compute their images under the composition 
 \[
 \mathfrak{u}(W_\R) \map{ i_V }    \mathfrak{sp}(\bm{W}_\R )  
  \map{\omega^\mathrm{inf}   }  \End_\C(S(V_\R^d ))
 \]
of   \eqref{unitary i_V} with the infinitesimal Weil representation of  Definition \ref{def:Lie Schrodinger}.

\begin{lemma}\label{lem:unitary XY operators}
 The above vectors act on $S(V_\R^d)$  as 
\begin{align*}
 \omega^\mathrm{inf}  ( i_V(  X^{\FirstIndex}_{jk})) 
   & = 
    \frac{1}{4 \pi i } 
 \sum_\alpha   \left(4\pi i x_{\alpha j} +  \frac{\partial}{\partial y_{\alpha j } }   \right)   
 \left(4\pi i x_{\alpha k} +  \frac{\partial}{\partial y_{\alpha k } }   \right)     \\
 & \qquad  +
 \frac{1}{4 \pi i } 
   \sum_\alpha    \left(4\pi i y_{\alpha j} -   \frac{\partial}{\partial x_{\alpha j } }   \right)   
 \left(4\pi i y_{\alpha k} -  \frac{\partial}{\partial x_{\alpha k } }   \right)  \\
 \omega^\mathrm{inf}  (  i_V( X_{jk}^{\SecondIndex} ) ) 
   & = 
    \frac{1}{4 \pi i } 
 \sum_\alpha   \left(4\pi i x_{\alpha j} -  \frac{\partial}{\partial y_{\alpha j } }   \right)   
 \left(4\pi i x_{\alpha k} -  \frac{\partial}{\partial y_{\alpha k } }   \right)     \\
 & \qquad  +
 \frac{1}{4 \pi i } 
   \sum_\alpha    \left(4\pi i y_{\alpha j} +   \frac{\partial}{\partial x_{\alpha j } }   \right)  
 \left(4\pi i y_{\alpha k}  +   \frac{\partial}{\partial x_{\alpha k } }   \right)  
\end{align*}
and 
\begin{align*}
 \omega^\mathrm{inf}   (  i_V(  Y^{\FirstIndex}_{jk} ) ) 
 &=   
  \frac{1}{4 \pi i } 
 \sum_\alpha   \left(4\pi i x_{\alpha j} +  \frac{\partial}{\partial y_{\alpha j } }   \right)   
  \left(4\pi i y_{\alpha k} -  \frac{\partial}{\partial x_{\alpha k } }   \right)    \\
 & \quad  -
 \frac{1}{4 \pi i } 
   \sum_\alpha    \left(4\pi i y_{\alpha j} -   \frac{\partial}{\partial x_{\alpha j } }   \right) 
    \left(4\pi i x_{\alpha k} +  \frac{\partial}{\partial y_{\alpha k } }   \right)   \\
    & \quad - 2  \dim_E(V)       \delta_{jk} \\
     \omega^\mathrm{inf}(  i_V( Y^{\SecondIndex}_{jk} ) ) 
 &=  
  \frac{1}{4 \pi i } 
 \sum_\alpha   \left(4\pi i x_{\alpha j} -  \frac{\partial}{\partial y_{\alpha j } }   \right)   
  \left(4\pi i y_{\alpha k} +  \frac{\partial}{\partial x_{\alpha k } }   \right)    \\
 & \quad  -
 \frac{1}{4 \pi i } 
   \sum_\alpha    \left(4\pi i y_{\alpha j} +    \frac{\partial}{\partial x_{\alpha j } }   \right)  
    \left(4\pi i x_{\alpha k} -  \frac{\partial}{\partial y_{\alpha k } }   \right)  \\
   & \quad +  2   \dim_E(V)      \delta_{jk}   .
 \end{align*}
\end{lemma}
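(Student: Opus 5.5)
The proof follows Lemma \ref{lem:inf Z_jk}: compute $i_V$ on the relevant tensors in terms of $\Sym^2(\bm{W}_\R)$, pass through \eqref{Sym2Weyl} to the Weyl algebra, and then apply $\rho$ using Remark \ref{rem:weyl action}. First we determine how the composition
\[
\Herm^2(W_\R) \iso \mathfrak{u}(W_\R) \map{i_V} \mathfrak{sp}(\bm{W}_\R) \iso \Sym^2(\bm{W}_\R)
\]
acts on a tensor $w_1\otimes w_2$. Choose a real basis of $V_\R$, namely $v_\alpha$ and $iv_\alpha$ for $1\le\alpha\le n$. Using the symplectic form \eqref{big unitary symplectic}, we compute $\langle w_1\otimes v_\alpha , w\otimes v\rangle_{\bm{W}}$ and compare the endomorphism $w\otimes v\mapsto (\langle w_1,w\rangle w_2+\langle w_2,w\rangle w_1)\otimes v$ with the image of candidate symmetric tensors under \eqref{sym2sp}. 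We expect, up to a normalizing constant that must be pinned down (likely $\tfrac12$, because $\mathrm{Tr}_{E/F}$ doubles the real part), a formula of the shape
\[
w_1 \otimes w_2 \mapsto c\sum_\alpha \big( (w_1\otimes v_\alpha)(w_2\otimes v_\alpha) + (w_1\otimes i v_\alpha)(w_2 \otimes i v_\alpha)\big),
\]
possibly with the $E$-action placed on the other factor. We will verify this on basis vectors $w=e_\ell, f_\ell, ie_\ell, if_\ell$ and $v=v_\beta, iv_\beta$.

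Next we need the $\rho$-action. With $\bm{Y}_\R=V_\R^d$ in coordinates $x_{\alpha j},y_{\alpha j}$, we determine $\rho(e_j\otimes v_\alpha)$, $\rho(e_j\otimes iv_\alpha)$, $\rho(f_j\otimes v_\alpha)$ and $\rho(f_j\otimes iv_\alpha)$. The $\rho(e\otimes v)$ are multiplication by $2\pi i\langle e\otimes v,\cdot\rangle_{\bm{W}}$, which gives linear forms such as $4\pi i x_{\alpha j}$ and $\pm4\pi i y_{\alpha j}$; the factor $2$ comes from the trace. The $\rho(f\otimes v)$ are $-\partial/\partial x_{\alpha j}$ and $-\partial/\partial y_{\alpha j}$, and the $\sigma$-conjugation in \eqref{big unitary symplectic} determines the signs. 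Expanding $e_j\mp if_j$ with the $E_\R$-action $i$ (so that $if_j\otimes v_\alpha=f_j\otimes iv_\alpha$), each factor $(e_j\mp i f_j)\otimes v_\alpha$ and $(e_j\mp if_j)\otimes iv_\alpha$ becomes, up to overall constants, one of the first-order operators $4\pi i x_{\alpha j}\pm\partial_{y_{\alpha j}}$ or $4\pi i y_{\alpha j}\mp\partial_{x_{\alpha j}}$ appearing in the statement.

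Applying \eqref{Sym2Weyl}, $ab\mapsto\frac1{2\pi i}a\otimes b-\frac12\langle a,b\rangle$, produces the product of first-order operators plus a scalar correction $-\frac12\sum_\alpha\langle a_\alpha,b_\alpha\rangle$. For $X'_{jk}$ and $X''_{jk}$ the two factors pair to zero, since both lie in the same $J$-eigenspace and are isotropic, so no constant appears. For $Y'_{jk}$ and $Y''_{jk}$ the constant is $\langle e_j\mp if_j, \pm i(e_k\mp i f_k)\rangle$-type terms summed over the $2n$ real basis vectors. This yields a multiple of $\delta_{jk}\dim_E(V)$, which should come out to $\mp2\dim_E(V)\delta_{jk}$; we will also check that the order of the factors in the displayed products matches the chosen ordering in $\mathcal{W}$.

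The main obstacle is bookkeeping. We must get the exact normalization of the map $\Herm^2\to\Sym^2(\bm{W}_\R)$ and the signs coming from $\sigma$ and from the skew-Hermitian convention (linear in the second variable). Any mistake there changes the relative signs inside the factors, the overall $\frac1{4\pi i}$, or the constant $\pm2\dim_E(V)$. We will fix these by first checking the simplest case, $d=n=1$ with $X'_{11}$, directly against the definitions, and then let linearity handle the rest.
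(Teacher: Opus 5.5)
Your plan is the paper's proof. The composition $W_\R\otimes_\R W_\R\to\mathfrak{u}(W_\R)\to\mathfrak{sp}(\bm{W}_\R)\cong\Sym^2(\bm{W}_\R)$ sends $w_1\otimes w_2$ to $\frac12\sum_\alpha\big((w_1\otimes v_\alpha)(w_2\otimes v_\alpha)+(iw_1\otimes v_\alpha)(iw_2\otimes v_\alpha)\big)$. This is your guess with $c=\frac12$, since $w_1\otimes iv_\alpha=iw_1\otimes v_\alpha$. The rest is \eqref{Sym2Weyl} together with the four formulas $\rho(e_j\otimes v_\alpha)=4\pi i x_{\alpha j}$, $\rho(ie_j\otimes v_\alpha)=4\pi i y_{\alpha j}$, $\rho(f_j\otimes v_\alpha)=-\partial/\partial x_{\alpha j}$ and $\rho(if_j\otimes v_\alpha)=-\partial/\partial y_{\alpha j}$. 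The scalar term that results is $-\frac{n}{2}\mathrm{Tr}_{\C/\R}\langle w_1,w_2\rangle$, where $n=\dim_E(V)$.

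One justification in your proposal is wrong, although its conclusion happens to be true: the reason you give for the missing constant in $X'_{jk}$ and $X''_{jk}$.

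\emph{The eigenspaces are not isotropic.} The $E_\R$-eigenspaces $W_\R^{J=\pm i}$ are the positive and negative definite subspaces of the Hermitian form $i\langle\cdot,\cdot\rangle$. For example, $u=e_1-if_1$ has $\langle u,u\rangle=-2i$ and $\langle u\otimes v_\alpha,\,iu\otimes v_\alpha\rangle_{\bm{W}}=4$.

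\emph{The argument also proves too much.} Since $ie_k+f_k=i(e_k-if_k)$ and $ie_k-f_k=i(e_k+if_k)$, the two factors of $Y'_{jk}$ (resp.\ $Y''_{jk}$) also lie in a single eigenspace. Yet their constants are $\mp2\dim_E(V)\delta_{jk}$, not zero.

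\emph{The correct reason is direct computation.} We have $\langle e_j\mp if_j,\,e_k\mp if_k\rangle=\mp2i\delta_{jk}$, which is purely imaginary, so its real trace vanishes. Multiplying the second factor by $i$ turns the pairing into the real number $\pm2\delta_{jk}$, and this gives the constants $\mp2\dim_E(V)\delta_{jk}$.

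With that step replaced, your argument goes through exactly as in the paper.
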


\begin{proof}
Elementary linear algebra shows that the composition
\[
W_\R \otimes_\R W_\R \map{\eqref{herm2u}}  
 \mathfrak{u}(W_\R) \map{i_V} \mathfrak{sp}(\bm{W}_\R) 
  \stackrel{ \eqref{sym2sp} }{ \iso }  \Sym^2 (  \bm{W}_\R ) 
\]
 sends
\[
w_1   \otimes w_2  \mapsto 
 \frac{1}{2} \sum_\alpha   (w_1  \otimes  v_\alpha ) ( w_2\otimes v_\alpha)   
+ 
 \frac{1}{2}  \sum_\alpha     (i w_1 \otimes v_\alpha  ) (i w_2  \otimes v_\alpha  )  .
\]
Tracing through the constructions leading to Definition \ref{def:Lie Schrodinger}, it follows that if $Z \in \mathfrak{u}(W_\R)$ is the image of  $w_1 \otimes w_2$ under \eqref{herm2u}, then 
\begin{align*}
\omega^\mathrm{inf}  ( i_V( Z ) )  &=     \sum_\alpha 
  \left(  \frac{1}{4 \pi i }
   \rho   ( w_1   \otimes  v_\alpha) \circ\rho ( w_2  \otimes v_\alpha  )  - \frac{1}{4}  \mathrm{Tr}_{\C/\R}     \langle w_1 ,  w_2  \rangle
  \right)    \\
  & \quad  +
     \sum_\alpha  \left(  \frac{1}{4 \pi i }
   \rho  ( i w_1  \otimes  v_\alpha ) \circ \rho ( i w_2\otimes v_\alpha )    - \frac{1}{4}  \mathrm{Tr}_{\C/\R}     \langle w_1 ,  w_2  \rangle
  \right)      .
 \end{align*}
In this last expression $\mathcal{W}$ is the  Weyl algebra (Definition \ref{def:weyl algebra}) of the real symplectic space $\bm{W}_\R$, 
we are identifying  the vectors $w_1   \otimes  v_\alpha$,  $i w_2   \otimes  v_\alpha$, and so on in $\bm{W}_\R = W_\R \otimes_{E_\R}  V_\R$ with   their images under the composition
 \begin{equation}\label{screwy unitary composition}
  \bm{W}_\R \to \bm{W}_\C^{\otimes} \to \mathcal{W}. 
  \end{equation}
  The action  $\rho$  of the Weyl algebra on $S(\bm{Y}_\R)=S( V_\R^d  )$ was defined in 
   \eqref{Weyl module}.

Caution is needed, as the real vector space $\bm{W}_\C = \bm{W}_\R\otimes_\R \C$ has two complex structures: the action of $\C  = E_\R$ on the first tensor factor, and the natural action of $\C$ on the second  factor.
The tensor power in $\bm{W}_\C^\otimes$ is over $\C$, acting on the second tensor factor.
   The first arrow in \eqref{screwy unitary composition} is $\C$-linear with respect to the action on the first tensor factor.  The second is $\C$-linear with respect to the action on the second tensor factor.     
   The composition \eqref{screwy unitary composition}  is only $\R$-linear, and  
 $\rho  (   iw  \otimes  v_\alpha ) \neq i \cdot \rho  (   w  \otimes  v_\alpha )$.

Using the above expression for $\omega^\mathrm{inf}  ( i_V( Z ) )$, the rest of the proof is an explicit calculation using the  formulas
\begin{align*}
\rho   ( e_j \otimes v_\alpha )     & =   4 \pi i  \cdot x_{\alpha j}  & \qquad 
\rho  ( i  e_j \otimes v_\alpha    )    & =   4 \pi i  \cdot  y_{\alpha j} \\
\rho  ( f_j  \otimes v_\alpha )   & = - \frac{\partial}{\partial x_{\alpha j } }    & \qquad 
\rho (  i  f_j   \otimes v_\alpha )   & = - \frac{\partial}{\partial y_{\alpha j } } 
 \end{align*}
from Remark \ref{rem:weyl action}.  (The change from $2\pi i $ to $4\pi i $ is because our normalization of the symplectic form \eqref{big unitary symplectic} implies that 
$\langle e_j \otimes v_\alpha ,f_k \otimes v_\alpha  \rangle_{\bm{W}}$ is equal to $2 \delta_{jk}$, rather than $\delta_{jk}$.)
\end{proof}

\begin{proof}[Proof of Theorem \ref{thm:main unitary}]
In  the coordinates $x_{\alpha j}$ and $y_{\alpha j}$ on $V_\R^d$ determined by our orthonormal basis of $V_\R$, the standard Gaussian $\varphi_\R^\circ \in S(V_\R^d)$ from the proof of Theorem \ref{thm:hermitian theta modularity} becomes 
\[
\varphi^\circ_\R(v)  =  e^{-2\pi   \mathrm{Tr}(  H(v ) ) } 
= \prod_{ j =1}^d  e^{ - 2 \pi   \sum_\alpha  | z_{\alpha j} |^2     } .
\]
As in earlier arguments, if we can show that $\varphi_\R=P^\sharp \varphi^\circ_\R$ satisfies  the transformation law \eqref{unitary-unitary K weight} with 
\[
m = 2 N + \dim_E(V),
\]
then we are done by  Proposition \ref{prop:hermitian theta expansion}.

 Return to the setup of the proof of Theorem \ref{thm:hermitian theta modularity}, so that $K=K_1\times K_2$ is the stabilizer in $\mathrm{U}(W_\R)$ of the eigenspace  decomposition 
\[
W_\R  = W_\R^{J = i }  \oplus W_\R^{J = - i }  .
\]
Consider the corresponding Lie algebra decomposition 
$\mathfrak{k} = \mathfrak{k}_1\oplus \mathfrak{k}_2 \subset \mathfrak{u}(W_\R)$.

By directly computing the images of the vectors \eqref{XsYs} under \eqref{herm2u}, we see that they act on $W_\R$ as
\begin{align*}
X^{\FirstIndex}_{jk} \cdot ( e_\ell - i f_\ell  )  
& = 
- 2i \delta_{ j \ell }( e_k - i f_k )  - 2i \delta_{ k  \ell }( e_j - i f_j )   \\
X^{\FirstIndex}_{jk} \cdot ( e_\ell + i f_\ell  )  
& = 0 \\
Y^{\FirstIndex}_{jk} \cdot ( e_\ell - i f_\ell  )  
& = 
 2  \delta_{ j \ell }( e_k - i f_k )  - 2 \delta_{ k  \ell }( e_j - i f_j )   \\
Y^{\FirstIndex}_{jk} \cdot ( e_\ell + i f_\ell  )  
& = 0 
\end{align*}
and
 \begin{align*}
X^{\SecondIndex}_{jk} \cdot ( e_\ell - i f_\ell  )  
& =   0  \\
X^{\SecondIndex}_{jk} \cdot ( e_\ell + i f_\ell  )  
& =    2i \delta_{ j \ell }( e_k + i f_k )  + 2i \delta_{ k  \ell }( e_j + i f_j )     \\
Y^{\SecondIndex}_{jk} \cdot ( e_\ell - i f_\ell  )  
& =  0   \\
Y^{\SecondIndex}_{jk} \cdot ( e_\ell + i f_\ell  )  
& =  - 2  \delta_{ j \ell }( e_k + i f_k )  + 2 \delta_{ k  \ell }( e_j + i f_j )   .
\end{align*}
From these formulas and \eqref{K1K2}, one sees that 
\[
\mathfrak{k}_1  = \mathrm{Span}_\R\{ X'_{jk} , Y'_{jk} \}  \qquad \mbox{and}\qquad 
\mathfrak{k}_2  = \mathrm{Span}_\R\{ X''_{jk} , Y''_{jk} \}   ,
\]
and that  the traces of these vectors acting on $W_\R$, regarded as a vector space over $E_\R=\C$, are given by 
\begin{align}\label{XsYs trace}
 \mathrm{Tr}_W ( X^{\FirstIndex}_{jk}  ) & = - 4 i \delta_{jk} 
&
 \mathrm{Tr}_W ( X^{\SecondIndex}_{jk}  ) & =  4 i \delta_{jk}     \\ 
 \mathrm{Tr}_W ( Y^{\FirstIndex}_{jk}  ) & = 0
& 
\mathrm{Tr}_W ( Y^{\SecondIndex}_{jk}  ) & = 0   . \nonumber 
\end{align}

Using the formulas of Lemma \ref{lem:unitary XY operators}, a tedious calculus exercise shows that 
\begin{align*}
   \omega^\mathrm{inf}(  i_V  ( X^{\FirstIndex}_{jk}) )  (f \varphi^\circ_\R )
  &  =  2 i \cdot 
\left(        E^{\FirstIndex}_{jk} f  + E^{\FirstIndex}_{kj}f     -     \frac{ 1 }{4\pi }      F_{jk} f  +    \dim_E(V)  \delta_{jk}    f    \right)  \cdot    \varphi^\circ_\R     \\
  \omega^\mathrm{inf}(  i_V  ( X_{jk}^{\SecondIndex} ) )   ( f  \varphi^\circ_\R ) 
  &  =  
  2 i \cdot 
  \left(        E^{\SecondIndex}_{jk}  f + E^{\SecondIndex}_{kj}f    -    \frac{ 1 }{4\pi }     F_{jk} f  +  
   \dim_E(V) \delta_{jk}    f   \right)  \cdot     \varphi^\circ_\R    \\
   \omega^\mathrm{inf}(  i_V  ( Y^{\FirstIndex}_{jk}  )  ) (f \varphi^\circ_\R )  
   & =   
  2 \cdot   \left(     -   E^{\FirstIndex}_{ jk }  f      +   E^{\FirstIndex}_{k j } f +   \frac{  1    }{4 \pi i }   G_{jk}  f     \right)  \cdot    \varphi^\circ_\R  \\
  \omega^\mathrm{inf}(  i_V  ( Y^{\SecondIndex}_{jk}  )  )   (f \varphi^\circ_\R ) 
  & =   
   2 \cdot    \left(       E^{\SecondIndex}_{jk } f -   E^{\SecondIndex}_{kj} f       +   \frac{  1    }{4 \pi i }   G_{jk}f   \right) \cdot     \varphi^\circ_\R  ,
 \end{align*}
 for any polynomial function $f$ on $V_\R^d$.
 Taking $f=P^\sharp$  and using Lemma \ref{lem:unitary commutator}, we find 
\begin{align*}
  \omega^\mathrm{inf}(  i_V  ( X^{\FirstIndex}_{jk}) )  (P^\sharp \varphi^\circ_\R )
  &  =   2 i \delta_{jk}  \cdot \big( 2N +  \dim_E(V) \big)  \cdot P^\sharp \varphi^\circ_\R   \\
   \omega^\mathrm{inf}(  i_V  ( X_{jk}^{\SecondIndex} ) )   ( P^\sharp \varphi^\circ_\R ) 
  &  =   2 i \delta_{jk}  \cdot  \big( 2N +  \dim_E(V) \big)   \cdot P^\sharp \varphi^\circ_\R     \\
\omega^\mathrm{inf}(  i_V  ( Y^{\FirstIndex}_{jk}  )  ) (P^\sharp \varphi^\circ_\R )     & =    0  \\
    \omega^\mathrm{inf}(  i_V  ( Y^{\SecondIndex}_{jk}  )  )   (P^\sharp \varphi^\circ_\R )    & =     0  .
 \end{align*}
Comparing   these formulas with \eqref{XsYs trace}, we see that 
\begin{align}\label{Z1Z2 action}
\omega^\mathrm{inf}( i_V(Z_1 ) )  ( P^\sharp \varphi_\R^\circ )  
 & = -  \frac{ m  }{2}   \cdot  \mathrm{Tr}_W(Z_1)  \cdot P^\sharp \varphi_\R^\circ    \\
\omega^\mathrm{inf}( i_V(Z_2 ) )  ( P^\sharp \varphi_\R^\circ )  
 & =   \frac{ m  }{2}  \cdot   \mathrm{Tr}_W(Z_2)  \cdot  P^\sharp \varphi_\R^\circ   \nonumber
\end{align}
for all $Z_1 \in \mathfrak{k}_1$ and $Z_2 \in \mathfrak{k}_2$.

Now consider the action of $K=K_1\times K_2$ on $S(V_\R^d)$ through the Weil representation  $\omega^\eta_{V,\psi} = \omega_\psi \circ i^\eta_{V,\psi}$ of Definition \ref{def:unitary-unitary weil}.
Unlike in the proof of Theorem \ref{thm:main orthogonal}, 
the action of $\mathfrak{k}=\mathfrak{k}_1 \oplus \mathfrak{k}_2$  on $S(V_\R^d)$ obtained by differentiating  $\omega^\eta_{V,\psi}$  is \emph{not} the naive composition $\omega^\mathrm{inf}\circ i_V$, but rather is 
\[
Z \mapsto \frac{ r_\eta }{2}  \cdot  \mathrm{Tr}_W(Z) 
+ (  \omega^\mathrm{inf} \circ i_V) (Z)  .
\]
This is a consequence of  Theorem \ref{thm:inf compatibility}  and Lemma \ref{lem:kinf correction}.    
Taking  this correction  into account, it follows from  \eqref{Z1Z2 action} that  $\mathfrak{k}=\mathfrak{k}_1 \oplus \mathfrak{k}_2$ acts on  $P^\sharp \varphi^\circ_\R$   through the linear functional
\[
Z_1 +  Z_2   \mapsto  \left(  \frac{ - m + r_\eta }{2}   \right) \mathrm{Tr}_W(Z_1)   +  \left(  \frac{ m + r_\eta }{2}   \right) \mathrm{Tr}_W(Z_2).
\]
Recalling Remark \ref{rem:banana peel},  this shows that $K=K_1\times K_2$ acts on $P^\sharp \varphi^\circ_\R$ through the character in  \eqref{unitary-unitary K weight}, as was needed to prove the theorem.
\end{proof}


\subsection{Changing the conventions}
\label{ss:old normalization}


Suppose we were to change the definition of the symplectic form \eqref{big unitary symplectic} to 
\[
\langle  w_1 \otimes v_1 , w_2 \otimes v_2  \rangle_{\bm{W}} = 
\frac{1}{2} \cdot  \mathrm{Tr}_{E/F} \big(   \langle w_1,w_2 \rangle^\sigma \cdot  h(v_1,v_2)    \big).  
 \]
 Although rescaling the symplectic form on $\bm{W}$ changes the Weil representation $\omega_\psi$,  if we were to simultaneously change the definition of the matrix \eqref{hermitian gram} to 
 \[
 H(v) = \left(   \frac{ h(v_k  ,v_j) }{2}    \right)_{ 1 \le j,k \le d } \in \Herm_d(E) 
 \]
 then much of \S \ref{s:unitary-unitary} would  continue to hold as currently stated.

Under these new conventions the current statements of Propositions \ref{prop:unitary-unitary weil formulas} and \ref{prop:hermitian theta expansion}  would remain true (although the  function $\theta_{V,\psi}( \mathtt{z} ,\varphi)$ would have a different   meaning).
Theorem  \ref{thm:hermitian theta modularity} would continue to hold as written, but because rescaling the symplectic form on $\bm{W}$ would change the vacuum vector, in the proof  one must change the definition of the standard Gaussian to 
\[
\varphi^\circ_\R(v)  
= e^{-2\pi  \mathrm{Tr}( H (v  )) }   =  \prod_{j=1}^d e^{- \pi   h(v_j  , v_j  ) } .
\]

Although many formulas in the proof of Theorem \ref{thm:main unitary} would change, the statement of that theorem would continue to hold with a single modification:  the definition of $P^\sharp$ must  be replaced by 
\[
P^\sharp =  \exp\left( -\frac{\Delta}{8\pi} \right)  P  .
\]

\bibliographystyle{amsalpha}


\providecommand{\bysame}{\leavevmode\hbox to3em{\hrulefill}\thinspace}
\providecommand{\MR}{\relax\ifhmode\unskip\space\fi MR }
\providecommand{\MRhref}[2]{%
  \href{http://www.ams.org/mathscinet-getitem?mr=#1}{#2}
}
\providecommand{\href}[2]{#2}

\end{document}